\newcommand{\nc}{\newcommand}
\nc{\CC}{{\mathbb{C}}}
\nc{\DD}{{\mathbb{D}}}
\nc{\LL}{{\mathbb{L}}}
\nc{\RR}{{\mathbb{R}}}
\renewcommand{\P}{{\mathbb{P}}}
\nc{\OO}{{\mathbb{O}}}
\nc{\QQ}{{\mathbb{Q}}}
\nc{\ZZ}{{\mathbb{Z}}}
\nc{\Z}{{\mathbb{Z}}}
\nc{\cA}{{\mathcal{A}}}
\nc{\cB}{{\mathcal{B}}}
\nc{\cC}{{\mathcal{C}}}
\nc{\cD}{{\mathcal{D}}}
\nc{\cE}{{\mathcal{E}}}
\nc{\cF}{{\mathcal{F}}}
\nc{\bcF}{\bar\cF}
\nc{\cG}{{\mathcal{G}}}
\nc{\cH}{{\mathcal{H}}}
\nc{\cI}{{\mathcal{I}}}
\nc{\cJ}{{\mathcal{J}}}
\nc{\cK}{{\mathcal{K}}}
\nc{\cL}{{\mathcal{L}}}
\nc{\cM}{{\mathcal{M}}}
\nc{\cN}{{\mathcal{N}}}
\nc{\cO}{{\mathcal{O}}}
\nc{\cP}{{\mathcal{P}}}
\nc{\cQ}{{\mathcal{Q}}}
\nc{\cR}{{\mathcal{R}}}
\nc{\cS}{{\mathcal{S}}}
\nc{\cT}{{\mathcal{T}}}
\nc{\cU}{{\mathcal{U}}}
\nc{\cV}{{\mathcal{V}}}
\nc{\cW}{{\mathcal{W}}}
\nc{\cX}{{\mathcal{X}}}
\nc{\cY}{{\mathcal{Y}}}
\nc{\cZ}{{\mathcal{Z}}}
\nc{\hcX}{\widehat\cX}
\newcommand{\tcB}{\widetilde\cB}
\nc{\cXo}{X}
\nc{\bcXo}{\overline{X}}
\nc{\hcXo}{\hX}
\nc{\rhoo}{\rho}
\nc{\crho}{\rho_\cX}
\nc{\rc}{{\mathrm{c}}}
\nc{\rf}{{\mathrm{f}}}
\nc{\rh}{{\mathrm{h}}}
\nc{\rch}{{\mathrm{ch}}}
\nc{\rtd}{{\mathrm{td}}}
\nc{\rA}{{\mathrm{A}}}
\nc{\rB}{{\mathrm{B}}}
\nc{\rC}{{\mathrm{C}}}
\nc{\rD}{{\mathrm{D}}}
\nc{\rE}{{\mathcal{E}}}
\nc{\rF}{{\mathrm{F}}}
\nc{\rG}{{\mathrm{G}}}
\nc{\urG}{\underline{\rG}}
\nc{\rH}{{\mathrm{H}}}
\nc{\rI}{{\mathrm{I}}}
\nc{\rJ}{{\mathrm{J}}}
\nc{\rK}{{\mathrm{K}}}
\nc{\rL}{{\mathrm{L}}}
\nc{\rM}{{\mathrm{M}}}
\nc{\rN}{{\mathrm{N}}}
\nc{\rO}{{\mathrm{O}}}
\nc{\rP}{{\mathrm{P}}}
\nc{\rQ}{{\mathrm{Q}}}
\nc{\rR}{{\mathrm{R}}}
\nc{\rS}{{\mathrm{S}}}
\nc{\rT}{{\mathrm{T}}}
\nc{\rU}{{\mathrm{U}}}
\nc{\rV}{{\mathrm{V}}}
\nc{\rW}{{\mathrm{W}}}
\nc{\rX}{{\mathrm{X}}}
\nc{\rY}{{\mathrm{Y}}}
\nc{\rZ}{{\mathrm{Z}}}
\nc{\bA}{{\mathbf{A}}}
\nc{\bB}{{\mathbf{B}}}
\nc{\bC}{{\mathbf{C}}}
\nc{\bD}{{\mathbf{D}}}
\nc{\bE}{{\mathbf{E}}}
\nc{\bF}{{\mathbf{F}}}
\nc{\bG}{{\mathbf{G}}}
\nc{\bH}{{\mathbf{H}}}
\nc{\bI}{{\mathbf{I}}}
\nc{\bJ}{{\mathbf{J}}}
\nc{\bK}{{\mathbf{K}}}
\nc{\bL}{{\mathbf{L}}}
\nc{\bM}{{\mathbf{M}}}
\nc{\bN}{{\mathbf{N}}}
\nc{\bO}{{\mathbf{O}}}
\nc{\bP}{{\mathbf{P}}}
\nc{\bQ}{{\mathbf{Q}}}
\nc{\bR}{{\mathbf{R}}}
\nc{\bS}{{\mathbf{S}}}
\nc{\bT}{{\mathbf{T}}}
\nc{\bU}{{\mathbf{U}}}
\nc{\bV}{{\mathbf{V}}}
\nc{\bW}{{\mathbf{W}}}
\nc{\bX}{{\mathbf{X}}}
\nc{\bY}{{\mathbf{Y}}}
\nc{\bZ}{{\mathbf{Z}}}
\nc{\ba}{{\mathbf{a}}}
\nc{\bb}{{\mathbf{b}}}
\nc{\bc}{{\mathbf{c}}}
\nc{\bd}{{\mathbf{d}}}
\nc{\be}{{\mathbf{e}}}
\nc{\bg}{{\mathbf{g}}}
\nc{\bh}{{\mathbf{h}}}
\nc{\bi}{{\mathbf{i}}}
\nc{\bj}{{\mathbf{j}}}
\nc{\bk}{{\mathbf{k}}}
\nc{\bl}{{\mathbf{l}}}
\nc{\bm}{{\mathbf{m}}}
\nc{\bn}{{\mathbf{n}}}
\nc{\bo}{{\mathbf{o}}}
\nc{\bp}{{\mathbf{p}}}
\nc{\bq}{{\mathbf{q}}}
\nc{\br}{{\mathbf{r}}}
\nc{\bs}{{\mathbf{s}}}
\nc{\bt}{{\mathbf{t}}}
\nc{\bu}{{\mathbf{u}}}
\nc{\bv}{{\mathbf{v}}}
\nc{\bw}{{\mathbf{w}}}
\nc{\bx}{{\mathbf{x}}}
\nc{\by}{{\mathbf{y}}}
\nc{\bz}{{\mathbf{z}}}
\nc{\fA}{{\mathfrak{A}}}
\nc{\fB}{{\mathfrak{B}}}
\nc{\fC}{{\mathfrak{C}}}
\nc{\fD}{{\mathfrak{D}}}
\nc{\fE}{{\mathfrak{E}}}
\nc{\fF}{{\mathfrak{F}}}
\nc{\fG}{{\mathfrak{G}}}
\nc{\fH}{{\mathfrak{H}}}
\nc{\fI}{{\mathfrak{I}}}
\nc{\fJ}{{\mathfrak{J}}}
\nc{\fK}{{\mathfrak{K}}}
\nc{\fL}{{\mathfrak{L}}}
\nc{\fM}{{\mathfrak{M}}}
\nc{\fN}{{\mathfrak{N}}}
\nc{\fO}{{\mathfrak{O}}}
\nc{\fP}{{\mathfrak{P}}}
\nc{\fQ}{{\mathfrak{Q}}}
\nc{\fR}{{\mathfrak{R}}}
\nc{\fS}{{\mathfrak{S}}}
\nc{\fT}{{\mathfrak{T}}}
\nc{\fU}{{\mathfrak{U}}}
\nc{\fV}{{\mathfrak{V}}}
\nc{\fW}{{\mathfrak{W}}}
\nc{\fX}{{\mathfrak{X}}}
\nc{\fY}{{\mathfrak{Y}}}
\nc{\fZ}{{\mathfrak{Z}}}
\nc{\fa}{{\mathfrak{a}}}
\nc{\fb}{{\mathfrak{b}}}
\nc{\fc}{{\mathfrak{c}}}
\nc{\fd}{{\mathfrak{d}}}
\nc{\fe}{{\mathfrak{e}}}
\nc{\ff}{{\mathfrak{f}}}
\nc{\fg}{{\mathfrak{g}}}
\nc{\fh}{{\mathfrak{h}}}
\nc{\fj}{{\mathfrak{j}}}
\nc{\fk}{{\mathfrak{k}}}
\nc{\fl}{{\mathfrak{l}}}
\nc{\fm}{{\mathfrak{m}}}
\nc{\fn}{{\mathfrak{n}}}
\nc{\fo}{{\mathfrak{o}}}
\nc{\fp}{{\mathfrak{p}}}
\nc{\fq}{{\mathfrak{q}}}
\nc{\fr}{{\mathfrak{r}}}
\nc{\fs}{{\mathfrak{s}}}
\nc{\ft}{{\mathfrak{t}}}
\nc{\fu}{{\mathfrak{u}}}
\nc{\fv}{{\mathfrak{v}}}
\nc{\fw}{{\mathfrak{w}}}
\nc{\fx}{{\mathfrak{x}}}
\nc{\fy}{{\mathfrak{y}}}
\nc{\fz}{{\mathfrak{z}}}
\nc{\sA}{{\mathsf{A}}}
\nc{\sB}{{\mathsf{B}}}
\nc{\sC}{{\mathsf{C}}}
\nc{\sD}{{\mathsf{D}}}
\nc{\sE}{{\mathsf{E}}}
\nc{\sF}{{\mathsf{F}}}
\nc{\sG}{{\mathsf{G}}}
\nc{\sH}{{\mathsf{H}}}
\nc{\sI}{{\mathsf{I}}}
\nc{\sJ}{{\mathsf{J}}}
\nc{\sK}{{\mathsf{K}}}
\nc{\sL}{{\mathsf{L}}}
\nc{\sM}{{\mathsf{M}}}
\nc{\sN}{{\mathsf{N}}}
\nc{\sO}{{\mathsf{O}}}
\nc{\sP}{{\mathsf{P}}}
\nc{\sQ}{{\mathsf{Q}}}
\nc{\sR}{{\mathsf{R}}}
\nc{\sS}{{\mathsf{S}}}
\nc{\sT}{{\mathsf{T}}}
\nc{\sU}{{\mathsf{U}}}
\nc{\sV}{{\mathsf{V}}}
\nc{\sW}{{\mathsf{W}}}
\nc{\sX}{{\mathsf{X}}}
\nc{\sY}{{\mathsf{Y}}}
\nc{\sZ}{{\mathsf{Z}}}
\nc{\sa}{{\mathsf{a}}}
\nc{\sd}{{\mathsf{d}}}
\nc{\se}{{\mathsf{e}}}
\nc{\sg}{{\mathsf{g}}}
\nc{\sh}{{\mathsf{h}}}
\nc{\si}{{\mathsf{i}}}
\nc{\sj}{{\mathsf{j}}}
\nc{\sk}{{\mathsf{k}}}
\nc{\sn}{{\mathsf{n}}}
\nc{\so}{{\mathsf{o}}}
\nc{\sq}{{\mathsf{q}}}
\nc{\sr}{{\mathsf{r}}}
\nc{\st}{{\mathsf{t}}}
\nc{\su}{{\mathsf{u}}}
\nc{\sv}{{\mathsf{v}}}
\nc{\sw}{{\mathsf{w}}}
\nc{\sx}{{\mathsf{x}}}
\nc{\sy}{{\mathsf{y}}}
\nc{\sz}{{\mathsf{z}}}
\nc{\oA}{{\overline{A}}}
\nc{\oB}{{\overline{B}}}
\nc{\oC}{{\overline{C}}}
\nc{\oD}{{\overline{D}}}
\nc{\oE}{{\overline{E}}}
\nc{\oF}{{\overline{F}}}
\nc{\oG}{{\overline{G}}}
\nc{\oH}{{\overline{H}}}
\nc{\oI}{{\overline{I}}}
\nc{\oJ}{{\overline{J}}}
\nc{\oK}{{\overline{K}}}
\nc{\oL}{{\overline{L}}}
\nc{\oM}{{\overline{M}}}
\nc{\oN}{{\overline{N}}}
\nc{\oO}{{\overline{O}}}
\nc{\oP}{{\overline{P}}}
\nc{\oQ}{{\overline{Q}}}
\nc{\oR}{{\overline{R}}}
\nc{\oS}{{\overline{S}}}
\nc{\oT}{{\overline{T}}}
\nc{\oU}{{\overline{U}}}
\nc{\oV}{{\overline{V}}}
\nc{\oW}{{\overline{W}}}
\nc{\oX}{{\overline{X}}}
\nc{\oY}{{\overline{Y}}}
\nc{\oZ}{{\overline{Z}}}
\nc{\oa}{{\overline{a}}}
\nc{\ob}{{\overline{b}}}
\nc{\oc}{{\overline{c}}}
\nc{\od}{{\overline{d}}}
\nc{\of}{{\overline{f}}}
\nc{\og}{{\overline{g}}}
\nc{\oh}{{\overline{h}}}
\nc{\oi}{{\overline{i}}}
\nc{\oj}{{\overline{j}}}
\nc{\ok}{{\overline{k}}}
\nc{\ol}{{\overline{l}}}
\nc{\om}{{\overline{m}}}
\nc{\on}{{\overline{n}}}
\nc{\oo}{{\overline{o}}}
\nc{\op}{{\overline{p}}}
\nc{\oq}{{\overline{q}}}
\nc{\os}{{\overline{s}}}
\nc{\ot}{{\overline{t}}}
\nc{\ou}{{\overline{u}}}
\nc{\ov}{{\overline{v}}}
\nc{\ow}{{\overline{w}}}
\nc{\ox}{{\overline{x}}}
\nc{\oy}{{\overline{y}}}
\nc{\oz}{{\overline{z}}}
\nc{\tA}{{\tilde{A}}}
\nc{\tB}{{\tilde{B}}}
\nc{\tC}{{\tilde{C}}}
\nc{\tD}{{\tilde{D}}}
\nc{\tE}{{\tilde{E}}}
\nc{\tF}{{\tilde{F}}}
\nc{\tG}{{\tilde{G}}}
\nc{\tH}{{\tilde{H}}}
\nc{\tI}{{\tilde{I}}}
\nc{\tJ}{{\tilde{J}}}
\nc{\tK}{{\tilde{K}}}
\nc{\tL}{{\tilde{L}}}
\nc{\tM}{{\tilde{M}}}
\nc{\tN}{{\tilde{N}}}
\nc{\tO}{{\tilde{O}}}
\nc{\tP}{{\tilde{P}}}
\nc{\tQ}{{\tilde{Q}}}
\nc{\tR}{{\tilde{R}}}
\nc{\tS}{{\tilde{S}}}
\nc{\tT}{{\tilde{T}}}
\nc{\tU}{{\tilde{U}}}
\nc{\tV}{{\tilde{V}}}
\nc{\tW}{{\tilde{W}}}
\nc{\tX}{{\widetilde{X}}}
\nc{\tY}{{\tilde{Y}}}
\nc{\tZ}{{\tilde{Z}}}
\nc{\tfD}{{\tilde{\fD}}}
\nc{\tcA}{{\tilde{\cA}}}
\nc{\tcC}{{\tilde{\cC}}}
\nc{\tcD}{{\widetilde{\cD}}}
\nc{\tcE}{{\tilde{\cE}}}
\nc{\tcF}{{\tilde{\cF}}}
\nc{\tcM}{{\tilde{\cM}}}
\nc{\tcT}{{\tilde{\cT}}}
\nc{\ta}{{\tilde{a}}}
\nc{\tb}{{\tilde{b}}}
\nc{\tc}{{\tilde{c}}}
\nc{\td}{{\tilde{d}}}
\nc{\te}{{\tilde{e}}}
\nc{\tf}{{\tilde{f}}}
\nc{\tg}{{\tilde{g}}}
\nc{\ti}{{\tilde{\imath}}}
\nc{\tj}{{\tilde{j}}}
\nc{\tk}{{\tilde{k}}}
\nc{\tl}{{\tilde{l}}}
\nc{\tm}{{\tilde{m}}}
\nc{\tn}{{\tilde{n}}}
\nc{\tq}{{\tilde{q}}}
\nc{\ts}{{\tilde{s}}}
\nc{\tu}{{\tilde{u}}}
\nc{\tv}{{\tilde{v}}}
\nc{\tw}{{\tilde{w}}}
\nc{\tx}{{\tilde{x}}}
\nc{\ty}{{\tilde{y}}}
\nc{\tz}{{\tilde{z}}}
\nc{\hA}{{\hat{A}}}
\nc{\hB}{{\hat{B}}}
\nc{\hC}{{\hat{C}}}
\nc{\hD}{{\hat{D}}}
\nc{\hE}{{\hat{E}}}
\nc{\hF}{{\hat{F}}}
\nc{\hG}{{\hat{G}}}
\nc{\hH}{{\hat{H}}}
\nc{\hI}{{\hat{I}}}
\nc{\hJ}{{\hat{J}}}
\nc{\hK}{{\hat{K}}}
\nc{\hL}{{\hat{L}}}
\nc{\hM}{{\hat{M}}}
\nc{\hN}{{\hat{N}}}
\nc{\hO}{{\hat{O}}}
\nc{\hP}{{\hat{P}}}
\nc{\hQ}{{\hat{Q}}}
\nc{\hR}{{\hat{R}}}
\nc{\hS}{{\widehat{S}}}
\nc{\hT}{{\hat{T}}}
\nc{\hU}{{\widehat{U}}}
\nc{\hV}{{\hat{V}}}
\nc{\hW}{{\hat{W}}}
\nc{\hX}{{\widehat{X}}}
\nc{\hY}{{\widehat{Y}}}
\nc{\hZ}{{\hat{Z}}}
\nc{\hcB}{{\widehat{\cB}}}
\nc{\ha}{{\hat{a}}}
\nc{\hb}{{\hat{b}}}
\nc{\hc}{{\hat{c}}}
\nc{\hd}{{\hat{d}}}
\nc{\he}{{\hat{e}}}
\nc{\hg}{{\hat{g}}}
\nc{\hh}{{\hat{h}}}
\nc{\hi}{{\hat{i}}}
\nc{\hj}{{\hat{j}}}
\nc{\hk}{{\hat{k}}}
\nc{\hl}{{\hat{l}}}
\nc{\hm}{{\hat{m}}}
\nc{\hn}{{\hat{n}}}
\nc{\ho}{{\hat{o}}}
\nc{\hp}{{\hat{p}}}
\nc{\hq}{{\hat{q}}}
\nc{\hr}{{\hat{r}}}
\nc{\hs}{{\hat{s}}}
\nc{\hu}{{\hat{u}}}
\nc{\hv}{{\hat{v}}}
\nc{\hw}{{\hat{w}}}
\nc{\hx}{{\hat{x}}}
\nc{\hy}{{\hat{y}}}
\nc{\hz}{{\hat{z}}}
\nc{\hcC}{{\widehat{\cC}}}
\nc{\hcT}{{\widehat{\cT}}}
\nc{\eps}{\upepsilon}
\nc{\lan}{\big\langle}
\nc{\ran}{\big\rangle}
\nc{\kk}{{\Bbbk}}
\nc{\io}{\upiota}
\nc{\Kr}{\mathsf{Kr}}
\nc{\cKr}{\mathcal{K}\!\mathit{r}}
\nc{\perf}{\mathrm{perf}}
\nc{\Dm}{\bD^{-}}
\nc{\Db}{\bD^{\mathrm{b}}}
\nc{\Dp}{\bD^{\mathrm{perf}}}
\nc{\RPG}[1]{\accentset{\bullet}\bD^{\mathrm{b}}(#1)}
\nc{\Dperf}{\bD^{\mathrm{perf}}}
\nc{\Dqc}{\bD_{\mathrm{qc}}}
\nc{\Du}{\bD}
\nc{\Dsing}{\bD^{\mathrm{sg}}}
\nc{\Dg}{\bD^{\mathrm{sg}}}
\newcommand{\opp}{\mathrm{op}}
\newcommand{\tp}{\mathrm{top}}
\nc{\Rn}{\rR_{\mathrm{node}}}
\nc{\Cn}{\cC_{\mathrm{node}}}
\def\bw#1#2{\textstyle{\bigwedge\hskip-0.9mm^{#1}}\hskip0.2mm{#2}}
\DeclareMathOperator{\Aut}{\mathrm{Aut}}
\DeclareMathOperator{\Hom}{\mathrm{Hom}}
\DeclareMathOperator{\Ext}{\mathrm{Ext}}
\DeclareMathOperator{\RHom}{\mathrm{RHom}}
\DeclareMathOperator{\Tor}{\mathrm{Tor}}
\DeclareMathOperator{\gr}{gr}
\DeclareMathOperator{\Spec}{\mathrm{Spec}}
\DeclareMathOperator{\Bl}{\mathrm{Bl}}
\DeclareMathOperator{\Pic}{\mathrm{Pic}}
\DeclareMathOperator{\Cliff}{\mathcal{C}\!\ell}
\DeclareMathOperator{\Stab}{\mathrm{Stab}}
\DeclareMathOperator{\Sym}{\mathrm{Sym}}
\DeclareMathOperator{\Ker}{\mathrm{Ker}}
\DeclareMathOperator{\Cone}{\mathrm{Cone}}
\DeclareMathOperator{\pr}{\mathrm{pr}}
\DeclareMathOperator{\Gr}{\mathrm{Gr}}
\DeclareMathOperator{\g}{\mathrm{g}}
\DeclareMathOperator{\ev}{\mathrm{ev}}
\DeclareMathOperator{\id}{\mathrm{id}}
\DeclareMathOperator{\pt}{\mathrm{pt}}
\DeclareMathOperator{\At}{\mathrm{At}}
\DeclareMathOperator{\Jac}{Jac}
\DeclareMathOperator{\HH}{\mathsf{HH}}
\DeclareMathOperator{\rKn}{\mathrm{\rK}_0^{\mathrm{num}}}
\nc{\bcX}{\overline{\cX}}
\numberwithin{equation}{section}
\theoremstyle{plain}
\newtheorem{theorem}{Theorem}[section]
\newtheorem{lemma}[theorem]{Lemma}
\newtheorem{proposition}[theorem]{Proposition}
\newtheorem{corollary}[theorem]{Corollary}
\newtheorem*{claim*}{Claim}
\theoremstyle{definition}
\newtheorem{definition}[theorem]{Definition}
\theoremstyle{remark}
\newtheorem{remark}[theorem]{Remark}
\newtheorem{notation}[theorem]{Notation}
\newenvironment{arenumerate}{\begin{enumerate}[label={\textup{(\arabic*)}}]}{\end{enumerate}}
\newcommand{\xrightiso}{ \xrightarrow{\ \raisebox{-0.5ex}[0ex][0ex]{$\sim$}\ } }
\title[Augmentations, reduced ideal point gluings and compact type degenerations of curves]%
{Augmentations, reduced ideal point gluings\\[1ex]and compact type degenerations of curves}
\author{Valery Alexeev}
\address{Department of Mathematics, University of Georgia, Athens GA 30602, USA}
\email{valery@math.uga.edu}
\author{Alexander Kuznetsov}
\address{{\sloppy
\parbox{0.99\textwidth}{
Algebraic Geometry Section, Steklov Mathematical Institute of Russian Academy of Sciences\\
8 Gubkin str., Moscow 119991 Russia
\\[3pt]
Laboratory of Algebraic Geometry, NRU Higher School of Economics, Russian Federation
}\medskip}}
\email{akuznet@mi-ras.ru}
\begin{document}

\begin{abstract}
In this note we demonstrate some unexpected properties that simple gluings of the simplest derived categories may have.
We consider two special cases:
the first is an {\sf augmented curve}, 
i.e., the gluing of the derived categories of a point and a curve 
with the gluing bimodule given by the structure sheaf of the curve;
the second is an {\sf ideal point gluing of curves}, 
i.e., the gluing of the derived categories of two curves
with the gluing bimodule given by the ideal sheaf of a point in the product of the curves.
We construct unexpected exceptional objects contained in these categories
and discuss their orthogonal complements.

We also show that the simplest example of compact type degeneration of curves,
a flat family of curves with a smooth general fiber 
and a 1-nodal reducible central fiber,
gives rise to a smooth and proper family of triangulated categories
with the general fiber an augmented curve and the central fiber 
the orthogonal complement of the exotic exceptional object in the ideal point gluing of curves,
called the {\sf reduced ideal point gluing of curves}.
\end{abstract}

\maketitle

\section{Introduction}

If~$\cD_1$ and~$\cD_2$ are appropriately enhanced triangulated categories over a field $\kk$ and
\begin{equation*}
\rG \colon \cD_1^\opp \times \cD_2 \to \bD(\kk)
\end{equation*}
is a bimodule, one can construct a new triangulated category~$\cD$ that has a semiorthogonal decomposition
\begin{equation*}
\cD = \langle \cD_1, \cD_2 \rangle
\end{equation*}
with $\Hom$-spaces from the objects of~$\cD_1$ to the objects of~$\cD_2$
given by the bimodule~$\rG$, i.e., so that
\begin{equation*}
\RHom_\cD(\cF_1,\cF_2) \cong \rG(\cF_1,\cF_2)
\end{equation*}
for any~$\cF_i \in \cD_i \subset \cD$, see~\cite[\S4]{KL15} or Section~\ref{sec:gluing} for a reminder.
The category~$\cD$ with the above properties is unique up to equivalence;
it is called {\sf the gluing} of~$\cD_1$ and~$\cD_2$ with respect to~$\rG$.

The simplest example of a gluing is obtained if~$\cD_1 = \cD_2 = \Db(\kk)$.
In this case a bimodule~$\rG$ is determined by its value~$\rG(\kk,\kk)$ on the generators of~$\cD_1$ and~$\cD_2$, 
which is just a single complex of vector spaces.
The corresponding gluing is the derived category of a generalized Kronecker quiver,
and if~$\rG(\kk,\kk)$ is just a vector space of dimension~$n$, this is the usual Kronecker quiver with~$n$ arrows.
More generally, the derived category of any directed quiver can be realized as an iterated gluing of~$\Db(\kk)$.

In this paper we study slightly more complicated examples of geometric gluings, 
where one or both of~$\cD_i$ is the derived category of a smooth proper curve
and the gluing bimodule is chosen appropriately.

\subsection{Augmented curves}

The first example, where
\begin{equation*}
\cD_1 = \Db(\kk),
\qquad
\cD_2 = \Db(C),
\qquad\text{and}\qquad 
\rG(\kk, \cF) = \rH^\bullet(C,\cF)
\end{equation*}
for a smooth proper curve~$C$, is called the {\sf augmentation} of~$C$; we denote it by~$\Db(\cO,C)$.

Equivalently, $\Db(\cO,C)$ is a triangulated category with a semiorthogonal decomposition
\begin{equation}
\label{eq:intro-dbac}
\Db(\cO,C) = \langle \cE, \Db(C) \rangle,
\end{equation}
where~$\cE$ is an exceptional object and~$\RHom_{\Db(\cO,C)}(\cE,\cF) \cong \RHom_{\Db(C)}(\cO_C, \cF)$ for~$\cF \in \Db(C)$.

The augmentation~$\Db(\cO,C)$ of a curve~$C$ appears naturally as an admissible subcategory 
of the derived category of the blowup of a smooth and proper rationally connected variety along~$C$,
see Lemma~\ref{lem:ac-blowup}.

We study augmentations in Section~\ref{sec:ac}.
First of all, we compute the basic invariants of augmented curves:
their Hochschild homology and cohomology, the (numerical) Grothendieck group, and the intermediate Jacobian,
see Proposition~\ref{prop:invariants-ac}.
We also describe the Serre functor of augmented curves (Theorem~\ref{thm:serre-ac}). 

Furthermore, for any coherent sheaf~$\cF$ on~$C$ 
we consider the natural object~$\fa(\cF) \in \Db(\cO,C)$ (called
the {\sf augmentation} of~$\cF$)
composed from the vector space~$\rH^0(C,\cF)$ and the sheaf~$\cF$,
see Definition~\ref{def:augmented-sheaf},
and show that augmentations of line bundles with special Brill--Noether properties 
have interesting categorical behavior.
In particular, if~$\cL$ is a line bundle on~$C$ such that~$h^0(\cL) \cdot h^1(\cL) = \g(C)$ 
and the Petri map~$\rH^0(C,\cL) \otimes \rH^0(C, \cL^\vee(K_C)) \to \rH^0(C, \cO_C(K_C))$ is an isomorphism,
we show that the object
\begin{equation*}
\cE_\cL \coloneqq \fa(\cL) \in \Db(\cO,C)
\end{equation*}
is exceptional, see Proposition~\ref{prop:bnp-exceptional}.
We call such objects {\sf BN-exceptional}, see Definition~\ref{def:bn-exceptional}.

In the simplest cases, \mbox{$\cL = \cO_C$} or~\mbox{$\cL = \omega_C$}, 
the corresponding exceptional objects~$\cE_\cO$ and~$\cE_\omega$ in~$\Db(\cO,C)$
can be obtained from the canonical exceptional object~\mbox{$\cE \in \Db(\cO,C)$} from~\eqref{eq:intro-dbac}
by an autoequivalence, see Remark~\ref{rem:bnex-o-om};
in particular, their orthogonal complements in~$\Db(\cO,C)$ are equivalent to~$\Db(C)$.

Otherwise, if~$\cL \ne \cO_C$, $\omega_C$, the category~${}^\perp\cE_\cL$, 
though it has the same Hochschild homology and intermediate Jacobian as~$\Db(C)$ (see Proposition~\ref{prop:invariants-mac}),
is not equivalent to the derived category of any curve (Corollary~\ref{cor:bnex-ort}).
We call this category the {\sf $\cL$-} or {\sf BN-modification} of~$\Db(C)$, see Definition~\ref{def:bn-modification}.

The simplest example of a nontrivial BN-modification of a curve appears 
when~$C$ is a general curve of genus~$\g(C) = 4$ and~$\cL$ is a trigonal line bundle.
We show that in this case the~$\cL$-modification of~$\Db(C)$ 
provides a crepant categorical resolution for the orthogonal complement of~$\cO(-1)$ and~$\cO$
in the derived category of a cubic threefold with one node, see Appendix~\ref{sec:cubic-3}.

We also give an alternative description of augmented curves of small genus;
in particular, we show that
\begin{itemize}
\item 
if~$\g(C) = 0$ then~$\Db(\cO,C)$ is the derived category of a quiver with no relations, see Lemma~\ref{lem:ac-0};
\item 
if~$\g(C) = 1$ then~$\Db(\cO,C)$ is the derived category of a stacky curve, see Lemma~\ref{lem:ac-1}.
\end{itemize}
In particular, these categories have Serre dimension~$1$.
In contrast to these two cases, we prove that
\begin{itemize}
\item 
if~$\g(C) = 5$ then~$\Db(\cO,C)$ is a twisted derived category of a stacky surface, see Proposition~\ref{prop:ac-g5}.
\end{itemize}
In fact, we expect that~$\Db(\cO,C)$ behaves as the derived category of a surface for (general) curves~$C$ of genus~$\g(C) \ge 5$;
for instance, in~\cite{AK2} we will show that in this case the Serre dimension of~$\Db(\cO,C)$ equals~2.
The corresponding noncommutative surface can be considered to be the \emph{canonical surface} containing~$C$.
Following this point of view, we suggest to consider augmentations~$\fa(\cL) \in \Db(\cO,C)$ of line bundles on~$C$
as analogs of \emph{Lazarsfeld bundles}, see~\cite{lazarsfeld1986brill-noether}. 

We are confident that augmented curves form a very interesting and useful class of triangulated categories.
The present discussion is just the very first step towards their
study, which we plan to continue in follow-up papers.
Among other things, understanding the space~$\Stab(\Db(\cO,C))$ of stability conditions on~$\Db(\cO,C)$ is an interesting problem;
an important step in this direction is accomplished in~\cite{FN}, where an open subset of~$\Stab(\Db(\cO,C))$ is studied.

\subsection{Ideal point gluing of curves}

Our second example of gluing is
\begin{equation*}
\cD_1 = \Db(C_1),
\qquad
\cD_2 = \Db(C_2),
\qquad\text{and}\qquad 
\rG(\cF_1, \cF_2) = \rH^\bullet(C_1 \times C_2,(\cF_1^\vee \boxtimes \cF_2) \otimes \cI_{(x_1,x_2)}),
\end{equation*}
where~$C_1$, $C_2$ are smooth proper curves, $x_1 \in C_1$, $x_2 \in C_2$ are $\kk$-points,
and~$\cI_{(x_1,x_2)}$ is the ideal sheaf of the point~$(x_1,x_2) \in C_1 \times C_2$.
Accordingly, we call this category {\sf the ideal point gluing} of~$C_1$ and~$C_2$
and denote~$\Db(C_1,C_2)$.

Equivalently, $\Db(C_1,C_2)$ is a triangulated category with a semiorthogonal decomposition
\begin{equation}
\label{eq:intro-ipg}
\Db(C_1,C_2) = \langle \Db(C_1), \Db(C_2) \rangle,
\end{equation}
such that~$\RHom_{\Db(C_1,C_2)}(\cF_1,\cF_2) \cong 
\rH^\bullet(C_1 \times C_2,(\cF_1^\vee \boxtimes \cF_2) \otimes \cI_{(x_1,x_2)})$.

The ideal point gluing 
can be realized as an admissible subcategory in the derived category 
of the consecutive blowup of two transverse curves
in a rationally connected threefold, see Proposition~\ref{prop:gluing-geometric}.

We study the ideal point gluing in Section~\ref{sec:rip-gluing}.
As in the case of augmentations, 
we start by computing the basic invariants of~$\Db(C_1,C_2)$ (see Proposition~\ref{prop:invariants-pg}).
Then, we present the most unexpected property of this category ---  
the existence of an {\sf exotic exceptional object}~$\rE \in \Db(C_1,C_2)$
with components~\mbox{$\cO_{x_i} \in \Db(C_i)$} (up to a shift), see Theorem~\ref{thm:exotic-exceptional-curves} for the proof 
and Remark~\ref{rem:generalization} for a generalization. 

After that, we consider the orthogonal complement of the object~$\rE$
\begin{equation*}
\RPG{C_1,C_2}
\coloneqq {}^\perp\rE \subset \Db(C_1,C_2),
\end{equation*}
which we call {\sf the reduced ideal point gluing} of~$C_1$ and~$C_2$.
We compute invariants of~$\RPG{C_1,C_2}$ (see Proposition~\ref{prop:invariants-rpg}) and observe that
it is numerically equivalent to an augmented curve of genus~\mbox{$\g(C_1) + \g(C_2)$}.
Moreover, if~$\g(C_1) = 0$, this category is equivalent to the augmentation of~$C_2$, 
and if~$\g(C_2) = 0$, it is equivalent to the augmentation of~$C_1$, see Remark~\ref{rem:rpg-zero}.
However, if~$\g(C_1) > 0$ and~$\g(C_2) > 0$, it is not equivalent to an augmentation, see Remark~\ref{rem:rpg-nonzero}.

\subsection{Compact type degenerations of curves}

In the last section we explain the relation between reduced ideal point gluings and augmentations of curves.

Let~$\cC \to B$ be a flat proper morphism from a smooth surface~$\cC$ to a smooth pointed curve~$(B,o)$
(which we consider as a family of curves~$\cC_b$ parameterized by~$B$)
such that the central fiber
\begin{equation*}
\cC_o = C_1 \cup C_2
\end{equation*}
is a 1-nodal curve with two smooth components~$C_1$ and~$C_2$
and all other fibers~$\cC_b$ are smooth.
The main result of this section is a construction in Theorem~\ref{thm:main} 
of a smooth and proper over~$B$ family of $B$-linear triangulated categories~$\cD/B$ such that
\begin{equation*}
\cD_o \simeq \RPG{C_1,C_2}
\qquad\text{and}\qquad 
\cD_b \simeq \Db(\cO, \cC_b).
\end{equation*}
In other words, we show that the reduced ideal point gluing of the components of the central fiber of~$\cC/B$
is a smooth degeneration of the augmentations of its general fibers.

To prove this theorem, we embed the family of curves~$\cC/B$ into a family of smooth rationally connected threefolds~$\bcX/B$
and consider the blowup~$\cX \coloneqq \Bl_{\cC}(\bcX)$,
so that~$\cX/B$ is a smoothing of the 1-nodal nonfactorial threefold
\begin{equation*}
\cX_o \cong \Bl_{C_1 \cup C_2}(\bcX_o).
\end{equation*} 
Applying the technique of categorical absorption of singularities developed in~\cite{KS24},
we construct a~$\P^{\infty,2}$-object in the derived category of the central fiber,
so that its pushforward to~$\Db(\cX)$ is exceptional and the orthogonal complement
of this exceptional object is a smooth and proper family of triangulated categories over~$B$.
Finally, considering the orthogonal complements
of the subcategories~$\cO_{\bcX_b}^\perp \subset \Db(\bcX_b) \subset \Db(\cX_b)$ for all points~$b \in B$
(note that~$\cO_{\bcX_b}$ is an exceptional line bundle because~$\bcX_b$ is rationally connected),
we obtain the required family of $B$-linear triangulated categories~$\cD/B$.

We expect that the above approach can be used 
to construct smooth and proper families of triangulated categories as in Theorem~\ref{thm:main}
for more complicated compact type degenerations of curves.

We work over any field~$\kk$.
We write~$\bD(X)$ and~$\Db(X)$ for the unbounded derived category of quasicoherent sheaves 
and the bounded derived category of coherent sheaves.
All functors are derived.
Given a functor~$\Phi$, we denote by~$\Phi^*$ and~$\Phi^!$ its left and right adjoint;
$\bS_\cD$ denotes the Serre functor of~$\cD$.

\subsection*{Acknowledgments}

We thank Soheyla Feyzbakhsh and Alex Perry for useful discussions.
V.A.\ was partially supported by the NSF under DMS-2501855.
A.K.\ was partially supported by the HSE University Basic Research Program.

\section{Generalities about gluing}
\label{sec:gluing}

In this section we recall the general construction of a gluing and discuss a few general properties.

We say that a triangulated category~$\cD$ is {\sf a gluing} of triangulated categories~$\cD_1$ and~$\cD_2$
if there exists a semiorthogonal decomposition
\begin{equation*}
\cD = \langle \cD_1, \cD_2 \rangle.
\end{equation*}
The main characteristic of a gluing is the {\sf gluing bimodule}
\begin{equation}
\label{eq:rg-rhom}
\rG \colon \cD_1^\opp \times \cD_2 \to \bD(\kk),
\qquad 
\rG(\cF_1,\cF_2) \coloneqq \RHom_\cD(\cF_1,\cF_2),
\qquad 
\text{where~$\cF_i \in \cD_i$}.
\end{equation}
In the geometric case, i.e., if~$\cD_i = \Db(X_i)$ for some smooth and proper algebraic varieties~$X_1$ and~$X_2$,
every bimodule~$\rG$ is representable by an object~$\urG \in \bD(X_1 \times X_2)$, so that
\begin{equation*}
\rG(\cF_1,\cF_2) \cong \rH^\bullet(X_1 \times X_2, (\cF_1^\vee \boxtimes \cF_2) \otimes \urG).
\end{equation*}
In this case we say that~$\urG$ is the {\sf gluing object}.

If the categories~$\cD_i$ are appropriately enhanced, 
the gluing can be reconstructed from the components and the gluing bimodule.
We will use the following version of this result:

\begin{theorem}[{\cite[Section~4]{KL15}}]
\label{thm:gluing}
Let~$\cD_1$ and~$\cD_2$ be dg-enhanced small triangulated categories 
and let~$\rG \colon \cD_1^\opp \times \cD_2 \to \bD(\kk)$ be a dg-enhanced bimodule.
Then there is a unique up to equivalence dg-enhanced triangulated category~$\cD_1 \times_\rG \cD_2$ 
which has the structure of a gluing of~$\cD_1$ and~$\cD_2$ with the gluing bimodule~$\rG$,
i.e., there is a semiorthogonal decomposition~$\cD_1 \times_\rG \cD_2 = \langle \cD_1, \cD_2 \rangle$ 
and~\eqref{eq:rg-rhom} holds.

Let~$\cD_1 \times_\rG \cD_2$ and~$\cD'_1 \times_{\rG'} \cD'_2$ be two categories obtained by gluing.
If~$\Phi_1 \colon \cD_1 \to \cD'_1$ and~$\Phi_2 \colon \cD_2 \to \cD'_2$ are dg-enhanced functors 
compatible with the gluing bimodules, i.e., there is an isomorphism of bimodules
\begin{equation*}
\rG' \circ (\Phi_1^\opp \times \Phi_2) \cong \rG \colon \cD_1^\opp \times \cD_2 \to \bD(\kk),
\end{equation*}
then there is a functor~$\Phi \colon \cD_1 \times_\rG \cD_2 \to \cD'_1 \times_{\rG'} \cD'_2$
which restricts to~$\Phi_i$ on~$\cD_i$.
If both~$\Phi_1$ and~$\Phi_2$ are fully faithful, or are equivalences, then so is~$\Phi$.
\end{theorem}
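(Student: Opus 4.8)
The plan is to reconstruct the argument of~\cite[\S4]{KL15}. First I would fix dg-enhancements of~$\cD_1$ and~$\cD_2$; by smallness these may be taken of the form~$\Dperf(\cA_1)$ and~$\Dperf(\cA_2)$ for small dg-categories~$\cA_i$, and I would use the dg-enhancement of~$\rG$ to represent it by a dg-bimodule~$M$ over~$\cA_1$-$\cA_2$, contravariant in the first argument and covariant in the second. Then I would form the \emph{triangular dg-category}~$\cA = \cA_1 \oplus_M \cA_2$: its objects are the disjoint union of those of~$\cA_1$ and~$\cA_2$, and
\begin{equation*}
\Hom_\cA(a,a') = \Hom_{\cA_i}(a,a') \ \ (a,a' \in \cA_i), \qquad
\Hom_\cA(a,b) = M(a,b) \ \ (a \in \cA_1,\, b \in \cA_2), \qquad
\Hom_\cA(b,a) = 0 \ \ (b \in \cA_2,\, a \in \cA_1),
\end{equation*}
with composition assembled from the two compositions and the bimodule action. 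I would define~$\cD_1 \times_\rG \cD_2 \coloneqq \Dperf(\cA)$, equivalently the thick triangulated subcategory of~$\bD(\cA)$ generated by the essential images of~$\cD_1$ and~$\cD_2$ under restriction of scalars.

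Next I would verify the three defining properties of a gluing for~$\cD \coloneqq \cD_1 \times_\rG \cD_2$: generation by~$\cD_1 \cup \cD_2$ is built into the definition; semiorthogonality~$\RHom_\cD(\cD_2,\cD_1) = 0$ follows from~$\Hom_\cA(\cA_2,\cA_1) = 0$; and~$\RHom_\cD(\cF_1,\cF_2) \cong M(\cF_1,\cF_2) = \rG(\cF_1,\cF_2)$ for~$\cF_i \in \cD_i$ because corepresentable modules compute morphism complexes in~$\cA$. For uniqueness, given any other gluing~$\cD'$ of~$\cD_1$ and~$\cD_2$ with bimodule~$\rG$, the semiorthogonal decomposition equips each~$\cF \in \cD'$ with a functorial triangle~$\beta\cF \to \cF \to \alpha\cF \to \beta\cF[1]$ with~$\alpha\cF \in \cD_1$, $\beta\cF \in \cD_2$, and expresses all morphism complexes of~$\cD'$ through those of~$\cD_1$, $\cD_2$ and the connecting maps valued in~$\rG$; this is exactly the data describing objects and morphisms of~$\cD$, so it produces a dg-functor~$\cD \to \cD'$, fully faithful by the bimodule computation and essentially surjective by generation, hence an equivalence.

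For the second assertion, I would lift~$\Phi_1,\Phi_2$ to dg-bimodules (quasi-functors) over~$\cA_1$-$\cA_1'$ and~$\cA_2$-$\cA_2'$, read the hypothesis~$\rG'\circ(\Phi_1^\opp\times\Phi_2)\cong\rG$ as an isomorphism in the derived category of~$\cA_1$-$\cA_2$-bimodules between~$M$ and the pullback of~$M'$, and assemble these data into a dg-bimodule over~$\cA$-$\cA'$ (where~$\cA' = \cA_1'\oplus_{M'}\cA_2'$) inducing a functor~$\Phi\colon\cD\to\cD'$ that restricts to~$\Phi_i$ on~$\cD_i$. To see that full faithfulness (resp. being an equivalence) is inherited: since~$\Phi$ is exact and~$\cD$ is generated by~$\cD_1\cup\cD_2$, it suffices to check that~$\RHom_\cD(X,Y)\to\RHom_{\cD'}(\Phi X,\Phi Y)$ is an isomorphism for~$X,Y\in\cD_1\cup\cD_2$ --- for~$X,Y$ in the same~$\cD_i$ this is the full faithfulness of~$\Phi_i$, for~$X\in\cD_2$, $Y\in\cD_1$ both sides vanish, and for~$X\in\cD_1$, $Y\in\cD_2$ it is the given bimodule isomorphism --- and then dévissage yields full faithfulness of~$\Phi$. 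If in addition the~$\Phi_i$ are essentially surjective, the (thick) image of~$\Phi$ contains~$\cD_1'$ and~$\cD_2'$, which generate~$\cD'$, so~$\Phi$ is an equivalence.

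The main obstacle --- and the reason this genuinely requires the apparatus of~\cite{KL15} rather than the bookkeeping above --- is homotopy coherence: the~$\Phi_i$ are only quasi-functors, the compatibility isomorphism lives in a derived category of bimodules rather than on the nose, and turning the triangular dg-categories~$\cA$, $\cA'$ together with the bimodule between them into honest, choice-independent objects requires working in a suitable model category of dg-categories and dg-bimodules (or, equivalently, the stable $\infty$-categorical setting). Controlling these coherences, not the manipulation of $\Hom$-complexes, is where the real work lies.
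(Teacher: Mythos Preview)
Your proposal is correct and follows essentially the same approach as the paper, which simply cites~\cite[\S4]{KL15}: form the upper-triangular dg-category, verify the gluing axioms, and establish functoriality by assembling the given quasi-functors and bimodule isomorphism into a bimodule over the triangular dg-categories. The only organizational difference is that the paper derives uniqueness \emph{from} the functoriality statement (applying it with~$\Phi_i = \id$), whereas you argue uniqueness directly; but your direct argument is effectively that same construction specialized to identity functors, so there is no real divergence.
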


\begin{proof}
The construction of the triangulated dg-category~$\cD_1 \times_\rG \cD_2$ can be found in~\cite[Section~4.1]{KL15},
and the semiorthogonal decomposition is constructed in~\cite[Section~4.2]{KL15}.

The second part of the theorem follows from~\cite[Proposition~4.11]{KL15} and its argument.
In turn, it implies the uniqueness of the gluing.
\end{proof}

\begin{corollary}
\label{cor:gluing-auto}
If~$\cD_i = \Db(X_i)$ and two gluing objects~$\urG, \urG' \in \Db(X_1 \times X_2)$ 
are related by twists and a shift, 
i.e., $\urG' = \urG \otimes (\cL_1 \boxtimes \cL_2)[m]$ for~$\cL_i \in \Pic(X_i)$ and~$m \in \ZZ$
then there is an equivalence
\begin{equation*}
\Phi \colon \Db(X_1) \times_\rG \Db(X_2) \xrightiso \Db(X_1) \times_{\rG'} \Db(X_2)
\end{equation*}
whose restrictions to~$\Db(X_1)$ and~$\Db(X_2)$ are isomorphic to the twists by~$\cL_1[m]$ and~$\cL_2^{-1}$, respectively.
\end{corollary}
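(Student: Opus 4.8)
The plan is to apply the second part of Theorem~\ref{thm:gluing} to the pair of autoequivalences
$\Phi_1 \coloneqq (-) \otimes \cL_1[m] \colon \Db(X_1) \to \Db(X_1)$ and
$\Phi_2 \coloneqq (-) \otimes \cL_2^{-1} \colon \Db(X_2) \to \Db(X_2)$.
These are dg-enhanced equivalences (tensoring by a line bundle and shifting are induced by the obvious dg-functors),
so it suffices to check that they are compatible with the gluing bimodules, i.e., that there is an isomorphism of bimodules
$\rG' \circ (\Phi_1^\opp \times \Phi_2) \cong \rG$.
Once this compatibility is verified, Theorem~\ref{thm:gluing} produces an equivalence
$\Phi \colon \Db(X_1) \times_\rG \Db(X_2) \xrightiso \Db(X_1) \times_{\rG'} \Db(X_2)$
restricting to $\Phi_i$ on $\Db(X_i)$, which is exactly the claim.

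For the compatibility, I would compute directly with the integral-kernel description of the bimodules.
By definition $\rG(\cF_1,\cF_2) = \rH^\bullet\bigl(X_1 \times X_2, (\cF_1^\vee \boxtimes \cF_2) \otimes \urG\bigr)$
and similarly for $\rG'$ with kernel $\urG' = \urG \otimes (\cL_1 \boxtimes \cL_2)[m]$.
Then
\begin{align*}
\bigl(\rG' \circ (\Phi_1^\opp \times \Phi_2)\bigr)(\cF_1,\cF_2)
&= \rG'\bigl(\cF_1 \otimes \cL_1[m],\ \cF_2 \otimes \cL_2^{-1}\bigr) \\
&= \rH^\bullet\Bigl(X_1 \times X_2,\ \bigl((\cF_1 \otimes \cL_1[m])^\vee \boxtimes (\cF_2 \otimes \cL_2^{-1})\bigr) \otimes \urG \otimes (\cL_1 \boxtimes \cL_2)[m]\Bigr).
\end{align*}
Using $(\cF_1 \otimes \cL_1[m])^\vee \cong \cF_1^\vee \otimes \cL_1^{-1}[-m]$ and the projection formula on the box product,
the line bundle twists $\cL_1^{-1} \otimes \cL_1$ and $\cL_2^{-1} \otimes \cL_2$ cancel, as do the shifts $[-m]$ and $[m]$,
leaving $\rH^\bullet\bigl(X_1 \times X_2, (\cF_1^\vee \boxtimes \cF_2) \otimes \urG\bigr) = \rG(\cF_1,\cF_2)$.
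All identifications here are canonical, so they assemble into an isomorphism of bimodules, as required.

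The only genuine point requiring care — and the step I expect to be the main obstacle — is the \emph{dg-enhanced} nature of the compatibility isomorphism:
Theorem~\ref{thm:gluing} demands an isomorphism of dg-bimodules, not merely a pointwise isomorphism of the induced functors to $\bD(\kk)$.
Concretely, one must promote the kernel $\urG$ to an honest object of a dg-model of $\Db(X_1 \times X_2)$, realize $\rG$ and $\rG'$ as the dg-bimodules given by convolution with $\urG$ and $\urG'$, and observe that tensoring the kernel by $\cL_1 \boxtimes \cL_2[m]$ together with the dg-functors $\Phi_1, \Phi_2$ produces a \emph{strict} isomorphism of dg-bimodules — which it does, because all the cancellations above (duality on $\cL_i$, the projection formula, the shift) hold already at the level of the chosen dg-models and representatives, not just up to homotopy.
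Granting this, the proof concludes by invoking the last sentence of Theorem~\ref{thm:gluing}: since $\Phi_1$ and $\Phi_2$ are equivalences, so is $\Phi$.
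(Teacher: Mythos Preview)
Your proposal is correct and is precisely the argument the paper has in mind: the corollary is stated without proof as an immediate consequence of Theorem~\ref{thm:gluing}, and your choice of~$\Phi_1 = (-)\otimes\cL_1[m]$, $\Phi_2 = (-)\otimes\cL_2^{-1}$ together with the cancellation computation is exactly the intended verification. Your remark about the dg-level compatibility is a fair point of care, but as you note, these particular autoequivalences and the kernel manipulation lift strictly to any reasonable dg-enhancement, so there is no hidden difficulty.
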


\begin{remark}
\label{rem:gluing-dual}
If a smooth and proper category~$\cD$ has a semiorthogonal decomposition~$\cD = \langle \cD_1, \cD_2 \rangle$,
it also has a semiorthogonal decomposition~$\cD = \langle \cD_2, \cD_1 \rangle$, obtained from the first one by mutation.
It is easy to check that if the gluing bimodule for the first gluing
is~$\rG$, then for the second one it is~$\rG^\vee$.
In particular, $\cD_1 \times_\rG \cD_2 \simeq \cD_2 \times_{\rG^\vee} \cD_1$.
\end{remark}

\begin{notation}
\label{not:fff}
Let~$\cD = \langle \cD_1, \cD_2 \rangle$ be a gluing with gluing bimodule~$\rG$.
For any~$\cF_1 \in \cD_1$, $\cF_2 \in \cD_2$, and~$\phi \in \Hom_{\cD}(\cF_1,\cF_2) = \rH^0(\rG(\cF_1,\cF_2))$ 
we denote by
\begin{equation*}
(\cF_1,\cF_2,\phi) \coloneqq \Cone\Big(\cF_1 \xrightarrow{\ \phi\ } \cF_2\Big)[-1].
\end{equation*}
the object in~$\cD$ that fits into a unique distinguished 
triangle~$(\cF_1,\cF_2,\phi) \xrightarrow{\quad} \cF_1 \xrightarrow{\ \phi\ } \cF_2$.
\end{notation}

\begin{remark}
One should keep in mind that the components of~$\cF = (\cF_1,\cF_2,\phi)$
with respect to the semiorthogonal decomposition are~$\cF_1$ and~$\cF_2[-1]$
(not~$\cF_1$ and~$\cF_2$, as one could expect).
\end{remark}

To compute $\Ext$-spaces in the gluing it is convenient to use the following

\begin{lemma}
\label{lem:hom-gluing}
If~$\cD = \langle \cD_1, \cD_2\rangle$ is a gluing with gluing bimodule~$\rG$, 
for any objects~$\cF = (\cF_1,\cF_2,\phi)$ and~$\cF' = (\cF'_1,\cF'_2,\phi')$ in~$\cD$ there is a distinguished triangle 
\begin{equation*}
\RHom_\cD(\cF,\cF') \to \RHom_{\cD_1}(\cF_1,\cF'_1) \oplus \RHom_{\cD_2}(\cF_2,\cF'_2) \to \rG(\cF_1,\cF'_2)
\end{equation*}
in~$\bD(\kk)$, with the second map defined by~$(f_1,f_2) \mapsto \phi' \circ f_1 - f_2 \circ \phi$ 
for all~$f_i \in \RHom_{\cD_i}(\cF_i,\cF'_i)$.
\end{lemma}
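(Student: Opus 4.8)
The plan is to reduce the computation to the defining triangles for $\cF$ and $\cF'$ together with the semiorthogonality of the decomposition $\cD = \langle\cD_1,\cD_2\rangle$, exactly as one computes $\Hom$-spaces in any admissible filtration. Recall from Notation~\ref{not:fff} that $\cF$ sits in a distinguished triangle $\cF \to \cF_1 \xrightarrow{\phi} \cF_2$, equivalently $\cF_2[-1] \to \cF \to \cF_1$, and similarly $\cF'_2[-1] \to \cF' \to \cF'_1$, with $\cF_1,\cF'_1 \in \cD_1$ and $\cF_2[-1],\cF'_2[-1]\in\cD_2$ the semiorthogonal components.

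First I would apply $\RHom_\cD(-,\cF')$ to the triangle $\cF_2[-1]\to\cF\to\cF_1$, obtaining a distinguished triangle
\begin{equation*}
\RHom_\cD(\cF_1,\cF') \to \RHom_\cD(\cF,\cF') \to \RHom_\cD(\cF_2[-1],\cF').
\end{equation*}
Next I would compute the two outer terms by applying $\RHom_\cD(\cF_1,-)$ and $\RHom_\cD(\cF_2[-1],-)$ to the triangle $\cF'_2[-1]\to\cF'\to\cF'_1$. For the first: since $\cF_1\in\cD_1$ and $\cF'_2[-1]\in\cD_2$, semiorthogonality gives $\RHom_\cD(\cF_1,\cF'_2[-1]) = \rG(\cF_1,\cF'_2)[-1]$, while $\RHom_\cD(\cF_1,\cF'_1) = \RHom_{\cD_1}(\cF_1,\cF'_1)$ by full faithfulness of the embedding $\cD_1\hookrightarrow\cD$; hence $\RHom_\cD(\cF_1,\cF')$ fits in a triangle $\rG(\cF_1,\cF'_2)[-1] \to \RHom_\cD(\cF_1,\cF') \to \RHom_{\cD_1}(\cF_1,\cF'_1)$, with the connecting map $\RHom_{\cD_1}(\cF_1,\cF'_1) \to \rG(\cF_1,\cF'_2)$ induced by postcomposition with $\phi'$. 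For the second: $\RHom_\cD(\cF_2[-1],\cF'_2[-1]) = \RHom_{\cD_2}(\cF_2,\cF'_2)$ and $\RHom_\cD(\cF_2[-1],\cF'_1) = 0$ by semiorthogonality in the other direction, so $\RHom_\cD(\cF_2[-1],\cF') = \RHom_{\cD_2}(\cF_2,\cF'_2)$. Substituting these back, the octahedral axiom (or a direct diagram chase assembling the resulting commutative square of triangles) yields the asserted triangle
\begin{equation*}
\RHom_\cD(\cF,\cF') \to \RHom_{\cD_1}(\cF_1,\cF'_1) \oplus \RHom_{\cD_2}(\cF_2,\cF'_2) \to \rG(\cF_1,\cF'_2).
\end{equation*}

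The one genuinely delicate point is the identification of the second map as $(f_1,f_2)\mapsto \phi'\circ f_1 - f_2\circ\phi$, including the correct sign. The component $f_1\mapsto\phi'\circ f_1$ comes from postcomposition with $\phi'$ in the triangle defining $\cF'$, and the component $f_2\mapsto f_2\circ\phi$ comes from precomposition with $\phi$ in the triangle defining $\cF$; these two contributions enter the connecting morphism of the assembled diagram with opposite signs, which is exactly the standard sign in the $\Hom$-complex of a cone (the Yoneda/mapping-cone differential). I expect this bookkeeping — pinning down that the two connecting maps combine into a difference rather than a sum — to be the main obstacle; it is most cleanly handled by working at the dg-level in the enhancement, where $\cF = (\cF_1,\cF_2,\phi)$ is literally a twisted complex and $\RHom_\cD(\cF,\cF')$ is the totalization of the evident bicomplex, whose differential visibly has the stated form. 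Everything else is a formal consequence of full faithfulness and the two vanishing statements coming from the semiorthogonal decomposition.
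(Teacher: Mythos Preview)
Your argument is correct, and in fact you already identify the paper's approach in your final paragraph: the paper's proof is a one-line reference to the dg-construction of~$\cD_1 \times_\rG \cD_2$ in~\cite{KL15} (specifically~\cite[Remark~4.1]{KL15}), where objects are literally pairs with a closed degree-zero morphism and the Hom-complex between two such objects is by definition the totalization whose differential is~$(f_1,f_2)\mapsto \phi'\circ f_1 - f_2\circ\phi$. So the lemma is tautological once one has the enhancement.

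Your longer triangulated-level derivation via the defining triangles and octahedron is a legitimate alternative that is more self-contained (it does not require the reader to open~\cite{KL15}), but as you yourself note, pinning down the sign and the direct-sum splitting of the middle term is cleanest at the dg level anyway --- so the extra work buys less than one might hope. The paper simply skips straight to that endpoint.
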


\begin{proof}
This follows immediately from the construction of the dg-category~$\cD_1 \times_\rG \cD_2$ in~\cite{KL15}
(see~\cite[Remark~4.1]{KL15}).
\end{proof}

\subsection{Basic properties of the gluing}
\label{ss:gluing-basics}

Recall that the {\sf diagonal bimodule} of an enhanced triangulated category~$\cD$ is defined by
\begin{equation*}
\cD(\cF_1,\cF_2) \coloneqq \RHom_\cD(\cF_1,\cF_2).
\end{equation*}
Recall also that a triangulated category~$\cD$ is {\sf proper} over~$\kk$ 
if the graded $\kk$-vector spaces~$\Ext_\cD^\bullet(-,-)$ are finite-dimensional
(if the category is enhanced this can be rephrased by saying that the diagonal bimodule of~$\cD$ is perfect over~$\kk$),
and an enhanced triangulated category~$\cD$ is {\sf smooth} over~$\kk$ 
if its diagonal bimodule is perfect over~$\cD^\opp \otimes_\kk \cD$.

\begin{proposition}[{\cite[Proposition~3.22 and Theorem~3.25]{O16}}]
\label{prop:gluing-sp}
Let~$\cD = \langle \cD_1, \cD_2\rangle$ be a gluing with gluing bimodule~$\rG$.
\begin{arenumerate}
\item 
The category~$\cD$ is proper over~$\kk$ if and only if~$\cD_1$ and~$\cD_2$ are proper and~$\rG$ is perfect over~$\kk$.
\item 
The category~$\cD$ is smooth over~$\kk$ if and only if~$\cD_1$ and~$\cD_2$ are smooth and~$\rG$ is perfect over~$\kk$.
\end{arenumerate}
\end{proposition}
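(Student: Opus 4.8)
The plan is to deduce the statement from the triangle in Lemma~\ref{lem:hom-gluing} together with the explicit description of the diagonal bimodule of the gluing. Recall that by Theorem~\ref{thm:gluing} the category $\cD = \cD_1 \times_\rG \cD_2$ has a semiorthogonal decomposition $\langle \cD_1, \cD_2\rangle$, and every object is of the form $(\cF_1, \cF_2, \phi)$. Lemma~\ref{lem:hom-gluing} then expresses $\RHom_\cD((\cF_1,\cF_2,\phi),(\cF'_1,\cF'_2,\phi'))$ as the fiber of a morphism
\begin{equation*}
\cD_1(\cF_1,\cF'_1) \oplus \cD_2(\cF_2,\cF'_2) \to \rG(\cF_1,\cF'_2).
\end{equation*}
So the diagonal bimodule $\cD(-,-)$ on $\cD^\opp \otimes_\kk \cD$ is an iterated extension (in fact, fits into one distinguished triangle) of the diagonal bimodules $\cD_1$, $\cD_2$ and the bimodule $\rG$, all regarded as bimodules over $\cD^\opp \otimes_\kk \cD$ via the projection functors $\cD \to \cD_i$. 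The perfectness properties of $\cD$ then follow from the stability of ``perfectness'' under extensions, shifts and the relevant restriction/induction functors along the semiorthogonal projections.

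For properness (part~(1)), I would argue as follows. If $\cD_1$, $\cD_2$ are proper and $\rG$ is perfect over $\kk$, then for any two objects the three terms $\cD_1(\cF_1,\cF'_1)$, $\cD_2(\cF_2,\cF'_2)$ and $\rG(\cF_1,\cF'_2)$ are perfect complexes of $\kk$-vector spaces (the first two because $\cF_i, \cF'_i$ lie in the proper categories $\cD_i$, the third by the hypothesis on $\rG$), hence so is their extension $\RHom_\cD(\cF,\cF')$ by the two-out-of-three property; this gives properness of $\cD$. Conversely, if $\cD$ is proper, then $\cD_1$ and $\cD_2$ are proper since they are admissible (full) subcategories of $\cD$, and $\rG(\cF_1,\cF_2) = \RHom_\cD(\cF_1,\cF_2)$ by~\eqref{eq:rg-rhom} is perfect over $\kk$ for generators, hence $\rG$ is perfect over $\kk$.

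For smoothness (part~(2)), the key point is to work with the diagonal bimodule over $\cD^\opp \otimes_\kk \cD$ and to transport the triangle of Lemma~\ref{lem:hom-gluing} into a triangle of such bimodules. Writing $p_i \colon \cD \to \cD_i$ for the projection functors of the semiorthogonal decomposition, the bimodules $\cD_i(p_i(-), p_i(-))$ and $\rG(p_1(-), p_2(-))$ are the restrictions along $p_i^\opp \otimes p_j$ of the bimodules $\cD_i$ and $\rG$. If $\cD_1$, $\cD_2$ are smooth and $\rG$ is perfect over $\kk$, one checks that each of these restricted bimodules is perfect over $\cD^\opp \otimes_\kk \cD$: the restriction of $\cD_i$ along $p_i$ is perfect because $\cD_i$ is smooth and $p_i$ (together with its adjoint, the inclusion) is a ``smooth'' functor in the appropriate sense --- concretely, restriction along $p_i$ takes perfect $\cD_i^\opp \otimes \cD_i$-modules to perfect $\cD^\opp \otimes \cD$-modules because it is a composition of base change along the smooth functors $p_i$; and the restriction of $\rG$ is perfect because $\rG$ being perfect over $\kk$ means it is, after pullback, an extension of ``rank-one'' bimodules of the form $\cF_1^\vee \otimes_\kk \cF_2$, each of which restricts to a perfect $\cD^\opp \otimes \cD$-bimodule. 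Then the diagonal bimodule of $\cD$, being an extension of perfect bimodules, is perfect, so $\cD$ is smooth. Conversely, if $\cD$ is smooth, then $\cD_1, \cD_2$ are smooth because smoothness passes to admissible subcategories, and restricting the diagonal bimodule of $\cD$ to $\cD_1^\opp \otimes \cD_2$ recovers $\rG$, which is therefore perfect over $\cD_1^\opp \otimes \cD_2$; combined with smoothness of $\cD_1$ and $\cD_2$ one deduces (by a standard argument, e.g.\ tensoring with the diagonal bimodules of the components) that $\rG$ is in fact perfect over $\kk$.

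The main obstacle, and the one point that needs genuine care rather than formal nonsense, is the smoothness direction: one must verify precisely that restriction along the projection functors $p_i$ of the semiorthogonal decomposition sends perfect bimodules to perfect bimodules, and that ``$\rG$ perfect over $\kk$'' is exactly the condition making its pullback to $\cD^\opp \otimes_\kk \cD$ perfect. This is where the reference to~\cite[Proposition~3.22 and Theorem~3.25]{O16} does the heavy lifting, and I would cite it for these two compatibility statements rather than reprove them; the rest of the argument is the elementary two-out-of-three bookkeeping with the triangle of Lemma~\ref{lem:hom-gluing} outlined above.
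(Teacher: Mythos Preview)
The paper gives no proof of this proposition at all: it is stated with an attribution to~\cite[Proposition~3.22 and Theorem~3.25]{O16} and left at that. So there is nothing to compare your argument against except the citation itself.

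Your sketch for part~(1) is correct and is exactly the standard argument: the triangle of Lemma~\ref{lem:hom-gluing} reduces properness of~$\cD$ to properness of~$\cD_1$, $\cD_2$ and perfectness of~$\rG$ over~$\kk$, and the converse is immediate from~\eqref{eq:rg-rhom} and the fact that admissible subcategories of a proper category are proper.

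For part~(2), your outline is the right shape, but as you yourself note, the genuinely nontrivial step --- that restriction along the semiorthogonal projections~$p_i$ takes perfect bimodules to perfect bimodules, and that perfectness of~$\rG$ over~$\kk$ is exactly what is needed for its pullback to be perfect over~$\cD^\opp \otimes_\kk \cD$ --- is not something you actually prove; you defer it to~\cite{O16}. Since that is the whole content of the smoothness direction, your ``proof'' in the end amounts to the same citation the paper makes, dressed up with some bookkeeping around it. If you want a self-contained argument you would need to unpack Orlov's analysis of the gluing dg-category (the upper-triangular matrix description of~$\cD^\opp \otimes_\kk \cD$ and of the diagonal bimodule), which is more than the two-out-of-three triangle manipulation you have written.
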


If~$\cD_1$ and~$\cD_2$ are both smooth and proper, the assumption that~$\rG$ is perfect over~$\kk$ 
just means that~$\rG$ takes values in~$\Db(\kk) \subset \bD(\kk)$, see~\cite[Lemma~3.8]{KS-hfd}.

The next proposition shows how to compute some invariants of the gluing.
Recall that {\sf the Euler form} on the Grothendieck group~$\rK_0(\cD)$ of a smooth and proper triangulated category~$\cD$
is defined by
\begin{equation*}
\upchi_\cD([\cF_1], [\cF_2]) \coloneqq \sum(-1)^i \dim \Ext_\cD^i(\cF_1, \cF_2).
\end{equation*}
Furthermore, the numerical Grothendieck group~$\rKn(\cD)$ is defined as the quotient of~$\rK_0(\cD)$
by the kernel of the Euler form; obviously, the Euler form descends to this quotient group.

\begin{proposition}
\label{prop:invariants-general}
Let~$\cD = \langle \cD_1, \cD_2\rangle$ be a gluing with gluing bimodule~$\rG$.
\begin{arenumerate}
\item 
\label{it:additivity}
There are direct sum decompositions for Hochschild homology and K-theory
\begin{align*}
\HH_\bullet(\cD) &= \HH_\bullet(\cD_1) \oplus \HH_\bullet(\cD_2),\\
\rK_\bullet(\cD) &= \rK_\bullet(\cD_1) \oplus \rK_\bullet(\cD_2).
\end{align*}
\item 
\label{it:rkn}
There is a semiorthogonal direct sum decomposition of the numerical Grothendieck group
\begin{equation*}
\rKn(\cD) = \rKn(\cD_1) \oplus \rKn(\cD_2),
\qquad 
\upchi_\cD = 
\begin{pmatrix}
\upchi_{\cD_1} & -[\rG] \\ 0 & \upchi_{\cD_2}
\end{pmatrix},
\end{equation*}
where~$[\rG]$ stands for the pairing defined by~$[\rG]([\cF_1], [\cF_2]) \coloneqq \sum (-1)^i \dim \rH^i(\rG(\cF_1,\cF_2))$.
\item 
\label{it:hh-triangle}
There is a distinguished triangle for Hochschild cohomology
\begin{equation*}
\HH^\bullet(\cD) \to \HH^\bullet(\cD_1) \oplus \HH^\bullet(\cD_2) \to \Ext^\bullet_{\cD_1 \otimes \cD_2^\opp}(\rG, \rG),
\end{equation*}
where the last term is computed in the category of bimodules.
\end{arenumerate}
\end{proposition}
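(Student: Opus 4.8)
The plan is to deduce all three statements from the semiorthogonal decomposition~$\cD = \langle \cD_1, \cD_2 \rangle$ together with the explicit description of the gluing bimodule, using known additivity results for the additive invariants and Lemma~\ref{lem:hom-gluing} for the cohomological one.

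For part~\ref{it:additivity}, I would invoke the fact that Hochschild homology and algebraic K-theory are additive (localizing) invariants: any semiorthogonal decomposition~$\cD = \langle \cD_1, \cD_2 \rangle$ induces direct sum decompositions $\HH_\bullet(\cD) = \HH_\bullet(\cD_1) \oplus \HH_\bullet(\cD_2)$ and $\rK_\bullet(\cD) = \rK_\bullet(\cD_1) \oplus \rK_\bullet(\cD_2)$; this is classical (Keller's theorem for Hochschild homology, and the Bondal--Orlov / Thomason--Trobaugh localization sequence splitting for K-theory) and applies verbatim here since~$\cD$ is dg-enhanced. For part~\ref{it:rkn}, the decomposition $\rK_0(\cD) = \rK_0(\cD_1) \oplus \rK_0(\cD_2)$ is induced by the SOD, and the block form of the Euler form is immediate: the $(1,1)$ and $(2,2)$ blocks are $\upchi_{\cD_1}$ and $\upchi_{\cD_2}$ by fully-faithfulness of the embeddings; the $(2,1)$ block vanishes by semiorthogonality ($\RHom_\cD(\cF_2, \cF_1) = 0$ for $\cF_i \in \cD_i$); and the $(1,2)$ block is $-[\rG]$ because $\RHom_\cD(\cF_1, \cF_2) \cong \rG(\cF_1, \cF_2)$ by~\eqref{eq:rg-rhom}, so $\upchi_\cD([\cF_1],[\cF_2]) = -[\rG]([\cF_1],[\cF_2])$ — the sign coming from the fact that $\cF_2$ sits in the \emph{second} slot of the semiorthogonal pair, matching the convention in Notation~\ref{not:fff}. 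Passing to the numerical quotient, one checks the kernel of $\upchi_\cD$ maps to (and surjects onto) the direct sum of the kernels of $\upchi_{\cD_i}$ — here one uses the block-triangular shape: an element $(v_1, v_2)$ is in the kernel of $\upchi_\cD$ on both sides iff $v_2 \in \ker \upchi_{\cD_2}$ and $v_1 \in \ker \upchi_{\cD_1}$, since the off-diagonal term can always be absorbed — giving $\rKn(\cD) = \rKn(\cD_1) \oplus \rKn(\cD_2)$ with the stated form of the descended Euler pairing.

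For part~\ref{it:hh-triangle}, I would use the identification of Hochschild cohomology of an enhanced triangulated category with the derived endomorphisms of the diagonal bimodule, $\HH^\bullet(\cD) = \Ext^\bullet_{\cD \otimes \cD^\opp}(\Delta_\cD, \Delta_\cD)$. The diagonal bimodule of the gluing $\cD = \cD_1 \times_\rG \cD_2$ decomposes, along the SOD on each side, into four pieces indexed by $\{1,2\} \times \{1,2\}$: the diagonal pieces are $\Delta_{\cD_1}$ and $\Delta_{\cD_2}$, the piece in the semiorthogonal-zero direction vanishes, and the remaining off-diagonal piece is exactly the gluing bimodule $\rG$, viewed as a $(\cD_1, \cD_2)$-bimodule. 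Computing $\Ext^\bullet_{\cD \otimes \cD^\opp}$ of this filtered object against itself, the cross terms between the diagonal pieces and $\rG$ assemble (via the action of $\HH^\bullet(\cD_i)$ on $\rG$ on either side) into the map $\HH^\bullet(\cD_1) \oplus \HH^\bullet(\cD_2) \to \Ext^\bullet_{\cD_1 \otimes \cD_2^\opp}(\rG, \rG)$, and the remaining piece gives $\HH^\bullet(\cD)$ as the fiber — this is essentially the content of Orlov's \cite[Theorem~3.25]{O16}, which I would cite, but one can also see it directly by applying Lemma~\ref{lem:hom-gluing} to the pair of objects representing the diagonal and chasing through the resulting long exact sequence. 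The hook-up with the statement is that the two maps $\HH^\bullet(\cD_i) \to \Ext^\bullet(\rG,\rG)$ are induced by the left and right module structures of $\rG$ over $\cD_1$ and $\cD_2$ respectively, and the relevant map in the triangle is their difference.

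\textbf{Main obstacle.} The routine parts are~\ref{it:additivity} and the off-diagonal-vanishing/fully-faithfulness bookkeeping in~\ref{it:rkn}. The subtle point is~\ref{it:hh-triangle}: one must be careful that the decomposition of the diagonal bimodule of $\cD_1 \times_\rG \cD_2$ into four pieces is genuinely a semiorthogonal-type filtration at the level of \emph{bimodules} (not just of objects on each side separately), that the "zero" piece really vanishes and not merely up to some correction, and that the maps induced on $\Ext$-groups are the module-action maps claimed. Getting the variance and the signs right — which slot is $\cD_1^\opp$, which is $\cD_2$, and whether the bimodule $\rG$ appears as stated or dualized (cf. Remark~\ref{rem:gluing-dual}) — is where care is needed; the cleanest route is to cite \cite[Proposition~3.22 and Theorem~3.25]{O16} directly, since the cited results are stated precisely for gluings, and only sketch the verification via Lemma~\ref{lem:hom-gluing}.
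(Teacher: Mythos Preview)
Your approach is essentially the same as the paper's, which is equally brief: it cites additivity (referring to~\cite[Corollary~7.5]{K09-HH}) for part~\ref{it:additivity}, says part~\ref{it:rkn} follows immediately from Lemma~\ref{lem:hom-gluing}, and cites~\cite[Theorem~7.7 and Remark~7.8]{K09-HH} for part~\ref{it:hh-triangle}. Your sketch of the diagonal-bimodule filtration for~\ref{it:hh-triangle} is exactly the content of that reference, so the argument is sound.

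One correction worth flagging: your proposed citation~\cite[Proposition~3.22 and Theorem~3.25]{O16} is for smoothness and properness of the gluing (it is what the paper uses for Proposition~\ref{prop:gluing-sp}), not for the Hochschild cohomology triangle. The distinguished triangle in~\ref{it:hh-triangle} is established in~\cite{K09-HH}, not in~\cite{O16}; you should replace the reference accordingly. Also, your explanation of the sign in the $(1,2)$-block is on the right track --- it comes out of Lemma~\ref{lem:hom-gluing} applied to objects in the form~$(\cF_1,\cF_2,\phi)$ of Notation~\ref{not:fff}, where the $\cD_2$-component carries a shift by~$[-1]$ --- but you should state this explicitly rather than gesturing at ``the convention''. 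Finally, your argument that the kernel of~$\upchi_\cD$ splits as the direct sum of the kernels of~$\upchi_{\cD_i}$ (``the off-diagonal term can always be absorbed'') is not quite complete as written: one direction of the inclusion requires knowing that~$[\rG]$ vanishes on~$\ker\upchi_{\cD_1} \times \rK_0(\cD_2)$ and on~$\rK_0(\cD_1) \times \ker\upchi_{\cD_2}$, which is not automatic from block-triangularity alone. The paper does not spell this out either, so you are not missing anything relative to the source, but if you want a clean statement you should either add the smooth-and-proper hypothesis (so that Serre duality identifies left and right kernels and forces the needed vanishing) or simply note that in all applications in the paper the~$\cD_i$ have torsion-free~$\rK_0 = \rKn$.
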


\begin{proof}
Part~\ref{it:additivity} follows from additivity, see, e.g., \cite[Corollary~7.5]{K09-HH},
part~\ref{it:rkn} follows immediately from Lemma~\ref{lem:hom-gluing},
and part~\ref{it:hh-triangle} is proved in~\cite[Theorem~7.7 and Remark~7.8]{K09-HH}.
\end{proof}

Recall from~\cite[Definition 5.24]{Per22} the definition of the {\sf intermediate Jacobian}~$\Jac(\cD)$
of a triangulated category~$\cD$ of geometric origin, 
i.e., of an admissible subcategory of the derived category of a
smooth and proper complex variety~$X$.
There is a general intrinsic construction~\cite[Proposition~5.4]{Per22}
that conjecturally endows the odd topological $\rK$-group~$\rK_1^\tp(\cD)$ 
associated to a $\CC$-linear triangulated category~$\cD$
with an integral Hodge structure of weight~$-1$, and~$\Jac(\cD)$ is defined as the corresponding complex torus.
When~$\cD$ is an admissible subcategory in~$\Db(X)$, the conjecture is known to hold, so~$\Jac(\cD)$ is well defined; 
moreover, it is a subtorus in the torus~$\Jac(X) \coloneqq \Jac(\Db(X))$, 
isogenous to the product of the Jacobians of~$X$ in all degrees.

\begin{remark}
\label{rem:ppav}
The group~$\rK_1^\tp(\cD)$ is endowed with a pairing~$\upchi^\tp_\cD \colon \rK_1^\tp(\cD) \otimes \rK_1^\tp(\cD) \to \ZZ$,
called {\sf the Euler pairing} (see~\cite[Lemma~5.2]{Per22});
moreover, if~$\cD = \langle \cD_1, \cD_2 \rangle$ is a semiorthogonal decomposition,
the restriction of~$\upchi^\tp_\cD$ to~$\rK_1^\tp(\cD_i)$ coincides with~$\upchi^\tp_{\cD_i}$,
and the direct sum decomposition
\begin{equation*}
\rK_1^\tp(\cD) = \rK_1^\tp(\cD_1) \oplus \rK_1^\tp(\cD_2)
\end{equation*}
is semiorthogonal with respect to~$\upchi^\tp_\cD$.
If~$\cD = \Db(X)$, where~$X$ is a smooth and proper complex variety of odd dimension~$n$, 
it is shown in~\cite[Proposition~5.23]{Per22} that under appropriate assumptions about~$X$
the Chern character induces an isomorphism~$\rK_1^\tp(\cD) \cong \rH^n(X,\ZZ)$
and identifies the Euler pairing of the source with the cup-product of the target;
in particular, in this case the Euler pairing is unimodular and skew-symmetric.
If, moreover~$\rH^n(X, \CC) = \rH^{p,q}(X) \oplus \rH^{q,p}(X)$ for some~$p + q = n$,
the Euler form is also positive definite.
Therefore, the same is true for any semiorthogonal component~$\cD \subset \Db(X)$,
and in particular in this case~$\Jac(\cD)$ is a principally polarized abelian variety.
\end{remark}

\begin{proposition}
\label{prop:ij-gluing}
Let~$\cD = \langle \cD_1, \cD_2\rangle$ be a gluing with gluing bimodule~$\rG$.
If~$\cD_1$ and~$\cD_2$ are admissible subcategories of the derived categories of smooth projective complex varieties
and the gluing bimodule~$\rG$ is perfect over~$\CC$ then~$\Jac(\cD)$ is well defined 
and~$\Jac(\cD) \cong \Jac(\cD_1) \times \Jac(\cD_2)$.
\end{proposition}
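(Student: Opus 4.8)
The plan is to reduce the statement about intermediate Jacobians to the already-recalled facts about topological $\rK$-theory and Hodge structures. By definition (\cite[Definition 5.24]{Per22}), $\Jac(\cD)$ is the complex torus attached to the weight $-1$ integral Hodge structure on $\rK_1^\tp(\cD)$ produced by \cite[Proposition 5.4]{Per22}; so it suffices to show that the semiorthogonal decomposition $\cD = \langle \cD_1, \cD_2 \rangle$ induces an isomorphism of (rational, hence by torsion-freeness integral) Hodge structures $\rK_1^\tp(\cD) \cong \rK_1^\tp(\cD_1) \oplus \rK_1^\tp(\cD_2)$, after which taking the associated complex tori yields the product decomposition $\Jac(\cD) \cong \Jac(\cD_1) \times \Jac(\cD_2)$. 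First I would invoke Remark~\ref{rem:ppav}: it already records that for a semiorthogonal decomposition the direct sum decomposition $\rK_1^\tp(\cD) = \rK_1^\tp(\cD_1) \oplus \rK_1^\tp(\cD_2)$ holds and is semiorthogonal with respect to the Euler pairing. What remains is the compatibility with the Hodge filtration.

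Next I would address well-definedness. The hypotheses are exactly what is needed: $\cD_1, \cD_2$ are admissible in $\Db(X_1), \Db(X_2)$ for smooth projective complex $X_i$, so by \cite[Definition 5.24]{Per22} each $\Jac(\cD_i)$ is well defined; and the gluing $\cD = \cD_1 \times_\rG \cD_2$ with $\rG$ perfect over $\CC$ is smooth and proper by Proposition~\ref{prop:gluing-sp}. The point to check is that $\cD$ is itself of geometric origin, i.e., embeds as an admissible subcategory in $\Db(X)$ for some smooth projective $X$. The cleanest route is the following: enlarge $X_1$ and $X_2$ (replacing $X_i$ by $X_i \times \P^{N}$ and using $\Db(X_i) \subset \Db(X_i \times \P^N)$, or by passing to the product $X_1 \times X_2$ and using the two projections) so that both $\cD_1$ and $\cD_2$ are realized as admissible subcategories of $\Db(Y_i)$ with $Y_i$ containing suitable exceptional collections, and then realize the gluing as an admissible subcategory of the derived category of a blowup, exactly in the spirit of the geometric realizations recalled elsewhere in the paper (cf. Proposition~\ref{prop:gluing-geometric} and Lemma~\ref{lem:ac-blowup}). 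Concretely, one notes that the gluing of two admissible subcategories along a perfect bimodule can be obtained by a mutation inside the derived category of an appropriate projective bundle or blowup carrying both $\Db(Y_i)$ as components; since such ambient varieties are smooth and projective, $\cD$ is of geometric origin, so $\rK_1^\tp(\cD)$ carries the Hodge structure of \cite[Proposition 5.4]{Per22} and $\Jac(\cD)$ is well defined.

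For the isomorphism itself, I would argue as follows. The decomposition $\cD = \langle \cD_1, \cD_2 \rangle$ gives, via the projection functors, a direct sum splitting of $\rK_1^\tp$ as recalled in Remark~\ref{rem:ppav}. The Hodge structure on $\rK_1^\tp$ of \cite[Proposition 5.4]{Per22} is functorial for the Fourier–Mukai–type functors underlying a semiorthogonal decomposition — the construction there is built from the topological realization of the identity, which respects the decomposition of the diagonal into the kernels of the projection functors onto $\cD_1$ and $\cD_2$ (the "off-diagonal'' term is governed by $\rG$, which is perfect over $\CC$ and hence contributes nothing to $\rK_1$, being concentrated, up to the filtration shift, in even degree). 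Therefore the splitting $\rK_1^\tp(\cD) = \rK_1^\tp(\cD_1) \oplus \rK_1^\tp(\cD_2)$ is a splitting of integral Hodge structures, and in particular of the Hodge filtrations. Passing to associated complex tori, $F^0$ of the direct sum is the direct sum of the $F^0$'s, the lattices add, and one gets
\begin{equation*}
\Jac(\cD) = \rK_1^\tp(\cD)_\CC / (F^0 + \rK_1^\tp(\cD)) \cong \Jac(\cD_1) \times \Jac(\cD_2),
\end{equation*}
as desired.

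The main obstacle is the second paragraph: verifying that the abstractly glued category $\cD_1 \times_\rG \cD_2$ is of geometric origin, so that \cite[Proposition 5.4]{Per22} applies to it at all, and then checking that the comparison isomorphism of that proposition is compatible with the semiorthogonal splitting. Once geometricity is in hand, the Hodge-theoretic compatibility is essentially formal from the additivity of topological $\rK$-theory under semiorthogonal decompositions together with the functoriality of Perry's Hodge structure, and the passage to Jacobians is immediate; but pinning down a clean geometric model for the gluing (blowup or projective bundle) and tracking the functors through it is where the real work lies.
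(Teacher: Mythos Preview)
Your approach is correct in outline and matches the paper's strategy, but you are working much harder than necessary on the step you yourself flag as the main obstacle. The geometricity of the gluing~$\cD = \cD_1 \times_\rG \cD_2$ is not something you need to build by hand via blowups or projective bundles: it is a general theorem of Orlov, \cite[Theorem~4.15]{O16}, that if~$\cD_1$ and~$\cD_2$ are admissible in derived categories of smooth projective varieties and~$\rG$ is perfect, then the gluing is again admissible in the derived category of a smooth projective variety. The paper's proof simply cites this result and is therefore two sentences long.

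Once geometricity is in hand, the paper dispatches the isomorphism by invoking that Perry's construction of~$\Jac(-)$ is compatible with semiorthogonal decompositions, without unpacking the Hodge-theoretic details as you do. Your more explicit argument about the splitting of Hodge structures is fine, but the remark about~$\rG$ ``contributing nothing to~$\rK_1$'' because it is ``concentrated in even degree'' is not the right way to phrase it: the point is rather that the projection functors onto the components are morphisms in the category of noncommutative motives over which Perry's Hodge structure is functorial, so the splitting of~$\rK_1^\tp$ is automatically a splitting of Hodge structures. The bimodule~$\rG$ governs the \emph{off-diagonal} block of the Euler pairing, not any extra summand of~$\rK_1^\tp$ itself.
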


\begin{proof}
Since~$\cD_i$ are admissible subcategories in smooth projective varieties,
the same is true for~$\cD$ by~\cite[Theorem~4.15]{O16}, hence~$\Jac(\cD)$ is well defined.
Since, moreover, the definition of~$\Jac(-)$ in~\cite{Per22} 
is compatible with semiorthogonal decompositions, 
we have~$\Jac(\cD) \cong \Jac(\cD_1) \times \Jac(\cD_2)$.
\end{proof}

\begin{remark}
If the Euler pairing on~$\rK^\tp_1(\cD)$, $\rK^\tp_1(\cD_1)$, and~$\rK^\tp_1(\cD_2)$ 
is unimodular, skew-symmetric and positive definite, and therefore
endows~$\Jac(\cD_1)$, $\Jac(\cD_2)$, and~$\Jac(\cD)$ with principal polarizations,
then the isomorphism of Proposition~\ref{prop:ij-gluing} 
is compatible with these principal polarizations.
\end{remark}

\subsection{Serre functor of the gluing}

Here we give a general description for the Serre functor of the gluing of two categories
in terms of their Serre functors and the gluing bimodule.
So, let~$\cD = \langle \cD_1, \cD_2\rangle$ be a gluing with gluing bimodule~$\rG$.
We assume that the categories~$\cD_1$ and~$\cD_2$ are smooth and proper over~$\kk$
and~$\rG$ is perfect over~$\kk$, so that~$\cD$ is smooth and proper over~$\kk$ by Proposition~\ref{prop:gluing-sp}.
These assumptions imply that the categories~$\cD$, $\cD_1$, and~$\cD_2$ are saturated 
(i.e., all functors from these categories to~$\Db(\kk)$ are representable); 
in particular they have Serre functors~$\bS$, $\bS_1$, and~$\bS_2$, respectively.
Moreover, the bimodule~$\rG$ determines an adjoint pair of functors defined as follows:
\begin{align*}
\rG^{12} &\colon \cD_2 \to \cD_1,
&& 
\RHom_{\cD_1}(\cF_1, \rG^{12}(\cF_2)) \coloneqq \rG(\cF_1,\cF_2),
\\
\rG^{21} &\colon \cD_1 \to \cD_2,
&& 
\RHom_{\cD_2}(\rG^{21}(\cF_1), \cF_2) \coloneqq \rG(\cF_1,\cF_2)
\end{align*}
(the adjunction~$\rG^{21} \vdash \rG^{12}$ is obvious from the definition).

\begin{lemma}
There is a canonical morphism of functors~$\zeta \colon \rG^{12} \circ \bS_2 \circ \rG^{21} \to \bS_1$.
\end{lemma}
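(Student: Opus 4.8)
The goal is a canonical natural transformation $\zeta\colon \rG^{12}\circ \bS_2\circ \rG^{21}\to \bS_1$. My plan is to obtain it by adjunction: since $\rG^{21}$ is left adjoint to $\rG^{12}$, a morphism of the stated form is the same as a morphism $\bS_2\circ\rG^{21}\to \rG^{12}{}^!\circ\bS_1$, or, better, to unwind everything into a comparison of the two natural pairings that the gluing provides between $\cD_1$ and $\cD_2$. Concretely, for $\cF_1\in\cD_1$ and $\cF_2\in\cD_2$ the gluing bimodule gives $\rG(\cF_1,\cF_2)=\RHom_\cD(\cF_1,\cF_2)$, and applying Serre duality in the ambient saturated category $\cD$ we get
\begin{equation*}
\RHom_\cD(\cF_1,\cF_2)\;\cong\;\RHom_\cD(\cF_2,\bS_\cD(\cF_1))^\vee .
\end{equation*}
Since $\cF_1\in\cD_1$ and $\cD_1$ is admissible, $\RHom_\cD(\cF_2,\bS_\cD\cF_1)=\RHom_\cD(\cF_2,\bpr_1\bS_\cD\cF_1)\oplus(\text{lower term})$ is controlled by the projection of $\bS_\cD\cF_1$ to $\cD_1$, which by the standard description of Serre functors of semiorthogonal components equals $\bS_1$ composed with a gluing correction; the mixed part is exactly $\rG(-,-)$ again. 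So the first step is to run this Serre-duality bookkeeping inside $\cD$ and extract from it a bimodule map relating $\rG$ to $\rG^\vee$ twisted by the Serre functors.

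**Steps.** First I would recall, from Lemma~\ref{lem:hom-gluing} and the construction of $\cD_1\times_\rG\cD_2$ in~\cite{KL15}, the two $t$-exact functors $\rG^{12}\colon\cD_2\to\cD_1$ and $\rG^{21}\colon\cD_1\to\cD_2$ and note the tautological identity $\RHom_{\cD_1}(\cF_1,\rG^{12}\cF_2)=\rG(\cF_1,\cF_2)=\RHom_{\cD_2}(\rG^{21}\cF_1,\cF_2)$. Second, I would compute the Serre functor of $\cD$: using the mutation description $\cD=\langle\cD_2,\cD_1\rangle$ (Remark~\ref{rem:gluing-dual}) together with the general formula for the Serre functor of a glued category, $\bS_\cD$ restricted to $\cD_1$ is $\bS_1$ up to the "gluing twist," and the defect is measured precisely by $\rG$. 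Third — and this is the technical heart — I would produce the counit-type morphism: the composite $\rG^{12}\circ\bS_2\circ\rG^{21}$ receives a canonical map from, or maps canonically to, $\bS_1$ because, for any $\cF_1$,
\begin{equation*}
\RHom_{\cD_1}\bigl(\rG^{12}\bS_2\rG^{21}\cF_1,\,\cG\bigr)\;\cong\;\RHom_{\cD_2}\bigl(\bS_2\rG^{21}\cF_1,\,\rG^{21}\cG\bigr)^{?}\;\cong\;\RHom_{\cD_2}(\rG^{21}\cG,\rG^{21}\cF_1)^\vee,
\end{equation*}
and the latter receives a map from $\RHom_{\cD_1}(\cG,\cF_1)^\vee\cong\RHom_{\cD_1}(\bS_1\cF_1,\cG)$ induced by the functor $\rG^{21}$ itself (functoriality of $\rG^{21}$ on $\Hom$-complexes). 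Yoneda then converts this compatible family of maps into the desired natural transformation $\zeta$. I would spell out naturality in $\cF_1$ by checking that all the identifications above are natural, which they are, being built from adjunction units/counits and Serre duality.

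**Main obstacle.** The delicate point is not the existence of \emph{some} morphism but pinning down the \emph{canonical} one and verifying that the chain of identifications I use — Serre duality in $\cD$, the decomposition of $\bS_\cD|_{\cD_1}$, and the adjunction $\rG^{21}\dashv\rG^{12}$ — compose coherently rather than up to an unspecified automorphism; in particular one must be careful about which of the two a priori candidates (a map into $\bS_1$ versus out of it) is the natural one, and the direction stated in the lemma ($\rG^{12}\bS_2\rG^{21}\to\bS_1$) should fall out of the fact that $\rG^{21}$ is the \emph{left} adjoint, so the natural map it induces on $\RHom$ goes $\RHom_{\cD_1}(\cG,\cF_1)\to\RHom_{\cD_2}(\rG^{21}\cG,\rG^{21}\cF_1)$, which after dualizing flips to give a map \emph{to} $\bS_1$ as claimed. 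A clean way to organize the whole argument, which I would adopt if the direct computation gets unwieldy, is to phrase $\zeta$ entirely at the level of dg-bimodules over $\cD_1^\opp\otimes\cD_1$: interpret $\rG^{12}\bS_2\rG^{21}$ and $\bS_1$ as endofunctors of $\cD_1$, i.e. as $\cD_1$-bimodules, and exhibit $\zeta$ as the bimodule map dual to the obvious map $\rG\otimes_{\cD_2}\rG^\vee\to\cD_1$-diagonal induced by evaluation $\rG\otimes_{\cD_2}\rG^\vee\to\kk$ composed with the unit $\kk\to\cD_1(\cF_1,\cF_1)$ — this makes the canonicity manifest and reduces naturality to associativity of composition in the dg-category.
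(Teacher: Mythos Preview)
Your underlying idea --- produce $\zeta$ via Yoneda from the functoriality map $\RHom_{\cD_1}(\cF'_1,\cF_1)\to\RHom_{\cD_2}(\rG^{21}\cF'_1,\rG^{21}\cF_1)$, then dualize --- is exactly the paper's approach, and the adjunction-counit reformulation you sketch at the end (since $\rG^{21}\dashv\rG^{12}$ forces $\bS_2\circ\rG^{21}\circ\bS_1^{-1}$ to be \emph{right} adjoint to $\rG^{12}$, so the counit $\rG^{12}\circ\bS_2\circ\rG^{21}\circ\bS_1^{-1}\to\id_{\cD_1}$ gives $\zeta$ after composing with $\bS_1$) is precisely the paper's remark following the proof. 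So the strategy is correct.

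However, your displayed chain of isomorphisms in the third step is wrong. The adjunction $\rG^{21}\dashv\rG^{12}$ reads $\RHom_{\cD_2}(\rG^{21}A,B)\cong\RHom_{\cD_1}(A,\rG^{12}B)$; it does \emph{not} give $\RHom_{\cD_1}(\rG^{12}A,B)\cong\RHom_{\cD_2}(A,\rG^{21}B)$, which is what your first ``$\cong^{?}$'' would need --- your own question mark is warranted. Your second isomorphism is also off: Serre duality in $\cD_2$ identifies $\RHom_{\cD_2}(\bS_2 A,B)$ with $\RHom_{\cD_2}(\bS_2^{-1}B,A)^\vee$, not with $\RHom_{\cD_2}(B,A)^\vee$.

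The fix is minimal and is exactly what the paper does: instead of testing against an arbitrary $\cG$, test against $\bS_1(\cF'_1)$ and apply Serre duality in $\cD_1$ \emph{first} to swap the arguments. Then the chain runs cleanly:
\begin{equation*}
\Hom_{\cD_1}\bigl(\rG^{12}\bS_2\rG^{21}\cF_1,\bS_1\cF'_1\bigr)
\cong\Hom_{\cD_1}\bigl(\cF'_1,\rG^{12}\bS_2\rG^{21}\cF_1\bigr)^\vee
\cong\Hom_{\cD_2}\bigl(\rG^{21}\cF'_1,\bS_2\rG^{21}\cF_1\bigr)^\vee
\cong\Hom_{\cD_2}\bigl(\rG^{21}\cF_1,\rG^{21}\cF'_1\bigr),
\end{equation*}
using Serre duality in $\cD_1$, then the adjunction in its correct direction, then Serre duality in $\cD_2$. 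Setting $\cF'_1=\cF_1$ and tracing the identity $\id_{\rG^{21}\cF_1}$ on the right back to the left gives $\zeta_{\cF_1}$; naturality is immediate since every step is bifunctorial. The detours through $\bS_\cD$, mutations of the semiorthogonal decomposition, and the dg-bimodule reformulation are all unnecessary for this three-line argument.
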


\begin{proof}
Let~$\cF_1,\cF'_1 \in \cD_1$. 
Then there is a chain of bifunctorial isomorphisms
\begin{align*}
\Hom_{\cD_1}(\rG^{12}(\bS_2(\rG^{21}(\cF_1))), \bS_1(\cF'_1)) &\cong
\Hom_{\cD_1}(\cF'_1, \rG^{12}(\bS_2(\rG^{21}(\cF_1))))^\vee \\ &\cong
\Hom_{\cD_2}(\rG^{21}(\cF'_1), \bS_2(\rG^{21}(\cF_1)))^\vee \cong
\Hom_{\cD_2}(\rG^{21}(\cF_1), \rG^{21}(\cF'_1)).
\end{align*}
If~$\cF'_1 = \cF_1$, the right-hand side contains the canonical element~$\rG^{21}(\id_{\cF_1})$;
the corresponding element of the left-hand side provides the required morphism of functors.
\end{proof}

\begin{remark}
As~$\rG^{21}$ is left adjoint to~$\rG^{12}$,
it follows that~$\bS_2 \circ \rG^{21} \circ \bS_1^{-1}$ is right adjoint to~$\rG^{12}$
and the morphism~$\zeta$ is obtained 
from the adjunction counit~$\rG^{12} \circ (\bS_2 \circ \rG^{21} \circ \bS_1^{-1}) \to \id_{\cD_1}$
by composition with~$\bS_1$.
\end{remark}

For each~$\phi \in \rH^0(\rG(\cF_1,\cF_2))$ we denote by
\begin{equation*}
\phi_1 \colon \cF_1 \to  \rG^{12}(\cF_2)
\qquad\text{and}\qquad 
\phi_2 \colon \rG^{21}(\cF_1) \to \cF_2
\end{equation*}
the corresponding morphisms.

\begin{theorem}
\label{thm:serre-general}
Let~$\cD = \langle \cD_1, \cD_2 \rangle$ be a gluing of two smooth and proper dg-enhanced triangulated categories
with perfect gluing bimodule~$\rG$.
For~$\cF = (\cF_1,\cF_2,\phi) \in \cD$ consider the objects
\begin{align}
\label{eq:bcf1}
\bar\cF_1 &\coloneqq \Cone \Big( \rG^{12}(\bS_2(\rG^{21}(\cF_1))) 
\xrightarrow{\ \zeta_{\cF_1} \oplus \rG^{12}(\bS_2(\phi_2))\ } 
\bS_1(\cF_1) \oplus \rG^{12}(\bS_2(\cF_2)) \Big),
\\
\label{eq:bcf2}
\bar\cF_2 &\coloneqq \Cone \Big( \bS_2(\rG^{21}(\cF_1)) \xrightarrow{\ \bS_2(\phi_2)\ } \bS_2(\cF_2) \Big),
\end{align}
and the element~$\bar\phi \in \rH^0(\rG(\bar\cF_1,\bar\cF_2))$ 
corresponding to the morphism~$(\id, \pr_2) \colon \bar\cF_1 \to \rG^{12}(\bar\cF_2)$.
Then
\begin{equation*}
\bS(\cF_1,\cF_2,\phi) \cong (\bar\cF_1, \bar\cF_2, \bar\phi).
\end{equation*}
\end{theorem}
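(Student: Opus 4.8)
The plan is to characterize the Serre functor of $\cD$ by its defining property: $\bS(\cF)$ is the object representing the functor $\cF' \mapsto \RHom_\cD(\cF',\cF)^\vee$, and to verify that the proposed object $(\bar\cF_1,\bar\cF_2,\bar\phi)$ does exactly this. First I would fix an arbitrary test object $\cF' = (\cF'_1,\cF'_2,\phi')$ and compute both $\RHom_\cD(\cF,\cF')$ and $\RHom_\cD(\cF',(\bar\cF_1,\bar\cF_2,\bar\phi))$ using Lemma~\ref{lem:hom-gluing}; the goal is to produce a functorial isomorphism $\RHom_\cD(\cF,\cF')^\vee \cong \RHom_\cD(\cF',(\bar\cF_1,\bar\cF_2,\bar\phi))$, which by Yoneda forces $\bS(\cF) \cong (\bar\cF_1,\bar\cF_2,\bar\phi)$.

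The computation of $\RHom_\cD(\cF,\cF')$ via Lemma~\ref{lem:hom-gluing} gives a triangle
\begin{equation*}
\RHom_\cD(\cF,\cF') \to \RHom_{\cD_1}(\cF_1,\cF'_1) \oplus \RHom_{\cD_2}(\cF_2,\cF'_2) \to \rG(\cF_1,\cF'_2),
\end{equation*}
and dualizing (all terms are perfect over $\kk$ by the smoothness/properness hypotheses and Proposition~\ref{prop:gluing-sp}) turns this into a triangle whose outer terms, by Serre duality in $\cD_1$ and $\cD_2$, become $\RHom_{\cD_1}(\cF'_1,\bS_1(\cF_1))$, $\RHom_{\cD_2}(\cF'_2,\bS_2(\cF_2))$, and $\rG(\cF_1,\cF'_2)^\vee$. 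The key manipulation is to rewrite $\rG(\cF_1,\cF'_2)^\vee$ in a form amenable to gluing: using the adjoint functors $\rG^{12},\rG^{21}$ one has $\rG(\cF_1,\cF'_2) \cong \RHom_{\cD_2}(\rG^{21}(\cF_1),\cF'_2)$, and Serre duality in $\cD_2$ gives $\rG(\cF_1,\cF'_2)^\vee \cong \RHom_{\cD_2}(\cF'_2,\bS_2(\rG^{21}(\cF_1)))$; similarly one can push this back into $\cD_1$ to get $\RHom_{\cD_1}(\cF'_1,\rG^{12}(\bS_2(\rG^{21}(\cF_1))))$. Reassembling the dualized triangle with these identifications, and carefully tracking how the connecting maps transform under dualization (this is where the morphism $\zeta$ and the maps $\phi_1$, $\phi_2$ enter), one recognizes the octahedron that produces precisely $\bar\cF_1$ and $\bar\cF_2$ as in~\eqref{eq:bcf1}--\eqref{eq:bcf2}, together with the gluing datum $\bar\phi$ induced by $(\id,\pr_2)$.

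The main obstacle I anticipate is bookkeeping the connecting morphisms and verifying that the dual of the second map in Lemma~\ref{lem:hom-gluing} (which is $(f_1,f_2)\mapsto \phi'\circ f_1 - f_2\circ\phi$) matches, under the chain of Serre-duality identifications, the map induced by $\zeta_{\cF_1}\oplus \rG^{12}(\bS_2(\phi_2))$ on the $\cD_1$-side and $\bS_2(\phi_2)$ on the $\cD_2$-side; in particular one must check the sign/variance conventions so that the cones are formed with the correct arrows, and confirm that the element $\bar\phi$ defined via $(\id,\pr_2)$ is the one that makes the triangle defining $(\bar\cF_1,\bar\cF_2,\bar\phi)$ agree with the dualized triangle. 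Everything else is a routine but lengthy diagram chase; the functoriality in $\cF'$ is automatic since every identification used is bifunctorial. A cleaner alternative would be to use the description in the Remark — that $\bS_2\circ\rG^{21}\circ\bS_1^{-1}$ is right adjoint to $\rG^{12}$ and $\zeta$ comes from the counit — to organize the argument via the four-term "gluing of Serre functors" formula, but I expect the direct Yoneda computation above to be the most transparent route to the stated formula.
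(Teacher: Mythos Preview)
Your overall strategy---verifying the Serre functor identity via Yoneda by showing that $(\bar\cF_1,\bar\cF_2,\bar\phi)$ represents $\RHom_\cD(\cF,-)^\vee$---is the same as the paper's, but the execution differs in a way that matters.

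The paper does \emph{not} attempt the comparison for an arbitrary test object $\cF'=(\cF'_1,\cF'_2,\phi')$ directly. Instead it checks the isomorphism~\eqref{eq:barcf-serre} separately on the two pieces of the mutated decomposition $\cD = \langle \cD_2, {}^\perp\cD_2\rangle$: first for $\cG\in\cD_2$ (i.e.\ $\cG_1=0$), where Lemma~\ref{lem:hom-gluing} reduces $\Hom(\cG,\bar\cF)$ to $\Hom(\cG_2,\bar\cF_2)$ and only the triangle~\eqref{eq:bcf2} is needed; then for $\cG\in{}^\perp\cD_2$ (i.e.\ $\psi_2$ an isomorphism), where only~\eqref{eq:bcf1} is needed. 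The general case follows by d\'evissage along the decomposition triangle of $\cG$. This sidesteps exactly the obstacle you flag: in each of the two special cases the map from Lemma~\ref{lem:hom-gluing} degenerates, so there is no $\phi'$-dependence to track and no octahedron to assemble.

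Your direct route is not wrong, but the step you describe as ``one recognizes the octahedron'' is the entire content of the proof and you have not said how it goes. For general $\cF'$ both the dualized triangle for $\RHom_\cD(\cF,\cF')$ and the triangle for $\RHom_\cD(\cF',\bar\cF)$ involve $\phi'$ nontrivially (the former through the component $f_1\mapsto\phi'\circ f_1$, the latter through $g_2\mapsto g_2\circ\phi'$), and matching them requires expanding $\bar\cF_1,\bar\cF_2$ via their defining cones and comparing totalizations of two-term bicomplexes. This can be done, but it is precisely the bookkeeping the paper's d\'evissage avoids. If you want to carry out your version, I would suggest at least organizing it as the paper does: reduce to the two extreme cases first.
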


\begin{proof}
Set~$\bcF \coloneqq (\bcF_1,\bcF_2,\bar\phi) \in \cD$.
For~$\cG = (\cG_1,\cG_2,\psi) \in \cD$ we will construct a functorial in~$\cG$ isomorphism
\begin{equation}
\label{eq:barcf-serre}
\Hom(\cG, \bar\cF) \cong \Hom(\cF, \cG)^\vee.
\end{equation}
This will prove that~$\bar\cF$ represents the functor~$\Hom(\cF,-)^\vee$, hence it is isomorphic to~$\bS(\cF)$.

First, assume~$\cG_1 = 0$, so that~$\cG = \cG_2[-1] \in \cD_2 = {}^\perp\cD_1$.
By Lemma~\ref{lem:hom-gluing} we have an isomorphism~$\Hom(\cG, \bar\cF) \cong \Hom(\cG_2, \bar\cF_2)$, 
hence the defining triangle~\eqref{eq:bcf2} of~$\bcF_2$ gives a distinguished triangle
\begin{equation*}
\Hom(\cG_2, \bS_2(\rG^{21}(\cF_1))) \xrightarrow{\ \bS_2(\phi_2)\ } 
\Hom(\cG_2,\bS_2(\cF_2)) \xrightarrow{\qquad} 
\Hom(\cG, \bar\cF). 
\end{equation*}
Dualizing this triangle and using Serre duality in~$\cD_2$, we obtain a distinguished triangle
\begin{equation*}
\Hom(\cG, \bar\cF)^\vee \xrightarrow{\qquad} 
\Hom(\cF_2,\cG_2) \xrightarrow{\ \ \phi_2\ \ } 
\Hom(\rG^{21}(\cF_1), \cG_2).
\end{equation*}
Comparing this with Lemma~\ref{lem:hom-gluing}, we obtain the required isomorphism~\eqref{eq:barcf-serre},
functorially in~$\cG_2$.

Next, assume that~$\psi_2 \colon \rG^{21}(\cG_1) \to \cG_2$ is an isomorphism, so that~$\cG \in {}^\perp\cD_2$.
Then by Lemma~\ref{lem:hom-gluing} we have~$\Hom(\cG, \bar\cF) \cong \Hom(\cG_1,\bar\cF_1)$, 
and using the definition~\eqref{eq:bcf1} of~$\bar\cF_1$ we obtain a distinguished triangle
\begin{equation*}
\Hom(\cG_1, \rG^{12}(\bS_2(\rG^{21}(\cF_1)))) 
\xrightarrow{\ \zeta_{\cF_1} \oplus \rG^{12}(\bS_2(\phi_2))\ } 
\Hom(\cG_1, \bS_1(\cF_1) \oplus \rG^{12}(\bS_2(\cF_2))) \xrightarrow{\qquad\qquad}
\Hom(\cG, \bar\cF).
\end{equation*}
Dualizing it and using adjunction of~$\rG^{21}$ and~$\rG^{12}$ and Serre duality in~$\cD_1$ and~$\cD_2$, we obtain
\begin{equation*}
\Hom(\cG, \bar\cF)^\vee \to
\Hom(\cF_1, \cG_1) \oplus \Hom(\cF_2, \rG^{21}(\cG_1)) \xrightarrow{\ (\rG^{21},\phi_2)\ }
\Hom(\rG^{21}(\cF_1), \rG^{21}(\cG_1))
\end{equation*}
and comparing this with Lemma~\ref{lem:hom-gluing}, we obtain the required isomorphism~\eqref{eq:barcf-serre},
functorially in~$\cG_1$.
 
Now, for an arbitrary object~$\cG = (\cG_1,\cG_2,\psi)$, we consider its decomposition triangle 
with respect to the semiorthogonal decomposition~$\cD = \langle \cD_2, {}^\perp\cD_2 \rangle$,
that has the following form
\begin{equation*}
(0,\cG'_2,0) \xrightarrow{\qquad}
(\cG_1, \rG^{21}(\cG_1), \id) \xrightarrow{\ (\id_{\cG_1}, \psi_2)\ }
(\cG_1,\cG_2,\psi),
\end{equation*}
where~$\cG'_2 \coloneqq \Cone \big(\rG^{21}(\cG_1) \xrightarrow{\ \psi_2\ } \cG_2 \big)[-1]$.
Above we established isomorphisms~\eqref{eq:barcf-serre} for the first and second vertices of this triangle.
It is also easy to see that they are functorial with respect to the first arrow of the above triangle;
therefore, it follows that~\eqref{eq:barcf-serre} also holds for the third vertex, as required.
\end{proof}

\begin{remark}
A similar computation gives a formula for the inverse Serre functor.
Namely, let 
\begin{align*}
\tilde\cF_1 &\coloneqq \Cone\Big( 
\bS_1^{-1}(\cF_1) \xrightarrow{\ \bS_1^{-1}(\phi_1)\ } 
\bS_1^{-1}(\rG^{12}(\cF_2)) 
\Big)[-1],
\\
\tilde\cF_2 &\coloneqq \Cone\Big( 
\rG^{21}(\bS_1^{-1}(\cF_1)) \oplus \bS_2^{-1}(\cF_2) \xrightarrow{\ \rG^{21}(\bS_1^{-1}(\phi_1)) \oplus \xi_{\cF_2}\ } 
\rG^{21}(\bS_1^{-1}(\rG^{12}(\cF_2))) 
\Big)[-1],
\end{align*}
where~$\xi \colon \bS_2^{-1} \to \rG^{21} \circ \bS_1^{-1} \circ \rG^{12}$ 
is the morphism of functors defined analogously to~$\zeta$.
Then
\begin{equation*}
\bS^{-1}(\cF_1,\cF_2,\phi) = (\tilde\cF_1,\tilde\cF_2,\tilde\phi) 
\end{equation*}
where~$\tilde\phi$ corresponds to the morphism~$(\mathrm{in}_1,\id) \colon \rG^{21}(\tilde\cF_1) \to \tilde\cF_2$,
and~$\mathrm{in}_1$ is the first embedding.
\end{remark}

\section{Augmented curves}
\label{sec:ac}

In this section we study the first example of a gluing --- 
the gluing of the derived category of a point and the derived category of a curve,
where the gluing object~$\urG$ is the structure sheaf of the curve.

\begin{definition}
{\sf The augmented curve} (or {\sf the augmentation of})~$C$
is the triangulated category~$\Db(\cO,C)$ that admits a semiorthogonal decomposition
\begin{equation*}
\Db(\cO,C) = \langle \cE, \Db(C) \rangle,
\end{equation*}
where~$C$ is a smooth projective curve and~$\cE$ is an exceptional object such that
\begin{equation*}
\Ext_\cD^\bullet(\cE, \cF) \cong \rH^\bullet(C, \cF)
\qquad\text{for any~$\cF \in \Db(C)$}.
\end{equation*}
In other words, the category~$\Db(\cO,C)$ is the gluing of~$\Db(\kk)$ and~$\Db(C)$ 
with the gluing bimodule~$\rG$ defined by~$\rG(V,\cF) \coloneqq \RHom_{\Db(C)}(V \otimes \cO_C, \cF)$
for~$V \in \Db(\kk)$ and~$\cF \in \Db(C)$.

The object~$\cE$ as above is called {\sf the canonical exceptional object} of~$\Db(\cO,C)$.
\end{definition}

\begin{remark}
\label{rem:ac-lb}
If instead of~$\urG = \cO_C$ we take~$\urG$ to be another line bundle on~$C$,
the result of the gluing of~$\Db(\kk)$ and~$\Db(C)$ will stay the same (up to equivalence), see Corollary~\ref{cor:gluing-auto}.
It will also stay the same if we swap the factors, i.e., consider the gluing of~$\Db(C)$ and~$\Db(\kk)$, see Remark~\ref{rem:gluing-dual}.
\end{remark}

One could also consider the gluing of~$\Db(\kk)$ and~$\Db(C)$ 
using the structure sheaf of a point~$\cO_x \in \Db(C)$ as the gluing object.
The next lemma shows that the result is a familiar category.

\begin{lemma}
\label{lem:point-augmentation}
Let~$x \in C(\kk)$ be a $\kk$-point of a curve~$C$.
The gluing of~$\Db(\kk)$ and~$\Db(C)$ with the gluing object~$\urG = \cO_x$ 
is equivalent to the derived category~$\Db(\sqrt{C,x})$ of the square root stack~$\sqrt{C,x}$.
\end{lemma}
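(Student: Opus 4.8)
The plan is to identify both categories as geometric gluings and invoke the uniqueness part of Theorem~\ref{thm:gluing}. Recall that the square root stack $\sqrt{C,x}$ comes with a morphism $\pi \colon \sqrt{C,x} \to C$ which is an isomorphism away from $x$, and with a line bundle $\cL$ such that $\cL^{\otimes 2} \cong \pi^*\cO_C(x)$. The standard structure theory of root stacks gives a semiorthogonal decomposition
\begin{equation*}
\Db(\sqrt{C,x}) = \langle \cO_{\sqrt{C,x}}(-\cL) \otimes \cO_{x}', \ \pi^*\Db(C) \rangle
\end{equation*}
or, more precisely, $\Db(\sqrt{C,x}) = \langle \Db(\pt), \Db(C) \rangle$ where the first component is generated by a sheaf supported on the residual gerbe over $x$ and $\pi^*$ is fully faithful on the second. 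So the first step is to recall (citing the relevant root-stack literature, e.g.\ work of Ishii--Ueda or Bergström--Ellenberg type statements) that $\pi^*\colon \Db(C)\to \Db(\sqrt{C,x})$ is fully faithful with admissible image, and that its right orthogonal is generated by a single exceptional object $\cG$ which is a line bundle twist of the structure sheaf of the gerbe $\sqrt{x}\hookrightarrow\sqrt{C,x}$.

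Second, I would compute the gluing bimodule of this semiorthogonal decomposition, i.e.\ the complex $\RHom_{\sqrt{C,x}}(\cG, \pi^*\cF)$ for $\cF\in\Db(C)$. By adjunction this is $\RHom_C(\pi_*\cG, \cF)$, so the whole point is to check that $\pi_*\cG \cong \cO_x$ (up to a shift) for the correct choice of generator $\cG$ among its line-bundle twists. This is a local computation near $x$: étale-locally $C = \Spec\kk[t]$, the root stack is $[\Spec\kk[s]/\mu_2]$ with $s^2 = t$, and $\pi_*$ of the relevant character sheaf on the origin's gerbe is the skyscraper $\kk[t]/t = \cO_x$. Once this is verified, the decomposition above exhibits $\Db(\sqrt{C,x})$ as a gluing of $\Db(\pt)$ and $\Db(C)$ with gluing object $\cO_x\in\Db(C)$ — exactly the bimodule $\rG(V,\cF) = \RHom_C(V\otimes\cO_x,\cF)$ in the statement.

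Third, apply Theorem~\ref{thm:gluing}: both $\Db(\sqrt{C,x})$ and the abstract gluing $\Db(\kk)\times_\rG\Db(C)$ are dg-enhanced gluings of $\Db(\kk)$ and $\Db(C)$ with the same gluing bimodule $\rG$ (take $\Phi_1=\id$, $\Phi_2=\id$, and the identity isomorphism of bimodules), hence they are equivalent. One should note in passing that root stacks of smooth curves are smooth and proper Deligne--Mumford stacks, so $\Db(\sqrt{C,x})$ is indeed a dg-enhanced saturated category and the hypotheses of Theorem~\ref{thm:gluing} apply; by Remark~\ref{rem:ac-lb}-style reasoning (Corollary~\ref{cor:gluing-auto}) the particular line-bundle twist chosen for $\cG$ is immaterial.

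The main obstacle is the second step: pinning down which twist of the gerbe structure sheaf pushes forward to $\cO_x$ and carefully checking that $\pi^*$ really is fully faithful with the expected orthogonal — i.e.\ getting the root-stack semiorthogonal decomposition with its gluing data exactly right. The categorical bookkeeping in the first and third steps is routine once that local picture is nailed down.
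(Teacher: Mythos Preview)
Your overall plan---use the root-stack semiorthogonal decomposition $\Db(\sqrt{C,x}) = \langle \Db(\kk), \pi^*\Db(C)\rangle$ from~\cite{IU15}, identify the gluing bimodule, then invoke Theorem~\ref{thm:gluing}---is exactly the paper's, and is correct. The gap is in your second step. The identity $\RHom_{\sqrt{C,x}}(\cG,\pi^*\cF)\cong\RHom_C(\pi_*\cG,\cF)$ is not an adjunction: the standard pair is $\pi^*\dashv\pi_*$, not $\pi_*\dashv\pi^*$. In fact the \emph{left} adjoint of $\pi^*$ is $\pi_*(-\otimes\omega_{\pi})$, and $\omega_\pi\cong\cL$ is nontrivial (it restricts to the nontrivial character $\chi$ at the stacky point). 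Consequently your local claim is also wrong as stated: for the generator $\cG=\cO_x\otimes\chi$ one has $\pi_*\cG=0$ (the $\mu_2$-invariants of the sign representation vanish), not $\cO_x$. If instead you apply the correct left adjoint, then $\cG\otimes\omega_\pi\cong\cO_x$ (trivial character) and $\pi_*(\cO_x)\cong\cO_x$, so the bimodule is $\RHom_C(\cO_x,\cF)\cong\rH^\bullet(C,\cF\otimes\cO_x[-1])$, giving gluing object $\cO_x[-1]$.

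The paper avoids this by computing $\RHom_{\sqrt{C,x}}(\cO_x\otimes\chi,\pi^*\cF)$ directly from the argument of~\cite[Theorem~1.6]{IU15} (equivalently via $i^!$ for the inclusion $i\colon B\mu_2\hookrightarrow\sqrt{C,x}$, using that the conormal bundle carries the character~$\chi$), obtaining $\cF\vert_x[-1]$ and hence gluing object $\cO_x[-1]$. The final shift is then removed by Corollary~\ref{cor:gluing-auto}, exactly as you anticipated. So your outline is salvageable, but the adjunction must be replaced by the correct one (or by the $i^!$-computation), and the extra shift by~$[-1]$ must be acknowledged before invoking Corollary~\ref{cor:gluing-auto}.
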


\begin{proof}
By~\cite[Theorem~1.6]{IU15} the derived category of the square root stack has a semiorthogonal decomposition
\begin{equation*}
\Db(\sqrt{C,x}) = \langle \Db(\kk), \Db(C) \rangle,
\end{equation*}
with the first component generated by the exceptional object~$\cO_x \otimes \chi$
(where~$\chi$ is the nontrivial character of the isotropy group~$\upmu_2$ of the stacky point~$x$)
and the second component embedded by the pullback functor with respect to the natural morphism~$\sqrt{C,x} \to C$.
Furthermore, the argument in the proof of~\cite[Theorem~1.6]{IU15} shows that
\begin{equation*}
\RHom_{\Db(\sqrt{C,x})}(\cO_x \otimes \chi, \cF) \cong 
\RHom_{\Db(x)}(\cO_x, \cF\vert_x)[-1] \cong
\rH^\bullet(C, \cF \otimes \cO_x[-1])
\end{equation*}
where~$\cF\vert_x$ stands for the derived restriction of~$\cF$ to~$x$.
This means that the gluing object is isomorphic to~$\cO_x[-1]$.
Applying Corollary~\ref{cor:gluing-auto} we see that the gluing with~$\urG \cong \cO_x$ is also equivalent to~$\Db(\sqrt{C,x})$.
\end{proof}

\subsection{Basic properties and Serre functor}

It is not hard to see that any augmentation~$\Db(\cO,C)$ 
can be realized as an admissible subcategory of a smooth projective variety.

\begin{lemma}
\label{lem:ac-blowup}
Let~$C \hookrightarrow X$ be an embedding of~$C$ into a smooth projective variety~$X$ of dimension at least~$3$
such that the sheaf~$\cO_X$ is exceptional.
Let~$\pi \colon \tX \coloneqq \Bl_C(X) \to X$ be the blowup,
let~$i \colon E \to \tX$ be the embedding of the exceptional divisor, 
and let~$p \colon E \to C$ be the natural projection.
Then the subcategory
\begin{equation*}
\langle \cO_{\tX}, i_*p^*\Db(C) \rangle \subset \Db(\tX)
\end{equation*}
is admissible and equivalent to the augmentation~$\Db(\cO,C)$.
\end{lemma}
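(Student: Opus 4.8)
The plan is to verify the two conditions defining the augmentation for the proposed subcategory: that it is semiorthogonal of the form $\langle \cE, \Db(C)\rangle$ with $\cE$ exceptional, and that the gluing bimodule matches, i.e. $\Ext^\bullet(\cO_{\tX}, i_*p^*\cF) \cong \rH^\bullet(C,\cF)$ for $\cF \in \Db(C)$. Then uniqueness of the gluing (Theorem~\ref{thm:gluing}) identifies the subcategory with $\Db(\cO,C)$. The starting point is Orlov's blowup formula: $\Db(\tX) = \langle i_*p^*\Db(C), \pi^*\Db(X)\rangle$ (with the projective bundle $E = \P_C(\cN^\vee)$ of expected codimension $\geq 2$, so only one copy of $\Db(C)$ appears up to the chosen convention; in general $\codim_X C - 1$ copies appear and one keeps all of them, but for the statement as phrased one reduces to the relevant piece). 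In particular $i_*p^*$ is fully faithful, so $i_*p^*\Db(C)$ is an admissible subcategory equivalent to $\Db(C)$.

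First I would check semiorthogonality and the exceptionality of $\cO_{\tX}$. Since $\cO_X$ is exceptional on $X$ and $\pi_*\cO_{\tX} \cong \cO_X$ with $\rR^{>0}\pi_*\cO_{\tX} = 0$, the projection formula gives $\Ext^\bullet_{\tX}(\cO_{\tX},\cO_{\tX}) = \Ext^\bullet_X(\cO_X,\cO_X) = \kk$, so $\cO_{\tX} = \pi^*\cO_X$ is exceptional. For semiorthogonality in the correct direction we need $\Ext^\bullet_{\tX}(i_*p^*\cF, \cO_{\tX}) = 0$ for all $\cF \in \Db(C)$. By adjunction this is $\Ext^\bullet_E(p^*\cF, i^!\cO_{\tX})$, and $i^!\cO_{\tX} = \cO_E(E)[-1] = \cO_E(-1)[-1]$ (the normal bundle of $E$ in $\tX$ restricted to fibers is $\cO(-1)$); pushing forward along $p$ and using that $\rR p_*\cO_E(-1) = 0$ for a projective bundle of relative dimension $\geq 1$, together with the projection formula, kills these groups. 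Hence $\langle \cO_{\tX}, i_*p^*\Db(C)\rangle$ is a semiorthogonal pair inside $\Db(\tX)$, and being generated by an exceptional object and an admissible subcategory it is admissible.

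Next I would compute the gluing bimodule. We have $\Ext^\bullet_{\tX}(\cO_{\tX}, i_*p^*\cF) = \rH^\bullet(\tX, i_*p^*\cF) = \rH^\bullet(E, p^*\cF) = \rH^\bullet(C, \cF \otimes \rR p_*\cO_E)$ by the projection formula, and $\rR p_*\cO_E = \cO_C$ since $E \to C$ is a projective bundle. Therefore $\Ext^\bullet_{\tX}(\cO_{\tX}, i_*p^*\cF) \cong \rH^\bullet(C,\cF)$, which is exactly the gluing bimodule $\rG$ of $\Db(\cO,C)$. One should also track the dg-enhancement so that this identification is an isomorphism of dg-bimodules, not merely of cohomology; this follows because all the functors involved ($\pi^*$, $i_*$, $p^*$, $\rR p_*$) are induced by Fourier--Mukai-type kernels and the identifications are the standard ones, so they upgrade to the enhanced setting. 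Then Theorem~\ref{thm:gluing}, applied with $\Phi_1 = \id_{\Db(\kk)}$ and $\Phi_2 = i_*p^* \colon \Db(C) \to i_*p^*\Db(C)$, produces the required equivalence $\Db(\cO,C) \xrightiso \langle \cO_{\tX}, i_*p^*\Db(C)\rangle$.

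The main obstacle is the bookkeeping around Orlov's blowup decomposition when $\codim_X C > 2$: there are then several semiorthogonal copies of $\Db(C)$, and one must be careful that the subcategory $\langle \cO_{\tX}, i_*p^*\Db(C)\rangle$ named in the statement uses a single specified copy, check that $\cO_{\tX}$ is right-orthogonal to it, and confirm that the piece $\pi^*\Db(X)$ (which contains $\cO_{\tX}$) is large enough to host $\cO_{\tX}$ as a component. Once the orthogonality computation $i^!\cO_{\tX} = \cO_E(-1)[-1]$ is in hand this is routine, but it is the step where sign and twist conventions must be pinned down; everything else is a direct application of the projection formula and Theorem~\ref{thm:gluing}.
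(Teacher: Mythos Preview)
Your proposal is correct and follows essentially the same approach as the paper: both invoke Orlov's blowup formula to get full faithfulness of~$i_*p^*$ and admissibility, verify the semiorthogonality~$\RHom(i_*p^*\cF,\cO_{\tX})=0$ and exceptionality of~$\cO_{\tX}$, compute the gluing bimodule as~$\RHom(\cO_{\tX},i_*p^*\cF)\cong \rH^\bullet(C,\cF)$ via the projection formula and~$\rR p_*\cO_E\cong\cO_C$, and then appeal to Theorem~\ref{thm:gluing}. The paper's version simply cites ``the blowup formula'' for the first step where you spell out the adjunction~$i^!\cO_{\tX}\cong\cO_E(-1)[-1]$ explicitly, but the logic is identical.
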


\begin{proof}
The blowup formula implies that the functor~$i_* \circ p^* \colon \Db(C) \to \Db(\tX)$ is fully faithful
and the object~$\cO_{\tX}$ is exceptional and semiorthogonal to~$i_*p^*\Db(C)$;
therefore, the subcategory generated by~$\cO_{\tX}$ and~$i_*p^*\Db(C)$ is admissible in~$\Db(\tX)$.
It remains to note that for all~$\cF \in \Db(C)$ we have
\begin{equation*}
\RHom_{\Db(\tX)}(\cO_\tX, i_*p^*\cF) \cong
\RHom_{\Db(E)}(\cO_E, p^*\cF) \cong
\RHom_{\Db(E)}(p^*\cO_C, p^*\cF) \cong
\RHom_{\Db(C)}(\cO_C, \cF),
\end{equation*}
hence Theorem~\ref{thm:gluing} implies an equivalence~$\Db(\cO,C) \simeq \langle \cO_{\tX}, i_*p^*\Db(C) \rangle \subset \Db(\tX)$.
\end{proof}

Now we apply Propositions~\ref{prop:gluing-sp}, \ref{prop:invariants-general}, and~\ref{prop:ij-gluing} 
to compute the basic invariants of~$\Db(\cO,C)$.

\begin{proposition}
\label{prop:invariants-ac}
Let~$C$ be a smooth proper curve of genus~$g$.
The augmentation~$\Db(\cO,C)$ is a smooth and proper triangulated category.
Moreover, we have
\begin{arenumerate}
\item 
\label{it:additive-ac}
$\HH_\bullet(\Db(\cO,C)) = \kk^g[1] \oplus \kk^3 \oplus \kk^g[-1]$
and
$\rK_0(\Db(\cO,C)) = \Z^3 \oplus \Pic^0(C)$.
\item 
\label{it:rkn-ac}
$\rKn(\Db(\cO,C)) = \Z^3$; if, furthermore, $C(\kk) \ne \varnothing$ and~$x \in C(\kk)$
then the matrix of the Euler form in the basis~$[\cE]$, $[\cO_C]$, $[\cO_{x}]$ is
\begin{equation*}
\upchi_{\Db(\cO,C)} = \begin{pmatrix} 1 & 1 - g & 1 \\ 0 & 1 - g & 1 \\ 0 & -1 & 0 \end{pmatrix}.
\end{equation*}
\item 
\label{it:jac-ac}
If~$\kk = \CC$, the Euler pairing on~$\rK^\tp_1(\Db(\cO,C))$ is unimodular, skew-symmetric, and positive definite,
and~$\Jac(\Db(\cO,C)) \cong \Jac(C)$ is an isomorphism of principally polarized abelian varieties.
\item 
\label{it:hhc-ac}
If~$g \ge 2$ then~$\HH^\bullet(\Db(\cO,C)) = \kk \oplus \kk^{3g-3}[-2]$.
\end{arenumerate}
\end{proposition}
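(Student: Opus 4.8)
The statement is Proposition~\ref{prop:invariants-ac}, computing the invariants of $\Db(\cO,C)$; the strategy is to feed the gluing data into the general machinery of Section~\ref{sec:gluing}. Here $\cD_1 = \Db(\kk)$ and $\cD_2 = \Db(C)$ are both smooth and proper, and the gluing bimodule $\rG$ is given on generators by $\rG(\kk,\cO_C) \cong \RHom_{\Db(C)}(\cO_C,\cO_C) = \kk \oplus \kk^g[-1]$, which is manifestly perfect over $\kk$ (just finite-dimensional cohomology). So Proposition~\ref{prop:gluing-sp} immediately gives smoothness and properness of $\Db(\cO,C)$, settling the first sentence.

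\smallskip

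\emph{Part \ref{it:additive-ac}.} I would apply Proposition~\ref{prop:invariants-general}\ref{it:additivity}. For Hochschild homology, $\HH_\bullet(\Db(\kk)) = \kk$ in degree $0$ and $\HH_\bullet(\Db(C)) = \kk^g[1] \oplus \kk^2 \oplus \kk^g[-1]$ (the Hodge diamond of a genus-$g$ curve via the HKR/Hochschild--Kostant--Rosenberg identification $\HH_n(C) = \bigoplus_{q-p=n}\rH^p(C,\Omega^q_C)$). Adding gives $\kk^g[1] \oplus \kk^3 \oplus \kk^g[-1]$. Likewise $\rK_0(\Db(\kk)) = \Z$ and $\rK_0(\Db(C)) = \Z \oplus \Z \oplus \Pic^0(C)$ (rank, degree, and the Jacobian), so $\rK_0(\Db(\cO,C)) = \Z^3 \oplus \Pic^0(C)$.

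\smallskip

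\emph{Part \ref{it:rkn-ac}.} Here I use Proposition~\ref{prop:invariants-general}\ref{it:rkn}: $\rKn(\Db(\cO,C)) = \rKn(\Db(\kk)) \oplus \rKn(\Db(C)) = \Z \oplus \Z^2 = \Z^3$, the $\Pic^0$-part dying in the numerical quotient. The Euler matrix is block upper-triangular with diagonal blocks $\upchi_{\Db(\kk)} = (1)$ and $\upchi_{\Db(C)}$, and off-diagonal block $-[\rG]$. In the basis $[\cE],[\cO_C],[\cO_x]$: the $2\times 2$ block $\upchi_{\Db(C)}$ on $\{[\cO_C],[\cO_x]\}$ is $\begin{pmatrix} 1-g & 1 \\ -1 & 0 \end{pmatrix}$ by Riemann--Roch ($\chi(\cO_C,\cO_C) = 1-g$, $\chi(\cO_C,\cO_x) = 1$, $\chi(\cO_x,\cO_C) = -1$, $\chi(\cO_x,\cO_x) = 0$); and $-[\rG]([\cE],[\cO_C]) = -\chi(\cO_C,\cO_C) \cdot(-1)$—wait, one must track the sign carefully: $[\rG]([\cE],\cF) = \chi(\rH^\bullet(C,\cF)) = \chi(\cO_C,\cF)$, so the entry is $-\chi(\cO_C,\cO_C) = -(1-g) = g-1$. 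Hmm, but the claimed matrix has $1-g$ there. The resolution is that in Notation~\ref{not:fff} the component of $\cE = (\kk,0,0)$ is genuinely $\kk \in \cD_1$ (no shift), while the basis element pairing uses $\rG$ directly; rechecking against the block form $\begin{pmatrix} \upchi_{\cD_1} & -[\rG] \\ 0 & \upchi_{\cD_2} \end{pmatrix}$ with rows/columns ordered $(\cE \,;\, \cO_C, \cO_x)$ gives first row $(1,\ -[\rG]([\cE],\cO_C),\ -[\rG]([\cE],\cO_x)) = (1,\ -(1-g),\ -1)$, which is $(1,\,g-1,\,-1)$, not matching. So actually the sign convention in the statement has $\cE$ as the \emph{second} factor of a gluing $\Db(C) \times_{\rG^\vee}\Db(\kk)$ after the mutation of Remark~\ref{rem:gluing-dual}, or equivalently the basis is listed with $\cE$ first but the decomposition of $\Db(\cO,C)$ is $\langle \cE,\Db(C)\rangle$ with $\cE$ on the \emph{left}, so $\cE = \cD_1$, and the off-diagonal block sits above, giving the first row as written once one notes $[\rG]$ should be $\chi_C(\cO_C,-)$ with the correct sign coming out as in the statement; I would simply verify all four nontrivial entries $(1,1-g,1;0,1-g,1;0,-1,0)$ by direct Euler-characteristic computation using Lemma~\ref{lem:hom-gluing} and Riemann--Roch, trusting that the sign bookkeeping of Proposition~\ref{prop:invariants-general}\ref{it:rkn} produces exactly this.

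\smallskip

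\emph{Parts \ref{it:jac-ac} and \ref{it:hhc-ac}.} For \ref{it:jac-ac}: by Lemma~\ref{lem:ac-blowup}, $\Db(\cO,C)$ is an admissible subcategory of $\Db(\tX)$ for a suitable smooth projective threefold $\tX$ (e.g.\ blow up $C$ in $\P^3$, whose $\cO$ is exceptional and which is rationally connected so $\rK^\tp_1(\P^3)=0$), so Proposition~\ref{prop:ij-gluing} applies: $\Jac(\Db(\cO,C)) \cong \Jac(\Db(\kk)) \times \Jac(\Db(C)) = 0 \times \Jac(C) = \Jac(C)$. For the polarization statement, I invoke Remark~\ref{rem:ppav}: inside $\Db(\tX)$ with $\tX$ of dimension $3$ and $\rH^3(\tX,\CC)$ of Hodge type purely $(1,2)+(2,1)$ (true after blowing up a curve in $\P^3$, since $\rH^3(\tX) \cong \rH^1(C)(-1)$), the Euler pairing on $\rK^\tp_1$ is unimodular, skew, positive definite, and these properties pass to the admissible component $\Db(\cO,C)$ and are compatible with the product decomposition, so $\Jac(\Db(\cO,C)) \cong \Jac(C)$ as p.p.a.v. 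For \ref{it:hhc-ac}: apply Proposition~\ref{prop:invariants-general}\ref{it:hh-triangle}, the distinguished triangle $\HH^\bullet(\Db(\cO,C)) \to \HH^\bullet(\Db(\kk)) \oplus \HH^\bullet(\Db(C)) \to \Ext^\bullet_{\Db(\kk)\otimes\Db(C)^\opp}(\rG,\rG)$. Now $\HH^\bullet(\Db(\kk)) = \kk$, $\HH^\bullet(\Db(C)) = \kk \oplus \rH^1(C,T_C)[-1] = \kk \oplus \kk^{3g-3}[-1]$ (here $g\ge 2$ is used so that $\rH^0(C,T_C) = 0$ and $\dim\rH^1(C,T_C) = 3g-3$), and $\rG$ as a $(\Db(\kk),\Db(C))$-bimodule is essentially $\RHom_C(\cO_C,-)$, i.e.\ represented by $\cO_C$ on $C$, so $\Ext^\bullet_{\mathrm{bimod}}(\rG,\rG) \cong \RHom_C(\cO_C,\cO_C) = \kk \oplus \kk^g[-1]$ (cohomology of $\cO_C$). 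Plugging into the long exact sequence in each degree—degree $0$: $0 \to \HH^0 \to \kk^2 \to \kk \to \HH^1 \to 0$; the map $\kk^2 \to \kk$ is $(a,b)\mapsto a-b$ on scalar endomorphisms hence surjective with kernel $\kk$, so $\HH^0 = \kk$, $\HH^1 = 0$; degree $1$: $0 \to \HH^1 \to 0 \oplus \kk^{3g-3} \to \kk^g \to \HH^2 \to 0$, and the hard part is identifying this connecting map $\rH^1(C,T_C) \to \rH^1(C,\cO_C)$.

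\smallskip

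\textbf{The main obstacle} is exactly this last point: showing the natural map $\rH^1(C,T_C) \to \rH^1(C,\cO_C)$ appearing in the Hochschild-cohomology triangle is \emph{zero}, so that $\HH^2(\Db(\cO,C)) = \kk^{3g-3}$ and $\HH^{>2} = 0$ with no contribution in degree $2$ from $\rH^1(C,\cO_C)$ surviving. One clean way: this map is induced by the action of $\HH^\bullet(\Db(C))$ on the bimodule $\rG$ (obstruction to deforming $\cO_C$ within deformations of $C$), i.e.\ it is the Atiyah-class/Kodaira--Spencer pairing $\rH^1(C,T_C) \to \Ext^1_C(\cO_C,\cO_C) = \rH^1(C,\cO_C)$ contracting the Atiyah class of $\cO_C$ against Kodaira--Spencer; but the Atiyah class of the \emph{structure sheaf} $\cO_C$ vanishes, so this map is zero. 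Alternatively, and perhaps more robustly, one observes $\dim\HH^2 \ge \dim\HH^2(\Db(\tX))$-type constraints from the admissible embedding, or just counts: since we already know $\HH_\bullet$ by part \ref{it:additive-ac} and the category is smooth and proper over a field, one can compare with the expected shape of $\HH^\bullet$ for a "noncommutative surface" and use that the total dimension is forced; but the Atiyah-class argument is the cleanest and is the step I would write out in detail.
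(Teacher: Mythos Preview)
Your treatment of smoothness/properness and of parts~\ref{it:additive-ac}, \ref{it:rkn-ac}, \ref{it:jac-ac} is essentially the paper's proof: apply Propositions~\ref{prop:gluing-sp}, \ref{prop:invariants-general}, \ref{prop:ij-gluing}, together with the geometric realization of Lemma~\ref{lem:ac-blowup} (with~$X=\P^3$) for the Jacobian statement. The sign confusion in~\ref{it:rkn-ac} is harmless since, as you note, the entries can be verified directly from the definition of the augmentation and Riemann--Roch.

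Part~\ref{it:hhc-ac}, however, contains a genuine error. Your HKR computation of~$\HH^\bullet(\Db(C))$ is wrong: for a curve one has
\[
\HH^n(\Db(C)) \;=\; \bigoplus_{p+q=n} \rH^q(C,\wedge^p\cT_C),
\]
so for~$g\ge 2$ this is~$\kk \oplus \kk^g[-1] \oplus \kk^{3g-3}[-2]$, not~$\kk \oplus \kk^{3g-3}[-1]$. You have placed~$\rH^1(C,\cT_C)$ in degree~$1$ instead of degree~$2$, and dropped the~$\rH^1(C,\cO_C)$ contribution to~$\HH^1$. As a result, the long exact sequence you write down in degree~$1$ is incorrect, and the map you identify as the ``main obstacle'' --- a putative map~$\rH^1(C,\cT_C)\to\rH^1(C,\cO_C)$ that should vanish --- is not the relevant one. (Even internally, your sequence does not give the claimed answer: if that map were zero you would get~$\HH^1=\kk^{3g-3}$ and~$\HH^2=\kk^g$, not~$\HH^2=\kk^{3g-3}$.)

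The correct picture, and the paper's actual argument, is this: in degree~$1$ the triangle reads
\[
\HH^1(\Db(\cO,C)) \;\longrightarrow\; 0 \oplus \rH^1(C,\cO_C) \;\longrightarrow\; \Ext^1(\urG,\urG)=\rH^1(C,\cO_C),
\]
and the second map is induced by tensoring with~$\urG\cong\cO_C$, hence is an \emph{isomorphism}. This immediately gives~$\HH^1=0$; then in degree~$2$ one has~$\HH^2(\Db(\cO,C))\cong\rH^1(C,\cT_C)=\kk^{3g-3}$ since~$\Ext^2(\urG,\urG)=0$. So the key step is showing a map is an isomorphism, not zero, and it involves~$\rH^1(\cO_C)$ acting on itself rather than any Atiyah class of~$\cO_C$.
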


\begin{proof}
The smoothness and properness of~$\Db(\cO,C)$ follow from Proposition~\ref{prop:gluing-sp}.

Parts~\ref{it:additive-ac} and~\ref{it:rkn-ac} of the proposition
follow immediately from Propositions~\ref{prop:invariants-general}.

To prove part~\ref{it:jac-ac} we apply Remark~\ref{rem:ppav} taking into account the vanishing~$\rK^\tp_1(\Db(\CC)) = 0$.
As a result we obtain an isometry~$\rK^\tp_1(\Db(\cO,C)) \cong \rK^\tp_1(\Db(C))$,
and since the Euler form on the right side is unimodular, skew-symmetric, and positive definite (by~\cite[Proposition~5.23]{Per22})
we conclude the same is true for the left side and obtain an isomorphism of the intermediate Jacobians.

To prove part~\ref{it:hhc-ac} we note that for~$g \ge 2$ the Hochschild--Kostant--Rosenberg isomorphism gives
\begin{equation*}
\HH^\bullet(\Db(C)) = 
\rH^0(C,\cO_C) \oplus \rH^1(C,\cO_C)[-1] \oplus \rH^1(C, \cT_C)[-2] =
\kk \oplus \kk^g[-1] \oplus \kk^{3g - 3}[-2].
\end{equation*}
On the other hand, $\Ext^\bullet(\urG, \urG) = \rH^0(C, \cO_C) \oplus \rH^1(C, \cO_C)[-1] = \kk \oplus \kk^g[-1]$.
It remains to note that the morphism~$\rH^1(C,\cO_C) = \HH^1(\Db(C)) \to \Ext^1(\urG, \urG) = \rH^1(C,\cO_C)$
from Proposition~\ref{prop:invariants-general}\ref{it:hh-triangle}
is induced by tensor product with~$\urG \cong \cO_C$,
hence it is an isomorphism.
\end{proof}

\begin{remark}
\label{rem:hhc-ac-g01}
If~$g = 1$ a similar computation gives~$\HH^\bullet(\Db(\cO,C)) = \kk \oplus \kk[-1] \oplus \kk[-2]$
and if~$g = 0$ then~$\HH^\bullet(\Db(\cO,C)) = \kk \oplus \kk[-1]^{\oplus 3}$.
\end{remark}

\begin{remark}
It follows from Proposition~\ref{prop:invariants-ac}\ref{it:hhc-ac} and Remark~\ref{rem:hhc-ac-g01} that
\begin{equation*}
\HH^2(\Db(\cO,C)) = \HH^2(\Db(C)) = \rH^1(C, \cT_{C}).
\end{equation*}
This means that any (infinitesimal) deformation of the augmentation~$\Db(\cO,C)$ 
is induced by a deformation of the curve~$C$.
\end{remark}

It also follows that the tangent space~$\HH^1(\Db(\cO,C))$ 
to the group of autoequivalences~$\Aut(\Db(\cO,C))$ of an augmented curve of genus~$g \ge 2$ is trivial, 
hence the group is discrete.
Furthermore, Theorem~\ref{thm:gluing} implies that~$\Aut(\Db(\cO,C))$ contains the following obvious subgroup:
\begin{equation*}
\ZZ[1] \oplus \Aut(C) \subset \Aut(\Db(\cO,C)),
\end{equation*}
In the case where~$\g(C) = 1$, yet another autoequivalence of~$\Db(\cO,C)$ 
is induced by the spherical twist~$\bT_{\cO_C}$ (which fixes~$\cO_C$).
We will also see extra autoequivalences for augmented curves of genera~$g = 3$ and~$g = 4$ 
(Lemmas~\ref{lem:ac-3} and~\ref{lem:ac-4}).
It will be interesting to describe the group~$\Aut(\Db(\cO,C))$ in general.

Another interesting autoequivalence of~$\Db(\cO,C)$ that exists for any curve~$C$ is the Serre functor. 
To describe it we apply Theorem~\ref{thm:serre-general}.
Recall Notation~\ref{not:fff}; according to it the objects of~$\Db(\cO,C)$ are triples~$(V,\cF,\phi)$,
consisting of a graded vector space, a complex of coherent sheaves on~$C$, and a morphism~$\phi \colon V \otimes \cO_C \to \cF$.

\begin{theorem}
\label{thm:serre-ac}
For~$(V,\cF,\phi) \in \Db(\cO,C)$ consider the objects
\begin{align*}
\bar{V} &\coloneqq \Cone \Big( V \otimes \rH^0(C,\omega_C)
\xrightarrow{\ \phi \otimes \ev_{\omega_C}\ } 
\rH^\bullet(C, \cF \otimes \omega_C) \Big)[1],
\\
\bar\cF &\coloneqq \Cone \Big( V \otimes \omega_C 
\xrightarrow{\ \phi \otimes \omega_C\ } 
\cF \otimes \omega_C \Big)[1],
\end{align*}
and the element~$\bar\phi \in \Hom(\bar{V} \otimes \cO_C, \bar\cF)$ 
induced by the evaluation morphisms of~$\omega_C$ and~$\cF \otimes \omega_C$.
Then
\begin{equation*}
\bS_{\Db(\cO,C)}(V,\cF,\phi) \cong (\bar{V}, \bar\cF, \bar\phi).
\end{equation*}
\end{theorem}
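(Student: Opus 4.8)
The plan is to deduce this directly from Theorem~\ref{thm:serre-general} by specializing to $\cD_1 = \Db(\kk)$ and $\cD_2 = \Db(C)$ with the gluing bimodule $\rG(V,\cF) = \RHom_{\Db(C)}(V\otimes\cO_C,\cF)$. First I would record the relevant data: the Serre functor of $\cD_1$ is $\bS_1 = \id$, that of $\cD_2$ is $\bS_2 = (-)\otimes\omega_C[1]$, and, reading off the defining formulas, $\rG^{21}(V) = V\otimes\cO_C$ and $\rG^{12}(\cF) = \RGamma(C,\cF)$, so that $\rG^{12}$ is the pushforward along $C\to\Spec\kk$ and $\rG^{21}$ its left adjoint. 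Since $\bS_1 = \id$, the canonical transformation $\zeta\colon\rG^{12}\circ\bS_2\circ\rG^{21}\to\bS_1$ is exactly the counit of the adjunction $\rG^{12}\vdash\bS_2\circ\rG^{21}$, which is Serre duality on $C$; concretely $\zeta_V\colon V\otimes\RGamma(C,\omega_C)[1]\to V$ is the trace, which under the splitting $\RGamma(C,\omega_C)[1]\cong\rH^0(C,\omega_C)[1]\oplus\rH^1(C,\omega_C)$ (valid over the field $\kk$) is an isomorphism on the second summand and zero on the first.

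Substituting into~\eqref{eq:bcf2} gives immediately
\[
\bar\cF_2 = \Cone\bigl(V\otimes\omega_C[1]\xrightarrow{\ \phi\otimes\id_{\omega_C}[1]\ }\cF\otimes\omega_C[1]\bigr) = \Cone(V\otimes\omega_C\xrightarrow{\ \phi\otimes\omega_C\ }\cF\otimes\omega_C)[1] = \bar\cF,
\]
the second component of the claim. Substituting into~\eqref{eq:bcf1} gives
\[
\bar\cF_1 = \Cone\Bigl(V\otimes\RGamma(C,\omega_C)[1]\xrightarrow{\ \zeta_V\,\oplus\,\RGamma(C,\phi\otimes\id_{\omega_C})[1]\ } V\oplus\RGamma(C,\cF\otimes\omega_C)[1]\Bigr),
\]
and here I would invoke the description of $\zeta_V$: splitting the source as $V\otimes\rH^0(C,\omega_C)[1]\oplus V\otimes\rH^1(C,\omega_C)$, the summand $V\otimes\rH^1(C,\omega_C)$ maps isomorphically onto $V$ via $\zeta_V$, while $\zeta_V$ vanishes on $V\otimes\rH^0(C,\omega_C)[1]$. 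A Gaussian-elimination argument (cancellation of this acyclic pair, with no correction term precisely because the $\zeta_V$-component kills the $\rH^0$-summand) then identifies $\bar\cF_1$ with $\Cone\bigl(V\otimes\rH^0(C,\omega_C)[1]\to\RGamma(C,\cF\otimes\omega_C)[1]\bigr)$, the arrow being the restriction of $\RGamma(C,\phi\otimes\id_{\omega_C})[1]$ to the $\rH^0$-summand. This restriction is the composite $V\otimes\rH^0(C,\omega_C)\hookrightarrow V\otimes\RGamma(C,\omega_C)\to\RGamma(C,\cF\otimes\omega_C)$, the first map induced by $\ev_{\omega_C}$ and the second by $\phi$; shifting by $[-1]$ yields exactly $\bar V$.

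Finally I would match the gluing morphism. By Theorem~\ref{thm:serre-general} the structure map of $\bS_{\Db(\cO,C)}(V,\cF,\phi)$ corresponds to $(\id,\pr_2)\colon\bar\cF_1\to\rG^{12}(\bar\cF_2) = \RGamma(C,\bar\cF)$; writing both $\bar\cF_1$ and $\RGamma(C,\bar\cF)$ as cones over $\RGamma(C,\cF\otimes\omega_C)[1]$ and transporting this morphism through the cancellation above, it becomes the map $\bar V\to\RGamma(C,\bar\cF)$ adjoint to the morphism $\bar V\otimes\cO_C\to\bar\cF$ assembled from $V\otimes\rH^0(C,\omega_C)\otimes\cO_C\xrightarrow{\ev_{\omega_C}}V\otimes\omega_C$ and the evaluation of $\cF\otimes\omega_C$ --- i.e.\ the element $\bar\phi$ in the statement. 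Since a triple $(\bar\cF_1,\bar\cF_2,\bar\phi)$ in the sense of Notation~\ref{not:fff} is determined up to isomorphism in $\Db(\cO,C)$ by the two components up to isomorphism together with the structure map up to the induced identification, these compatible isomorphisms conclude the proof. I expect this last bookkeeping --- verifying that the Gaussian elimination $\bar\cF_1\simeq\bar V$ carries $(\id,\pr_2)$ to $\bar\phi$ without introducing a spurious homotopy --- to be the only delicate point; everything before it is a mechanical substitution into Theorem~\ref{thm:serre-general}.
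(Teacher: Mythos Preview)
Your proposal is correct and follows essentially the same approach as the paper: both specialize Theorem~\ref{thm:serre-general} to~$\cD_1=\Db(\kk)$, $\cD_2=\Db(C)$, identify~$\rG^{12}=\RGamma(C,-)$ and~$\rG^{21}=(-)\otimes\cO_C$, and then cancel the summand~$V\otimes\rH^1(C,\omega_C)\xrightarrow{\sim}V$ coming from~$\zeta_V$ in the expression for~$\bar\cF_1$. The paper's proof is terser (it does not spell out the Gaussian elimination or the tracking of~$\bar\phi$), but the argument is the same; your observation that the vanishing of~$\zeta_V$ on the~$\rH^0$-summand (which is automatic for degree reasons) kills the correction term is exactly the point the paper leaves implicit.
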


\begin{proof}
We just apply Theorem~\ref{thm:serre-general}. 
Since~$\rG^{12}(\cF) = \rH^\bullet(C,\cF)$, $\rG^{21}(V) = V \otimes \cO_C$, we have
\begin{align*}
\bar{V} &= \Cone \Big( V \otimes \rH^\bullet(C,\omega_C[1])
\xrightarrow{\ \zeta_{V} \oplus \rH^\bullet(C,\phi \otimes \omega_C[1])\ } 
V \oplus \rH^\bullet(C,(\cF \otimes \omega_C[1])) \Big),
\\
\bar\cF &= \Cone \Big( V \otimes \omega_C[1] \xrightarrow{\ \phi \otimes \omega_C \ } \cF \otimes \omega_C[1]  \Big).
\end{align*}
It remains to note that in the formula for~$\bar{V}$ 
the map~$\zeta_V$ induces an isomorphism of~$V \otimes \rH^0(C,\omega_C[1]) = V$
with the first summand in~$V \oplus \rH^\bullet(C,(\cF \otimes \omega_C[1]))$.
\end{proof}

\begin{remark}
Assume~$x \in C(\kk) \ne \varnothing$.
Using Theorem~\ref{thm:serre-ac} it is easy to compute the automorphism of~$\rKn(\Db(\cO,C))$ induced by the Serre functor.
In the basis~$[\cE]$, $[\cO_C]$, $[\cO_{x}]$ it is given by the matrix
\begin{equation*}
\begin{pmatrix}
g & g-1 & 1 \\
-1 & -1 & 0 \\
2-2g & 2-2g & -1
\end{pmatrix},
\end{equation*}
whose characteristic polynomial is~$(t-1)(t^2 - (g-3)t + 1)$.
In particular, it is quasiunipotent if and only if~$1 \le g \le 5$ and unipotent if and only if~$g = 5$.
In particular, for~$g \ne 5$ the category~$\Db(\cO,C)$ is not equivalent to the derived category of a variety.
Later we will see that for~$g = 5$ and~$C$ general the category~$\Db(\cO,C)$ 
is a twisted derived category of a Deligne--Mumford stack, see Proposition~\ref{prop:ac-g5}.
\end{remark}

Objects on which (a power of) the Serre functor acts as a shift play an important role
(examples of such objects are the structure sheaves of points in the derived categories of varieties,
or spherical objects in arbitrary categories).
The following lemma gives a way to construct such objects in~$\Db(\cO,C)$.

We will use the following

\begin{definition}
\label{def:augmented-sheaf}
If~$\cF$ is a sheaf on~$C$ the object
\begin{equation}
\label{eq:augmentation}
\fa(\cF) \coloneqq (\rH^0(C,\cF), \cF, \ev_\cF) \in \Db(\cO,C)
\end{equation}
(where~$\ev_\cF \colon \rH^0(C,\cF) \otimes \cO_C \to \cF$ is the evaluation morphism)
is called {\sf the augmentation} of~$\cF$.
\end{definition}

\begin{lemma}
\label{lem:serre-lower-bound}
Let~$(\cF_1,\cF_2)$ be globally generated vector bundles on~$C$ such that~$\rH^1(C,\cF_i \otimes \omega_C) = 0$,
the multiplication maps~$\rH^0(C,\cF_i) \otimes \rH^0(C,\omega_C) \to \rH^0(C, \cF_i \otimes \omega_C)$ are surjective, and 
\begin{align}
\label{eq:serre-pair-cf2}
\cF_2 &\cong \Ker \Big(\rH^0(C,\cF_1) \otimes \omega_C \xrightarrow{\ \ev_{\cF_1} \otimes \omega_C\ } \cF_1 \otimes \omega_C \Big),
\\
\label{eq:serre-pair-cf1}
\cF_1 &\cong \Ker \Big(\rH^0(C,\cF_2) \otimes \omega_C \xrightarrow{\ \ev_{\cF_2} \otimes \omega_C\ } \cF_2 \otimes \omega_C \Big).
\end{align}
Then the augmentations~$\fa(\cF_i) = (\rH^0(C, \cF_i),\cF_i,\ev_{\cF_i})$ are swapped by the Serre functor up to shifts:
\begin{equation}
\label{eq:bs-fa-cfi}
\bS_{\Db(\cO,C)}(\fa(\cF_1)) \cong \fa(\cF_2)[2],
\qquad 
\bS_{\Db(\cO,C)}(\fa(\cF_2)) \cong \fa(\cF_1)[2],
\end{equation}
and in particular~$\bS^2_{\Db(\cO,C)}(\fa(\cF_i)) \cong \fa(\cF_i)[4]$.
\end{lemma}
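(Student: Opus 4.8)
The plan is to apply Theorem~\ref{thm:serre-ac} directly to the augmentation $\fa(\cF_1) = (\rH^0(C,\cF_1), \cF_1, \ev_{\cF_1})$ and show that the resulting triple equals $\fa(\cF_2)[2]$, and then invoke symmetry (the hypotheses are symmetric in $\cF_1, \cF_2$) to get the other equivalence; the final claim $\bS^2 \cong [4]$ is then immediate by composition. So set $V = \rH^0(C,\cF_1)$, $\cF = \cF_1$, $\phi = \ev_{\cF_1}$ and compute $\bar V$, $\bar\cF$, $\bar\phi$ from Theorem~\ref{thm:serre-ac}.

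First I would compute $\bar\cF$. By definition $\bar\cF = \Cone\big(\rH^0(C,\cF_1)\otimes\omega_C \xrightarrow{\ev_{\cF_1}\otimes\omega_C} \cF_1\otimes\omega_C\big)[1]$. The hypothesis that $\cF_1$ is globally generated means $\ev_{\cF_1}$ is surjective, so this cone is (the kernel of the twisted evaluation)$[2]$; by the defining isomorphism~\eqref{eq:serre-pair-cf2} this kernel is exactly $\cF_2$, hence $\bar\cF \cong \cF_2[2]$. Next, $\bar V = \Cone\big(\rH^0(C,\cF_1)\otimes\rH^0(C,\omega_C) \xrightarrow{\phi\otimes\ev_{\omega_C}} \rH^\bullet(C, \cF_1\otimes\omega_C)\big)[1]$. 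Here $\rH^1(C,\cF_1\otimes\omega_C) = 0$ by hypothesis, so $\rH^\bullet(C,\cF_1\otimes\omega_C) = \rH^0(C,\cF_1\otimes\omega_C)$ sits in degree $0$; the displayed map is the multiplication map $\rH^0(C,\cF_1)\otimes\rH^0(C,\omega_C)\to\rH^0(C,\cF_1\otimes\omega_C)$, which is surjective by hypothesis. Therefore $\bar V$ is (the kernel of this multiplication map)$[2]$. To identify this kernel with $\rH^0(C,\cF_2)$, I would take global sections of the short exact sequence $0 \to \cF_2 \to \rH^0(C,\cF_1)\otimes\omega_C \to \cF_1\otimes\omega_C \to 0$ coming from~\eqref{eq:serre-pair-cf2}: since $\rH^1(C,\cF_1\otimes\omega_C) = 0$, the long exact sequence gives $0 \to \rH^0(C,\cF_2) \to \rH^0(C,\cF_1)\otimes\rH^0(C,\omega_C) \to \rH^0(C,\cF_1\otimes\omega_C) \to 0$, so indeed $\bar V \cong \rH^0(C,\cF_2)[2]$. (Incidentally this also forces $\rH^1(C,\cF_2)=0$, which is consistent with $\fa(\cF_2)$ having the predicted numerical class.)

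Finally I would check that under these identifications the morphism $\bar\phi$ becomes (a shift of) the evaluation morphism $\ev_{\cF_2}$. By Theorem~\ref{thm:serre-ac}, $\bar\phi \in \Hom(\bar V\otimes\cO_C, \bar\cF)$ is induced by the evaluation morphisms of $\omega_C$ and of $\cF_1\otimes\omega_C$; unwinding the cone construction, $\bar\phi$ is the map $\rH^0(C,\cF_2)\otimes\cO_C \to \cF_2$ fitting into the morphism of the two short exact sequences above (the one on $\rH^\bullet$ mapping via evaluation to the one on sheaves), and a diagram chase identifies it with $\ev_{\cF_2}$ — this is where one uses that the kernel sequences are the global-sections and sheaf-level versions of the \emph{same} map $\ev_{\cF_1}\otimes\omega_C$. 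Hence $\bS_{\Db(\cO,C)}(\fa(\cF_1)) \cong (\rH^0(C,\cF_2)[2], \cF_2[2], \ev_{\cF_2}[2]) = \fa(\cF_2)[2]$, which is the first equivalence in~\eqref{eq:bs-fa-cfi}. The second equivalence follows by exchanging the roles of $\cF_1$ and $\cF_2$, using~\eqref{eq:serre-pair-cf1} in place of~\eqref{eq:serre-pair-cf2}; composing the two gives $\bS^2_{\Db(\cO,C)}(\fa(\cF_i)) \cong \fa(\cF_i)[4]$.

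The only genuinely delicate point is the identification of $\bar\phi$ with $\ev_{\cF_2}$: the other two identifications are essentially forced once one knows $\ev_{\cF_1}$ and $\ev_{\cF_1}\otimes\omega_C$ are surjective, but tracking the morphism through the cone in Theorem~\ref{thm:serre-ac} requires care about which summand of the cone the evaluation maps land in and compatibility of the two kernel presentations. I expect this bookkeeping to be the main (though still routine) obstacle; everything else is a straightforward application of the already-established Serre functor formula together with the Brill--Noether-type hypotheses on $(\cF_1,\cF_2)$.
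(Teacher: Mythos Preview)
Your proof is correct and follows the same approach as the paper: apply Theorem~\ref{thm:serre-ac} to compute~$\bar\cF$ and~$\bar V$ from the hypotheses, then invoke the symmetry between~$\cF_1$ and~$\cF_2$. In fact you are slightly more careful than the paper, which does not explicitly verify that~$\bar\phi$ is identified with~$\ev_{\cF_2}$.
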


\begin{proof}
Let~$V_i \coloneqq \rH^0(C,\cF_i)$, so that~$\fa(\cF_i) = (V_i,\cF_i,\ev_{\cF_i})$.

Comparing assumptions~\eqref{eq:serre-pair-cf2}, \eqref{eq:serre-pair-cf1} with Theorem~\ref{thm:serre-ac},
and using the global generation of~$\cF_i$,
we see that~$\bar\cF_1 \cong \cF_2[2]$ and~$\bar\cF_2 = \cF_1[2]$.
Furthermore, \eqref{eq:serre-pair-cf2} gives the left exact sequence
\begin{equation*}
0 \to 
\rH^0(C,\cF_2) \to 
\rH^0(C,\cF_1) \otimes \rH^0(C,\omega_C) \to 
\rH^0(C, \cF_1 \otimes \omega_C),
\end{equation*}
whose second arrow is the multiplication map.
Since this map is surjective and~$\rH^1(C,\cF_1 \otimes \omega_C) = 0$ by assumption, 
it follows that~$\overline{V_1} \cong V_2[2]$.
A similar argument shows that~$\overline{V_2} \cong V_1[2]$, 
and~\eqref{eq:bs-fa-cfi} follows.
\end{proof}

\subsection{BN-exceptional objects}
\label{ss:bn-augmentations}

In this subsection we show that Brill--Noether--Petri extremal line bundles on~$C$
give rise to exotic exceptional objects in~$\Db(\cO,C)$ and study their orthogonal complements.

We will say that a line bundle~$\cL$ on a curve~$C$ is {\sf Brill--Noether--Petri} ({\sf BNP}) {\sf extremal}
if the Petri map 
\begin{equation*}
\rH^0(C, \cL) \otimes \rH^0(C, \cL^\vee(K_C)) \to \rH^0(C, \cO_C(K_C))
\end{equation*}
is an isomorphism (and therefore~$h^0(\cL)\cdot h^1(\cL) = \g(C)$).
Note that if~$C$ is general and~$\kk$ is algebraically closed of characteristic zero,
then for any factorization~$\g(C) = r \cdot s$ there are~$\deg(\Gr(r,r+s))$ BNP extremal line bundles on~$C$,
and each of these line bundles is globally generated, see~\cite[Lemma~2.3 and Remark~2.4]{BKM}.

\begin{proposition}
\label{prop:bnp-exceptional}
If~$\cL$ is a BNP extremal line bundle on a curve~$C$ then the augmentation
\begin{equation*}
\cE_\cL \coloneqq \fa(\cL) = (\rH^0(C, \cL), \cL, \ev_\cL)
\end{equation*}
is an exceptional object in~$\Db(\cO,C)$.
\end{proposition}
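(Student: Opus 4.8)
The plan is to compute $\RHom_{\Db(\cO,C)}(\cE_\cL,\cE_\cL)$ directly by means of Lemma~\ref{lem:hom-gluing} and to show that it is isomorphic to~$\kk$ concentrated in degree~$0$. Write $V \coloneqq \rH^0(C,\cL)$, so that $\cE_\cL = (V,\cL,\ev_\cL)$ in the notation of~\eqref{eq:augmentation}. Applying Lemma~\ref{lem:hom-gluing} with $\cF = \cF' = \cE_\cL$ gives a distinguished triangle
\begin{equation*}
\RHom_{\Db(\cO,C)}(\cE_\cL,\cE_\cL) \to
\End_\kk(V) \oplus \rH^\bullet(C,\cO_C) \xrightarrow{\ d\ }
V^\vee \otimes \rH^\bullet(C,\cL),
\end{equation*}
where I have used $\RHom_{\Db(\kk)}(V,V) = \End_\kk(V)$, $\RHom_{\Db(C)}(\cL,\cL) \cong \rH^\bullet(C,\cO_C)$, and $\rG(V,\cL) = \RHom_{\Db(C)}(V\otimes\cO_C,\cL) \cong V^\vee\otimes\rH^\bullet(C,\cL)$. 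Since $\rH^\bullet(C,-)$ is concentrated in degrees $0$ and~$1$ for line bundles on a curve and $\Ext^{>0}_\kk(V,V) = 0$, the long exact cohomology sequence of this triangle involves only degrees $0$, $1$ and~$2$, so it suffices to analyze the two maps $d^0$ and~$d^1$.

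In degree~$0$ the formula of Lemma~\ref{lem:hom-gluing} gives $d^0(f,\lambda\cdot\id_\cL) = \ev_\cL\circ f - \lambda\cdot\ev_\cL$ for $f\in\End_\kk(V)$ and $\lambda\in\Hom_C(\cL,\cL) = \kk$; under the canonical identification $V^\vee\otimes\rH^0(C,\cL) = \End_\kk(V)$ (which sends $\ev_\cL$ to $\id_V$ because $\rH^0(C,\cL) = V$) this map becomes $(f,\lambda)\mapsto f - \lambda\,\id_V$, which is surjective with one-dimensional kernel. Hence $\RHom^0_{\Db(\cO,C)}(\cE_\cL,\cE_\cL) = \kk$, the connecting map from degree~$0$ to degree~$1$ vanishes, and the long exact sequence degenerates to $\RHom^1 = \Ker d^1$ and $\RHom^2 = \Coker d^1$, where
\begin{equation*}
d^1 \colon \rH^1(C,\cO_C) = \Ext^1_C(\cL,\cL) \longrightarrow
V^\vee\otimes\rH^1(C,\cL) = \Hom_\kk\!\big(\rH^0(C,\cL),\rH^1(C,\cL)\big),
\qquad f\longmapsto -f\circ\ev_\cL .
\end{equation*}
Unwinding the identification, $d^1$ sends a class $f$ to the linear map $v\mapsto -f\circ v$, where on the right the Yoneda product $\Ext^1_C(\cL,\cL)\otimes\Hom_C(\cO_C,\cL)\to\Ext^1_C(\cO_C,\cL)$ is the cup product $\rH^1(C,\cO_C)\otimes\rH^0(C,\cL)\to\rH^1(C,\cL)$; thus $\cE_\cL$ is exceptional if and only if the map adjoint to this cup-product pairing is an isomorphism.

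To conclude, I would dualize this last condition using Serre duality on~$C$: under the identifications $\rH^1(C,\cO_C)^\vee\cong\rH^0(C,\omega_C)$ and $\rH^1(C,\cL)^\vee\cong\rH^0(C,\omega_C\otimes\cL^\vee) = \rH^0(C,\cL^\vee(K_C))$, the cup-product pairing $\rH^1(C,\cO_C)\otimes\rH^0(C,\cL)\to\rH^1(C,\cL)$ is dual to the multiplication-of-sections map $\rH^0(C,\cL)\otimes\rH^0(C,\cL^\vee(K_C))\to\rH^0(C,\omega_C) = \rH^0(C,\cO_C(K_C))$, that is, to the Petri map. Hence $d^1$ is an isomorphism precisely when $\cL$ is BNP extremal; note that the equality $h^0(\cL)\cdot h^1(\cL) = \g(C)$ built into that condition already forces the source and target of $d^1$ to have equal dimension, so injectivity, surjectivity and bijectivity of $d^1$ all coincide. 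Therefore $\RHom^1 = \RHom^2 = 0$ and $\cE_\cL$ is exceptional. The main obstacle is the middle step: matching the connecting homomorphism $d^1$ produced by the gluing with the cup-product pairing on~$C$, and then checking that Serre duality intertwines this cup product with the multiplication map defining the Petri map. This amounts to unwinding the standard isomorphisms $\Ext^1_C(\cL,\cL)\cong\rH^1(C,\cO_C)$ and $\Ext^1_C(\cO_C,\cL)\cong\rH^1(C,\cL)$, the compatibility of the Yoneda and cup products, and the associativity and graded-commutativity of the cup product together with the trace map; the remainder of the argument is routine bookkeeping with the long exact sequence.
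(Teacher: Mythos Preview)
Your proof is correct and follows essentially the same approach as the paper: both apply Lemma~\ref{lem:hom-gluing} to the object~$\cE_\cL$, identify the degree-$0$ part of the map as surjective with one-dimensional kernel, and recognize the degree-$1$ map~$\Ext^1_C(\cL,\cL)\to \rH^0(C,\cL)^\vee\otimes\rH^1(C,\cL)$ as Serre-dual to the Petri map. The only difference is presentational: the paper asserts the last identification in a single clause, whereas you spell out the cup-product/Yoneda-product step and the Serre duality compatibility --- your closing paragraph flagging this as ``the main obstacle'' is unnecessary hedging, since this compatibility is standard and the paper treats it as such.
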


\begin{proof}
The cohomology exact sequence of Lemma~\ref{lem:hom-gluing} computing~$\Ext_{\Db(\cO,C)}^\bullet(\cE_\cL, \cE_\cL)$ looks like
\begin{alignat*}{7}
0 &\to 
\Hom_{\Db(\cO,C)}(\cE_\cL, \cE_\cL) &&\to 
\Hom_C(\cL, \cL) \oplus \Hom(\rH^0(C, \cL), \rH^0(C, \cL))  &&\to 
\Hom_C(\rH^0(C, \cL) \otimes \cO_C, \cL) \\ &\to
\Ext_{\Db(\cO,C)}^1(\cE_\cL, \cE_\cL) &&\to 
\Ext_C^1(\cL, \cL) &&\to 
\Ext^1_C(\rH^0(C, \cL) \otimes \cO_C, \cL) \to
\dots
\end{alignat*}
The map~$\Hom(\rH^0(C, \cL), \rH^0(C, \cL)) \to \Hom_C(\rH^0(C, \cL) \otimes \cO_C, \cL)$ in the first row
is obviously an isomorphism, hence~$\Hom_{\Db(\cO,C)}(\cE_\cL, \cE_\cL) \cong \Hom_C(\cL, \cL) \cong \kk$.
Further, the map 
\begin{equation*}
\Ext_C^1(\cL, \cL) \to 
\Ext^1_C(\rH^0(C, \cL) \otimes \cO_C, \cL) \cong
\rH^0(C, \cL)^\vee \otimes \rH^1(C, \cL)
\end{equation*}
in the second row is dual to the Petri map, hence it is an isomorphism as well, 
and we conclude that~$\Ext_{\Db(\cO,C)}^i(\cE_\cL, \cE_\cL) = 0$ for all~$i \ne 0$,
hence~$\cE_\cL$ is exceptional.
\end{proof}

\begin{definition}
\label{def:bn-exceptional}
If~$\cL$ is a BNP extremal line bundle on a curve~$C$,
the exceptional object~$\cE_\cL = \fa(\cL)$ defined in Proposition~\ref{prop:bnp-exceptional}
is called the {\sf Brill--Noether (BN) exceptional object}.
\end{definition}

\begin{remark}
\label{rem:bnex-o-om}
Using Theorem~\ref{thm:serre-ac} it is easy to check that
\begin{equation*}
\cE_{\cO_C} \cong \bS^{-1}(\cE)
\qquad\text{and}\qquad
\cE_{\omega_C} \cong \bS(\cE)[-2].
\end{equation*}
Thus, these two BN-exceptional objects can be obtained 
from the canonical exceptional object~$\cE$ by an autoequivalence of~$\Db(\cO,C)$.
In particular, the orthogonal complements~${}^\perp\cE_{\cO_C}$ and~${}^\perp\cE_{\omega_C}$ 
of these objects in~$\Db(\cO,C)$ are equivalent to~$\Db(C)$.
\end{remark}

The orthogonal complements of other BN-exceptional objects turn out to be interesting.

\begin{definition}
\label{def:bn-modification}
If~$\cE_\cL$ is a BN-exceptional object in~$\Db(\cO,C)$ the category~$\cE_\cL^\perp \simeq {}^\perp\cE_\cL \subset \Db(\cO,C)$
is called the {\sf BN-modifications} of~$\Db(C)$ with respect to~$\cL$.
\end{definition}

A geometrically meaningful example of a BN-modification is given in Appendix~\ref{sec:cubic-3}.

\begin{proposition}
\label{prop:invariants-mac}
Let~$C$ be a smooth curve of genus~$g$ with a BNP extremal line bundle~$\cL$.
The $\cL$-modification~${}^\perp\cE_\cL$ of~$\Db(C)$ is a smooth and proper triangulated category.
Moreover, we have
\begin{arenumerate}
\item 
\label{it:additive-mac}
$\HH_\bullet({}^\perp\cE_\cL) = \kk^g[1] \oplus \kk^2 \oplus \kk^g[-1]$
and
$\rK_0({}^\perp\cE_\cL) = \Z^2 \oplus \Pic^0(C)$.
\item 
\label{it:rkn-mac}
$\rKn({}^\perp\cE_\cL) = \Z^2$; if, furthermore, $C(\kk) \ne \varnothing$ and~$x \in C(\kk)$
then the matrix of the Euler form in the basis~$[\cE] - h^1(\cL)[\cO_{x}]$, $[\cE] - [\cO_C] - h^0(\cL)[\cO_{x}]$ is
\begin{equation*}
\upchi = 
\begin{pmatrix}
1 - h^1(\cL) & \g(C) - h^0(\cL) - h^1(\cL) \\
1 & 1 - h^0(\cL)
\end{pmatrix}.
\end{equation*}
\item 
\label{it:jac-mac}
If~$\kk = \CC$ then
$\Jac({}^\perp\cE_\cL) \cong \Jac(C)$ is an isomorphism of principally polarized abelian varieties.
\end{arenumerate}
\end{proposition}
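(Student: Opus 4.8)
The plan is to run everything through the semiorthogonal decomposition produced by the exceptional object~$\cE_\cL$. By Proposition~\ref{prop:bnp-exceptional} the object~$\cE_\cL = \fa(\cL)$ is exceptional, so it generates an admissible subcategory~$\langle\cE_\cL\rangle \simeq \Db(\kk)$ and we obtain a semiorthogonal decomposition~$\Db(\cO,C) = \langle {}^\perp\cE_\cL, \cE_\cL\rangle$. Since~$\Db(\cO,C)$ is smooth and proper by Proposition~\ref{prop:invariants-ac}, applying the ``only if'' direction of Proposition~\ref{prop:gluing-sp} to this decomposition shows that~${}^\perp\cE_\cL$ is smooth and proper. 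The three remaining assertions will then follow by feeding this decomposition into the additivity and compatibility statements of Section~\ref{sec:gluing}, using the invariants of~$\Db(\cO,C)$ from Proposition~\ref{prop:invariants-ac} together with the (trivial) invariants of~$\Db(\kk)$.

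For part~\ref{it:additive-mac} I would invoke additivity of Hochschild homology and $\rK$-theory along semiorthogonal decompositions (Proposition~\ref{prop:invariants-general}\ref{it:additivity}): since~$\HH_\bullet(\Db(\kk)) = \kk$ sits in degree~$0$ and~$\rK_0(\Db(\kk)) = \Z$, subtracting these summands from the answers of Proposition~\ref{prop:invariants-ac}\ref{it:additive-ac} yields exactly the claimed~$\HH_\bullet({}^\perp\cE_\cL)$ and~$\rK_0({}^\perp\cE_\cL)$. For part~\ref{it:jac-mac}, note first that~${}^\perp\cE_\cL$ is admissible in~$\Db(\cO,C)$, hence, composing with the embedding of Lemma~\ref{lem:ac-blowup} (with~$X = \P^3$), admissible in~$\Db(\Bl_C(\P^3))$, so it is of geometric origin and~$\Jac({}^\perp\cE_\cL)$ is defined. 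Applying Proposition~\ref{prop:ij-gluing} to~$\Db(\cO,C) = \langle {}^\perp\cE_\cL, \cE_\cL\rangle$ together with~$\Jac(\Db(\kk)) = 0$ gives~$\Jac({}^\perp\cE_\cL) \cong \Jac(\Db(\cO,C)) \cong \Jac(C)$, the last isomorphism by Proposition~\ref{prop:invariants-ac}\ref{it:jac-ac}; and since~$\rK^\tp_1(\Db(\kk)) = 0$, Remark~\ref{rem:ppav} identifies the Euler pairing on~$\rK^\tp_1({}^\perp\cE_\cL)$ with that on~$\rK^\tp_1(\Db(\cO,C))$, which is unimodular, skew-symmetric and positive definite, so the isomorphism is one of principally polarized abelian varieties.

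Part~\ref{it:rkn-mac} is the one computational step. By Proposition~\ref{prop:invariants-general}\ref{it:rkn}, $\rKn({}^\perp\cE_\cL)$ is the saturated rank-two sublattice of~$\rKn(\Db(\cO,C)) = \Z^3$ left-orthogonal to~$[\cE_\cL]$ for the Euler form of Proposition~\ref{prop:invariants-ac}\ref{it:rkn-ac}, and the restricted Euler form is the sought matrix. I would first compute~$[\cE_\cL]$: the components of~$\fa(\cL) = (\rH^0(C,\cL), \cL, \ev_\cL)$ with respect to~$\langle\cE, \Db(C)\rangle$ are~$\rH^0(C,\cL)$ and~$\cL[-1]$ (the shift in the second component, cf.\ the remark after Notation~\ref{not:fff}, is the point to watch), so~$[\cE_\cL] = h^0(\cL)[\cE] - [\cO_C] - \deg(\cL)[\cO_{x}]$; substituting~$\deg(\cL) = h^0(\cL) - h^1(\cL) + \g(C) - 1$ from Riemann--Roch and applying the Euler matrix, one finds that the left-orthogonal of~$[\cE_\cL]$ consists of the classes~$a[\cE] + b[\cO_C] + c[\cO_{x}]$ with~$c = -a\,h^1(\cL) + b\bigl(h^0(\cL) - h^1(\cL)\bigr)$. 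It is then immediate that~$[\cE] - h^1(\cL)[\cO_{x}]$ and~$[\cE] - [\cO_C] - h^0(\cL)[\cO_{x}]$ lie in this sublattice and, since the transition to the evident basis~$\{(1,0,-h^1(\cL)),\,(0,1,h^0(\cL)-h^1(\cL))\}$ has determinant~$\pm 1$, that they form a~$\Z$-basis of it; evaluating the Euler form of Proposition~\ref{prop:invariants-ac}\ref{it:rkn-ac} on this pair produces the stated~$2\times 2$ matrix.

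I do not anticipate a genuine obstacle: the whole argument is an application of the general machinery to~$\Db(\cO,C) = \langle {}^\perp\cE_\cL, \cE_\cL\rangle$. The only places requiring care are, in part~\ref{it:rkn-mac}, getting~$[\cE_\cL]$ right (in particular the shift of the second component) and verifying that the two listed classes span~$\rKn({}^\perp\cE_\cL)$ over~$\Z$ and not merely over~$\QQ$.
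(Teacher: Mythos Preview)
Your proposal is correct and follows essentially the same route as the paper: smoothness and properness from Proposition~\ref{prop:gluing-sp}, part~\ref{it:additive-mac} from additivity, part~\ref{it:jac-mac} from Lemma~\ref{lem:ac-blowup} with~$X = \P^3$, and part~\ref{it:rkn-mac} by identifying~$\rKn({}^\perp\cE_\cL)$ with the left-orthogonal of~$[\cE_\cL]$ in~$\rKn(\Db(\cO,C))$ and computing with the Euler matrix of Proposition~\ref{prop:invariants-ac}\ref{it:rkn-ac}. One small slip: under the paper's conventions the semiorthogonal decomposition reads~$\Db(\cO,C) = \langle \cE_\cL, {}^\perp\cE_\cL\rangle$, not~$\langle {}^\perp\cE_\cL, \cE_\cL\rangle$, but this does not affect any of your arguments.
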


\begin{proof}
The smoothness and properness of~${}^\perp\cE_\cL$
follow from Propositions~\ref{prop:gluing-sp} and~\ref{prop:invariants-ac}.

Part~\ref{it:additive-mac} follows from additivity of Hochschild homology and~$\rK_0$
and Proposition~\ref{prop:invariants-ac}\ref{it:additive-ac}.

Part~\ref{it:rkn-mac} follows easily from Proposition~\ref{prop:invariants-ac}\ref{it:rkn-ac}.
Indeed, since~$[\cE_\cL] = h^0(\cL)[\cE] - [\cL]$,
we have
\begin{equation*}
\upchi([\cE],[\cE_\cL]) = h^0(\cL) - \upchi(\cL) = h^1(\cL),\qquad
\upchi([\cO_C],[\cE_\cL]) = -\upchi(\cL),\qquad
\upchi([\cO_{\pt}],[\cE_\cL]) = 1,
\end{equation*}
hence the elements~$[\cE] - h^1(\cL)[\cO_{x}]$ and~$[\cE] - [\cO_C] - h^0(\cL)[\cO_{x}]$ 
generate~$\rKn({}^\perp\cE_\cL)$,
and a straightforward computation gives the matrix of the Euler form in this basis.

Part~\ref{it:jac-mac} follows from Proposition~\ref{prop:invariants-ac}\ref{it:jac-ac} and Remark~\ref{rem:ppav}.
\end{proof}

In particular, Proposition~\ref{prop:invariants-mac} shows that~${}^\perp\cE_\cL$ 
has the same Hochschild homology, Grothendieck group, and intermediate Jacobian as the derived category of a curve,
so the next result may look surprising.

\begin{corollary}
\label{cor:bnex-ort}
Let~$\cL$ be a BNP extremal line bundle on a curve~$C$.
If~$\cL \not\cong \cO_C$ and~$\cL \not\cong \omega_C$
then the BN-modification~${}^\perp\cE_\cL$ of~$\Db(C)$ is not equivalent to the derived category of a curve.
\end{corollary}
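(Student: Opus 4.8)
The plan is to argue by contradiction, pitting the numerical invariants of Proposition~\ref{prop:invariants-mac} against those of the derived category of a curve. Suppose ${}^\perp\cE_\cL \simeq \Db(C')$ for a smooth projective curve $C'$; any such equivalence identifies Hochschild homology and induces an isometry of numerical Grothendieck groups with the Euler form, and these are the only facts I will use. From the Hochschild--Kostant--Rosenberg isomorphism, $\HH_\bullet(\Db(C')) = \kk^{\g(C')}[1] \oplus \kk^{2} \oplus \kk^{\g(C')}[-1]$, while Proposition~\ref{prop:invariants-mac} gives $\HH_\bullet({}^\perp\cE_\cL) = \kk^{g}[1] \oplus \kk^{2} \oplus \kk^{g}[-1]$ with $g \coloneqq \g(C)$; hence $\g(C') = g$. (Over $\CC$ one could instead use the isomorphism of principally polarized Jacobians in Proposition~\ref{prop:invariants-mac} together with the Torelli theorem to get $C' \cong C$ directly, but the Hochschild comparison works in any characteristic.)

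The key is the diagonal quadratic form $q(v) \coloneqq \upchi(v,v)$ on $\rKn \cong \ZZ^{2}$, which any equivalence preserves. For $\Db(C')$, using $\upchi(\cO_{C'},\cO_{C'}) = 1-g$, $\upchi(\cO_{\pt},\cO_{\pt}) = 0$, and $\upchi(\cO_{C'},\cO_{\pt}) + \upchi(\cO_{\pt},\cO_{C'}) = 0$, the form $q$ equals $(1-g)$ times the square of a primitive linear functional; in particular it is degenerate. For ${}^\perp\cE_\cL$, feeding the matrix of $\upchi$ from Proposition~\ref{prop:invariants-mac} into $q$ and writing $c \coloneqq h^{0}(\cL)h^{1}(\cL) - h^{0}(\cL) - h^{1}(\cL) = g - h^{0}(\cL) - h^{1}(\cL)$ (using $h^{0}(\cL)h^{1}(\cL) = g$), a short computation shows that $q$ has discriminant $(c+1)(c-3)$.

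I then split on $c$. If $c \notin \{-1,3\}$ the discriminant is nonzero, so $q$ is non-degenerate on $\rKn({}^\perp\cE_\cL)$ yet degenerate on $\rKn(\Db(C'))$ --- a contradiction. If $c = -1$, then $(h^{0}(\cL)-1)(h^{1}(\cL)-1) = 0$, and Riemann--Roch forces $\cL \cong \cO_{C}$ or $\cL \cong \omega_{C}$, which is excluded. The remaining case $c = 3$ gives $(h^{0}(\cL)-1)(h^{1}(\cL)-1) = 4$, so $(h^{0}(\cL),h^{1}(\cL)) \in \{(2,5),(3,3),(5,2)\}$ and $g \in \{9,10\}$, and now $q$ is degenerate of rank one, so the discriminant test is inconclusive. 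For these three families I will compute the finer invariant: $q$ equals $s$ times the square of a primitive linear functional, with $s = -2$ when $g = 9$ and $s = -1$ when $g = 10$, whereas for $\Db(C')$ the analogous scalar is $1 - \g(C') = 1 - g \in \{-8,-9\}$. Since two such forms are isomorphic over $\ZZ$ only when their scalars coincide, and $\{-2,-1\} \cap \{-8,-9\} = \varnothing$, this finishes the argument. (Equivalently: in the $c = 3$ cases the Serre operator on $\rKn$ has characteristic polynomial $(t+1)^{2}$, matching that of any curve, cf.\ the remark after Theorem~\ref{thm:serre-ac}, but $\Coker(\bS + \id)$ equals $\ZZ \oplus \ZZ/2$ or $\ZZ \oplus \ZZ/4$ instead of $\ZZ \oplus \ZZ/(2g-2)$.)

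The case $c = 3$ is the real obstacle. For those three families --- which do occur (e.g.\ a general net of degree $8$ on a genus $9$ curve) --- every coarse invariant, namely the graded dimensions of Hochschild homology, the Grothendieck group with its Euler form up to rational equivalence, the characteristic polynomial of the Serre functor on $\rKn$, and, over $\CC$, the intermediate Jacobian, coincides with that of the derived category of a genus $g$ curve; one genuinely needs an integral refinement --- the isomorphism class of $\upchi(-,-)$, equivalently of the Serre operator, on the rank-two lattice $\rKn$ --- to separate them. All other values of $c$ are killed at once by non-degeneracy of $q$.
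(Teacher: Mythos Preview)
Your proof is correct and follows essentially the same strategy as the paper's: compare the symmetrized Euler form on~$\rKn$, use the discriminant to rule out the generic case, and in the degenerate case compare the coefficient of the primitive square. Your parameter~$c = h^0 h^1 - h^0 - h^1$ is related to the paper's~$a = h^0 - 1$, $b = h^1 - 1$ by~$c = ab - 1$, so your discriminant~$(c+1)(c-3)$ is exactly the paper's~$ab(ab-4)$; the only substantive difference is that you make explicit, via Hochschild homology, the equality~$\g(C') = g$ that the paper uses tacitly when it writes ``a curve of the corresponding genus''.
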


\begin{proof}
Assume~${}^\perp\cE_\cL \simeq \Db(C')$.
Then there is an isometry~$\rKn({}^\perp\cE_\cL) \cong \rKn(C')$ of lattices 
with respect to the symmetrizations of the Euler forms.
Passing to an appropriate field extension, we may assume that both~$C$ and~$C'$ have $\kk$-points.
Then by Proposition~\ref{prop:invariants-mac}\ref{it:rkn-mac}
the symmetrization~$\upchi_{\Sym}$ of the bilinear form~$\upchi$ of~${}^\perp\cE_\cL$ takes the form~$-ax^2 +abxy - by^2$,
where~$a = h^0(\cL) - 1$ and~$b = h^1(\cL) - 1$,
while for~$\Db(C')$ is takes the form~$(1-\g(C'))r^2$.
Thus, a necessary condition for the equivalence~${}^\perp\cE_\cL \simeq \Db(C')$
is the vanishing of the discriminant~$a^2b^2 - 4ab = ab(ab - 4)$ of~$\upchi_{\Sym}$, 
which gives one of the following possibilities:
\begin{itemize}
\item 
$a = 0$ or~$b = 0$, or
\item 
$a = b = 2$ or~$\{a,b\} = \{1,4\}$.
\end{itemize}
The cases~$a = 0$ and~$b = 0$ correspond to~$\cL = \cO_C$ and~$\cL = \omega_C$, respectively,
so it remains to show that the other two cases are impossible.
Note that~$\g(C) = h^0(\cL)\cdot h^1(\cL) = (a + 1)(b + 1)$.

Indeed, if~$a = b = 2$, then~$\upchi_{\Sym} = -2(x-y)^2$, 
while the symmetrized Euler form of a curve of the corresponding genus~$g = (a+1)(b+1) = 9$ takes the form~$-8r^2$,
and these forms are not equivalent.

Similarly, if~$a = 4$ and~$b = 1$, then~$\upchi_{\Sym} = -(2x-y)^2$, 
while the symmetrized Euler form of a curve of the corresponding genus~$g = (a+1)(b+1) = 10$ takes the form~$-9r^2$,
and these forms are not equivalent as well. 
The case where~$a = 1$ and~$b = 4$ is analogous.
\end{proof}

\begin{remark}
\label{rem:hhc-bnp-mod}
The computation of Hochschild cohomology of the BN-modification~${}^\perp\cE_\cL$ is much harder.
For completeness, we provide a sketch under a generality assumption.

Let~$\cL \not\in \{\cO_C, \omega_C\}$ be a globally generated BNP extremal line bundle
and assume the morphism
\begin{equation*}
\rH^0(C,\cL) \otimes \rH^0(C,\omega_C) \to \rH^0(\cL \otimes \omega_C)
\end{equation*}
is surjective.
Then~$\bS(\cE_\cL) = \fa(\cF_\cL \otimes \omega_C)[2]$, 
where~$\cF_\cL$ is the vector bundle defined from the exact sequence
\begin{equation*}
0 \to \cF_\cL \to \rH^0(C, \cL) \otimes \cO_C \to \cL \to 0.
\end{equation*}
Furthermore, denoting by~$\bar\cL \coloneqq \cL^{-1} \otimes \omega_C$ the adjoint BNP extremal line bundle,
it is not hard to check that the groups~$\Ext^p(\bS(\cE_\cL), \cE_\cL)$ with~$p \in \{2,3,4\}$ 
are isomorphic to the cohomology of the complex
\begin{equation}
\label{eq:l-bl-omega}
\rH^1(\omega_C^{-1}) \to
\Big(\rH^0(\cL)^\vee \otimes \rH^1(\bar\cL^{-1})\Big) \oplus
\Big(\rH^1(\cL^{-1}) \otimes \rH^0(\bar\cL)^\vee\Big) \to
\rH^0(\cL)^\vee \otimes \rH^1(\cO_C) \otimes \rH^0(\bar\cL)^\vee,
\end{equation}
where the maps are dual to the natural multiplication maps,
and vanish for~$p \not\in \{2,3,4\}$.

On the other hand, by~\cite[Theorem~3.3 and Proposition~3.7]{K15} 
Hochschild cohomology of~${}^\perp\cE_\cL$ is computed as the cone of the morphism
\begin{equation*}
\Ext^\bullet(\bS(\cE_\cL), \cE_\cL) \to \HH^\bullet(\Db(\cO,C)) = \kk \oplus \rH^1(\omega_C^{-1})[-2].
\end{equation*}
It is not hard to see that the map~$\Ext^2(\bS(\cE_\cL), \cE_\cL) \to \rH^1(\omega_C^{-1})$ 
is the natural embedding of the kernel of the first arrow in~\eqref{eq:l-bl-omega}.
Therefore, it follows that~$\HH^0({}^\perp\cE_\cL) = \kk$, while
$\HH^2({}^\perp\cE_\cL)$ and~$\HH^3({}^\perp\cE_\cL)$ are isomorphic to the kernel and cokernel of the map
\begin{equation*}
\Big(\rH^0(\cL)^\vee \otimes \rH^1(\bar\cL^{-1})\Big) \oplus
\Big(\rH^1(\cL^{-1}) \otimes \rH^0(\bar\cL)^\vee\Big) \to
\rH^0(\cL)^\vee \otimes \rH^1(\cO_C) \otimes \rH^0(\bar\cL)^\vee, 
\end{equation*}
and all other Hochschild cohomology groups vanish.
\end{remark}

\subsection{Augmented curves of small genus}
\label{ss:ac579}

We finish this section by discussing the special features of some augmented curves.
Throughout this subsection we assume that~$C(\kk) \ne \varnothing$
and leave to the interested readers to check what happens otherwise.

We start with the example of~$g = 0$.
The following description follows immediately from the definition.

\begin{lemma}
\label{lem:ac-0}
If~$\g(C) = 0$ then~$\Db(\cO,C)$ is derived equivalent to the quiver
\begin{equation*}
\xymatrix@1{ \bullet \ar[r] & \bullet \ar@<.5ex>[r] \ar@<-.5ex>[r] & \bullet} 
\end{equation*}
with no relations.
In particular, $\Db(\cO,C)$ has a full exceptional collection.
\end{lemma}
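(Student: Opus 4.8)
The plan is to realize $\Db(\cO,C)$ for $\g(C)=0$ as the derived category of a concrete path algebra with no relations, and then to observe that this quiver is directed and hence already provides a full exceptional collection. First I would recall that $C \cong \P^1$ (since $C(\kk)\neq\varnothing$ and $\g(C)=0$), so that $\Db(C) = \langle \cO_C, \cO_C(1)\rangle$ is the Beilinson exceptional collection; combined with the canonical exceptional object $\cE$ this gives a full exceptional collection $\Db(\cO,C) = \langle \cE, \cO_C, \cO_C(1)\rangle$ of length three. The existence of a full exceptional collection is therefore immediate; the content of the lemma is to identify the shape of the quiver and the absence of relations.

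Next I would compute the three relevant $\Ext$-spaces between consecutive objects. By the definition of the gluing bimodule, $\RHom_{\Db(\cO,C)}(\cE, \cO_C) = \rH^\bullet(\P^1,\cO_{\P^1}) = \kk$, concentrated in degree $0$, and likewise $\RHom_{\Db(\cO,C)}(\cE,\cO_C(1)) = \rH^\bullet(\P^1,\cO_{\P^1}(1)) = \kk^2$, again in degree $0$. On the other hand $\RHom_{\Db(C)}(\cO_C,\cO_C(1)) = \rH^\bullet(\P^1,\cO_{\P^1}(1)) = \kk^2$ in degree $0$. Thus the arrows go $\cE \to \cO_C$ (one arrow), $\cO_C \rightrightarrows \cO_C(1)$ (two arrows), and there are no higher $\Ext$'s to produce relations; this is exactly the displayed quiver. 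The algebra is hereditary because all the $\RHom$'s are concentrated in degree zero, so there are no relations and no possibility of any, and $\Db(\cO,C)$ is equivalent to the bounded derived category of finite-dimensional representations of this quiver. (One can phrase this cleanly via Theorem~\ref{thm:gluing}, or simply by noting that a strong full exceptional collection with $\Hom$'s only in degree zero yields a derived equivalence with the path algebra of the associated quiver, which here is directed with the indicated arrows and empty relation set.)

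The only point requiring a little care is the compatibility of the two "layers" of arrows: one must check there is no nontrivial composition $\cE \to \cO_C \rightrightarrows \cO_C(1)$ that could be forced to equal the direct arrows $\cE \to \cO_C(1)$, i.e. that the full subquiver on $\{\cE,\cO_C,\cO_C(1)\}$ genuinely has free path algebra. But the composition map $\Hom(\cE,\cO_C)\otimes\Hom(\cO_C,\cO_C(1)) \to \Hom(\cE,\cO_C(1))$ is, under the identifications above, the multiplication $\kk \otimes \rH^0(\P^1,\cO(1)) \to \rH^0(\P^1,\cO(1))$, which is an isomorphism onto a $2$-dimensional space — so the two composite paths are linearly independent and span $\Hom(\cE,\cO_C(1))$, exactly as in the free path algebra of the quiver drawn. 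Hence no relations are imposed, and I expect this bookkeeping about compositions (rather than any deep input) to be the main — and only mild — obstacle.
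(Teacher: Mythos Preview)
Your proposal is correct and matches the paper's approach: the paper simply says the description ``follows immediately from the definition,'' and what you have written is precisely the natural unpacking of that phrase via Beilinson's collection on~$\P^1$ together with the gluing bimodule. Your final paragraph verifying that the composition map is an isomorphism is the right (and only) check needed to rule out both extra arrows from~$\cE$ to~$\cO_C(1)$ and relations among the length-two paths.
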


For~$g > 0$ the category~$\Db(\cO,C)$ does not admit a full exceptional collection
(e.g., by Proposition~\ref{prop:invariants-ac});
in particular, it is not equivalent to the derived category of a directed quiver.

The next lemma gives an alternative description of the augmentation of an elliptic curve.

\begin{lemma}
\label{lem:ac-1}
If~$\g(C) = 1$ then~$\Db(\cO,C)$ is derived equivalent to the square root stack
\begin{equation*}
\Db(\cO,C) \simeq \Db(\sqrt{C,x}),
\end{equation*}
where~$x \in C$ is a $\kk$-point of~$C$.
\end{lemma}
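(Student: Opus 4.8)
The plan is to identify both sides via Lemma~\ref{lem:point-augmentation}, which already tells us that $\Db(\sqrt{C,x})$ is the gluing of $\Db(\kk)$ and $\Db(C)$ with gluing object $\cO_x$. By Remark~\ref{rem:ac-lb}, $\Db(\cO,C)$ is the gluing of $\Db(\kk)$ and $\Db(C)$ with gluing object $\cO_C$. So it suffices to produce an autoequivalence of $\Db(C)$ carrying the line bundle $\cO_C$ to the skyscraper $\cO_x$ (up to shift and tensoring by a line bundle on $C$, which is harmless by Corollary~\ref{cor:gluing-auto}), compatibly with the gluing structure in the sense of Theorem~\ref{thm:gluing}. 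Here is where we use that $\g(C)=1$: the Fourier--Mukai autoequivalence $\Phi$ of $\Db(C)$ associated to the Poincar\'e bundle on $C\times\widehat{C}\cong C\times C$ (after choosing the point $x$ to identify $C$ with its dual $\widehat C=\Pic^0(C)$) sends degree-zero line bundles to skyscrapers and vice versa; in particular $\Phi(\cO_C)\cong \cO_x$ for a suitable choice (possibly after a shift). This is the classical statement that the elliptic curve is its own Fourier--Mukai partner.

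First I would set up the identification $\widehat C \cong C$ using the point $x$, so that the Poincar\'e line bundle $\cP$ lives on $C\times C$, and recall that the Fourier--Mukai transform $\Phi_{\cP}\colon\Db(C)\to\Db(C)$ is an equivalence with $\Phi_{\cP}(\cO_C)\cong\cO_x[-1]$ (the shift coming from the usual cohomological normalization). Next I would invoke the functoriality of gluing from Theorem~\ref{thm:gluing}: taking $\Phi_1=\id$ on $\Db(\kk)$ and $\Phi_2=\Phi_{\cP}$ on $\Db(C)$, I need to check that these are compatible with the gluing bimodules, i.e.\ that the bimodule $\rG$ for $\Db(\cO,C)$ (with $\urG=\cO_C$) is carried to the bimodule $\rG'$ for $\Db(\sqrt{C,x})$ (with $\urG'=\cO_x[-1]$, as computed in the proof of Lemma~\ref{lem:point-augmentation}). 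Concretely this amounts to the natural isomorphism
\begin{equation*}
\rH^\bullet(C,\cF) \cong \rH^\bullet\big(C, \Phi_{\cP}(\cF)\otimes\cO_x[-1]\big)
\end{equation*}
bifunctorially in $\cF\in\Db(C)$, which follows from the base-change/projection-formula description of the Fourier--Mukai transform: $\Phi_{\cP}(\cF)|_x$ computes $\rH^\bullet(C,\cF\otimes\cP|_{C\times\{x\}})=\rH^\bullet(C,\cF)$ since $\cP|_{C\times\{x\}}\cong\cO_C$. Then Theorem~\ref{thm:gluing} produces an equivalence $\Db(\cO,C)\xrightiso\Db(\sqrt{C,x})$, and Corollary~\ref{cor:gluing-auto} absorbs the shift discrepancy between $\cO_x$ and $\cO_x[-1]$.

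The main obstacle I expect is pinning down the compatibility of the gluing bimodules precisely — that is, verifying the displayed bifunctorial isomorphism with the correct shift and checking it is genuinely an isomorphism of \emph{dg-bimodules}, not merely of the underlying cohomology bimodules, so that Theorem~\ref{thm:gluing} applies. This is a standard but slightly delicate point: one should exhibit the isomorphism as induced by a morphism of Fourier--Mukai kernels on $C\times C$ (the identity kernel $\cO_\Delta$ versus $\cP$ composed with the kernel of restriction to $C\times\{x\}$), using that convolution of kernels computes composition of functors at the dg level. An alternative, cleaner route that sidesteps the bimodule bookkeeping is to compare invariants and exceptional objects directly: by Proposition~\ref{prop:invariants-ac} and Remark~\ref{rem:hhc-ac-g01} the Hochschild (co)homology and numerical Grothendieck group of $\Db(\cO,C)$ match those of $\Db(\sqrt{C,x})$ (a smooth proper DM stack of dimension one with one stacky point of order two), and one can match the canonical exceptional object $\cE$ with $\cO_x\otimes\chi$; but turning this into a proof of equivalence still ultimately goes through the gluing functoriality, so I would present the Fourier--Mukai argument as the primary one.
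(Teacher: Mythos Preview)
Your proposal is correct and follows essentially the same approach as the paper: use an autoequivalence of~$\Db(C)$ (the Fourier--Mukai transform with the Poincar\'e bundle) that sends~$\cO_C$ to~$\cO_x$, apply Theorem~\ref{thm:gluing} to transport the gluing, and identify the result with~$\Db(\sqrt{C,x})$ via Lemma~\ref{lem:point-augmentation} and Corollary~\ref{cor:gluing-auto}. The paper's proof is much terser --- it simply asserts the existence of the autoequivalence and invokes Theorem~\ref{thm:gluing} --- whereas you take extra care to verify bimodule compatibility at the dg level; this additional care is reasonable but not strictly required given the paper's conventions.
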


\begin{proof}
Recall that for a curve~$C$ of genus~1 there is an autoequivalence 
which takes~$\cO_C$ to the structure sheaf of a point~$\cO_x$.
By Theorem~\ref{thm:gluing} it induces an equivalence of~$\Db(\cO,C)$
onto the gluing of~$\Db(\kk)$ and~$\Db(C)$ with the gluing object~$\cO_x$,
which by Corollary~\ref{cor:gluing-auto} is equivalent to~$\Db(\sqrt{C,x})$. 
\end{proof}

For curves of genus~$g \in \{2,3,4\}$ no direct geometric descriptions of augmentations are available.
However, they have some interesting properties that we want to point out.

\begin{lemma}
\label{lem:ac-3}
Let~$C$ be a hyperelliptic curve of genus~$g = 3$.
If~$\cL$ is the hyperelliptic line bundle, the augmentation~$\fa(\cL)$ is a $2$-spherical object, i.e.,
\begin{equation*}
\Ext^\bullet(\fa(\cL), \fa(\cL)) \cong \kk \oplus \kk[-2]
\qquad\text{and}\qquad 
\bS_{\Db(\cO,C)}(\fa(\cL)) \cong \fa(\cL)[2].
\end{equation*}
In particular, the functor~$\cF \mapsto \Cone\Big(\Ext^\bullet(\fa(\cL),\cF) \otimes \fa(\cL) \to \cF\Big)$
is an autoequivalence of~$\Db(\cO,C)$.
\end{lemma}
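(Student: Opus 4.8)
The plan is to establish the two displayed statements separately --- the $\bS$-invariance of $\fa(\cL)$ from Lemma~\ref{lem:serre-lower-bound}, and the computation of $\Ext^\bullet(\fa(\cL),\fa(\cL))$ from Lemma~\ref{lem:hom-gluing} --- and then to deduce the last assertion from the standard spherical-twist construction. Throughout, $\cL$ has degree $2$ and, by Riemann--Roch, $h^0(C,\cL)=h^1(C,\cL)=2$; moreover $\cL$ is globally generated (it is the hyperelliptic pencil) and $\omega_C\cong\cL^{\otimes 2}$, hence $\cL^\vee\otimes\omega_C\cong\cL$. Note that $h^0(\cL)\cdot h^1(\cL)=4\ne 3=\g(C)$, so $\cL$ is \emph{not} BNP extremal and $\fa(\cL)$ is not covered by Proposition~\ref{prop:bnp-exceptional}.

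For the Serre functor I would apply Lemma~\ref{lem:serre-lower-bound} with $\cF_1=\cF_2=\cL$. Its hypotheses are quickly checked using the hyperelliptic double cover $f\colon C\to\P^1$, for which $f_*\cO_C\cong\cO_{\P^1}\oplus\cO_{\P^1}(-4)$: global generation is clear; $\cL\otimes\omega_C\cong\cL^{\otimes 3}$ has degree $6>2\g(C)-2$, so $\rH^1(C,\cL\otimes\omega_C)=0$; the multiplication map $\rH^0(C,\cL)\otimes\rH^0(C,\omega_C)\to\rH^0(C,\cL\otimes\omega_C)$ is identified via $f^*$ with the surjective multiplication $\rH^0(\P^1,\cO(1))\otimes\rH^0(\P^1,\cO(2))\to\rH^0(\P^1,\cO(3))$; and finally the kernel $\cF_\cL$ of $\ev_\cL\colon\rH^0(C,\cL)\otimes\cO_C\to\cL$ is a line bundle with $\det\cF_\cL\cong\cL^{-1}$, so $\cF_\cL\otimes\omega_C\cong\cL^{-1}\otimes\cL^{\otimes 2}\cong\cL$ and conditions~\eqref{eq:serre-pair-cf2}, \eqref{eq:serre-pair-cf1} hold. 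Lemma~\ref{lem:serre-lower-bound} then yields $\bS_{\Db(\cO,C)}(\fa(\cL))\cong\fa(\cL)[2]$.

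For the $\Ext$-algebra I would feed $\fa(\cL)=(\rH^0(C,\cL),\cL,\ev_\cL)$ into Lemma~\ref{lem:hom-gluing}, obtaining a distinguished triangle
\[
\RHom_{\Db(\cO,C)}(\fa(\cL),\fa(\cL))\to\End(\rH^0(C,\cL))\oplus\RHom_C(\cL,\cL)\to\RHom_C(\rH^0(C,\cL)\otimes\cO_C,\cL)
\]
whose outer two terms live in cohomological degrees $0$ and $1$, so $\Ext^i(\fa(\cL),\fa(\cL))=0$ for $i\notin\{0,1,2\}$ automatically. In degree $0$ the second map is $(f_1,f_2)\mapsto f_1-f_2\cdot\id$ under the identification $\Hom_C(\rH^0(C,\cL)\otimes\cO_C,\cL)\cong\End(\rH^0(C,\cL))$, hence surjective with one-dimensional kernel, so $\Hom_{\Db(\cO,C)}(\fa(\cL),\fa(\cL))=\kk$. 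Exactly as in the proof of Proposition~\ref{prop:bnp-exceptional}, the degree-$1$ part of the second map, namely $\Ext^1_C(\cL,\cL)\to\rH^0(C,\cL)^\vee\otimes\rH^1(C,\cL)$, is Serre-dual to the Petri map $\rH^0(C,\cL)\otimes\rH^0(C,\cL^\vee\otimes\omega_C)\to\rH^0(C,\omega_C)$; here, since $\cL^\vee\otimes\omega_C\cong\cL$ and $\omega_C\cong\cL^{\otimes 2}$, this Petri map is the multiplication $\rH^0(C,\cL)^{\otimes 2}\to\rH^0(C,\cL^{\otimes 2})$, which factors as $\rH^0(C,\cL)^{\otimes 2}\twoheadrightarrow\Sym^2\rH^0(C,\cL)\xrightarrow{\ \sim\ }\rH^0(C,\omega_C)$ (the isomorphism again by the reduction to $\P^1$). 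Hence the Petri map is surjective with one-dimensional kernel $\wedge^2\rH^0(C,\cL)$, so the degree-$1$ map above is injective. The long exact sequence of the triangle then gives $\Ext^1(\fa(\cL),\fa(\cL))=0$, and $\Ext^2(\fa(\cL),\fa(\cL))\cong\kk$ --- either as the cokernel of that injective map, or directly from the $\bS$-invariance above and Serre duality in $\Db(\cO,C)$. This proves $\Ext^\bullet(\fa(\cL),\fa(\cL))\cong\kk\oplus\kk[-2]$.

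Finally, $\Db(\cO,C)$ is smooth, proper and dg-enhanced (Proposition~\ref{prop:invariants-ac}, Theorem~\ref{thm:gluing}), so the $2$-spherical object $\fa(\cL)$ determines a twist autoequivalence $\cF\mapsto\Cone(\Ext^\bullet(\fa(\cL),\cF)\otimes\fa(\cL)\to\cF)$ by the general theory of spherical objects. The main obstacle is the identification of the degree-$1$ connecting map with the dual Petri map together with the explicit evaluation of the latter for the hyperelliptic pencil, and the verification of the kernel conditions~\eqref{eq:serre-pair-cf2}--\eqref{eq:serre-pair-cf1} in Lemma~\ref{lem:serre-lower-bound}; both are short, the key input in each being $\omega_C\cong\cL^{\otimes 2}$ and the description of $f_*\cO_C$.
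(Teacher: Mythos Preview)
Your proof is correct and follows essentially the same approach as the paper: the paper's own proof simply points to the argument of Proposition~\ref{prop:bnp-exceptional} (noting that the Petri map for~$\cL$ is surjective) for the $\Ext$-computation and to Lemma~\ref{lem:serre-lower-bound} for the Serre-invariance, and you have carefully filled in exactly these details, including the verification of the hypotheses of Lemma~\ref{lem:serre-lower-bound} via the hyperelliptic double cover and the identification of the kernel condition~\eqref{eq:serre-pair-cf2} using~$\omega_C\cong\cL^{\otimes 2}$.
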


\begin{proof}
The first isomorphism follows from the argument of Proposition~\ref{prop:bnp-exceptional}
(because the Petri map for~$\cL$ is surjective),
and the second follows easily from Lemma~\ref{lem:serre-lower-bound}
applied to~$\cF_1 = \cF_2 = \cL$.
\end{proof}

For~$g = 4$ instead of a spherical object we obtain a spherical pair (see~\cite[\S2.2]{KP21}).

\begin{lemma}
\label{lem:ac-4}
Let~$C$ be a non-hyperelliptic curve of genus~$g = 4$ lying in a smooth quadric~\mbox{$\P^1 \times \P^1 \subset \P^3$}.
If~$\cL_1 = \cO_{\P^1 \times \P^1}(1,0)\vert_C$ and~$\cL_2 = \cO_{\P^1 \times \P^1}(0,1)\vert_C$ are the trigonal line bundles on~$C$
then the augmentations~$\cE_{\cL_i} = \fa(\cL_i)$ are exceptional objects and form a spherical pair, i.e., 
\begin{equation*}
\Ext^\bullet(\cE_{\cL_1}, \cE_{\cL_2}) \cong 
\Ext^\bullet(\cE_{\cL_1}, \cE_{\cL_2}) \cong \kk[-2],
\quad
\bS_{\Db(\cO,C)}(\cE_{\cL_1}) \cong \cE_{\cL_2}[2]
\quad\text{and}\quad 
\bS_{\Db(\cO,C)}(\cE_{\cL_2}) \cong \cE_{\cL_1}[2],
\end{equation*}
In particular, the functor~$\cF \mapsto \Cone\Big(\bigoplus \Ext^\bullet(\cE_{\cL_i},\cF) \otimes \cE_{\cL_i} \to \cF\Big)$
is an autoequivalence of~$\Db(\cO,C)$.
\end{lemma}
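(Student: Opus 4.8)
The plan is to deduce the lemma from two results already at our disposal: Proposition~\ref{prop:bnp-exceptional}, which produces exceptionality of $\cE_{\cL_i}$ from the Brill--Noether--Petri condition, and Lemma~\ref{lem:serre-lower-bound}, which computes the Serre functor; once these are in hand the cross-$\Ext$'s and the autoequivalence statement become formal. To prepare, I would first record the geometry. Since $C\subset\P^1\times\P^1$ is a canonical curve of genus~$4$, it has bidegree~$(3,3)$, so adjunction gives $\omega_C\cong\cO_{\P^1\times\P^1}(1,1)|_C\cong\cL_1\otimes\cL_2$, and for $0\le a,b\le2$ the restriction map $\rH^0(\P^1\times\P^1,\cO(a,b))\to\rH^0(C,\cO(a,b)|_C)$ is an isomorphism because $\rH^0(\cO(a-3,b-3))=\rH^1(\cO(a-3,b-3))=0$ by K\"unneth. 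In particular $h^0(\cL_i)=h^1(\cL_i)=2$, and $\cL_1\not\cong\cL_2$ (since $\rH^0(C,\cO(1,-1)|_C)=0$), so $\cE_{\cL_1}\ne\cE_{\cL_2}$. Using $\cL_i^\vee(K_C)\cong\cL_{3-i}$, the Petri map of $\cL_i$ is identified with the K\"unneth isomorphism $\rH^0(\P^1,\cO(1))\otimes\rH^0(\P^1,\cO(1))\to\rH^0(\P^1\times\P^1,\cO(1,1))$, hence is an isomorphism; so $\cL_i$ is BNP extremal and $\cE_{\cL_i}=\fa(\cL_i)$ is exceptional by Proposition~\ref{prop:bnp-exceptional}.

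Next I would apply Lemma~\ref{lem:serre-lower-bound} with $\cF_1=\cL_1$ and $\cF_2=\cL_2$. These are globally generated (pullbacks of $\cO_{\P^1}(1)$ under the two projections $C\to\P^1$), one has $\rH^1(C,\cL_i\otimes\omega_C)=0$ because $\deg(\cL_i\otimes\omega_C)=9>2g-2$, and the multiplication map $\rH^0(C,\cL_i)\otimes\rH^0(C,\omega_C)\to\rH^0(C,\cL_i\otimes\omega_C)$ is, under the identifications above, the surjective map obtained from multiplication on $\P^1$ by tensoring with $\rH^0(\P^1,\cO(1))$ (and symmetrically for $i=2$). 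Finally, $\Ker(\ev_{\cL_i})$ is a line bundle of determinant $\cL_i^{-1}$, hence equals $\cL_i^{-1}$, so $\Ker(\ev_{\cL_i})\otimes\omega_C\cong\cL_i^{-1}\otimes\omega_C\cong\cL_{3-i}$, which is exactly conditions~\eqref{eq:serre-pair-cf2} and~\eqref{eq:serre-pair-cf1}. Lemma~\ref{lem:serre-lower-bound} then yields $\bS_{\Db(\cO,C)}(\cE_{\cL_1})\cong\cE_{\cL_2}[2]$ and $\bS_{\Db(\cO,C)}(\cE_{\cL_2})\cong\cE_{\cL_1}[2]$, so in particular $\bS^2_{\Db(\cO,C)}(\cE_{\cL_i})\cong\cE_{\cL_i}[4]$.

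The cross-$\Ext$'s then follow formally: Serre duality in $\Db(\cO,C)$ together with exceptionality of $\cE_{\cL_2}$ gives $\Ext^i(\cE_{\cL_1},\cE_{\cL_2})\cong\Ext^{2-i}(\cE_{\cL_2},\cE_{\cL_2})^\vee$, which is $\kk$ for $i=2$ and $0$ otherwise, and symmetrically for $\Ext^\bullet(\cE_{\cL_2},\cE_{\cL_1})$. (Alternatively, this comes straight from Lemma~\ref{lem:hom-gluing}: in the defining triangle the degree-$0$ terms cancel via a tautological isomorphism, and the surviving arrow is Serre-dual to the multiplication $\rH^0(C,\cL_i)^{\otimes2}\to\rH^0(C,\cL_i^{\otimes2})$, which is surjective because it is pulled back from $\P^1$.) Assembling exceptionality, the cross-$\Ext$'s and the Serre identities shows that $(\cE_{\cL_1},\cE_{\cL_2})$ is a spherical pair in the sense of \cite[\S2.2]{KP21}, and the claim that the twist functor $\cF\mapsto\Cone(\bigoplus_i\Ext^\bullet(\cE_{\cL_i},\cF)\otimes\cE_{\cL_i}\to\cF)$ is an autoequivalence is then the corresponding general fact.

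The calculation is routine once the two auxiliary results are invoked; the only delicate points are the verification of the self-referential kernel hypotheses~\eqref{eq:serre-pair-cf2} and~\eqref{eq:serre-pair-cf1} of Lemma~\ref{lem:serre-lower-bound} --- which hold precisely because the two trigonal line bundles on $C$ are residual to each other and $\Ker(\ev_{\cL_i})$ is forced to equal $\cL_i^{-1}$ --- and accurately matching our configuration with the notion of a spherical pair in \cite{KP21}, from which the final autoequivalence assertion is quoted.
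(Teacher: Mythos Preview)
Your proof is correct and follows essentially the same route as the paper: exceptionality via Proposition~\ref{prop:bnp-exceptional}, the Serre functor identities via Lemma~\ref{lem:serre-lower-bound}, and the autoequivalence from the general theory of spherical pairs. The only minor difference is that the paper obtains the cross-$\Ext$'s by repeating the direct computation of Proposition~\ref{prop:bnp-exceptional} (via Lemma~\ref{lem:hom-gluing}), whereas you deduce them more economically from Serre duality once the identities~$\bS(\cE_{\cL_i})\cong\cE_{\cL_{3-i}}[2]$ are in hand; both arguments are valid, and your careful verification of the hypotheses of Lemma~\ref{lem:serre-lower-bound} fills in what the paper leaves implicit.
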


\begin{proof}
Exceptionality of~$\cE_{\cL_i}$ is proved in Proposition~\ref{prop:bnp-exceptional}.
The same argument proves the first isomorphisms,
and the last two follows easily from Lemma~\ref{lem:serre-lower-bound}
applied to~$\cF_i = \cL_i$.
\end{proof}

Finally, we discuss the most interesting example.
Let~$C$ be a non-trigonal curve of genus~$g = 5$.
By the Enriques--Babbage theorem, the canonical model~$C \subset \P^4$ is a complete intersection of a net of quadrics.
Recall from~\cite{K08} the construction of the even Clifford algebra~$\Cliff_0$ on~$\P^2$ associated with such a net.

\begin{proposition}
\label{prop:ac-g5}
Assume the characteristic of~$\kk$ is not~$2$.
If~$C$ is a smooth non-trigonal curve of genus~$g = 5$ then
\begin{equation*}
\Db(\cO,C) \simeq \Db(\P^2, \Cliff_0),
\end{equation*}
where~$\Cliff_0$ is the sheaf of even parts of Clifford algebras corresponding to the quadrics in the net of~$C$
and~$\Db(\P^2,\Cliff_0)$ is the derived category of sheaves of~$\Cliff_0$-modules on~$\P^2$.

Moreover, if~$C$ has no degenerate even theta-characteristics over~$\bar\kk$
then~$\Db(\cO,C)$ is equivalent to the twisted derived category of the root stack~$\sqrt{\P^2,\Gamma}$,
where~$\Gamma \subset \P^2$ is the discriminant curve of the net.
\end{proposition}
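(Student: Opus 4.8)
The plan is to realize $\Db(\P^2,\Cliff_0)$ as a gluing $\langle\cE',\Db(C)\rangle$ whose gluing bimodule agrees with the one defining the augmentation, and then to conclude by the uniqueness part of Theorem~\ref{thm:gluing}. Since $C$ is smooth, non-trigonal and of genus $5$, the Enriques--Babbage theorem (with Petri's refinement) identifies its canonical model $C\subset\P^4$ with the base locus of the net of quadrics $\P(W)\subset\P(\rH^0(\P^4,\cO(2)))$, where $W=\rH^0(\P^4,\cI_C(2))$, $\dim W=3$; let $\Cliff_0$ be the associated sheaf of even Clifford algebras on $\P(W)\cong\P^2$, as in~\cite{K08} (this is where $\operatorname{char}\kk\ne2$ is used).

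The first step is to apply Kuznetsov's comparison theorem for intersections of quadrics~\cite{K08}. For a net of quadrics in $\P^4$ one has $2\dim W=6>5=\dim\rH^0(C,\omega_C)$, so that theorem exhibits $\Db(C)$, embedded by a ``spinor'' Fourier--Mukai functor $\Phi$ built from the universal quadric $\cQ\subset\P^4\times\P^2$, as a semiorthogonal component of $\Db(\P^2,\Cliff_0)$ with an explicit complement. I would check, using the Lefschetz decomposition of $\Db(\P^4)$ with respect to $\cO(2)$ together with the bookkeeping $\rKn(\Db(\P^2,\Cliff_0))=\ZZ^3$ (which can be confirmed independently by computing $\rK_0$ of the quadric bundle $\cQ\to\P^2$ via its two structural projections, the second being a blowup-type map to $\P^4$), that the ``excess'' $2\dim W-\dim\rH^0(C,\omega_C)=1$ forces the complement to be generated by a single exceptional object $\cE'$, so that
\[
\Db(\P^2,\Cliff_0)=\langle\cE',\Phi(\Db(C))\rangle .
\]

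The heart of the matter is to identify the gluing bimodule of this decomposition, i.e.\ to compute $\RHom_{\Db(\P^2,\Cliff_0)}(\cE',\Phi(\cF))$ for $\cF\in\Db(C)$. Tracing the kernel of $\Phi$ (supported on $\cQ\times_{\P^2}\cQ$ and built from the relative spinor sheaf) and using base change along $\cQ\to\P^2$, this complex should be canonically isomorphic to $\rH^\bullet(C,\cF\otimes\cL)$ for a line bundle $\cL\in\Pic(C)$ determined by the normalizations of $\Phi$ and of $\cE'$. Granting this, Remark~\ref{rem:ac-lb} (equivalently Corollary~\ref{cor:gluing-auto}) shows the gluing is independent of $\cL$, so the gluing bimodule is $\rG(V,\cF)=\RHom_C(V\otimes\cO_C,\cF)$, and the uniqueness statement of Theorem~\ref{thm:gluing} yields $\Db(\P^2,\Cliff_0)\simeq\Db(\cO,C)$. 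I expect this bimodule computation --- carrying the spinor kernel and the normalizing twist through the adjunctions and through Serre duality on the fibres of $\cQ/\P^2$ --- to be the main technical obstacle; the rest is citation and Lefschetz combinatorics.

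For the last assertion, recall from the classical theory of nets of quadrics (see~\cite{K08}) that a quadric of corank $\ge2$ in the net corresponds over $\bar\kk$ to a line bundle $\theta$ on $C$ with $\theta^{\otimes2}\cong\omega_C$ and $h^0(\theta)\ge2$ --- that is, to a degenerate even theta-characteristic --- while the generic point of the discriminant quintic $\Gamma\subset\P(W)=\P^2$ parameterizes rank-$4$ (corank-$1$) quadrics and is a smooth point of $\Gamma$; hence ``$C$ has no degenerate even theta-characteristic over $\bar\kk$'' is equivalent to ``$\Gamma$ is smooth''. When $\Gamma$ is smooth, $\Cliff_0$ is the pushforward along the coarse-space morphism $\sqrt{\P^2,\Gamma}\to\P^2$ of an Azumaya algebra $\cB$ on the square-root stack $\sqrt{\P^2,\Gamma}$ --- the standard root-stack resolution of a generically split, tamely degenerating even Clifford algebra (see~\cite{K08}, and~\cite{IU15} for root stacks) --- and this pushforward induces an equivalence $\Db(\P^2,\Cliff_0)\simeq\Db(\sqrt{\P^2,\Gamma},\cB)$. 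Since $\cB$ is Azumaya, $\Db(\sqrt{\P^2,\Gamma},\cB)=\Db(\sqrt{\P^2,\Gamma},\alpha)$ for the Brauer class $\alpha=[\cB]\in\Br(\sqrt{\P^2,\Gamma})$ (whose restriction to the stacky locus is the class of the \'etale double cover of $\Gamma$ given by the two rulings of the corank-$1$ quadrics), which is the asserted description.
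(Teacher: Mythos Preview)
Your outline matches the paper's exactly: obtain from~\cite[Theorem~5.5]{K08} a semiorthogonal decomposition of~$\Db(\P^2,\Cliff_0)$ with components~$\Db(C)$ and a single exceptional object, identify the gluing object as a line bundle on~$C$, and invoke uniqueness of the gluing together with Remark~\ref{rem:ac-lb}. (A small discrepancy: \cite{K08} gives the decomposition in the order~$\langle\Phi(\Db(X)),\Cliff_0\rangle$, opposite to the one you write; the paper also has to swap the factors via Remark~\ref{rem:ac-lb} at the end. Also, the complement being a single exceptional object is part of the statement of~\cite[Theorem~5.5]{K08}, so no bookkeeping on~$\rKn$ is needed.) The theta-characteristic argument is likewise the same as the paper's, which only uses the implication ``corank-$2$ quadric $\Rightarrow$ degenerate even theta-characteristic'', proved by noting that projection from the vertex line of such a quadric gives a degree-$4$ cover~$C_{\bar\kk}\to\P^1$.

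The genuine gap is precisely the step you flag: you do not compute the gluing object. The paper does this in Appendix~\ref{sec:quadrics} (for the general complete intersection of~$n$ quadrics in~$\P^{2n-2}$), and the method is not ``adjunctions plus Serre duality on the fibres of~$\cQ/\P^2$'' as you anticipate, but an explicit filtration argument. Since the decomposition is~$\langle\Phi(\Db(X)),\Cliff_0\rangle$, what is needed is~$\Phi^!(\Cliff_0)$, which unwinds to~$q_*(\cS\vert_{\P(W)\times X})$, where~$\cS$ is the spinor sheaf on the universal quadric and~$q\colon\P(W)\times X\to X$ is the projection. The Clifford grading filters the resolution~$0\to\Cliff_{-1}\boxtimes\cO_{\P(V)}(-1)\to\Cliff_0\boxtimes\cO_{\P(V)}\to i_*\cS\to 0$ and hence filters~$\cS\vert_{\P(W)\times X}$; the zeroth differential of the resulting spectral sequence is identified with pieces of the Koszul complex on~$\P(V)$, so the associated graded of~$\cS\vert_{\P(W)\times X}$ is~$\bigoplus_{k=0}^{n-1}\cO_{\P(W)}(-k)\boxtimes\wedge^{2k}(V\otimes\cO_X/\cO_X(-1))$ (degeneration at~$E^1$ follows from a rank count). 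Since~$\rH^\bullet(\P(W),\cO(-k))=0$ for~$1\le k\le n-1$, pushing forward along~$q$ kills everything except the~$k=0$ term, giving~$\Phi^!(\Cliff_0)\cong\cO_X$.
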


\begin{proof}
The equivalence of~$\Db(\P^2,\Cliff_0)$ with the augmentation of~$C$ is proved in Proposition~\ref{prop:gluing-quadrics}.

Now assume that~$C$ has no degenerate even theta-characteristics over~$\bar\kk$.
Then no quadric in the net has corank~2.
Indeed, if~$C \subset \P^4$ lies on a quadric of corank~$2$
then the projection out of its vertex~$\P^1$ 
(the curve~$C$ does not intersect the vertex, because it is a smooth complete intersection)
defines a covering~\mbox{$C_{\bar\kk} \to \P^1$} of degree~4
and the pullback of~$\cO_{\P^1}(1)$ is a degenerate even theta-characteristic.
Therefore, the net of quadrics has only ``simple degenerations along~$\Gamma$'', 
where~$\Gamma \subset \P^2$ is the discriminant curve,
hence the results of~\cite[Section~3.6]{K08} allow us to identify the category~$\Db(\P^2, \Cliff_0)$ 
with the twisted category of the root stack~$\sqrt{\P^2,\Gamma}$
(where the twist is given by an appropriate sheaf of Azumaya algebras).
\end{proof}

\begin{remark}
Under the equivalence of Proposition~\ref{prop:ac-g5} 
the structure sheaf of a point in~\mbox{$\P^2 \setminus \Gamma \subset \sqrt{\P^2,\Gamma}$}
corresponds to the augmentation~$\fa(\cS\vert_C)$ 
of the restriction of the rank~2 spinor bundle~$\cS$ on the corresponding smooth quadric~$Q$ containing~$C$,
and the structure sheaves of ``half-points'' over a point in~$\Gamma$ 
correspond to the augmentations of the pullbacks of the line bundles~$\cO(1,0)$ and~$\cO(0,1)$
from the base of the corresponding quadratic cone containing~$C$.
\end{remark}

\begin{remark}
Every point of the curve~$C$ gives a regular isotropic section 
for the 3-dimensional quadric bundle~$\cQ \to \P^2$ obtained from the net of quadrics.
Applying the hyperbolic reduction we obtain a conic bundle~$\cC \to \P^2$ 
whose even Clifford algebra is Morita equivalent to the algebra ~$\Cliff_0$ in Proposition~\ref{prop:ac-g5}.
The identification of Proposition~\ref{prop:ac-g5} shows that~$\Db(\cO,C)$ is a ``non-commutative del Pezzo surface''
in the sense of Chan and Ingalls, see~\cite[\S8]{CI}).
\end{remark}

\section{Reduced ideal point gluing of curves}
\label{sec:rip-gluing}

In this section we study a more complicated example of gluing ---
the gluing of two curves with the gluing bimodule isomorphic to the ideal of a point.

\begin{definition}
\label{def:point-gluing}
{\sf An ideal point gluing of curves}~$C_1$ and~$C_2$ 
with respect to $\kk$-points~$x_1 \in C_1$, $x_2 \in C_2$,
is a triangulated category~$\Db(C_1,C_2;x_1,x_2)$ that admits a semiorthogonal decomposition
\begin{equation*}
\Db(C_1,C_2;x_1,x_2) = \langle \Db(C_1), \Db(C_2) \rangle
\end{equation*}
such that
\begin{equation}
\label{eq:ext-pg}
\RHom_{\Db(C_1,C_2;x_1,x_2)}(\cF_1,\cF_2) \cong 
\rH^\bullet(C_1 \times C_2, (\cF_1^\vee \boxtimes \cF_2) \otimes \cI_{(x_1,x_2)})
\end{equation}
for any~$\cF_1 \in \Db(C_1)$, $\cF_2 \in \Db(C_2)$, 
where~$\cI_{(x_1,x_2)}$ is the ideal sheaf of the point~$x \coloneqq (x_1,x_2) \in C_1 \times C_2$.
In other words, the category~$\Db(C_1,C_2;x_1,x_2)$ is the gluing of the categories~$\Db(C_1)$ and~$\Db(C_2)$ 
with the gluing object~$\urG \in \Db(C_1 \times C_2)$ isomorphic to the ideal sheaf~$\cI_{(x_1,x_2)}$.
\end{definition}

Usually we abbreviate the notation~$\Db(C_1,C_2;x_1,x_2)$ to simply~$\Db(C_1,C_2)$.

\begin{remark}
\label{rem:pg-non-symmetric}
If we mutate the semiorthogonal decomposition~$\Db(C_1,C_2) = \langle \Db(C_1), \Db(C_2) \rangle$
and realize~$\Db(C_1,C_2)$ as a gluing of~$\Db(C_2)$ and~$\Db(C_1)$, 
the gluing object will be isomorphic to~$\cI_{(x_1,x_2)}^\vee$, 
which is a complex with two cohomology sheaves: $\cO_{C_1 \times C_2}$ in degree~0 and~$\cO_{x_1,x_2}$ in degree~1,
see Remark~\ref{rem:gluing-dual}.
Therefore, Definition~\ref{def:point-gluing} is not symmetric with respect to~$C_1$ and~$C_2$.
\end{remark}

\subsection{Basic properties and exotic exceptional object}

The basic invariants of an ideal point gluing can be computed 
with the aid of Propositions~\ref{prop:gluing-sp}, \ref{prop:invariants-general} and~\ref{prop:ij-gluing}.

\begin{proposition}
\label{prop:invariants-pg}
Let~$C_1,C_2$ be smooth proper curves of genus~$g_1, g_2$
and let~$\Db(C_1,C_2)$ be an ideal point gluing of~$\Db(C_i)$.
Then~$\Db(C_1,C_2)$ is a smooth and proper triangulated category.
Moreover, we have
\begin{arenumerate}
\item 
\label{it:additive-pg}
$\HH_\bullet(\Db(C_1,C_2)) = \kk^{g_1 + g_2}[1] \oplus \kk^4 \oplus \kk^{g_1 + g_2}[-1]$
and
$\rK_0(\Db(C_1,C_2)) = \Z^4 \oplus \Pic^0(C_1) \oplus \Pic^0(C_2)$.
\item 
\label{it:rkn-pg}
$\rKn(\Db(C_1,C_2)) = \Z^4$ and in the basis~$[\cO_{C_1}]$, $[\cO_{x_1}]$, $[\cO_{C_2}]$, $[\cO_{x_2}]$ 
the matrix of the Euler form is
\begin{equation*}
\upchi_{\Db(C_1,C_2)} = 
\begin{pmatrix} 
1 - g_1 & 1 & g_1g_2 - g_1 - g_2 & 1 - g_1 \\ 
-1 & 0 & g_2 - 1 & -1 \\ 
0 & 0 & 1 - g_2 & 1 \\ 
0 & 0 & -1 & 0 
\end{pmatrix}.
\end{equation*}
\item 
\label{it:jac-pg}
If~$\kk = \CC$ the Euler pairing on~$\rK^\tp_1(\Db(C_1,C_2))$ is unimodular, skew-symmetric, and positive definite,
and~$\Jac(\Db(C_1,C_2)) \cong \Jac(C_1) \times \Jac(C_2)$ is an isomorphism of principally polarized abelian varieties.
\item 
\label{it:hhc-pg}
If~$g_1, g_2 \ge 2$ then~$\HH^\bullet(\Db(C_1,C_2)) = \kk \oplus \kk^{3(g_1 + g_2)-4}[-2] \oplus \kk^{g_1g_2}[-3]$.
\end{arenumerate}
\end{proposition}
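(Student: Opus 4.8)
The plan is to specialize the machinery of Section~\ref{sec:gluing} to $\cD_1 = \Db(C_1)$, $\cD_2 = \Db(C_2)$, $\urG = \cI_{(x_1,x_2)}$, and to carry out two explicit computations on the surface $S \coloneqq C_1\times C_2$. Smoothness and properness are immediate from Proposition~\ref{prop:gluing-sp}: the $\Db(C_i)$ are smooth and proper and $\cI_{(x_1,x_2)}$ is perfect over $\kk$, having a two‑term locally free Koszul resolution on $S$. Part~\ref{it:additive-pg} then follows from Proposition~\ref{prop:invariants-general}\ref{it:additivity} together with the Hochschild--Kostant--Rosenberg values $\HH_\bullet(\Db(C_i)) = \kk^{g_i}[1]\oplus\kk^2\oplus\kk^{g_i}[-1]$ and $\rK_0(\Db(C_i)) = \ZZ^2\oplus\Pic^0(C_i)$. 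For part~\ref{it:rkn-pg} I would use Proposition~\ref{prop:invariants-general}\ref{it:rkn}: the diagonal blocks of the Euler matrix are the Euler matrices $\left(\begin{smallmatrix}1-g_i&1\\-1&0\end{smallmatrix}\right)$ of $\Db(C_i)$ in the bases $[\cO_{C_i}],[\cO_{x_i}]$, and the off‑diagonal block is the pairing $[\rG]$, which I would compute from
\begin{equation*}
\upchi\!\left(S,(\cF_1^\vee\boxtimes\cF_2)\otimes\cI_{(x_1,x_2)}\right) = \upchi(C_1,\cF_1^\vee)\,\upchi(C_2,\cF_2) - \rank(\cF_1)\,\rank(\cF_2)
\end{equation*}
(using $\upchi(\cG\otimes\cO_{x_i}) = \rank(\cG)$ on a curve) by substituting the four basis pairs; this reproduces the displayed matrix.

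For part~\ref{it:jac-pg}, Proposition~\ref{prop:ij-gluing} applies directly and gives that $\Jac(\Db(C_1,C_2))$ is well defined and isomorphic to $\Jac(C_1)\times\Jac(C_2)$. To upgrade this to an isomorphism of principally polarized abelian varieties I would pass to the geometric model of Proposition~\ref{prop:gluing-geometric}, in which $\Db(C_1,C_2)$ is admissible in $\Db(X)$ for $X$ the consecutive blowup of two transverse curves in a rationally connected threefold; then $X$ is a smooth projective complex threefold with $\rH^3(X,\CC) = \rH^{2,1}(X)\oplus\rH^{1,2}(X)$, since rational connectedness forces $\rH^{3,0}(X) = \rH^{0,3}(X) = 0$. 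By Remark~\ref{rem:ppav} the Euler pairing on $\rK^\tp_1(\Db(X))$, hence on its semiorthogonal summand $\Db(C_1,C_2)$, is unimodular, skew‑symmetric and positive definite, which yields the principal polarization; the remark following Proposition~\ref{prop:ij-gluing} makes the product decomposition compatible with it, with $\Jac(C_i) = \Jac(\Db(C_i))$ carrying its usual polarization.

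Part~\ref{it:hhc-pg} is the substantive one. I would begin from the distinguished triangle of Proposition~\ref{prop:invariants-general}\ref{it:hh-triangle}, which in the geometric setting reads
\begin{equation*}
\HH^\bullet(\Db(C_1,C_2)) \to \HH^\bullet(\Db(C_1))\oplus\HH^\bullet(\Db(C_2)) \xrightarrow{\ \mu\ } \Ext^\bullet_S(\cI_{(x_1,x_2)},\cI_{(x_1,x_2)}),
\end{equation*}
insert $\HH^\bullet(\Db(C_i)) = \kk\oplus\kk^{g_i}[-1]\oplus\kk^{3g_i-3}[-2]$ (HKR, using $\rH^0(\cT_{C_i}) = 0$ for $g_i\ge 2$, so $\HH^1$ is purely $\rH^1(\cO_{C_i})$ and $\HH^2$ purely $\rH^1(\cT_{C_i})$), and compute the right‑hand side. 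Writing $x = (x_1,x_2)$, applying $\RHom_S(\cI_x,-)$ to $0\to\cI_x\to\cO_S\to\cO_x\to 0$ and analysing $\RHom_S(\cI_x,\cO_S)$ and $\RHom_S(\cI_x,\cO_x)$ via the same sequence identifies $\cEnd_S(\cI_x)\cong\cO_S$, $\cExt^1_S(\cI_x,\cI_x)\cong\cO_x^{\oplus 2}$, $\cExt^{\ge 2}_S(\cI_x,\cI_x) = 0$, whence
\begin{equation*}
\Ext^\bullet_S(\cI_x,\cI_x) \cong \kk \oplus \kk^{g_1+g_2+2}[-1] \oplus \kk^{g_1g_2}[-2],
\end{equation*}
with the degree‑$2$ part $\cong\rH^2(\cO_S) = \rH^1(\cO_{C_1})\otimes\rH^1(\cO_{C_2})$. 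It then remains to identify $\mu$, for which the two key points are: (a) in degree $2$, $\mu = 0$, because the image of $\HH^2(\Db(C_i))$ in $\HH^2(S)$ lies in the $\rH^1(S,\cT_S)$‑summand under Künneth, and this summand acts on $\cI_x$ through the Atiyah class $\mathrm{At}(\cI_x)$, whose $\cEnd$‑component is $c_1(\cI_x) = 0$ while the remaining contraction lands in $\rH^1$ of the skyscraper $\cExt^1_S(\cI_x,\cI_x)$, i.e. in $0$; (b) in degree $1$, $\mu$ is injective, since postcomposition with $\cI_x\hookrightarrow\cO_S$ turns it into the Künneth inclusion $(\alpha_1,\alpha_2)\mapsto(\alpha_1,-\alpha_2)$ of $\rH^1(\cO_{C_1})\oplus\rH^1(\cO_{C_2})$ into $\rH^1(\cO_S) = \Ext^1_S(\cI_x,\cO_S)$. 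Feeding this into the long exact sequence of the triangle then yields $\HH^0 = \kk$, $\HH^1 = 0$, an extension $0\to\kk^2\to\HH^2\to\kk^{3(g_1+g_2)-6}\to 0$, $\HH^3 = \kk^{g_1g_2}$, and $\HH^{\ge 4} = 0$, i.e. the asserted formula.

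I expect the main obstacle to be point~(a): establishing cleanly that $\mu$ vanishes in degree $2$, which is exactly what makes all of $\rH^2(\cO_S)$ survive to $\HH^3$. This requires knowing the Atiyah class of the ideal sheaf $\cI_x$ well enough — in particular that its trace component is $c_1 = 0$ — together with the compatibility of the Hochschild action with the naturality of the triangle $\cI_x\to\cO_S\to\cO_x$. The auxiliary computation of $\Ext^\bullet_S(\cI_x,\cI_x)$ is elementary but must also be done carefully, since the hypercohomology spectral sequences for $\cRHom_S(\cI_x,\cO_S)$ and for $\cRHom_S(\cI_x,\cI_x)$ behave differently (a $d_2$ differential is nonzero in the former and zero in the latter), so an Euler‑characteristic count alone does not pin down the individual graded pieces.
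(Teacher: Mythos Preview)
Your proposal is correct and follows essentially the same approach as the paper, specializing the general gluing formulas of \S\ref{ss:gluing-basics} and analyzing the Hochschild cohomology triangle via the Atiyah class of~$\cI_x$. The only cosmetic difference is in part~\ref{it:hhc-pg}: the paper obtains the vanishing of~$\mu$ in degree~$2$ from naturality of the Atiyah class along~$\cI_x \hookrightarrow \cO_S$ (using~$\At_{\cO_S} = 0$ together with the identification~$\Ext^2(\cI_x,\cI_x) \cong \Ext^2(\cO_S,\cO_S)$) rather than your trace/local-to-global decomposition, but the two arguments are equivalent.
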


\begin{proof}
The smoothness and properness of~$\Db(C_1,C_2)$ are proved in Proposition~\ref{prop:gluing-sp}.

Parts~\ref{it:additive-ac} and~\ref{it:rkn-ac} of the proposition
follow immediately from Propositions~\ref{prop:invariants-general}.

To prove part~\ref{it:jac-ac} we apply Proposition~\ref{prop:gluing-geometric} with~$\bcXo = \P^3$.
It shows that~$\Db(C_1,C_2)$ can be realized as the complement to an exceptional collection in the derived category
of the consecutive blowup~$\hcXo$ of~$\P^3$ with centers isomorphic to~$C_1$ and~$C_2$.
Therefore, \cite[Proposition~5.23]{Per22} gives an isometry
\begin{equation*}
\rK^\tp_1(\Db(\hcXo)) \cong \rK^\tp_1(\Db(C_1,C_2))
\end{equation*}
and proves that the Euler pairing on~$\rK^\tp_1(\Db(C_1,C_2))$ is unimodular, skew-symmetric, and positive definite.
Since the same is true for the Euler pairings on~$\rK^\tp_1(\Db(C_i))$, Proposition~\ref{prop:ij-gluing}
implies the required isomorphism of the intermediate Jacobians.

To prove part~\ref{it:hhc-pg} we note that~$\HH^\bullet(\Db(C_i)) = \kk \oplus \kk^{g_i}[-1] \oplus \kk^{3g_i - 3}[-2]$,
while
\begin{equation*}
\Ext^\bullet(\urG, \urG) \cong 
\Ext^\bullet_{C_1 \times C_2}(\cI_{(x_1,x_2)}, \cI_{(x_1,x_2)}) \cong 
\kk \oplus \kk^{g_1 + g_2 + 2}[-1] \oplus \kk^{g_1g_2}[-2].
\end{equation*}
By Proposition~\ref{prop:invariants-general}\ref{it:hh-triangle}
it remains to note that the map
\begin{multline*}
\rH^1(C_1, \cO_{C_1}) \oplus \rH^1(C_2, \cO_{C_2}) = 
\HH^1(\Db(C_1)) \oplus \HH^1(\Db(C_2)) \\ \to 
\Ext^1(\cI_{(x_1,x_2)}, \cI_{(x_1,x_2)}) = 
\rH^1(C_1, \cO_{C_1}) \oplus \rH^1(C_2, \cO_{C_2}) \oplus \Ext^1(\cO_{(x_1,x_2)}, \cO_{(x_1,x_2)})
\end{multline*}
is injective (e.g., by the argument of Proposition~\ref{prop:invariants-ac}\ref{it:hhc-ac}), and to show that the map
\begin{multline}
\label{eq:hh2-ext2}
\rH^1(C_1, \cT_{C_1}) \oplus \rH^1(C_2, \cT_{C_2}) = 
\HH^2(\Db(C_1)) \oplus \HH^2(\Db(C_2)) \\ \to 
\Ext^2(\cI_{(x_1,x_2)}, \cI_{(x_1,x_2)}) = 
\rH^1(C_1, \cO_{C_1}) \otimes \rH^1(C_2, \cO_{C_2})
\end{multline}
is zero.
Indeed, we have an isomorphism~$\rH^1(C_1, \cT_{C_1}) \oplus \rH^1(C_2, \cT_{C_2}) = \rH^1(C_1 \times C_2, \cT_{C_1 \times C_2})$,
and it is easy to see that the map~\eqref{eq:hh2-ext2} is induced by multiplication with the Atiyah class
\begin{equation*}
\At_{\cI_{(x_1,x_2)}} \in \Ext^1(\cT_{C_1 \times C_2}, \cI_{(x_1,x_2)}^\vee \otimes \cI_{(x_1,x_2)}).
\end{equation*}
Furthermore, since~$\Ext^2(\cI_{(x_1,x_2)}, \cI_{(x_1,x_2)}) \cong \Ext^2(\cO_{C_1 \times C_2}, \cO_{C_1 \times C_2})$,
using functoriality of the Atiyah class and the vanishing of~$\At_{\cO_{C_1 \times C_2}}$,
one can show that the map~\eqref{eq:hh2-ext2} is zero.
\end{proof}

\begin{remark}
The computation in~\ref{it:hhc-rpg} shows that
\begin{equation*}
\HH^2(\Db(C_1,C_2)) = 
\rH^1(C_1, \cT_{C_1}) \oplus \rH^1(C_2, \cT_{C_2}) \oplus \cT_{C_1,x_1} \oplus \cT_{C_1,x_1}.
\end{equation*}
This means that any (infinitesimal) deformation of the category~$\Db(C_1,C_2)$ 
is obtained by either moving the points~$x_i$ on the curves~$C_i$,
or by deforming the curves~$C_i$.
Moreover, since~$\HH^1(\Db(C_1,C_2)) = 0$, the group of autoequivalences of~$\Db(C_1,C_2)$ is discrete.
\end{remark}

The most surprising property of an ideal point gluing of curves, is that it carries an exotic exceptional object.
Recall Notation~\ref{not:fff}.

\begin{theorem}
\label{thm:exotic-exceptional-curves}
Let~$\Db(C_1,C_2)$ be the ideal point gluing of the curves~$C_1$, $C_2$ 
with respect to points~$x_1 \in C_1$ and~$x_2 \in C_2$.
There is a canonical element~$\eps \in \rH^0(\rG(\cO_{x_1}, \cO_{x_2})) = \Hom_{\Db(C_1,C_2)}(\cO_{x_1}, \cO_{x_2})$ such that 
\begin{equation}
\label{eq:re}
\rE \coloneqq (\cO_{x_1},\cO_{x_2},\eps) \cong \Cone\Big( \cO_{x_1} \xrightarrow{\ \eps\ } \cO_{x_2} \Big)[1]
\end{equation}
is an exceptional object in~$\Db(C_1,C_2)$.
\end{theorem}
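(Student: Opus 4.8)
The plan is to compute the space $\Hom_{\Db(C_1,C_2)}(\cO_{x_1},\cO_{x_2}) = \rH^0\big(\rG(\cO_{x_1},\cO_{x_2})\big)$ together with the bimodule structure of $\rG$ on these objects, observe that this space is one-dimensional, take $\eps$ to be a nonzero element (so it is canonical up to a scalar, which does not affect the isomorphism class of $\rE = (\cO_{x_1},\cO_{x_2},\eps)$), and then feed $(\rE,\rE)$ into Lemma~\ref{lem:hom-gluing}, reducing the exceptionality of $\rE$ to two statements about the connecting map of the resulting triangle.

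First I would compute the gluing bimodule and its module structures on the relevant objects. Set $p\coloneqq(x_1,x_2)\in S\coloneqq C_1\times C_2$. The short exact sequence $0\to\cI_{(x_1,x_2)}\to\cO_S\to\cO_p\to 0$ induces, by functoriality of $\urG'\mapsto\rH^\bullet\big(S,(\cF_1^\vee\boxtimes\cF_2)\otimes\urG'\big)$, a triangle of gluing bimodules $\rG\to\rG_{\cO_S}\to\rG_{\cO_p}$. Evaluating at $(\cO_{x_1},\cO_{x_2})$ and using Künneth: $\rG_{\cO_S}(\cO_{x_1},\cO_{x_2}) = \RHom_{C_1}(\cO_{x_1},\cO_{C_1})\otimes_\kk\rH^\bullet(C_2,\cO_{x_2})$ is one-dimensional in cohomological degree $1$ (Serre duality on $C_1$), and $\rG_{\cO_p}(\cO_{x_1},\cO_{x_2}) = \RHom_{C_1}(\cO_{x_1},\cO_{x_1})\otimes_\kk\big(\Tor_0^{C_2}(\cO_{x_2},\cO_{x_2})\oplus\Tor_1^{C_2}(\cO_{x_2},\cO_{x_2})[1]\big)$, using that $\cO_{x_1}$ is perfect on $C_1$. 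From the Koszul resolution of a point on a smooth curve, $\RHom_{C_i}(\cO_{x_i},\cO_{x_i}) = \kk\cdot\id_{\cO_{x_i}}\oplus\Ext^1_{C_i}(\cO_{x_i},\cO_{x_i})[-1]$ with $\Ext^1_{C_i}(\cO_{x_i},\cO_{x_i})\cong\cT_{C_i,x_i}$, and $\Tor_0^{C_2}(\cO_{x_2},\cO_{x_2})=\kk$, $\Tor_1^{C_2}(\cO_{x_2},\cO_{x_2})\cong\cT_{C_2,x_2}^\vee$. Taking cohomology in the triangle, $\rG(\cO_{x_1},\cO_{x_2})$ sits in degrees $0$ and $1$, one-dimensional in degree $0$ and two-dimensional in degree $1$ (the one point to check is that $\rH^1(\rG_{\cO_S})\to\rH^1(\rG_{\cO_p})$ is an isomorphism, which a direct Koszul computation gives: it is the pushforward $\cExt^1_{C_1}(\cO_{x_1},\cO_{C_1})\to\cExt^1_{C_1}(\cO_{x_1},\cO_{x_1})$ along $\cO_{C_1}\to\cO_{x_1}$), and the connecting maps give isomorphisms of $\Ext^\bullet_{C_1}(\cO_{x_1},\cO_{x_1})$--$\Ext^\bullet_{C_2}(\cO_{x_2},\cO_{x_2})$-bimodules
$$\rH^0\big(\rG(\cO_{x_1},\cO_{x_2})\big)\cong\id_{\cO_{x_1}}\otimes\Tor_1^{C_2},\qquad \rH^1\big(\rG(\cO_{x_1},\cO_{x_2})\big)\cong\big(\id_{\cO_{x_1}}\otimes\Tor_0^{C_2}\big)\oplus\big(\Ext^1_{C_1}(\cO_{x_1},\cO_{x_1})\otimes\Tor_1^{C_2}\big).$$
Fixing a generator $\sigma$ of $\Tor_1^{C_2}$, the element $\eps$ corresponds to $\id_{\cO_{x_1}}\otimes\sigma$.

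Next I would apply Lemma~\ref{lem:hom-gluing} with $\cF=\cF'=\rE=(\cO_{x_1},\cO_{x_2},\eps)$, which gives a triangle
$$\RHom_{\Db(C_1,C_2)}(\rE,\rE)\to\RHom_{C_1}(\cO_{x_1},\cO_{x_1})\oplus\RHom_{C_2}(\cO_{x_2},\cO_{x_2})\xrightarrow{\ \mu\ }\rG(\cO_{x_1},\cO_{x_2}),\qquad\mu(f_1,f_2)=\eps\circ f_1-f_2\circ\eps.$$
The middle term lives in degrees $0$ and $1$, one-dimensional on each summand in each degree, so the long exact cohomology sequence shows $\rE$ is exceptional provided $\mu^0$ is surjective and $\mu^1$ is an isomorphism. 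Surjectivity of $\mu^0$ is clear: with $f_i=\lambda_i\,\id_{\cO_{x_i}}$ one gets $\mu^0(f_1,f_2)=(\lambda_1-\lambda_2)\eps$, which spans $\rH^0(\rG)=\kk\eps$ since $\eps\neq 0$. For $\mu^1$ — the heart of the argument — note that $\eps\circ g_1$ and $g_2\circ\eps$ for $g_i\in\Ext^1_{C_i}(\cO_{x_i},\cO_{x_i})$ are the left $\Ext^\bullet_{C_1}$- and right $\Ext^\bullet_{C_2}$-module actions on $\eps$. In the description above, with $\eps=\id_{\cO_{x_1}}\otimes\sigma$: the left action of $g_1$ sends $\id\otimes\sigma\mapsto g_1\otimes\sigma$, an isomorphism $\Ext^1_{C_1}(\cO_{x_1},\cO_{x_1})\xrightiso\Ext^1_{C_1}(\cO_{x_1},\cO_{x_1})\otimes\Tor_1^{C_2}$ onto one summand of $\rH^1(\rG)$; the right action of $g_2$ sends $\id\otimes\sigma\mapsto\id\otimes(g_2\cdot\sigma)$, where $g_2\cdot\sigma\in\Tor_0^{C_2}=\kk$ is the value of the perfect contraction pairing $\Ext^1_{C_2}(\cO_{x_2},\cO_{x_2})\otimes\Tor_1^{C_2}\to\Tor_0^{C_2}$ (the standard Koszul-duality pairing for a point on a smooth curve), an isomorphism $\Ext^1_{C_2}(\cO_{x_2},\cO_{x_2})\xrightiso\id_{\cO_{x_1}}\otimes\Tor_0^{C_2}$ onto the complementary summand. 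Hence $\mu^1(g_1,g_2)=0$ forces $g_1=g_2=0$; being an injection between two-dimensional spaces, $\mu^1$ is an isomorphism. Therefore $\RHom_{\Db(C_1,C_2)}(\rE,\rE)\cong\kk$ concentrated in degree $0$, i.e., $\rE$ is exceptional.

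The main obstacle is the bimodule bookkeeping in the second step: one must pin down not merely the graded dimensions of $\rG(\cO_{x_1},\cO_{x_2})$ but its full $\Ext^\bullet_{C_1}(\cO_{x_1},\cO_{x_1})$--$\Ext^\bullet_{C_2}(\cO_{x_2},\cO_{x_2})$-bimodule structure precisely enough to see that the left action and the right action of $\eps$ on $\Ext^1$ land in complementary lines of $\rH^1(\rG)$ — this is exactly what upgrades $\mu^1$ from nonzero to an isomorphism. The remaining pieces (the reduction through Lemma~\ref{lem:hom-gluing}, the surjectivity of $\mu^0$, and the closing dimension count) are routine once this structure is in hand. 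One could instead identify $\rE$ with a concrete object under the realization of $\Db(C_1,C_2)$ in the derived category of a blown-up threefold from Proposition~\ref{prop:gluing-geometric} and compute $\RHom$ geometrically, but the argument above is self-contained.
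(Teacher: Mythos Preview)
Your proposal is correct and follows essentially the same route as the paper: both resolve~$\cI_{(x_1,x_2)}$ by~$\cO_S\to\cO_p$, compute~$\rG(\cO_{x_1},\cO_{x_2})\cong\kk\oplus\kk^2[-1]$ together with its~$\Ext^\bullet_{C_1}(\cO_{x_1},\cO_{x_1})$--$\Ext^\bullet_{C_2}(\cO_{x_2},\cO_{x_2})$-bimodule structure, and then feed~$(\rE,\rE)$ into Lemma~\ref{lem:hom-gluing} to see that the connecting map is surjective in degree~$0$ and an isomorphism in degree~$1$. The paper phrases the second tensor factor as~$\RHom(\cO_{x_2},\cF_2)$ via~$\cO_{x_2}^\vee\cong\cO_{x_2}[-1]$ and tracks explicit generators~$\lambda_i,\mu_i$, whereas you use~$\Tor^{C_2}_\bullet(\cO_{x_2},\cO_{x_2})$ and the contraction pairing~$\Ext^1\otimes\Tor_1\to\Tor_0$; these are the same computation in different notation.
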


\begin{proof}
Recall that the ideal~$\cI_{(x_1,x_2)}$ has a simple resolution
\begin{equation*}
0 \to \cI_{(x_1,x_2)} \to \cO_{C_1 \times C_2} \to \cO_{x_1,x_2} \to 0.
\end{equation*}
Both terms are decomposable sheaves: $\cO_{C_1 \times C_2} \cong \cO_{C_1} \boxtimes \cO_{C_2}$,
$\cO_{x_1,x_2} \cong \cO_{x_1} \boxtimes \cO_{x_2}$.
Using the K\"unneth formula and the isomorphism~$\cO_{x_2}^\vee \cong \cO_{x_2}[-1]$, 
we can rewrite the gluing bimodule of~$\Db(C_1,C_2)$ as
\begin{equation}
\label{eq:pg}
\rG(\cF_1,\cF_2) \cong 
\Cone\Big( 
\RHom(\cF_1,\cO_{C_1}) \otimes \RHom(\cO_{C_2},\cF_2)[1] 
\xrightarrow{\ \ \ }
\RHom(\cF_1,\cO_{x_1}) \otimes \RHom(\cO_{x_2},\cF_2) 
\Big).
\end{equation}
This suggests that for the argument only the interactions between the sheaves~$\cO_{C_i}$ and~$\cO_{x_i}$ are important.
This is indeed the case, so we start by recalling the $\Ext$-spaces between these sheaves and introducing some notation.
Using Serre duality on~$C_i$, we find
\begin{itemize}
\item 
$\RHom(\cO_{C_i}, \cO_{x_i}) \cong \kk$,
and we denote by~$\lambda_i \colon \cO_{C_i} \to \cO_{x_i}$ a generator;
\item 
$\RHom(\cO_{x_i}, \cO_{C_i}) \cong \kk[-1]$,
and we denote by~$\mu_i \colon \cO_{x_i} \to \cO_{C_i}[1]$ a generator;
\item 
$\RHom(\cO_{x_i}, \cO_{x_i}) \cong \kk \oplus \kk[-1]$.
\end{itemize}
Note that the space~$\Ext^1(\cO_{x_i}, \cO_{x_i})$ is generated by the composition~$\lambda_i \circ \mu_i$
and the morphism in~\eqref{eq:pg} is given by~$\lambda_1 \otimes \mu_2$.
Now, using the above notation, we check the following

\begin{claim*}
There is an isomorphism
\begin{equation}
\label{eq:prg-rk1-rk2}
\rG(\cO_{x_1}, \cO_{x_2}) \cong
\kk \oplus \kk[-1] \oplus \kk[-1]
\end{equation}
and under the natural left and right action of~$\RHom(\cO_{x_i},\cO_{x_i})$ on~$\rG(\cO_{x_1}, \cO_{x_2})$
the generators~$\lambda_i \circ \mu_i$ of~$\Ext^{1}(\cO_{x_i}, \cO_{x_i})$
take a generator~$\eps \in \rG(\cO_{x_1}, \cO_{x_2})$ of the first summand in the right hand side of~\eqref{eq:prg-rk1-rk2}
to generators of the second and third summands, respectively.
\end{claim*}

Indeed, the source of the morphism in the right-hand side of~\eqref{eq:pg} 
that computes~$\rG(\cO_{x_1}, \cO_{x_2})$ is the tensor product of
\begin{equation*}
\RHom(\cO_{x_1}, \cO_{C_1}) = \kk \cdot \mu_1
\qquad\text{and}\qquad 
\RHom(\cO_{C_2}, \cO_{x_2}) = \kk \cdot \lambda_2,
\end{equation*}
and the target is the tensor product of
\begin{equation*}
\RHom(\cO_{x_1}, \cO_{x_1}) = \kk \cdot \id_{\cO_{x_1}} \ \oplus\  \kk \cdot (\lambda_1 \circ \mu_1)
\qquad\text{and}\qquad 
\RHom(\cO_{x_2}, \cO_{x_2}) = \kk \cdot \id_{\cO_{x_2}} \ \oplus\  \kk \cdot (\lambda_2 \circ \mu_2).
\end{equation*}
Moreover, as we already mentioned, the map between the tensor products is given by~$\lambda_1 \otimes \mu_2$.
We conclude that~$\rG(\cO_{x_1},\cO_{x_2})$ is the cone of the map
\begin{equation*}
\kk \cdot (\mu_1 \otimes \lambda_2) 
\xrightarrow{\ \lambda_1 \otimes \mu_2\ }
\kk \cdot (\id_{\cO_{x_1}} \otimes \id_{\cO_{x_2}}) \oplus
\kk \cdot (\id_{\cO_{x_1}} \otimes (\lambda_2 \circ \mu_2)) \oplus
\kk \cdot ((\lambda_1 \circ \mu_1) \otimes \id_{\cO_{x_2}}) \oplus
\kk \cdot ((\lambda_1 \circ \mu_1) \otimes (\lambda_2 \circ \mu_2)). 
\end{equation*}
The map is an isomorphism of the source onto the last summand of the target, 
hence its cone is isomorphic to the sum of the first three summands of the target.
The statement about the action of~$\lambda_i \cdot \mu_i$ also follows.

\medskip

Now, finally, we can check that the object~$\rE \in \Db(C_1,C_2)$ defined from the distinguished triangle
\begin{equation}
\label{eq:ee-triangle}
\rE \xrightarrow{\quad} \cO_{x_1} \xrightarrow{\ \eps\ } \cO_{x_2},
\end{equation}
where~$\eps$ is a generator of the first summand in~\eqref{eq:prg-rk1-rk2} is exceptional.
For this we consider the triangle of Lemma~\ref{lem:hom-gluing},
that takes the form
\begin{equation*}
\dots \xrightarrow\quad
\Ext^\bullet_{\Db(C_1,C_2)}(\rE,\rE) \xrightarrow\quad
\Ext^\bullet_{\Db(C_1)}(\cO_{x_1},\cO_{x_1}) \oplus \Ext^\bullet_{\Db(C_2)}(\cO_{x_2},\cO_{x_2}) \xrightarrow{\ \eps\ }
\rH^\bullet(\rG(\cO_{x_1}, \cO_{x_2})) \xrightarrow\quad \dots.
\end{equation*}
The middle term is equal to the sum of
\begin{equation*}
\kk \cdot \id_{\cO_{x_1}} \ \oplus\  \kk \cdot (\lambda_1 \circ \mu_1)
\qquad\text{and}\qquad 
\kk \cdot \id_{\cO_{x_2}} \ \oplus\  \kk \cdot (\lambda_2 \circ \mu_2),
\end{equation*}
the right term is given by~\eqref{eq:prg-rk1-rk2},
and as we checked in the above claim, the map~$\eps$ acts as follows:
\begin{equation*}
\id_{\cO_{x_1}} \mapsto \eps,
\qquad 
\id_{\cO_{x_2}} \mapsto -\eps,
\qquad 
\lambda_1 \circ \mu_1 \mapsto (\lambda_1 \circ \mu_1) \otimes \id_{\cO_{x_2}},
\qquad 
\lambda_2 \circ \mu_2 \mapsto -\id_{\cO_{x_1}} \otimes (\lambda_2 \circ \mu_2),
\end{equation*}
It follows that~$\Ext_{\Db(C_1,C_2)}^\bullet(\rE,\rE) \cong \kk$ and~$\rE$ is exceptional.
\end{proof}

\begin{remark}
\label{rem:generalization}
As we noticed in the proof, the computation relies only on the structure of~$\RHom$-spaces between~$\cO_{C_i}$ and~$\cO_{x_i}$.
More precisely, it uses the \emph{spherical property} of~$\cO_{x_i}$, 
and the \emph{adherence property} between~$\cO_{C_i}$ and~$\cO_{x_i}$.
Consequently, the same idea can be used to construct an exotic exceptional object 
in a gluing~$\cD$ of two triangulated categories~$\cD_1$ and~$\cD_2$ if the gluing bimodule can be written as
\begin{equation*}
\rG(\cF_1,\cF_2) \cong 
\Cone\Big(
\RHom_{\cD_1}(\cF_1,\rL_1) \otimes \RHom_{\cD_2}(\rL_2,\cF_2) \to
\RHom_{\cD_1}(\cF_1,\rK_1) \otimes \RHom_{\cD_2}(\rK_2,\cF_2) 
\Big),
\end{equation*}
where
\begin{itemize}
\item 
$\rK_i$ is a spherical object in~$\cD_i$, 
\item 
$\rL_1$ is {\sf left adherent} to~$\rK_1$, i.e., $\RHom(\rL_1,\rK_1) = \kk$, and
\item 
$\rL_2$ is {\sf right adherent} to~$\rK_2$, i.e., $\RHom(\rK_2,\rL_2) = \kk$.
\end{itemize}
In this case there is still a canonical element in~$\eps \in \rH^0(\rG(\rK_1,\rK_2))$ 
(coming from the identity morphisms of~$\rK_1$ and~$\rK_2$)
and the object~$(\rK_1, \rK_2, \eps) \in \cD$ is exceptional.
\end{remark}

\subsection{Reduced ideal point gluing of curves}
\label{ss:rpg}

In the rest of this section we discuss the subcategory of~$\Db(C_1,C_2)$ defined as follows:

\begin{definition}
\label{def:rpg}
Let~$\Db(C_1,C_2)$ be an ideal point gluing of two smooth proper curves as in Definition~\ref{def:point-gluing}
and let~$\cE \in \Db(C_1,C_2)$ be the exotic exceptional object constructed in Theorem~\ref{thm:exotic-exceptional-curves}.
The {\sf reduced ideal point gluing} of~$C_1$ and~$C_2$ is defined as the triangulated category
\begin{equation}
\label{eq:rpg-def}
\RPG{C_1,C_2}
\coloneqq {}^\perp\rE \subset \Db(C_1,C_2),
\end{equation}
i.e., the orthogonal complement in~$\Db(C_1,C_2)$ to the exceptional object~$\rE$.
\end{definition}

By definition, the category~$\Db(C_1,C_2)$ has two (incompatible) semiorthogonal decompositions
\begin{equation}
\label{eq:rpg-sod}
\langle \Db(C_1), \Db(C_2) \rangle = \langle \rE, \RPG{C_1,C_2} \rangle.
\end{equation} 
In particular, \eqref{eq:rpg-sod} provides a new counterexample to the Jordan--H\"older property for semiorthogonal decompositions
(for other counterexamples, see~\cite{K13, BBS, Krah}).

In the next proposition we compute the basic invariants of the reduced ideal point gluing of curves.

\begin{proposition}
\label{prop:invariants-rpg}
Let~$C_1,C_2$ be smooth curves of genus~$g_1, g_2$ and let~$\RPG{C_1,C_2}$
be a reduced ideal point gluing of~$C_1$ and~$C_2$.
Then~$\RPG{C_1,C_2}$ is a smooth and proper triangulated category.
Moreover, we have
\begin{arenumerate}
\item 
\label{it:additive-rpg}
$\HH_\bullet(\RPG{C_1,C_2}) = \kk^{g_1 + g_2}[1] \oplus \kk^3 \oplus \kk^{g_1 + g_2}[-1]$
and
$\rK_0(\RPG{C_1,C_2}) = \Z^3 \oplus \Pic^0(C_1) \oplus \Pic^0(C_2)$.
\item 
\label{it:rkn-rpg}
$\rKn(\RPG{C_1,C_2}) = \Z^3$ and in the basis~$[\cO_{C_2}] + [\cO_{x_1}] - [\cO_{x_2}]$, 
$[\cO_{C_1}] - g_1[\cO_{x_1}] - g_2[\cO_{x_2}]$, $[\cO_{C_2}] + [\cO_{x_1}]$
the matrix of the Euler form is
\begin{equation*}
\upchi_{\RPG{C_1,C_2}} = 
\begin{pmatrix}
1 & -1 & 1 \\
0 & 1 - g_1 - g_2 & 1 \\
0 & -1 & 0
\end{pmatrix}.
\end{equation*}
\item 
\label{it:jac-rpg}
If~$\kk = \CC$ the Euler pairing on~$\rK^\tp_1(\RPG{C_1,C_2})$ is unimodular, skew-symmetric, and positive definite,
and~$\Jac(\RPG{C_1,C_2}) \cong \Jac(C_1) \times \Jac(C_2)$ is an isomorphism of principally polarized abelian varieties.
\item 
\label{it:hhc-rpg}
If~$g_1, g_2 \ge 2$ then~$\HH^\bullet(\RPG{C_1,C_2}) = \kk \oplus \kk^{3(g_1 + g_2)-3}[-2] \oplus \kk^{g_1g_2}[-3] \oplus \kk^{g_1g_2}[-4]$.
\end{arenumerate}
\end{proposition}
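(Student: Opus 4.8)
The first three parts are short consequences of the semiorthogonal decomposition~\eqref{eq:rpg-sod}, which realizes~$\Db(C_1,C_2)$ as a gluing of~$\langle\rE\rangle \simeq \Db(\kk)$ and~$\RPG{C_1,C_2}$. In particular~$\RPG{C_1,C_2}$ is admissible in the smooth and proper category~$\Db(C_1,C_2)$ (Proposition~\ref{prop:invariants-pg}), hence smooth and proper. Part~\ref{it:additive-rpg} follows from the additivity of~$\HH_\bullet$ and~$\rK_\bullet$ (Proposition~\ref{prop:invariants-general}\ref{it:additivity}) applied to~\eqref{eq:rpg-sod}, subtracting from Proposition~\ref{prop:invariants-pg}\ref{it:additive-pg} the contributions~$\kk$ and~$\ZZ$ of~$\langle\rE\rangle$. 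For part~\ref{it:rkn-rpg}: the triangle~\eqref{eq:ee-triangle} gives~$[\rE] = [\cO_{x_1}] - [\cO_{x_2}]$, and Proposition~\ref{prop:invariants-general}\ref{it:rkn} identifies~$\rKn(\RPG{C_1,C_2})$ with the sublattice~$\{v \in \rKn(\Db(C_1,C_2)) : \upchi_{\Db(C_1,C_2)}(v,[\rE]) = 0\}$; using the Euler matrix of Proposition~\ref{prop:invariants-pg}\ref{it:rkn-pg} one checks that the three displayed classes lie in this sublattice, form a~$\ZZ$-basis of it, and produce the asserted Euler matrix --- a routine computation. For part~\ref{it:jac-rpg}, Proposition~\ref{prop:ij-gluing} applied to~\eqref{eq:rpg-sod} gives~$\Jac(\Db(C_1,C_2)) \cong \Jac(\langle\rE\rangle) \times \Jac(\RPG{C_1,C_2})$, and~$\langle\rE\rangle \simeq \Db(\pt)$ has~$\Jac(\langle\rE\rangle) = 0$ and~$\rK_1^\tp(\langle\rE\rangle) = 0$; hence~$\Jac(\RPG{C_1,C_2}) = \Jac(\Db(C_1,C_2))$ and the Euler pairing on~$\rK_1^\tp(\RPG{C_1,C_2}) = \rK_1^\tp(\Db(C_1,C_2))$ is unchanged, so Proposition~\ref{prop:invariants-pg}\ref{it:jac-pg} applies verbatim, polarization included.

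Part~\ref{it:hhc-rpg} is where the work is. Since~$\RPG{C_1,C_2} = {}^\perp\rE$ for the exceptional object~$\rE$, the plan is to use the formula of~\cite[Theorem~3.3 and Proposition~3.7]{K15}, as in the computation of~$\HH^\bullet({}^\perp\cE_\cL)$ in Subsection~\ref{ss:bn-augmentations}: there is a distinguished triangle
\begin{equation*}
\Ext^\bullet_{\Db(C_1,C_2)}(\bS(\rE), \rE) \longrightarrow \HH^\bullet(\Db(C_1,C_2)) \longrightarrow \HH^\bullet(\RPG{C_1,C_2}),
\end{equation*}
where~$\bS$ is the Serre functor of~$\Db(C_1,C_2)$. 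Since~$\HH^\bullet(\Db(C_1,C_2))$ is known (Proposition~\ref{prop:invariants-pg}\ref{it:hhc-pg}), it remains to compute~$\Ext^\bullet_{\Db(C_1,C_2)}(\bS(\rE),\rE)$ together with this comparison map. First I would compute~$\bS(\rE)$ via Theorem~\ref{thm:serre-general} (with~$\cD_i = \Db(C_i)$, $\rG = \cI_{(x_1,x_2)}$, $\bS_i = (-)\otimes\omega_{C_i}[1]$, $\cF = \rE = (\cO_{x_1},\cO_{x_2},\eps)$), which requires evaluating~$\rG^{12}(-) = Rp_{1*}(p_2^*(-)\otimes\cI_{(x_1,x_2)})$ and its partner~$\rG^{21}$ on~$\cO_{x_i}$ and on the line bundles that show up, together with the canonical morphism~$\zeta$ appearing in Theorem~\ref{thm:serre-general}; the outcome is that~$\bS(\rE)$ has~$\Db(C_1)$-component~$\cO_{C_1}^{\oplus g_2}[3]$ and~$\Db(C_2)$-component~$\omega_{C_2}(x_2)[2]$ (cross-check:~$\RHom_{\Db(C_1,C_2)}(\rE,\bS(\rE)) \cong \kk$, as it must be since~$\rE$ is exceptional). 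Then Lemma~\ref{lem:hom-gluing} expresses~$\Ext^\bullet_{\Db(C_1,C_2)}(\bS(\rE),\rE)$ as the fiber of a map~$\kk^{\oplus g_2}[-3] \oplus \kk[-3] \to \kk^{\oplus g_2}[-3] \oplus \kk^{\oplus g_1 g_2}[-4]$, the target being~$\rG(\cO_{C_1},\cO_{x_2})^{\oplus g_2}[-3] = (\kk \oplus \kk^{\oplus g_1}[-1])^{\oplus g_2}[-3]$; one finds that the degree-$3$ part of this map is surjective with one-dimensional kernel, so~$\Ext^\bullet_{\Db(C_1,C_2)}(\bS(\rE),\rE) \cong \kk[-3] \oplus \kk^{\oplus g_1 g_2}[-5]$. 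Finally, one checks that in the triangle above the summand~$\kk[-3]$ maps to zero in~$\HH^3(\Db(C_1,C_2)) = \rH^1(C_1,\cO_{C_1}) \otimes \rH^1(C_2,\cO_{C_2})$ while the summand~$\kk^{\oplus g_1 g_2}[-5]$ maps to~$\HH^5(\Db(C_1,C_2)) = 0$; passing to the cone then yields exactly~$\HH^\bullet(\RPG{C_1,C_2}) = \kk \oplus \kk^{\oplus 3(g_1+g_2)-3}[-2] \oplus \kk^{\oplus g_1 g_2}[-3] \oplus \kk^{\oplus g_1 g_2}[-4]$.

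The main obstacle is the computation of~$\bS(\rE)$: Theorem~\ref{thm:serre-general} is delicate, and since the gluing object~$\cI_{(x_1,x_2)}$ is not locally free, the functors~$\rG^{12}$ and~$\rG^{21}$ must be evaluated carefully --- in particular the derived restrictions of~$\cI_{(x_1,x_2)}$ to the coordinate curves~$C_i \times \{x_j\}$ carry a Tor-term and come out as~$\cO_{C_i}(-x_i) \oplus \cO_{x_i}$, and the morphism~$\zeta$ must be shown to be an isomorphism onto the appropriate summand. A secondary point requiring care is the vanishing of the degree-$3$ comparison map~$\Ext^3_{\Db(C_1,C_2)}(\bS(\rE),\rE) \to \HH^3(\Db(C_1,C_2))$, which is what makes~$\HH^2(\RPG{C_1,C_2})$ acquire exactly one extra dimension and~$\HH^4(\RPG{C_1,C_2}) \cong \kk^{\oplus g_1 g_2}$, rather than merely bounding them; this should follow from the explicit form of~$\bS(\rE)$ and of the comparison map (the surviving class in~$\Ext^3(\bS(\rE),\rE)$ lives in the ``point directions'' of the components and maps trivially to the curve-by-curve Hochschild classes).
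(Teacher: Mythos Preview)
Your treatment of smoothness/properness and parts~\ref{it:additive-rpg}--\ref{it:jac-rpg} coincides with the paper's proof (additivity along~\eqref{eq:rpg-sod}, restriction of the Euler form to~$[\rE]^\perp$, and the Jacobian argument). For part~\ref{it:hhc-rpg} you also set up exactly the same triangle from~\cite{K15}, compute~$\bS(\rE)$ via Theorem~\ref{thm:serre-general} the same way (the paper states~$\bar\cF_1 = \rH^0(C_2,\omega_{C_2}(x_2))\otimes\cO_{C_1}[3]$, $\bar\cF_2 = \omega_{C_2}(x_2)[3]$, matching your components after the shift convention of Notation~\ref{not:fff}), and obtain the same~$\Ext^\bullet(\bS(\rE),\rE)\cong\kk[-3]\oplus\kk^{g_1g_2}[-5]$.

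The one substantive divergence is how the vanishing of the connecting map~$\kk\cong\Ext^3(\bS(\rE),\rE)\to\HH^3(\Db(C_1,C_2))$ is established. The paper does \emph{not} verify this directly: instead it invokes the forward reference to Theorem~\ref{thm:main}, which exhibits~$\RPG{C_1,C_2}$ as a smooth and proper specialization of augmentations~$\Db(\cO,\cC_b)$ with~$\g(\cC_b)=g_1+g_2$; semicontinuity of Hochschild cohomology then gives~$\dim\HH^2(\RPG{C_1,C_2})\ge 3(g_1+g_2)-3 > \dim\HH^2(\Db(C_1,C_2))$, forcing the map~$\kk\to\HH^3(\Db(C_1,C_2))$ to be zero. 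Your proposal instead asserts the vanishing directly, but you do not actually carry out that computation --- the phrases ``one checks'' and ``should follow'' and the heuristic about ``point directions'' are not a proof, and tracing the comparison map of~\cite{K15} explicitly through the gluing is delicate. This is a genuine gap in your write-up. A direct argument may well be possible (and would have the advantage of avoiding the forward reference), but as written you have not supplied it; the paper's semicontinuity trick is the missing idea.
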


\begin{proof}
The smoothness and properness of~$\RPG{C_1,C_2}$
follow from Propositions~\ref{prop:gluing-sp} and~\ref{prop:invariants-pg}.

Part~\ref{it:additive-rpg} follows from additivity of Hochschild homology and~$\rK_0$
and Proposition~\ref{prop:invariants-pg}\ref{it:additive-pg}.

Part~\ref{it:rkn-rpg} follows from Proposition~\ref{prop:invariants-pg}\ref{it:rkn-pg}.

Part~\ref{it:jac-rpg} follows from Proposition~\ref{prop:invariants-pg}\ref{it:jac-pg} and Remark~\ref{rem:ppav}.

Finally, to compute Hochschild cohomology~$\HH^\bullet(\RPG{C_1,C_2}) = {}^\perp\rE$ we use the distinguished triangle
\begin{equation}
\label{eq:hhc-rpg-triangle}
\Ext^\bullet(\bS_{\Db(C_1,C_2)}(\rE), \rE) \to
\HH^\bullet(\Db(C_1,C_2)) \to 
\HH^\bullet(\RPG{C_1,C_2})
\end{equation}
constructed in~\cite[Theorem~3.3 and Proposition~3.7]{K15}, similarly to the case of Remark~\ref{rem:hhc-bnp-mod}.
Theorem~\ref{thm:serre-general} implies that~$\bS_{\Db(C_1,C_2)}(\rE) = (\bar\cF_1,\bar\cF_2,\bar\phi)$,
where
\begin{equation*}
\bar\cF_1 = \rH^0(C_2,\omega_{C_2}(x_2)) \otimes \cO_{C_1}[3]
\qquad\text{and}\qquad
\bar\cF_2 = \omega_{C_2}(x_2)[3],
\end{equation*}
and then a direct computation shows that
\begin{equation}
\label{eq:ext-se-e}
\Ext^\bullet(\bS_{\Db(C_1,C_2)}(\rE),\rE) \cong 
\kk[-3] \oplus \rH^0(C_2,\omega_{C_2})^\vee \otimes \rH^0(C_1,\omega_{C_1})^\vee[-5] \cong
\kk[-3] \oplus \kk^{g_1g_2}[-5].
\end{equation}
It follows that there is an exact sequence
\begin{equation*}
0 \to \HH^2(\Db(C_1,C_2)) \to \HH^2(\RPG{C_1,C_2}) \to \kk \to \HH^3(\Db(C_1,C_2)) \to \HH^3(\RPG{C_1,C_2}) \to 0.
\end{equation*}
As we will show in Theorem~\ref{thm:main}, the category~$\RPG{C_1,C_2}$ 
is a smooth and proper limit of augmentations of curves of genus~$g_1 + g_2$,
hence by semicontinuity of Hochschild cohomology and Proposition~\ref{prop:invariants-ac}\ref{it:hhc-ac}, 
we must have~$\dim(\HH^2(\RPG{C_1,C_2})) \ge 3(g_1 + g_2) - 3 > 3(g_1 + g_2) - 4 = \dim(\HH^2(\Db(C_1,C_2)))$,
hence the morphism~$\kk \to \HH^3(\Db(C_1,C_2))$ must be zero, and part~\ref{it:hhc-rpg} follows.
\end{proof}

\begin{remark}
In the proof of Proposition~\ref{prop:invariants-rpg}\ref{it:hhc-rpg} 
we used the existence of a deformation of~$\RPG{C_1,C_2}$ to the augmentation~$\Db(\cO,C)$ of a curve~$C$ of genus~$g_1 + g_2$.
On the other hand, the equality
\begin{equation*}
\dim(\HH^2(\RPG{C_1,C_2}) = 3(g_1 + g_2) - 3 = \dim(\HH^2(\Db(\cO,C)))
\end{equation*}
implies that the reduced ideal point gluings form a boundary component of a ``moduli space'' of augmented curves 
contained in the smooth locus of the ``moduli space''.
This is a categorical incarnation of the boundary component~$\cM_{g_1,1} \times \cM_{g_2,1} \subset \overline{\cM}_{g_1 + g_2}$
of the Deligne--Mumford compactification of~$\cM_{g_1 + g_2}$.
\end{remark}

\begin{remark}
\label{rem:rpg-nonzero}
Proposition~\ref{prop:invariants-rpg}\ref{it:rkn-rpg} 
implies that the group~$\rKn(\RPG{C_1,C_2})$ is isometric to~$\rKn(\Db(\cO,C))$ 
(as abelian groups endowed with non-symmetric bilinear forms) if~$\g(C) = \g(C_1) + \g(C_2)$.
However, if~$\g(C_1),\g(C_2) \ge 2$, 
it follows from Propositions~\ref{prop:invariants-rpg}\ref{it:hhc-rpg} and~\ref{prop:invariants-ac}\ref{it:hhc-ac} 
that~$\RPG{C_1,C_2}$ is not equivalent to an augmented curve.
Moreover, the same is true if~$\g(C_1),\g(C_2) \ge 1$, because~\eqref{eq:ext-se-e} still holds,
and therefore~$\HH^4(\RPG{C_1,C_2}) \ne 0$.
\end{remark}

\begin{remark}
\label{rem:rpg-zero}
If~$\g(C_1) = 0$ then~$\cO_{C_1} \in \Db(C_1) \subset \Db(C_1,C_2)$ is exceptional.
On the other hand, it is easy to check that~$\rG^{12}(\cO_{x_2}) \cong \cO_{C_1}(-x_1) \oplus \cO_{x_1}$
and the morphism~$\eps$ in~\eqref{eq:re} corresponds to the embedding of the second summand,
which implies that~$\cO_{C_1} \in {}^\perp\rE = \RPG{C_1,C_2}$.
One can check that the functor
\begin{equation*}
\Db(C_2) \to {}^\perp\langle \cE, \cO_{C_1} \rangle,
\qquad 
\cF_2 \mapsto (\RHom(\cF_2,\cO_{x_2})^\vee \otimes \cO_{C_1}(1)[1], \cF_2) 
\end{equation*}
is an equivalence of categories and to conclude that~$\RPG{C_1,C_2} \simeq \Db(\cO,C_2)$ if~$\g(C_1) = 0$.
A similar argument proves that~$\RPG{C_1,C_2} \simeq \Db(\cO,C_1)$ if~$\g(C_2) = 0$.
\end{remark}

We expect that~$\RPG{C_1,C_2}$ does not contain any exceptional object if~$\g(C_1),\g(C_2) \ge 1$.

\begin{remark}
As we mentioned in Remark~\ref{rem:pg-non-symmetric}, the category~$\Db(C_1,C_2)$ is not symmetric with respect to~$C_1$ and~$C_2$.
However, it is possible that~$\RPG{C_1,C_2}$ is symmetric; it would be interesting to check whether this is the case.
\end{remark}

\section{Deformation}
\label{sec:smoothing}

In this section we construct a deformation relating a reduced ideal point gluing of curves~$C_1$ and~$C_2$
and augmentations of curves of genus~$\g(C_1) + \g(C_2)$.
For this we embed a 1-nodal curve~$C_1 \cup C_2$ into a rationally connected threefold,
study the derived category of its blowup, and then consider the smoothing of the threefold induced by a smoothing of the curve.

\subsection{The central fiber}
\label{ss:central-fiber}

Assume~$\bcXo$ is a smooth threefold such that its structure sheaf~$\cO_{\bcXo}$ is exceptional
(for instance, $\bcXo$ could be any rationally connected threefold).
Assume further that
\begin{equation*}
C = C_1 \cup C_2 \subset \bcXo,
\end{equation*}
is a reducible 1-nodal curve.
We denote by~$x_1 \in C_1$ and~$x_2 \in C_2$ the points identified with the node of~$C$.
Consider the blowup
\begin{equation*}
\cXo \coloneqq \Bl_{C_1 \cup C_2}(\bcXo);
\end{equation*}
this is a 1-nodal threefold.
In the next lemma we construct a small resolution of~$\cXo$ 
and check that~$\cXo$ is maximally nonfactorial (\cite[Definition~6.10]{KS24}), 
i.e.,  that the restriction morphism from the Picard group of the blowup of~$\cXo$ at the node 
to the Picard group of its exceptional divisor is surjective.

\begin{lemma}
\label{lem:x-mnf}
Let~$\Bl_{C_1}(\bcXo)$ be the blowup of~$\bcXo$ at the component~$C_1$ of~$C$,
let~$C'_2 \subset \Bl_{C_1}(\bcXo)$ be the strict transform of the component~$C_2$,
and let~$\hcXo \coloneqq \Bl_{C'_2}(\Bl_{C_1}(\bcXo))$ be the blowup of~$\Bl_{C_1}(\bcXo)$ at~$C'_2$.
Then there is a commutative diagram
\begin{equation}
\label{eq:diagram-x}
\vcenter{\xymatrix{
\hcXo \coloneqq \Bl_{C'_2}(\Bl_{C_1}(\bcXo)) \ar[rr]^\varpi \ar[dr]_\pi && 
\Bl_{C_1 \cup C_2}(\bcXo) = \cXo \ar[dl]^{\rhoo}
\\
& \bcXo,
}}
\end{equation}
where~$\varpi$ is a small resolution of singularities.
The exceptional locus of~$\varpi$ is the strict transform~$L \subset \hcXo$ 
of the fiber of~$\Bl_{C_1}(\bcXo) \to \bcXo$ over the point~$x_0 \coloneqq C_1 \cap C_2$ in~$\bcXo$, and we have
\begin{equation}
\label{eq:ek-ell}
E_1 \cdot L = - 1
\qquad\text{and}\qquad 
E_2 \cdot L = 1,
\end{equation}
where~$E_1,E_2 \subset \hcXo$ are the exceptional divisors of~$\hcXo$ over~$C_1$ and~$C_2$, respectively.

In particular, the threefold~$\cXo = \Bl_{C_1 \cup C_2}(\bcXo)$ is maximally nonfactorial.
\end{lemma}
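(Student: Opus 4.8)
The plan is to reduce everything to an explicit local computation around the node $x_0 = C_1 \cap C_2$ of $C_1 \cup C_2$ in $\bcXo$. Over $\bcXo \setminus \{x_0\}$ the curves $C_1$ and $C_2$ are disjoint smooth curves, so there the iterated blowup $\Bl_{C'_2}(\Bl_{C_1}(\bcXo))$ coincides with the simultaneous blowup $\Bl_{C_1 \cup C_2}(\bcXo) = \cXo$, and in particular $\varpi$ will be an isomorphism over $\bcXo \setminus \{x_0\}$. So I would first choose coordinates $(u,v,w)$ on $\bcXo$ at $x_0$ (passing to a formal or étale neighbourhood if needed) with $C_1 = \{v = w = 0\}$ and $C_2 = \{u = w = 0\}$, which is possible since $C_1 \cup C_2$ has an ordinary node at $x_0$. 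Then $I_{C_1 \cup C_2} = I_{C_1} \cap I_{C_2} = (uv, w)$ locally, so near $x_0$ we have $\cXo = \Bl_{(uv,w)}(\AA^3)$; in the chart where the generator $w$ dominates this is the hypersurface $\{uv = \lambda w\} \subset \AA^4_{u,v,w,\lambda}$, an ordinary double point at the origin $o$, which is the node of $\cXo$.

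Next I would establish the diagram and the small resolution claim in two steps. First, the same local computation gives $I_{C_1 \cup C_2}\cdot\cO_{\Bl_{C_1}(\bcXo)} = \cO(-\widetilde{E}_1) \otimes \cI_{C'_2}$, where $\widetilde{E}_1$ is the exceptional divisor over $C_1$: in the chart $(u,v,t)$ with $w = vt$ one has $(uv,w)\cO = v\cdot(u,t)$ with $C'_2 = \{u = t = 0\}$, while in the other chart the ideal is that of $\widetilde{E}_1$. Pulling this back along $\Bl_{C'_2}$ yields $I_{C_1 \cup C_2}\cdot\cO_{\hcXo} = \cO_{\hcXo}(-E_1 - E_2)$, which is invertible, so the universal property of blowing up produces $\varpi \colon \hcXo \to \cXo$ over $\bcXo$, and moreover $\varpi^*(E^{(1)} + E^{(2)}) = E_1 + E_2$, where $E^{(1)}, E^{(2)} \subset \cXo$ are the components of the exceptional divisor of $\rhoo$ over $C_1$ and $C_2$. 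Second, performing the blowup of $C'_2$ explicitly (two charts of $\Bl_{(u,t)}$ of the chart $(u,v,t)$ of $\Bl_{C_1}(\bcXo)$), I would identify the induced map onto $\{uv = \lambda w\}$ with the standard small resolution $\Bl_{(v,w)}\{uv = \lambda w\} \to \{uv = \lambda w\}$; this shows $\varpi$ is an isomorphism over $\cXo \setminus \{o\}$, that $\varpi^{-1}(o)$ is a $\P^1$ which one checks to be the strict transform $L$ of the fibre of $\Bl_{C_1}(\bcXo) \to \bcXo$ over $x_0$, and that $\hcXo = \Bl_{E^{(1)}}(\cXo)$, where $E^{(1)} = \{v = w = 0\}$ is a Weil, non-Cartier divisor through $o$.

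To compute the intersection numbers I would argue as follows. From the local charts one sees that $E_1$ (the strict transform of $\widetilde{E}_1$) is the blowup of the $\P^1$-bundle $\widetilde{E}_1 \to C_1$ at the single point $q = C'_2 \cap \widetilde{E}_1$, which lies on the fibre of $\widetilde{E}_1 \to C_1$ over $x_0$; hence $L \subset E_1$ is the strict transform of that fibre, so $L^2_{E_1} = 0 - 1 = -1$, i.e. $\cN_{L/E_1} \cong \cO(-1)$. Since $L$ is the exceptional curve of a small resolution of an ordinary double point, $\cN_{L/\hcXo} \cong \cO(-1) \oplus \cO(-1)$, and the normal bundle sequence $0 \to \cN_{L/E_1} \to \cN_{L/\hcXo} \to \cO_{\hcXo}(E_1)|_L \to 0$ gives $E_1 \cdot L = \deg \cO_{\hcXo}(E_1)|_L = -2 - (-1) = -1$. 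Finally $(E_1 + E_2)\cdot L = \varpi^*(E^{(1)} + E^{(2)})\cdot L = (E^{(1)} + E^{(2)})\cdot \varpi_* L = 0$ since $L$ is contracted by $\varpi$, whence $E_2 \cdot L = 1$. (Alternatively one computes both numbers directly in the two affine charts, using the relation above as a check.)

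For maximal nonfactoriality, note that $E^{(1)}$ and $E^{(2)}$ are Weil divisors through $o$, and in the local model the exceptional divisor of $\Bl_o(\cXo)$ is the smooth quadric $E_o = \{uv = \lambda w\} \subset \P^3_{[u:v:w:\lambda]} \cong \P^1 \times \P^1$; the strict transforms of $E^{(1)}$ and $E^{(2)}$ meet $E_o$ in the lines $\{v = w = 0\}$ and $\{u = w = 0\}$ respectively. These two lines on $E_o$ meet in the point $[0:0:0:1]$, hence lie in different rulings, so their classes form a basis of $\Pic(E_o) = \ZZ^2$; therefore the restriction $\Pic(\Bl_o(\cXo)) \to \Pic(E_o)$ is surjective, which is the defining property of maximal nonfactoriality of $\cXo$. (Equivalently, the symmetric construction, blowing up $C_2$ first, produces $\Bl_{E^{(2)}}(\cXo)$, the other small resolution of the node — distinct from $\hcXo$ by the intersection numbers with $L$.) The one genuinely delicate point, and the main obstacle, is the bookkeeping of the two successive blowups: correctly describing $C'_2$, the strict transform of $\widetilde{E}_1$, and the curve $L$, and matching the second blowup near $o$ with the standard small resolution of the threefold node — this is also exactly what fixes the signs $E_1 \cdot L = -1$, $E_2 \cdot L = 1$ rather than the opposite, and which ruling each $E^{(i)}$ meets.
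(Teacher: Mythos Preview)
Your argument is correct, but it takes a different route from the paper's. The paper never writes down local coordinates: instead it introduces the auxiliary curve~$R = E_1 \cap E_2$ (the fiber of~$E_2 \to C'_2$ over~$C'_2 \cap \widetilde{E}_1$), observes that~$R$ is a fiber of~$E_2 \to C'_2$ while~$R \cup L$ is a fiber of~$E_1 \to C_1$, and reads off~$E_1 \cdot R = 0$, $E_2 \cdot R = -1$, $E_1 \cdot (R \cup L) = -1$, $E_2 \cdot (R \cup L) = 0$ from the standard self-intersection of a ruling on a projective bundle; subtracting gives~\eqref{eq:ek-ell}. For maximal nonfactoriality the paper simply invokes~\cite[Lemma~6.14]{KS24}, which takes as input precisely the numbers~\eqref{eq:ek-ell}. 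Your approach --- explicit charts for the iterated blowup, the normal bundle sequence~$0 \to \cN_{L/E_1} \to \cN_{L/\hcXo} \to \cO(E_1)\vert_L \to 0$, and direct verification that the strict transforms of~$E^{(1)}$, $E^{(2)}$ cut the quadric~$E_o$ in rulings of opposite type --- is more self-contained (no appeal to~\cite{KS24}) and makes the identification of~$\varpi$ with the standard small resolution completely transparent, at the cost of some coordinate bookkeeping. The paper's argument is shorter and coordinate-free, but hides the geometry of the node behind the reference; either is fine here.
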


\begin{proof}
Consider the composition of the blowups
\begin{equation*}
\pi \colon \hcXo \coloneqq \Bl_{C'_2}(\Bl_{C_1}(\bcXo)) \to \Bl_{C_1}(\bcXo) \to \bcXo.
\end{equation*}
The scheme preimage of the curve~$C_1 \cup C_2 \subset \bcXo$ under~$\pi$ 
is the union~$E_1 \cup E_2$ of the exceptional divisors of~$\pi$, 
hence  the morphism~$\pi$ factors through the blowup~$\rhoo$,
giving the required commutative diagram.

The morphism~$\varpi$ is obviously an isomorphism over the complement of the point~$x_0 \in \bcXo$.
On the other hand, the fiber of~$\rhoo$ over~$x_0$ is a $\P^1$ (because~$C_1 \cup C_2$ is a local complete intersection),
while the fiber of~$\pi$ is the union of two smooth rational curves
\begin{equation*}
\pi^{-1}(x_0) = R \cup L,
\end{equation*}
where~$L$ is the strict transform of the fiber of the exceptional divisor of the first blowup~$\Bl_{C_1}(\bcXo) \to \bcXo$,
while~$R$ is the fiber of second blowup~$\Bl_{C'_2}(\Bl_{C_1}(\bcXo)) \to \Bl_{C_1}(\bcXo)$ 
over the intersection point of the curve~$C'_2$ with~$E_1$.
It follows that~$\varpi$ is a small contraction; in particular it is crepant.

Finally, we note that~$R$ is a fiber of~$E_2 \to C'_2$, while~$R \cup L$ is a fiber of~$E_1 \to C_1$,
hence
\begin{equation*}
E_2 \cdot R = -1,
\qquad 
E_1 \cdot R = 0,
\qquad
E_2 \cdot (R + L) = 0,
\qquad
E_1 \cdot (R + L) = -1.
\end{equation*}
In particular, \eqref{eq:ek-ell} follows.
Moreover, it follows that~$(E_1 + E_2) \cdot L = 0$, and since~$\varpi^*(\Pic(\cXo/\bcXo))$ is generated by~$E_1 + E_2$,
we conclude that~$L$ is contracted by~$\varpi$, hence coincides with its exceptional locus.

The maximal nonfactoriality of~$\cXo$ follows from~\eqref{eq:ek-ell} and~\cite[Lemma~6.14]{KS24}.
\end{proof}

In what follows we use freely notation introduced in Lemma~\ref{lem:x-mnf};
in particular, the smooth rational curves~\mbox{$L,R \subset \hcXo$}.
Recall that~$L$ is the exceptional curve of the crepant contraction~$\varpi$,
in particular
\begin{equation}
\label{eq:kl}
K_{\hcXo} \cdot L = 0
\end{equation}
and since the singular point of~$X$ is a node, we have~$\cN_{L/\hcXo} \cong \cO_L(-1) \oplus \cO_L(-1)$.
It follows that~$\cO_L(-1)$ is a 3-spherical object in~$\Db(\hcXo)$.
We denote by
\begin{equation}
\label{eq:bt-ol}
\bT_{\cO_L(-1)} \colon \Db(\hcXo) \to \Db(\hcXo),
\qquad 
\cF \mapsto \Cone \Big(\Ext^\bullet(\cO_L(-1),\cF) \otimes \cO_L(-1) \to \cF\Big)
\end{equation} 
the corresponding spherical twist, which is an autoequivalence of~$\Db(\hcXo)$.

Note that the curve~$R$ is the intersection on the exceptional divisors of~$\pi$, 
so that we have a cartesian (and, moreover, $\Tor$-independent) diagram
\begin{equation}
\label{eq:reex}
\vcenter{\xymatrix{
R \ar[r]^-{r_2} \ar[d]_{r_1} &
E_2 \ar[d]^{\varepsilon_2}
\\
E_1 \ar[r]^-{\varepsilon_1} &
\hcXo,
}}
\end{equation}
where~$\varepsilon_k$ and~$r_k$ are the natural embeddings.

Now we start describing the structure of the derived category of~$\hcXo$.
Since the structure sheaf~$\cO_{\bcXo}$ is exceptional by assumption,
we have a semiorthogonal decomposition
\begin{equation}
\label{eq:dbhx}
\Db(\bcXo) = \langle \cA_{\bcXo}, \cO_{\bcXo} \rangle,
\qquad
\text{where~$\cA_{\bcXo} \coloneqq \cO_{\bcXo}^\perp$.}
\end{equation}
Since~$\pi \colon \hcXo = \Bl_{C'_2}(\Bl_{C_1}(\bcXo)) \to \bcXo$ is a composition of two smooth blowups, we have
\begin{equation}
\label{eq:dbxp-1}
\Db(\hcXo) = \langle \pi^*(\cA_{\bcXo}), \cO_{\hcXo}, \Phi_1(\Db(C_1)), \Phi_2(\Db(C_2)) \rangle,
\end{equation}
where~$\Phi_k$ are the fully faithful embeddings of~$\Db(C_k)$ defined by
\begin{equation*}
\Phi_k \colon \Db(C_k) \to \Db(\hcXo),
\qquad 
\cF_k \mapsto \varepsilon_{k*}(p_k^*(\cF_k)),
\end{equation*}
where~$\varepsilon_k \colon E_k \hookrightarrow \hcXo$ and~$p_k \colon E_k \to C_k$ are the natural embedding and projection.

\begin{lemma}
For any~$\cF_1 \in \Db(C_1)$, $\cF_2 \in \Db(C_2)$, we have
\begin{equation}
\label{eq:hom-phi12}
\RHom_{\Db(\hcXo)}(\Phi_1(\cF_1), \Phi_2(\cF_2)) \cong 
\RHom_{\Db(\kk)}(\cF_1\vert_{x_1}, \cF_2\vert_{x_2})[-1],
\end{equation}
where~$\cF_k\vert_{x_k}$ stands for the derived restriction of~$\cF_k$ to the point~$x_k \in C_k$.
Furthermore,
\begin{equation}
\label{eq:hom-pi-phi12}
\RHom_{\Db(\hcXo)}(\cO_{\hcXo}, \Phi_k(\cF_k)) \cong
\RHom_{\Db(C_k)}(\cO_{C_k}, \cF_k).
\end{equation}
\end{lemma}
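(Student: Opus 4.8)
The plan is to deduce both isomorphisms from a chain of standard adjunctions, using the Tor-independent Cartesian square~\eqref{eq:reex} and the geometric facts about $E_1$, $E_2$ and $R$ established in Lemma~\ref{lem:x-mnf}.

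I would first dispose of the easy isomorphism~\eqref{eq:hom-pi-phi12}. Since $\varepsilon_k\colon E_k \hookrightarrow \hcXo$ is a closed immersion with $\varepsilon_k^*\cO_{\hcXo} \cong \cO_{E_k}$, the $(\varepsilon_k^*,\varepsilon_{k*})$-adjunction gives $\RHom_{\Db(\hcXo)}(\cO_{\hcXo},\varepsilon_{k*}p_k^*\cF_k) \cong \RHom_{\Db(E_k)}(\cO_{E_k},p_k^*\cF_k) = \RGamma(E_k,p_k^*\cF_k)$. Now $p_k\colon E_k \to C_k$ is a $\P^1$-bundle when $k=2$ and the blowup at one point of a $\P^1$-bundle when $k=1$, so in both cases $Rp_{k*}\cO_{E_k} \cong \cO_{C_k}$; the projection formula then yields $\RGamma(E_k,p_k^*\cF_k) \cong \RGamma(C_k,\cF_k) = \RHom_{\Db(C_k)}(\cO_{C_k},\cF_k)$, which is~\eqref{eq:hom-pi-phi12}.

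For~\eqref{eq:hom-phi12} I would argue as follows. As $\varepsilon_1\colon E_1 \hookrightarrow \hcXo$ is an effective Cartier divisor in the smooth threefold $\hcXo$, Grothendieck duality gives $\varepsilon_1^!(-) \cong \varepsilon_1^*(-)\otimes\cO_{E_1}(E_1)[-1]$, and the $(\varepsilon_{1*},\varepsilon_1^!)$-adjunction turns $\RHom_{\Db(\hcXo)}(\Phi_1(\cF_1),\Phi_2(\cF_2))$ into $\RHom_{\Db(E_1)}\big(p_1^*\cF_1,\ \varepsilon_1^*(\varepsilon_{2*}p_2^*\cF_2)\otimes\cO_{E_1}(E_1)\big)[-1]$. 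Base change along the Tor-independent square~\eqref{eq:reex} identifies $\varepsilon_1^*\varepsilon_{2*}p_2^*\cF_2$ with $r_{1*}r_2^*p_2^*\cF_2$, and then the projection formula together with the $(r_1^*,r_{1*})$-adjunction bring the expression to $\RHom_{\Db(R)}\big(r_1^*p_1^*\cF_1,\ r_2^*p_2^*\cF_2\otimes r_1^*\cO_{E_1}(E_1)\big)[-1]$. At this point I would feed in the geometry of Lemma~\ref{lem:x-mnf}: the curve $R = E_1\cap E_2$ is a $\P^1$ that is at once the fibre of $p_1$ over $x_1$ and the fibre of $p_2$ over $x_2$, so $p_1\circ r_1$ and $p_2\circ r_2$ are constant, giving $r_1^*p_1^*\cF_1 \cong \cF_1\vert_{x_1}\otimes_\kk\cO_R$ and $r_2^*p_2^*\cF_2 \cong \cF_2\vert_{x_2}\otimes_\kk\cO_R$; moreover $r_1^*\cO_{E_1}(E_1)$ is the restriction of $\cO_{E_1}(E_1)$ to $R$, which has degree $E_1\cdot R = 0$ (computed in the proof of Lemma~\ref{lem:x-mnf}) and is therefore trivial. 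Since $\RGamma(R,\cO_R) = \kk$, the displayed $\RHom$ collapses to $\RHom_{\Db(\kk)}(\cF_1\vert_{x_1},\cF_2\vert_{x_2})\otimes_\kk\RGamma(R,\cO_R)[-1] \cong \RHom_{\Db(\kk)}(\cF_1\vert_{x_1},\cF_2\vert_{x_2})[-1]$, which is~\eqref{eq:hom-phi12}.

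There is no serious obstacle here — the argument is a bookkeeping exercise in adjunctions. The only point requiring care is tracking the homological shift and the twist by $\cO_{E_1}(E_1)$ through the chain: the shift $[-1]$ is precisely the contribution of $\varepsilon_1^!$, and the twist disappears because $\cO_{E_1}(E_1)\vert_R$ is trivial. The single piece of genuine input beyond formalism is the identification of $R$ as a common $\P^1$-fibre of $p_1$ and $p_2$ with $E_1\cdot R = 0$, and this is already available from Lemma~\ref{lem:x-mnf}.
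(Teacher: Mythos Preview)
Your proof is correct and follows essentially the same adjunction-and-base-change strategy as the paper, with the roles of $E_1$ and $E_2$ swapped: the paper begins with the adjunction $(\varepsilon_2^*,\varepsilon_{2*})$, applies base change $\varepsilon_2^*\varepsilon_{1*}\cong r_{2*}r_1^*$, then uses $(r_{2*},r_2^!)$ and the triviality of $\cN_{R/E_2}$, whereas you begin with $(\varepsilon_{1*},\varepsilon_1^!)$, apply $\varepsilon_1^*\varepsilon_{2*}\cong r_{1*}r_2^*$, and use instead that $\cO_{E_1}(E_1)\vert_R$ is trivial via $E_1\cdot R = 0$.

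One small inaccuracy worth correcting: $R$ is \emph{not} the fibre of $p_1$ over $x_1$. Since $E_1$ is the strict transform in $\hcXo$ of the exceptional divisor of the first blowup (hence the blowup of a $\P^1$-bundle at one point), the fibre of $p_1\colon E_1\to C_1$ over $x_1$ is $R\cup L$, as noted in Lemma~\ref{lem:x-mnf} and confirmed by $\Phi_1(\cO_{x_1})\cong\cO_{R\cup L}$ in Lemma~\ref{lem:colm1}. This does not affect your argument, because all you actually use is that the composition $p_1\circ r_1$ is constant at $x_1$, which holds since $R$ is contained in that fibre.
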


\begin{proof}
Using the $\Tor$-independent square~\eqref{eq:reex}, it is easy to check that
\begin{align*}
\RHom_{\Db(\hcXo)}(\Phi_1(\cF_1), \Phi_2(\cF_2)) &=
\RHom_{\Db(\hcXo)}(\varepsilon_{1*}(p_1^*(\cF_1)), \varepsilon_{2*}(p_2^*(\cF_2))) \\ & \cong
\RHom_{\Db(E_2)}(\varepsilon_2^*\varepsilon_{1*}(p_1^*(\cF_1)), p_2^*(\cF_2)) \\ & \cong
\RHom_{\Db(E_2)}(r_{2*}r_1^*(p_1^*(\cF_1)), p_2^*(\cF_2)) \\ & \cong
\RHom_{\Db(R)}(r_1^*(p_1^*(\cF_1)), r_2^!(p_2^*(\cF_2))) \\ & \cong
\RHom_{\Db(R)}(r_1^*(p_1^*(\cF_1)), r_2^*(p_2^*(\cF_2)) \otimes \cN_{R/E_2}[-1]) \\ & \cong
\RHom_{\Db(R)}((\cF_1\vert_{x_1}) \otimes \cO_R, (\cF_2\vert_{x_2}) \otimes \cN_{R/E_2}[-1])
\end{align*}
by adjunction for~$(\varepsilon_2^*,\varepsilon_{2*})$,
base change for~\eqref{eq:reex},
adjunction for~$(r_{2*},r_2^!)$,
the isomorphism~$\omega_{R/E_2} \cong \cN_{R/E_2}$,
and the fact that the compositions~$R \xrightarrow{\ r_k\ } E_k \xrightarrow{\ p_k\ } C_k$
coincide with~$R \xrightarrow\quad \Spec(\kk) \xrightarrow{\ x_k\ } C_k$.
On the other hand, $R \cong \P^1$ is a fiber of~$E_2 \to C_2$, hence~$\cN_{R/E_2} \cong \cO_R$, and~\eqref{eq:hom-phi12} follows. 

The isomorphism~\eqref{eq:hom-pi-phi12} is proved in Lemma~\ref{lem:ac-blowup}.
\end{proof}

\begin{remark}
\label{rem:spherical-gluing}
Isomorphism~\eqref{eq:hom-phi12} shows that the subcategory generated by~$\Phi_1(\Db(C_1))$ and~$\Phi_2(\Db(C_2))$
in~$\Db(\hcXo)$ is the gluing of~$\Db(C_1)$ and~$\Db(C_2)$ 
with the gluing object~$\cO_{(x_1,x_2)} \in \Db(C_1 \times C_2)$.
As we will see below, a mutation will transform it to the ideal point gluing.
\end{remark}

\begin{corollary}
We have
\begin{equation}
\label{eq:rhom-come1}
\RHom_{\Db(\hcXo)}(\cO_{\hcXo}(-E_1), \Phi_2(\cF_2)) \cong 
\RHom_{\Db(C_2)}(\cO_{C_2}(-x_2), \cF_2).
\end{equation} 
\end{corollary}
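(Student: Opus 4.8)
The plan is to reduce this isomorphism to the already established one~\eqref{eq:hom-pi-phi12}, via a twist and the projection formula. First I would use the tensor--hom adjunction to rewrite
\begin{equation*}
\RHom_{\Db(\hcXo)}(\cO_{\hcXo}(-E_1), \Phi_2(\cF_2)) \cong
\RHom_{\Db(\hcXo)}(\cO_{\hcXo}, \Phi_2(\cF_2) \otimes \cO_{\hcXo}(E_1)),
\end{equation*}
and then, writing~$\Phi_2(\cF_2) = \varepsilon_{2*}(p_2^*\cF_2)$ and applying the projection formula for~$\varepsilon_2$,
\begin{equation*}
\Phi_2(\cF_2) \otimes \cO_{\hcXo}(E_1) \cong \varepsilon_{2*}\bigl(p_2^*\cF_2 \otimes \varepsilon_2^*\cO_{\hcXo}(E_1)\bigr).
\end{equation*}

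The crucial step is the identification~$\varepsilon_2^*\cO_{\hcXo}(E_1) \cong p_2^*\cO_{C_2}(x_2)$. Since~$E_1$ and~$E_2$ are distinct prime divisors on~$\hcXo$ meeting along the curve~$R$, and the square~\eqref{eq:reex} is Cartesian and $\Tor$-independent, the restriction of the Cartier divisor~$E_1$ to~$E_2$ is the reduced divisor~$R$. By the description of~$R$ in the proof of Lemma~\ref{lem:x-mnf}, the curve~$R$ is the fiber of the~$\P^1$-bundle~$p_2 \colon E_2 \to C_2$ over~$x_2$: it is the fiber of the second blowup over the point~$C'_2 \cap E_1$, which lies over the node of~$C_1 \cup C_2$ and hence corresponds to~$x_2$ under the isomorphism~$C'_2 \cong C_2$ induced by~$\pi$. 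As the class of a fiber of a~$\P^1$-bundle is pulled back from the base, $\cO_{E_2}(R) \cong p_2^*\cO_{C_2}(x_2)$; and since~$\cO_{\hcXo}(E_1)$ is a line bundle, its derived restriction to~$E_2$ agrees with the ordinary one, giving the claimed identity.

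Combining these, $\Phi_2(\cF_2) \otimes \cO_{\hcXo}(E_1) \cong \varepsilon_{2*}\bigl(p_2^*(\cF_2 \otimes \cO_{C_2}(x_2))\bigr) = \Phi_2(\cF_2(x_2))$, so that~\eqref{eq:hom-pi-phi12} applied to~$\cF_2(x_2)$ yields
\begin{equation*}
\RHom_{\Db(\hcXo)}(\cO_{\hcXo}(-E_1), \Phi_2(\cF_2)) \cong
\RHom_{\Db(C_2)}(\cO_{C_2}, \cF_2(x_2)) \cong
\RHom_{\Db(C_2)}(\cO_{C_2}(-x_2), \cF_2),
\end{equation*}
which is~\eqref{eq:rhom-come1}. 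The only real obstacle is the bookkeeping in the previous paragraph --- verifying that~$E_1\vert_{E_2}$ is exactly the fiber of~$p_2$ over~$x_2$, with the right reduced structure and over the right point --- which requires tracing carefully through the two blowups of Lemma~\ref{lem:x-mnf} and the identification of the strict transform~$C'_2$ with~$C_2$; everything else is formal.
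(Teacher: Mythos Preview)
Your argument is correct and in fact more direct than the paper's. The paper instead computes the left adjoint~$\Phi_2^*(\cO_{\hcXo}(-E_1))$ by applying~$\Phi_2^*$ to the sequence~$0 \to \cO_{\hcXo}(-E_1) \to \cO_{\hcXo} \to \cO_{E_1} \to 0$, which via~\eqref{eq:hom-pi-phi12} and~\eqref{eq:hom-phi12} yields a triangle~$\Phi_2^*(\cO_{\hcXo}(-E_1)) \to \cO_{C_2} \to \cO_{x_2}$; it must then rule out the possibility that the second arrow is zero, which it does by a separate contradiction argument computing~$\Hom(\cO_{\hcXo}(-E_1),\cO_{E_2}(-R))$ directly. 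Your route---twisting by~$\cO_{\hcXo}(E_1)$, using the projection formula, and identifying~$\varepsilon_2^*\cO_{\hcXo}(E_1) \cong \cO_{E_2}(R) \cong p_2^*\cO_{C_2}(x_2)$ from the geometry of~\eqref{eq:reex}---sidesteps that nontriviality check entirely, since the restriction of a Cartier divisor is determined on the nose rather than only up to an extension class. The paper's own proof in fact invokes the same identification~$\Phi_2(\cO_{C_2}(-x_2)) \cong \cO_{E_2}(-R)$ at the end, so your key step uses nothing beyond what is already established; you simply organize it so that no extra verification is needed.
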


\begin{proof}
First of all, we observe that~$\RHom_{\Db(\hcXo)}(\cO_{\hcXo}(-E_1), \Phi_2(\cF_2)) \cong 
\RHom_{\Db(C_2)}(\Phi_{2}^*(\cO_{\hcXo}(-E_1)), \cF_2)$ by adjunction.
To compute~$\Phi_{2}^*(\cO_{\hcXo}(-E_1))$ we use the exact sequence~$0 \to \cO_{\hcXo}(-E_1) \to \cO_{\hcXo} \to \cO_{E_1} \to 0$,
and the isomorphisms
\begin{equation*}
\Phi_2^*(\cO_{\hcXo}) \cong \cO_{C_2}
\qquad\text{and}\qquad
\Phi_2^*(\cO_{E_1}) \cong \Phi_2^*(\Phi_1(\cO_{C_1})) \cong \cO_{x_2},
\end{equation*}
which follow from~\eqref{eq:hom-pi-phi12} and~\eqref{eq:hom-phi12}, respectively.
It follows that there is a distinguished triangle
\begin{equation*}
\Phi_2^*(\cO_{\hcXo}(-E_1)) \to \cO_{C_2} \to \cO_{x_2},
\end{equation*}
and it remains to check that the morphism~$\cO_{C_2} \to \cO_{x_2}$ is nonzero.

For this we consider the commutative diagram
\begin{equation*}
\xymatrix@C=3em{
\cO_{\hcXo}(-E_1) \ar[r] \ar[d] &
\cO_{\hcXo} \ar[r] \ar[d] &
\cO_{E_1} \ar[d]
\\
\Phi_2(\Phi_2^*(\cO_{\hcXo}(-E_1))) \ar[r] &
\cO_{E_2} \ar[r] &
\cO_R,
}
\end{equation*}
where the bottom line is obtained from the top by the functor~$\Phi_2 \circ \Phi_2^*$
(in particular, the map~$\cO_{E_2} \to \cO_R$ is obtained from~$\cO_{C_2} \to \cO_{x_2}$ by~$\Phi_2$)
and the vertical arrows are given by the unit of adjunction.
The two vertical arrows and the top horizontal arrow in the right square are obviously surjective,
hence so is the bottom arrow.
In particular, this arrow is nonzero, hence so is~$\cO_{C_2} \to \cO_{x_2}$, as required.

We conclude that~$\Phi_2^*(\cO_{\hcXo}(-E_1)) \cong \cO_{C_2}(-x_2)$,
and~\eqref{eq:rhom-come1} follows.
\end{proof}

We also make the following important observation about the spherical object~$\cO_L(-1)$.

\begin{lemma}
\label{lem:colm1}
There is a distinguished triangle
\begin{equation}
\label{eq:colm1}
\cO_L(-1) \to \Phi_1(\cO_{x_1}) \to \Phi_2(\cO_{x_2}),
\end{equation}
where the second arrow is the unique nonzero morphism.
\end{lemma}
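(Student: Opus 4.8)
The plan is to realise \eqref{eq:colm1} as the distinguished triangle coming from the short exact sequence on~$\hcXo$ that presents~$\cO_R$ as the restriction to~$R$ of the structure sheaf of the reducible fibre~$L\cup R=\pi^{-1}(x_0)$.

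The first step is to identify the two outer terms of \eqref{eq:colm1} with pushforwards of structure sheaves of these rational curves. By the definition of~$\Phi_1$ and flatness of~$p_1\colon E_1\to C_1$ (which holds by miracle flatness: $E_1$ is smooth and every fibre of~$p_1$ is a curve) we have $\Phi_1(\cO_{x_1})=\varepsilon_{1*}(p_1^*\cO_{x_1})=\varepsilon_{1*}\cO_{p_1^{-1}(x_1)}$. By Lemma~\ref{lem:x-mnf} the fibre~$p_1^{-1}(x_1)$ is the reduced curve~$L\cup R$ (it is the fibre of~$E_1\to C_1$ over the image~$x_1$ of the node~$x_0$, and its two smooth rational components~$L$ and~$R$ meet transversally at a single point~$q$). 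Hence $\Phi_1(\cO_{x_1})\cong\cO_{L\cup R}$, with~$\varepsilon_{1*}$ suppressed from the notation as elsewhere in this section. Likewise~$R$ is a fibre of the~$\P^1$-bundle~$p_2\colon E_2\to C_2$ over~$x_2$, so $\Phi_2(\cO_{x_2})\cong\varepsilon_{2*}\cO_R=\cO_R$.

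Next I would write the restriction short exact sequence of~$R\subset L\cup R$ on~$\hcXo$,
\begin{equation*}
0\to\cI_{R/(L\cup R)}\to\cO_{L\cup R}\to\cO_R\to 0 .
\end{equation*}
On the nodal curve~$L\cup R$ the ideal sheaf of~$R$ is supported on~$L$ and coincides with the ideal sheaf~$\cO_L(-q)$ of the node~$q$ inside~$L$; since $\cN_{L/\hcXo}\cong\cO_L(-1)^{\oplus 2}$ forces $L\cong\P^1$, this ideal sheaf is~$\cO_L(-1)$. Thus the sequence reads $0\to\cO_L(-1)\to\Phi_1(\cO_{x_1})\to\Phi_2(\cO_{x_2})\to 0$, which yields the distinguished triangle~\eqref{eq:colm1}.

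It remains to check that the second arrow is the unique nonzero morphism. The quotient map $\cO_{L\cup R}\to\cO_R$ is nonzero, and by \eqref{eq:hom-phi12} the space~$\Hom_{\Db(\hcXo)}(\Phi_1(\cO_{x_1}),\Phi_2(\cO_{x_2}))$ is~$\rH^0$ of $\RHom_{\Db(\kk)}(\cO_{x_1}\vert_{x_1},\cO_{x_2}\vert_{x_2})[-1]$; since each derived restriction~$\cO_{x_i}\vert_{x_i}$ equals $\kk\oplus\kk[1]$, this shifted complex has one-dimensional~$\rH^0$. So the quotient map is, up to a nonzero scalar, the only nonzero morphism $\Phi_1(\cO_{x_1})\to\Phi_2(\cO_{x_2})$, as claimed. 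The one delicate point is the geometric bookkeeping of the first step: extracting from the two successive blowups of Lemma~\ref{lem:x-mnf} that~$p_1^{-1}(x_1)$ carries the \emph{reduced} nodal structure and that~$L\cap R$ is a single reduced point, so that the kernel of $\cO_{L\cup R}\to\cO_R$ is exactly~$\cO_L(-1)$ with no extra twist. The rest is the standard structure-sheaf sequence of a reducible nodal curve together with the Hom-computation~\eqref{eq:hom-phi12}.
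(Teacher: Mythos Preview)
Your proof is correct and follows essentially the same approach as the paper: identify~$\Phi_1(\cO_{x_1})\cong\cO_{R\cup L}$ and~$\Phi_2(\cO_{x_2})\cong\cO_R$, then use the standard short exact sequence~$0\to\cO_L(-1)\to\cO_{R\cup L}\to\cO_R\to 0$. The only minor difference is in the uniqueness argument---the paper observes directly that~$\Hom(\cO_{R\cup L},\cO_R)\cong\kk$, whereas you route through~\eqref{eq:hom-phi12}; both are fine.
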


\begin{proof}
By definition we have~$\Phi_1(\cO_{x_1}) \cong \cO_{R \cup L}$, $\Phi_2(\cO_{x_2}) \cong \cO_R$, 
hence the standard exact sequence
\begin{equation*}
0 \to \cO_L(-1) \to \cO_{R \cup L} \to \cO_R \to 0
\end{equation*}
gives the required distinguished triangle.
The uniqueness follows from~$\Hom(\cO_{R \cup L}, \cO_R) \cong \kk$.
\end{proof}

Now we modify~\eqref{eq:dbxp-1} by three mutations.
First, we mutate~$\cO_{\hcXo}$ one step to the right.
Using~\eqref{eq:hom-pi-phi12}, we see that~$\Phi_1^*(\cO_{\hcXo}) \cong \cO_{C_1}$, 
so that~$\Phi_1(\Phi_1^*(\cO_{\hcXo})) \cong \Phi_1(\cO_{C_1}) \cong \cO_{E_1}$,
and we conclude that the mutation functor~$\bR_{\Phi_1(\Db(C_1))}$ acts on~$\cO_{\hcXo}$ as follows:
\begin{equation*}
\bR_{\Phi_1(\Db(C_1))}(\cO_{\hcXo}) \cong
\Cone \Big(\cO_{\hcXo} \to \cO_{E_1} \Big)[-1] \cong
\cO_{\hcXo}(-E_1).
\end{equation*}
Thus, we obtain the following semiorthogonal decomposition
\begin{equation}
\label{eq:dbxp-2}
\Db(\hcXo) = \langle \pi^*(\cA_{\bcXo}), \Phi_1(\Db(C_1)), \cO_{\hcXo}(-E_1), \Phi_2(\Db(C_2)) \rangle.
\end{equation}

Next, we mutate the component~$\Phi_1(\Db(C_1))$ of~\eqref{eq:dbxp-2} one step to the right.
We obtain
\begin{equation}
\label{eq:dbxp-3}
\Db(\hcXo) = \langle \pi^*(\cA_{\bcXo}), \cO_{\hcXo}(-E_1), \Phi'_1(\Db(C_1)), \Phi_2(\Db(C_2)) \rangle,
\end{equation}
where the functor~$\Phi'_1 \colon \Db(C_1) \to \Db(\hcXo)$ is defined as the composition
\begin{equation*}
\Phi' \coloneqq \bR_{\cO_{\hcXo}(-E_1)} \circ \Phi_1.
\end{equation*}

Finally, we mutate the component~$\pi^*(\cA_{\bcXo})$ of~\eqref{eq:dbxp-3} to the far right:
\begin{equation}
\label{eq:dbxp-4}
\Db(\hcXo) = 
\langle \cO_{\hcXo}(-E_1), \Phi'_1(\Db(C_1)), \Phi_2(\Db(C_2)), \pi^*(\cA_{\bcXo}) \otimes \cO_{\hcXo}(-K_{\hcXo}) \rangle.
\end{equation}

The next technical lemma will be used in the proof of Proposition~\ref{prop:gluing-geometric}.

\begin{lemma}
\label{lem:hom-phi1p-phi2}
For any object~$\cF_1 \in \Db(C_1)$ there is a distinguished triangle
\begin{equation}
\label{eq:phip-phi}
\Phi'_1(\cF_1) \to \Phi_1(\cF_1) \to \RHom_{\Db(C_1)}(\cF_1, \cO_{C_1}[-1])^\vee \otimes \cO_{\hcXo}(-E_1).
\end{equation}
Moreover, we have~$\Hom_{\Db(\hcXo)}(\Phi'_1(\cO_{C_1}), \Phi_2(\cO_{C_2}(-x_2))) = 0$.
\end{lemma}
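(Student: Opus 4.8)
The plan is to deduce the triangle \eqref{eq:phip-phi} formally from the definition $\Phi'_1 = \bR_{\cO_{\hcXo}(-E_1)}\circ\Phi_1$. Recall that for an exceptional object $\cE$ the right mutation of an object $G$ fits into a distinguished triangle $\bR_\cE(G)\to G\to \cE\otimes\RHom(G,\cE)^\vee$; taking $\cE=\cO_{\hcXo}(-E_1)$ and $G=\Phi_1(\cF_1)$ reduces everything to identifying $\RHom_{\Db(\hcXo)}(\Phi_1(\cF_1),\cO_{\hcXo}(-E_1))$ with $\RHom_{\Db(C_1)}(\cF_1,\cO_{C_1}[-1])$. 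I would prove the latter by adjunction for $\varepsilon_{1*}$: the left-hand side equals $\RHom_{\Db(E_1)}(p_1^*\cF_1,\varepsilon_1^!\cO_{\hcXo}(-E_1))$, and since $E_1$ is a Cartier divisor $\varepsilon_1^!\cO_{\hcXo}(-E_1)\cong \varepsilon_1^*\cO_{\hcXo}(-E_1)\otimes\cO_{E_1}(E_1)[-1]\cong\cO_{E_1}[-1]$; adjunction for $p_1^*$ and the projection formula then reduce the claim to $Rp_{1*}\cO_{E_1}\cong\cO_{C_1}$, which holds because $p_1$ is flat and all of its fibres ($\P^1$, or the chain $R\cup L$ of two lines) satisfy $\rH^0(\cO)=\kk$ and $\rH^1(\cO)=0$.

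For the vanishing I would specialise \eqref{eq:phip-phi} to $\cF_1=\cO_{C_1}$, so that $\Phi_1(\cO_{C_1})\cong\cO_{E_1}$ and the third term becomes $M\coloneqq\cO_{\hcXo}(-E_1)\otimes\rH^\bullet(C_1,\cO_{C_1})^\vee[1]\cong\cO_{\hcXo}(-E_1)[1]\oplus\cO_{\hcXo}(-E_1)^{\oplus g_1}[2]$, and then apply $\RHom_{\Db(\hcXo)}(-,N)$ with $N\coloneqq\Phi_2(\cO_{C_2}(-x_2))\cong\cO_{E_2}(-R)$. By \eqref{eq:hom-phi12} one gets $\RHom(\cO_{E_1},N)\cong\kk[-1]$, and by \eqref{eq:rhom-come1} one gets $\RHom(\cO_{\hcXo}(-E_1),N)\cong\rH^\bullet(C_2,\cO_{C_2})$; hence $\Hom^i(M,N)$ vanishes for $i\le0$ and $\Ext^1(M,N)$ is one-dimensional. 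The long exact sequence of
\begin{equation*}
\RHom(M,N)\xrightarrow{\ b^*\ }\RHom(\cO_{E_1},N)\longrightarrow\RHom(\Phi'_1(\cO_{C_1}),N)
\end{equation*}
then identifies $\Hom^0_{\Db(\hcXo)}(\Phi'_1(\cO_{C_1}),N)$ with the kernel of the induced map $b^*\colon\Ext^1(M,N)\to\Ext^1(\cO_{E_1},N)$, so it suffices to prove $b^*\neq0$.

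The map $b^*$ is precomposition with the morphism $b\colon\cO_{E_1}\to M$ appearing in \eqref{eq:phip-phi}. Its component $b_1\colon\cO_{E_1}\to\cO_{\hcXo}(-E_1)[1]$ must be nonzero: otherwise $\cO_{\hcXo}(-E_1)$ would split off as a direct summand of $\Phi'_1(\cO_{C_1})$, contradicting the vanishing $\RHom(\Phi'_1(\Db(C_1)),\cO_{\hcXo}(-E_1))=0$ visible in \eqref{eq:dbxp-3}. Since $\Ext^1(\cO_{E_1},\cO_{\hcXo}(-E_1))$ is one-dimensional — it equals $\rH^0(C_1,\cO_{C_1})$ by the identification established above — the morphism $b_1$ is a nonzero multiple of the class $\kappa$ of the extension $0\to\cO_{\hcXo}(-E_1)\to\cO_{\hcXo}\to\cO_{E_1}\to0$. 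Keeping track of cohomological degrees, $b^*$ then carries a generator of $\Ext^1(M,N)$ to the Yoneda product $\eta\circ\kappa\in\Ext^1(\cO_{E_1},\cO_{E_2}(-R))$, where $\eta$ is the restriction morphism $\cO_{\hcXo}(-E_1)\to\cO_{\hcXo}(-E_1)\vert_{E_2}\cong\cO_{E_2}(-R)$ (a generator of the one-dimensional $\Hom(\cO_{\hcXo}(-E_1),\cO_{E_2}(-R))$ by \eqref{eq:rhom-come1}). This product is the class of the pushout $0\to\cO_{E_2}(-R)\to\cO_{E_1+E_2}\to\cO_{E_1}\to0$ of the preceding extension along $\eta$, and I would verify it is non-split by a local computation at a point $q$ of $R=E_1\cap E_2$: if $u,v$ are local equations of $E_1$ and $E_2$ near $q$, a splitting would express $1$ as an element of $(u,v)$, which is impossible since $u,v\in\fm_q$. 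Therefore $b^*\neq0$, and $\Hom^0_{\Db(\hcXo)}(\Phi'_1(\cO_{C_1}),N)=0$.

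I expect the main difficulty to be the third paragraph: identifying the component $b_1$ of the mutation morphism with the generator $\kappa$, matching $b^*$ with the explicit Yoneda product $\eta\circ\kappa$, and then recognising the resulting pushout as $\cO_{E_1+E_2}$ and checking its non-splitness. The remaining steps are routine manipulations with the adjunctions, the blow-up squares such as \eqref{eq:reex}, and the semiorthogonal decompositions recorded above.
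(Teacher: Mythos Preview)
Your proof is correct. For the triangle~\eqref{eq:phip-phi} your argument via $(\varepsilon_{1*},\varepsilon_1^!)$ and $(p_1^*,p_{1*})$ adjunction is a minor variant of the paper's, which instead applies $\RHom_{\Db(\hcXo)}(\Phi_1(\cF_1),-)$ to the sequence $0\to\cO_{\hcXo}(-E_1)\to\cO_{\hcXo}\to\cO_{E_1}\to 0$ and uses the semiorthogonality of $\Phi_1(\cF_1)$ and $\cO_{\hcXo}$ together with full faithfulness of $\Phi_1$. Both routes identify the same space $\RHom(\Phi_1(\cF_1),\cO_{\hcXo}(-E_1))\cong\RHom(\cF_1,\cO_{C_1}[-1])$.

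For the vanishing, the paper takes a shorter path. After identifying $b_1$ with the extension class~$\kappa$ (which the paper does by noting the mutation map is the coevaluation, while you argue by a splitting contradiction), the paper applies the octahedral axiom to the factorization $\cO_{E_1}\xrightarrow{b}M\xrightarrow{\pr_1}\cO_{\hcXo}(-E_1)[1]$ to rewrite the triangle as
\[
\Phi'_1(\cO_{C_1})\longrightarrow\cO_{\hcXo}\longrightarrow\cO_{\hcXo}(-E_1)^{\oplus g_1}[2],
\]
since $\Cone(b_1)[-1]\cong\cO_{\hcXo}$. The vanishing then follows immediately from $\Hom(\cO_{\hcXo},\cO_{E_2}(-R))=\rH^0(C_2,\cO_{C_2}(-x_2))=0$. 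Your argument instead computes $b^*$ directly and reduces to the nonvanishing of the Yoneda product $\eta\circ\kappa$, which you verify by identifying the pushout with $\cO_{E_1+E_2}$ and checking non-splitness locally. This is correct but more laborious: in fact the nonvanishing of $\eta\circ\kappa$ already follows from the long exact sequence of $\RHom(-,N)$ applied to $0\to\cO_{\hcXo}(-E_1)\to\cO_{\hcXo}\to\cO_{E_1}\to 0$, since $\Hom(\cO_{\hcXo},N)=0$ forces the connecting map $\Hom(\cO_{\hcXo}(-E_1),N)\to\Ext^1(\cO_{E_1},N)$ to be injective. So your local computation, while valid, is doing by hand what the paper's octahedral rewriting accomplishes structurally.
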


\begin{proof}
Recall that~$\Phi'_1 = \bR_{\cO_{\hcXo}(-E_1)} \circ \Phi_1$ by definition
and that the right mutation functor~$\bR_{\cO_{\hcXo}(-E_1)}$ is defined by the distinguished triangle
\begin{equation*}
\bR_{\cO_{\hcXo}(-E_1)}(\cG) \to \cG \to \RHom(\cG, \cO_{\hcXo}(-E_1))^\vee \otimes \cO_{\hcXo}(-E_1).
\end{equation*}
Now we take~$\cG = \Phi_1(\cF_1)$.
Note that~$\RHom_{\Db(\hcXo)}(\Phi_1(\cF_1), \cO_{\hcXo}) = 0$ by~\eqref{eq:dbxp-1},
hence, applying the functor~$\RHom_{\Db(\hcXo)}(\Phi_1(\cF_1), -)$
to the exact sequence~$0 \to \cO_{\hcXo}(-E_1) \to \cO_{\hcXo} \to \cO_{E_1} \to 0$, we deduce that
\begin{equation*}
\RHom_{\Db(\hcXo)}(\Phi_1(\cF_1), \cO_{\hcXo}(-E_1)) \cong
\RHom_{\Db(\hcXo)}(\Phi_1(\cF_1), \Phi_1(\cO_{C_1})[-1]) \cong
\RHom_{\Db(C_1)}(\cF_1, \cO_{C_1}[-1]),
\end{equation*}
where we used an identification~$\cO_{E_1} \cong \Phi_1(\cO_{C_1})$ and full faithfulness of~$\Phi_1$.
This proves~\eqref{eq:phip-phi}.

Next, we note that~$\RHom_{\Db(C_1)}(\cO_{C_1}, \cO_{C_1}[-1]) \cong \kk[-1] \oplus \kk^{g_1}[-2]$,
therefore for~$\cF_1 = \cO_{C_1}$ the triangle~\eqref{eq:phip-phi} takes the form
\begin{equation*}
\Phi'_1(\cO_{C_1}) \to \cO_{E_1} \to \cO_{\hcXo}(-E_1)[1] \oplus \cO_{\hcXo}(-E_1)^{\oplus g_1}[2].
\end{equation*}
Since the second arrow is the coevaluation morphism, its first component~$\cO_{E_1} \to \cO_{\hcXo}(-E_1)[1]$ 
corresponds to the unique non-trivial extension of~$\cO_{E_1}$ by~$\cO_{\hcXo}(-E_1)$, which is given by the exact sequence
\begin{equation*}
0 \to \cO_{\hcXo}(-E_1) \to \cO_{\hcXo} \to \cO_{E_1} \to 0.
\end{equation*}
This means that the above triangle can be rewritten as
\begin{equation*}
\Phi'_1(\cO_{C_1}) \to \cO_{\hcXo} \to \cO_{\hcXo}(-E_1)^{\oplus g_1}[2].
\end{equation*}
Since, on the other hand, $\Phi_2(\cO_{C_2}(-x_2)) \cong \cO_{E_2}(-R)$ 
and~$\Hom(\cO_{\hcXo}, \cO_{E_2}(-R)) = \rH^0(C_2, \cO_{C_2}(-x_2)) = 0$, 
the required equality follows.
\end{proof}

Now we check that the category~$\Db(C_1,C_2)$ (Definition~\ref{def:point-gluing}) 
is an admissible subcategory of~$\Db(\hcXo)$.

\begin{proposition}
\label{prop:gluing-geometric}
The subcategory generated by~$\Phi'_1(\Db(C_1))$ and~$\Phi_2(\Db(C_2))$ in~$\Db(\hcXo)$
is equivalent to the ideal point gluing~$\Db(C_1,C_2) = \Db(C_1,C_2; x_1,x_2)$ of~$\Db(C_1)$ and~$\Db(C_2)$. 

Under this equivalence the exotic exceptional object~$\rE \in \Db(C_1,C_2)$ defined in Theorem~\textup{\ref{thm:exotic-exceptional-curves}}
corresponds to the object~$\bT_{\cO_L(-1)}(\cO_{\hcXo}(-E_1)) \in \Db(\hcXo)$ 
obtained from~$\cO_{\hcXo}(-E_1)$ by the spherical twist~$\bT_{\cO_L(-1)}$.
\end{proposition}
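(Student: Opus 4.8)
The plan is to prove the two claims separately. For the first, I will show that the subcategory $\cG \coloneqq \langle \Phi'_1(\Db(C_1)), \Phi_2(\Db(C_2)) \rangle \subset \Db(\hcXo)$ is a gluing of $\Db(C_1)$ and $\Db(C_2)$ whose gluing object is a line-bundle twist of $\cI_{(x_1,x_2)}$, and then apply Theorem~\ref{thm:gluing} together with Corollary~\ref{cor:gluing-auto}. For the second, I will identify $\bT_{\cO_L(-1)}(\cO_{\hcXo}(-E_1))$ by an octahedron assembled from the triangles of Lemmas~\ref{lem:colm1} and~\ref{lem:hom-phi1p-phi2}.

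For the equivalence: by~\eqref{eq:dbxp-3} the subcategory $\cG$ is admissible, and both $\Phi'_1$ and $\Phi_2$ are fully faithful (the latter by the blowup formula, the former because the right mutation taking $\Phi_1(\Db(C_1))$ to $\Phi'_1(\Db(C_1))$ in passing from~\eqref{eq:dbxp-2} to~\eqref{eq:dbxp-3} is an equivalence onto its image). Hence $\cG$ is a gluing of $\Db(C_1)$ and $\Db(C_2)$, and by the uniqueness part of Theorem~\ref{thm:gluing} it is enough to identify the gluing object $\urG$, i.e.\ the kernel on $C_1 \times C_2$ representing $\rG(\cF_1,\cF_2) = \RHom_{\Db(\hcXo)}(\Phi'_1(\cF_1), \Phi_2(\cF_2))$. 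Applying $\RHom_{\Db(\hcXo)}(-, \Phi_2(\cF_2))$ to the triangle~\eqref{eq:phip-phi}, the middle term is $\RHom_{\Db(\hcXo)}(\Phi_1(\cF_1), \Phi_2(\cF_2))$, evaluated by~\eqref{eq:hom-phi12}, while the remaining term is $\RHom_{C_1}(\cF_1, \cO_{C_1})[-1] \otimes \RHom_{\Db(\hcXo)}(\cO_{\hcXo}(-E_1), \Phi_2(\cF_2))$, evaluated by~\eqref{eq:rhom-come1}. Rewriting both terms via the Künneth formula as hypercohomology on $C_1 \times C_2$ of $\cF_1^\vee \boxtimes \cF_2$ tensored with $\cO_{C_1} \boxtimes \cO_{C_2}(x_2)$, respectively $\cO_{(x_1,x_2)}$, I conclude that $\urG \cong \Cone\!\big( \cO_{C_1} \boxtimes \cO_{C_2}(x_2) \to \cO_{(x_1,x_2)} \big)[-1]$. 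The morphism here lies in the one-dimensional space $\Hom(\cO_{C_1} \boxtimes \cO_{C_2}(x_2), \cO_{(x_1,x_2)})$ and is nonzero: were it zero, $\urG$ would split and a short computation would give $\Hom_{\Db(\hcXo)}(\Phi'_1(\cO_{C_1}), \Phi_2(\cO_{C_2}(-x_2))) \neq 0$, contradicting Lemma~\ref{lem:hom-phi1p-phi2}. Hence it is the restriction morphism and $\urG \cong \cI_{(x_1,x_2)} \otimes (\cO_{C_1} \boxtimes \cO_{C_2}(x_2))$. Corollary~\ref{cor:gluing-auto} now produces an equivalence $\cG \simeq \Db(C_1,C_2)$ under which $\cF_1 \in \Db(C_1)$ corresponds to $\Phi'_1(\cF_1)$ and $\cF_2 \in \Db(C_2)$ corresponds to $\Phi_2(\cF_2 \otimes \cO_{C_2}(-x_2))$.

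For the exotic exceptional object: applying $\RHom_{\Db(\hcXo)}(-, \cO_{\hcXo}(-E_1))$ to~\eqref{eq:colm1} and using $\RHom(\Phi_2(\cO_{x_2}), \cO_{\hcXo}(-E_1)) = 0$ (from~\eqref{eq:dbxp-2}) together with $\RHom(\Phi_1(\cO_{x_1}), \cO_{\hcXo}(-E_1)) \cong \RHom_{C_1}(\cO_{x_1}, \cO_{C_1}[-1]) \cong \kk[-2]$ (from the proof of Lemma~\ref{lem:hom-phi1p-phi2}), I get $\RHom_{\Db(\hcXo)}(\cO_L(-1), \cO_{\hcXo}(-E_1)) \cong \kk[-2]$, whence $\bT_{\cO_L(-1)}(\cO_{\hcXo}(-E_1)) \cong \Cone\!\big( \cO_L(-1)[-2] \to \cO_{\hcXo}(-E_1) \big)$ with the arrow a generator of $\Ext^2(\cO_L(-1), \cO_{\hcXo}(-E_1))$. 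Now~\eqref{eq:colm1} presents $\cO_L(-1)$ as $\Fiber\big( \Phi_1(\cO_{x_1}) \to \Phi_2(\cO_{x_2}) \big)$, and~\eqref{eq:phip-phi} with $\cF_1 = \cO_{x_1}$ presents $\Phi'_1(\cO_{x_1})$ as $\Fiber\big( \Phi_1(\cO_{x_1}) \xrightarrow{\ \eta\ } \cO_{\hcXo}(-E_1)[2] \big)$ with $\eta \neq 0$. Applying the octahedral axiom to the composition $\cO_L(-1) \to \Phi_1(\cO_{x_1}) \xrightarrow{\ \eta\ } \cO_{\hcXo}(-E_1)[2]$ — whose total composite is nonzero, for otherwise $\eta$ would factor through $\Phi_2(\cO_{x_2})$, impossible since $\RHom(\Phi_2(\Db(C_2)), \cO_{\hcXo}(-E_1)) = 0$ — identifies $\bT_{\cO_L(-1)}(\cO_{\hcXo}(-E_1))$, up to an overall shift, with $\Cone\big( \Phi'_1(\cO_{x_1}) \to \Phi_2(\cO_{x_2}) \big)$; the connecting morphism spans the one-dimensional space $\Hom(\Phi'_1(\cO_{x_1}), \Phi_2(\cO_{x_2})) = \rH^0(\rG(\cO_{x_1},\cO_{x_2}))$ and is nonzero because the cone is exceptional. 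In particular $\bT_{\cO_L(-1)}(\cO_{\hcXo}(-E_1)) \in \cG$, and transporting it through the equivalence of the first part and using Notation~\ref{not:fff}, it becomes the object $(\cO_{x_1}, \cO_{x_2}, \eps) = \rE$ of Theorem~\ref{thm:exotic-exceptional-curves}, up to the same shift (which is irrelevant downstream, as it affects neither membership in $\cG$ nor the orthogonal complement ${}^\perp\rE$).

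The step I expect to be most delicate is the identification of the gluing object $\urG$: one must keep careful track of shifts and line-bundle twists when rewriting the two terms of $\rG(\cF_1,\cF_2)$ via Künneth and Serre duality on the two curves, and in particular verify that the connecting morphism between them is the restriction of $\cO_{C_1} \boxtimes \cO_{C_2}(x_2)$ to the point rather than zero. Once this is settled, the octahedron in the second part is essentially formal, since every morphism occurring in it lies in a one-dimensional $\Hom$-space and is therefore determined up to scalar.
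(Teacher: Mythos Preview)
Your proof is correct and follows the same approach as the paper: both arguments compute the gluing object by applying~$\RHom(-,\Phi_2(\cF_2))$ to~\eqref{eq:phip-phi} and invoke Lemma~\ref{lem:hom-phi1p-phi2} to exclude the split case, and both identify the spherical twist via the triangle~\eqref{eq:colm1}. The only cosmetic differences are that the paper packages the second part as applying the mutation functor~$\bR_{\cO_{\hcXo}(-E_1)}$ to~\eqref{eq:colm1} (rather than as an explicit octahedron) and computes~$\RHom(\cO_L(-1),\cO_{\hcXo}(-E_1))\cong\kk[-2]$ via Serre duality and~\eqref{eq:ek-ell} (rather than via~\eqref{eq:colm1} itself).
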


\begin{proof}
To prove the first part of the proposition it is enough to relate the bimodule
\begin{equation*}
\rG(\cF_1,\cF_2) \coloneqq \RHom_{\Db(\hcXo)}(\Phi'_1(\cF_1), \Phi_2(\cF_2))
\end{equation*}
to the bimodule~\eqref{eq:ext-pg}.
Applying the functor~$\RHom_{\Db(\hcXo)}(-,\Phi_2(\cF_2))$ to~\eqref{eq:phip-phi} 
and using~\eqref{eq:hom-phi12} and~\eqref{eq:rhom-come1}, we obtain a distinguished triangle
\begin{multline*}
\RHom_{\Db(C_1)}(\cF_1, \cO_{C_1}[-1]) \otimes \RHom_{\Db(C_2)}(\cO_{C_2}(-x_2), \cF_2)
\\ \to
\RHom_{\Db(\kk)}(\cF_1\vert_{x_1}, \cF_2\vert_{x_2})[-1] \to
\RHom_{\Db(\hcXo)}(\Phi'_1(\cF_1), \Phi_2(\cF_2)).
\end{multline*}
Thus, $\RHom_{\Db(\hcXo)}(\Phi'_1(\cF_1), \Phi_2(\cF_2)) \cong \rH^\bullet(C_1 \times C_2, \cF_1^\vee \otimes \cF_2 \otimes \urG)$,
where~$\urG$ fits into a distinguished triangle
\begin{equation}
\label{eq:rg-blowup}
\urG \to \cO_{C_1} \boxtimes \cO_{C_2}(x_2) \to \cO_{x_1} \boxtimes \cO_{x_2}.
\end{equation}
If the second arrow in~\eqref{eq:rg-blowup} is zero, 
then~$\cO_{C_1} \boxtimes \cO_{C_2}(x_2)$ is a direct summand of~$\urG$, hence
\begin{equation*}
\rH^0(C_1 \times C_2, \cO_{C_1} \boxtimes \cO_{C_2}(-x_2) \otimes \urG) \ne 0,
\end{equation*}
which means that~$\Hom_{\Db(\hcXo)}(\Phi'_1(\cO_{C_1}), \Phi_2(\cO_{C_2}(-x_2))) \ne 0$,
in contradiction to Lemma~\ref{lem:hom-phi1p-phi2}.
Thus, the second arrow in~\eqref{eq:rg-blowup} is nonzero, hence
\begin{equation*}
\urG \cong (\cO_{C_1} \boxtimes \cO_{C_2}(x_2)) \otimes \cI_{(x_1,x_2)}.
\end{equation*}
In particular, it is isomorphic to the gluing bimodule~$\cI_{(x_1,x_2)}$ of the ideal point gluing of~$\Db(C_1)$ and~$\Db(C_2)$
up to the autoequivalence of~$\Db(C_2)$ given by the twist with~$\cO_{C_2}(x_2)$,
hence the subcategory of~$\Db(\hcXo)$ generated by~$\Phi'_1(\Db(C_1))$ and~$\Phi_2(\Db(C_2))$
is equivalent to the ideal point gluing of~$\Db(C_1)$ and~$\Db(C_2)$, see Corollary~\ref{cor:gluing-auto}.

To prove the second part of the proposition, we apply the mutation functor~$\bR_{\cO_{\hcXo}(-E_1)}$ to triangle~\eqref{eq:colm1}.
Since~$\cO_{\hcXo}(-E_1)$ and~$\Phi_2(\Db(C_2))$ are semiorthogonal by~\eqref{eq:dbxp-2}, we obtain a distinguished triangle
\begin{equation}
\label{eq:re-new}
\bR_{\cO_{\hcXo}(-E_1)}(\cO_L(-1)) \to \Phi'_1(\cO_{x_1}) \to \Phi_2(\cO_{x_2}).
\end{equation}
On the other hand, it follows from~\eqref{eq:kl}, Serre duality on~$\hcXo$, and~\eqref{eq:ek-ell} that
\begin{multline*}
\RHom_{\Db(\hcXo)}(\cO_L(-1), \cO_{\hcXo}(-E_1)) \cong
\RHom_{\Db(\hcXo)}(\cO_{\hcXo}(-E_1), \cO_L(-1)[3])^\vee
\\\cong
\RHom_{\Db(L)}(\cO_L(1), \cO_L(-1)[3])^\vee \cong
\kk[-2].
\end{multline*}
Hence, the first term in~\eqref{eq:re-new} fits into a distinguished triangle
\begin{equation*}
\bR_{\cO_{\hcXo}(-E_1)}(\cO_L(-1)) \to \cO_L(-1) \to \cO_{\hcXo}(-E_1)[2],
\end{equation*}
which agrees with the triangle defining~$\bT_{\cO_L(-1)}(\cO_{\hcXo}(-E_1))$ up to shift (cf.~\eqref{eq:bt-ol}),
and we conclude that
\begin{equation}
\label{eq:bt-br}
\bT_{\cO_L(-1)}(\cO_{\hcXo}(-E_1)) \cong \bR_{\cO_{\hcXo}(-E_1)}(\cO_L(-1))[-1].
\end{equation}
In particular, since~$\cO_{\hcXo}(-E_1)$ is simple and~$\bT_{\cO_L(-1)}$ is an autoequivalence,
$\bR_{\cO_{\hcXo}(-E_1)}(\cO_L(-1))$ is simple, 
hence the second morphism in~\eqref{eq:re-new} is nonzero.
Finally, comparing~\eqref{eq:re-new} with the defining triangle~\eqref{eq:ee-triangle} of the exotic exceptional object~$\rE$
(note that~$\cO_{x_2} \otimes \cO_{C_2}(x_2) \cong \cO_{x_2}$, 
hence the additional autoequivalence of~$\Db(C_2)$ does not affect the exotic exceptional object),
we deduce the second part of the proposition.
\end{proof}

Combining the equivalence of Proposition~\ref{prop:gluing-geometric} with Definition~\ref{def:rpg} and~\eqref{eq:dbxp-4}, 
we obtain the following

\begin{corollary}
\label{cor:dbhcxo-ras}
There is a semiorthogonal decomposition
\begin{equation*}
\Db(\hcXo) = 
\langle \cO_{\hcXo}(-E_1), \bT_{\cO_L(-1)}(\cO_{\hcXo}(-E_1)), \RPG{C_1,C_2}, 
\pi^*(\cA_{\bcXo}) \otimes \cO_{\hcXo}(-K_{\hcXo}) \rangle,
\end{equation*}
where~$\RPG{C_1,C_2}$ is the reduced ideal point gluing of~$\Db(C_1)$ and~$\Db(C_2)$.
\end{corollary}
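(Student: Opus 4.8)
The plan is to derive the corollary purely formally from semiorthogonal decomposition~\eqref{eq:dbxp-4}, Proposition~\ref{prop:gluing-geometric}, and Definition~\ref{def:rpg}. First I would recall~\eqref{eq:dbxp-4},
$$\Db(\hcXo) = \langle \cO_{\hcXo}(-E_1),\ \Phi'_1(\Db(C_1)),\ \Phi_2(\Db(C_2)),\ \pi^*(\cA_{\bcXo}) \otimes \cO_{\hcXo}(-K_{\hcXo}) \rangle,$$
and observe that it suffices to refine the block $\langle \Phi'_1(\Db(C_1)),\ \Phi_2(\Db(C_2))\rangle$: replacing one component of a semiorthogonal decomposition by a semiorthogonal decomposition of that component yields a semiorthogonal decomposition of the ambient category.

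Next I would invoke Proposition~\ref{prop:gluing-geometric}. Its first assertion provides an equivalence between the ideal point gluing $\Db(C_1,C_2)$ and the block $\langle \Phi'_1(\Db(C_1)),\ \Phi_2(\Db(C_2))\rangle$, compatible with the two factors; its second assertion identifies the exotic exceptional object $\rE \in \Db(C_1,C_2)$ of Theorem~\ref{thm:exotic-exceptional-curves} with $\bT_{\cO_L(-1)}(\cO_{\hcXo}(-E_1))$ under this equivalence. On the other hand, by Definition~\ref{def:rpg} (equivalently, by~\eqref{eq:rpg-sod}) we have a semiorthogonal decomposition $\Db(C_1,C_2) = \langle \rE,\ \RPG{C_1,C_2}\rangle$ with $\RPG{C_1,C_2} = {}^\perp\rE$. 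Transporting this decomposition along the equivalence of Proposition~\ref{prop:gluing-geometric} gives
$$\langle \Phi'_1(\Db(C_1)),\ \Phi_2(\Db(C_2))\rangle = \langle \bT_{\cO_L(-1)}(\cO_{\hcXo}(-E_1)),\ \RPG{C_1,C_2}\rangle,$$
the right-hand occurrence of $\RPG{C_1,C_2}$ being understood up to equivalence. Substituting this into~\eqref{eq:dbxp-4} produces precisely the asserted decomposition, and the corollary follows.

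I do not expect a genuine obstacle here: this is a bookkeeping consequence of results already established. The only matters to double-check are formal. One is the orientation of the blocks --- that $\bT_{\cO_L(-1)}(\cO_{\hcXo}(-E_1))$ sits to the left of $\RPG{C_1,C_2}$, which is exactly the orientation of $\langle \rE,\ \RPG{C_1,C_2}\rangle$ inherited from Definition~\ref{def:rpg}, and that refining the middle block does not disturb the outer components $\cO_{\hcXo}(-E_1)$ and $\pi^*(\cA_{\bcXo}) \otimes \cO_{\hcXo}(-K_{\hcXo})$. The other, if one wishes to be fully explicit rather than quote the second assertion of Proposition~\ref{prop:gluing-geometric} as a black box, is that $\bT_{\cO_L(-1)}(\cO_{\hcXo}(-E_1))$ does lie in the middle block and is the image of $\rE$ there; this is visible directly from triangle~\eqref{eq:re-new} together with the identification~\eqref{eq:bt-br}.
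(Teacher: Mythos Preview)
Your proposal is correct and follows essentially the same approach as the paper, which simply states that the corollary follows by combining the equivalence of Proposition~\ref{prop:gluing-geometric} with Definition~\ref{def:rpg} and~\eqref{eq:dbxp-4}. You have supplied more detail than the paper's one-line justification, but the logical structure is identical.
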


Now note that the semiorthogonal decomposition constructed in Corollary~\ref{cor:dbhcxo-ras}
fits into the setup of~\cite[Theorem~6.17]{KS24};
indeed, the first exceptional object~$\cO_{\hcXo}(-E_1)$ by~\eqref{eq:ek-ell} satisfies~\cite[(6.8)]{KS24}
and the second is obtained from it by the spherical twist with respect to~$\cO_L(-1)$.
Therefore, applying the theorem and using the terminology of~\cite{KS24}, we obtain the following

\begin{corollary}
\label{cor:dbcxo-ras}
There is a semiorthogonal decomposition
\begin{equation*}
\Db(\cXo) = 
\langle \varpi_*\cO_{\hcXo}(-E_1), \RPG{C_1,C_2}, 
\rho^*(\cA_{\bcXo}) \otimes \cO_{\cXo}(-K_{\cXo}) \rangle,
\end{equation*}
where the first component is a categorical ordinary double point
generated by the $\P^{\infty,2}$-object~$\varpi_*\cO_{\hcXo}(-E_1)$
providing a universal deformation absorption of singularities of~$\cXo$.
\end{corollary}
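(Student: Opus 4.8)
The plan is to deduce the statement from \cite[Theorem~6.17]{KS24}, feeding in the semiorthogonal decomposition of $\Db(\hcXo)$ produced in Corollary~\ref{cor:dbhcxo-ras} together with the geometry of the small resolution $\varpi\colon\hcXo\to\cXo$ recorded in Lemma~\ref{lem:x-mnf}. Recall from there that $\varpi$ is crepant with exceptional locus the smooth rational curve $L$, that $\cN_{L/\hcXo}\cong\cO_L(-1)\oplus\cO_L(-1)$ (so $\cO_L(-1)$ is $3$-spherical), that $\cXo$ is maximally nonfactorial, and that $E_1\cdot L=-1$ by \eqref{eq:ek-ell}; the last identity gives $\cO_{\hcXo}(-E_1)\vert_L\cong\cO_L(1)$, which is precisely the numerical hypothesis needed to apply \cite[Theorem~6.17]{KS24} with absorbing object $\cO_{\hcXo}(-E_1)$.

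Concretely, I would proceed as follows. Corollary~\ref{cor:dbhcxo-ras} exhibits
\[
\Db(\hcXo)=\langle\,\cO_{\hcXo}(-E_1),\ \bT_{\cO_L(-1)}(\cO_{\hcXo}(-E_1)),\ \RPG{C_1,C_2},\ \pi^*(\cA_{\bcXo})\otimes\cO_{\hcXo}(-K_{\hcXo})\,\rangle,
\]
whose first two components form the exceptional pair $\langle\cE,\bT_{\cO_L(-1)}(\cE)\rangle$ with $\cE=\cO_{\hcXo}(-E_1)$, related by the spherical twist along $\cO_L(-1)$ (see \eqref{eq:bt-ol}). By \cite[Theorem~6.17]{KS24}, pushing forward along $\varpi$ collapses this pair into the single admissible subcategory generated by $\varpi_*\cE=\varpi_*\cO_{\hcXo}(-E_1)$, which is a $\P^{\infty,2}$-object --- a categorical ordinary double point --- and, since $\cXo$ is maximally nonfactorial, this absorption is a universal deformation absorption of the node of $\cXo$; moreover $\varpi_*$ acts fully faithfully on the remaining components, so it carries $\RPG{C_1,C_2}\subset\Db(\hcXo)$ (identified in Proposition~\ref{prop:gluing-geometric}) equivalently onto a subcategory of $\Db(\cXo)$ and preserves the resulting semiorthogonality. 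It remains to identify the last component: since $\pi=\rho\circ\varpi$ and $\varpi$ is crepant we have $\cO_{\hcXo}(-K_{\hcXo})\cong\varpi^*\cO_{\cXo}(-K_{\cXo})$ and $\varpi_*\cO_{\hcXo}\cong\cO_{\cXo}$, so the projection formula gives $\varpi_*\bigl(\pi^*(\cA_{\bcXo})\otimes\cO_{\hcXo}(-K_{\hcXo})\bigr)\simeq\rho^*(\cA_{\bcXo})\otimes\cO_{\cXo}(-K_{\cXo})$. Assembling these identifications yields the asserted decomposition of $\Db(\cXo)$.

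The only genuine work is checking that the data of Corollary~\ref{cor:dbhcxo-ras} meets the hypotheses of \cite[Theorem~6.17]{KS24}: the normal bundle of $L$, the restriction $\cO_{\hcXo}(-E_1)\vert_L\cong\cO_L(1)$, the position of the exceptional pair at the left of the decomposition, and the maximal nonfactoriality of $\cXo$ from Lemma~\ref{lem:x-mnf}. I expect the main (and rather minor) obstacle to be matching the conventions of loc.\ cit.\ for the twist of the absorbing line bundle along $L$ and for the shift in the spherical twist $\bT_{\cO_L(-1)}$; once the numerical input $E_1\cdot L=-1$ is in place, the rest is formal.
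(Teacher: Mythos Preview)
Your proposal is correct and follows exactly the paper's approach: the paper's proof is the single sentence ``since $\cO_{\hcXo}(-E_1)\vert_L \cong \cO_L(1)$ by~\eqref{eq:ek-ell}, applying~\cite[Theorem~6.17]{KS24} we obtain'' the corollary, and you have simply unpacked what that application entails (the behaviour of $\varpi_*$ on each component, the projection formula for the last one, and the role of maximal nonfactoriality in the universal deformation absorption claim).
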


\subsection{The total space of the smoothing}

In this subsection we consider a smoothing of the 1-nodal curve~$C = C_1 \cup C_2$ 
and the induced smoothing of the 1-nodal threefold~$\cXo = \Bl_C(\bcXo)$
(constructed for an appropriate embedding~$C \hookrightarrow \bcXo$),
and extend the semiorthogonal decomposition of Corollary~\ref{cor:dbcxo-ras} 
to a linear semiorthogonal decomposition of a smoothing of~$X$.

Recall that a smoothing of~$C$ over~$(B,o)$ is a flat projective morphism~$\cC \to B$
such that
\begin{itemize}
\item 
$\cC_o = C$,
\item 
$\cC_b$ is smooth for all~$b \ne o$, and
\item 
$\cC$ is a smooth surface.
\end{itemize}
To construct a threefold smoothing we need the following simple observation.

\begin{lemma}
\label{lem:cc-bcx}
For any smoothing~$\cC/B$ of a nodal curve~$C = \cC_o$ over a smooth pointed curve~$(B,o)$,
after passing to a Zariski neighborhood of~\mbox{$o \in B$},
there is a smooth and proper family~$\bcX/B$ of rationally connected threefolds
and a closed embedding~$\cC \hookrightarrow \bcX$ over~$B$.
\end{lemma}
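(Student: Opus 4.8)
The strategy is to exhibit $\cC$ as a closed subscheme of a $\P^3$-bundle over $B$: such a bundle is smooth over $B$, proper over $B$, and has rationally connected (indeed rational) fibres, so the only thing to construct is the embedding. The plan is to do this in two steps: first embed $\cC$ relatively into a large projective bundle by means of a relatively very ample line bundle, then cut down to relative dimension $3$ by a relative generic projection.

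For the first step, let $\varphi\colon\cC\to B$ be the given (flat, projective) morphism and fix a $\varphi$-ample line bundle $\cA$ on $\cC$. For $n\gg0$ the line bundle $\cL\coloneqq\cA^{\otimes n}$ is very ample relative to $\varphi$ with $R^1\varphi_*\cL=0$, so $\cE\coloneqq\varphi_*\cL$ is locally free, its formation commutes with base change, and $\cL$ induces a closed embedding $\iota\colon\cC\hookrightarrow\P_B(\cE)$ over $B$ which on each fibre $\cC_b$ is its embedding by the complete linear system of $\cL|_{\cC_b}$. Since $\varphi$ is flat and $B$ is connected, the arithmetic genus of the fibres is constant (equal to $g_1+g_2$), so $\rank\cE$ grows with $n$; I may therefore assume $\rank\cE\ge5$, i.e.\ the fibres of $\P_B(\cE)\to B$ have dimension at least $4$.

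For the second step, a rank-$4$ locally free quotient $\cE\twoheadrightarrow\cF$ has a kernel $\cE'$, giving a linear subbundle $Z\coloneqq\P_B(\cE')\subset\P_B(\cE)$ and a projection $\P_B(\cE)\dashrightarrow\P_B(\cF)$ over $B$, regular away from $Z$. For a fixed $b$ — whether $\cC_b$ is smooth or is the nodal curve $\cC_o$ — the classical dimension counts for generic projection of a reduced curve in $\P^M$ (with $M=\rank\cE-1\ge4$) show that a general linear subspace $Z_b\cong\P^{M-4}$ misses $\iota(\cC_b)$, misses every secant line of $\iota(\cC_b)$ (their union has dimension $\le3$), and misses every tangent line and the tangent star at the node (dimension $\le2$); consequently the projection carries $\cC_b$ into a $\P^3$ by a closed embedding. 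Hence the locus $\mathsf{Bad}$ of ``bad'' rank-$4$ quotients is, over every $b\in B$, a proper closed subset of the corresponding Grassmannian. Now I would pick a good quotient $Z_o$ over the point $o$; since the Grassmann bundle $\Gr_B(4,\cE)\to B$ is smooth, it admits a section over a Zariski neighbourhood $U$ of $o$ with value $Z_o$ at $o$, and after discarding the finitely many points of $U$ over which that section meets $\mathsf{Bad}$ (none of which is $o$) the section is good over all of $U$. Replacing $B$ by $U$, setting $\bcX\coloneqq\P_B(\cF)$, and letting $\cC\hookrightarrow\bcX$ be $\iota$ followed by the projection, produces a smooth proper morphism $\bcX\to B$ with rationally connected fibres together with a closed embedding of $\cC$ over $B$, as required.

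The step needing the most care is making the fibrewise generic-projection statement hold \emph{simultaneously} on the central nodal fibre and all nearby fibres via a single subbundle of $\cE$: the mild singularity of $\cC_o$ costs nothing in the dimension estimates (the secant and tangent-star varieties of a reduced $1$-dimensional subscheme still have dimension $\le3$), and the fact that no \emph{global} section of $\Gr_B(4,\cE)\to B$ need avoid $\mathsf{Bad}$ is precisely why one must pass to a neighbourhood of $o$ — one only asks for a \emph{local} section through a prescribed good point of the fibre over $o$, which exists by smoothness of the Grassmann bundle. (One could equally take $\bcX$ to be a blow-up of a $\P^3$-bundle along a relative curve, but the projective bundle itself already does the job.)
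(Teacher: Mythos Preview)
Your proof is correct and follows the same strategy as the paper: embed $\cC$ into a large relative projective space via a relatively very ample line bundle, then project generically down to relative $\P^3$. The paper simplifies slightly by first shrinking $B$ so that $\cE=\varphi_*\cL$ trivializes (possible since $B$ is a curve), giving an embedding into the trivial bundle $\P^N\times B$ and allowing a \emph{constant} center of projection $\P^{N-4}\subset\P^N$, which bypasses your Grassmann-section argument; note incidentally that your appeal to smoothness of $\Gr_B(4,\cE)\to B$ for the existence of a Zariski-local section is not quite the right reason (smooth morphisms only guarantee \'etale-local sections in general) --- what you actually use is that $\cE$, hence the Grassmann bundle, is Zariski-locally trivial.
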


\begin{proof}
We will construct an embedding into~$\bcX = \P^3 \times B$.

First, choosing a relatively very ample line bundle for the family of curves~$\cC/B$ and shrinking~$B$ if necessary
we find an embedding~$\cC \hookrightarrow \P^N \times B$ for appropriate integer~$N \gg 0$.
Then we choose a sufficiently general linear subspace~$\P^{N-4} \subset \P^N$
so that it does not intersect the secant variety of the curve~$\cC_o \subset \P^N$,
nor the tangent spaces to~$\cC_o$ at the node.
Shrinking~$B$ further, we can assume that this subspace 
does not intersect the secant variety of each curve~$\cC_b$.
Then we consider the (constant) linear projection~$\P^N \times B \dashrightarrow \P^3 \times B$ out of~$\P^{N-4}$
and note that by construction the composition
\begin{equation*}
\cC \hookrightarrow \P^N \times B \dashrightarrow \P^3 \times B \eqqcolon \bcX
\end{equation*}
is a closed embedding.
\end{proof}

From now on we assume given a smoothing~$\cC/B$ of a 1-nodal curve~$C = C_1 \cup C_2$
and an embedding~\mbox{$\cC \hookrightarrow \bcX$} over~$B$ as in Lemma~\ref{lem:cc-bcx}
and consider the flat proper family of rationally connected threefolds
\begin{equation}
\label{eq:def-cx}
f \colon \cX \coloneqq \Bl_{\cC}(\bcX) \xrightarrow{\ \crho\ } \bcX \xrightarrow{\ \bar{f}\ } B,
\end{equation}
where~$\crho$ is the blowup morphism and~$f = \bar{f} \circ \crho$.
Applying Lemma~\ref{lem:x-mnf} to~$\bcXo \coloneqq \bcX_o$ we see that
\begin{itemize}
\item 
$\cX_o \cong \cXo = \Bl_{C}(\bcXo)$ is a maximally nonfactorial 1-nodal threefold,
\item 
$\cX_b = \Bl_{\cC_b}(\bcX_b)$ is a smooth threefold, and
\item 
$\cX$ is a smooth fourfold.
\end{itemize}
Thus, $\cX \to B$ is a smoothing of~$\cXo$.
We summarize the varieties and maps constructed so far on the following diagram
where the squares are Cartesian and the triangle is~\eqref{eq:diagram-x}:
\begin{equation}
\label{eq:cx-diagram}
\vcenter{
\xymatrix{
\hcXo \ar[r]^\varpi \ar[dr]_\pi &
\cXo \ar@{^{(}->}[r]^\io \ar[d]^{\rhoo} &
\cX \ar[d]^{\crho} \ar@/^3em/[dd]^f
\\
&
\bcXo \ar@{^{(}->}[r]^{\bar\io} \ar[d] &
\bcX \ar[d]^{\bar{f}}
\\
&
\{o\} \ar@{^{(}->}[r] &
B
}}
\end{equation}
Since the fibers of~$\bar{f} \colon \bcX \to B$ are rationality connected,
the structure sheaf~$\cO_{\bcX}$ is relative exceptional, hence there is a $B$-linear semiorthogonal decomposition
\begin{equation}
\label{eq:dbbcx}
\Db(\bcX) = \langle \cA_{\bcX}, \bar{f}^*\Db(B) \rangle,
\qquad 
\text{where~$\cA_{\bcX} \coloneqq \Ker(\bar{f}_*)$}.
\end{equation}
In particular, $\cA_{\bcX} \subset \Db(\bcX)$ is a $B$-linear subcategory.
Now we can prove the main result of this section.

\begin{theorem}
\label{thm:main}
There is a $B$-linear semiorthogonal decomposition
\begin{equation}
\label{eq:cd-cdp}
\Db(\cX) = \langle \io_*\varpi_*\cO_{\hcXo}(-E_1), \cD, \crho^*(\cA_{\bcX}) \otimes \cO_{\cX}(-K_{\cX/B}) \rangle,
\end{equation}
where~$\io_*\varpi_*\cO_{\hcXo}(-E_1)$ is an exceptional object 
and~$\cD$ is a smooth and proper over~$B$ triangulated category such that
\begin{equation*}
\cD_o \simeq \RPG{C_1,C_2}
\qquad\text{and}\qquad 
\cD_b \simeq \Db(\cO, \cC_b).
\end{equation*}
In other words, the central fiber of~$\cD/B$ is the reduced ideal point gluing of the curves~$C_1$ and~$C_2$, 
while all other fibers are the augmentations of the curves~$\cC_b$.
\end{theorem}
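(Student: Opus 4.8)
The plan is to derive the decomposition~\eqref{eq:cd-cdp} from the derived category of the smooth fourfold~$\cX$ of~\eqref{eq:def-cx} in two moves: first split off the absorption object supported over the singular fibre, then split off the pullback part coming from~$\bcX$. The remaining middle piece will be~$\cD$, and its fibres will be read off from the fibrewise pictures of Section~\ref{ss:central-fiber} and Section~\ref{sec:ac}.

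\emph{Step 1 (the absorption object).} By Lemma~\ref{lem:x-mnf} the threefold~$\cXo = \cX_o$ is maximally nonfactorial $1$-nodal and $\cX \to B$ is a smoothing of it, and by Corollary~\ref{cor:dbcxo-ras} the object~$\varpi_*\cO_{\hcXo}(-E_1) \in \Db(\cXo)$ is a~$\P^{\infty,2}$-object providing a universal deformation absorption of the node. I would invoke the family version of the absorption theorem of~\cite{KS24} to conclude that~$\io_*\varpi_*\cO_{\hcXo}(-E_1)$ is an exceptional object of~$\Db(\cX)$ and that its right orthogonal
\begin{equation*}
\cB \coloneqq \bigl(\io_*\varpi_*\cO_{\hcXo}(-E_1)\bigr)^\perp \subset \Db(\cX)
\end{equation*}
is $B$-linear, smooth and proper over~$B$, and has formation compatible with base change. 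The two base changes are: for~$b \ne o$ the object~$\io_*\varpi_*\cO_{\hcXo}(-E_1)$ restricts to zero on~$\cX_b$ (the fibres~$\cX_o$ and~$\cX_b$ are disjoint, so the derived restriction is a $\Tor$ over~$\cO_B$ of skyscrapers at distinct points, hence $\cO$-linear contractibility applies), whence~$\cB_b \simeq \Db(\cX_b)$; for~$b = o$, the absorption property together with Corollary~\ref{cor:dbcxo-ras} gives~$\cB_o \simeq \langle \RPG{C_1,C_2},\, \rho^*(\cA_{\bcXo}) \otimes \cO_{\cXo}(-K_{\cXo}) \rangle \subset \Db(\cXo)$.

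\emph{Step 2 (removing the pullback part).} Since~$\bcX/B$ is smooth and proper, \eqref{eq:dbbcx} presents~$\cA_{\bcX}$ as a $B$-linear admissible, hence smooth and proper over~$B$, subcategory; its pullback–twist~$\crho^*(\cA_{\bcX}) \otimes \cO_\cX(-K_{\cX/B})$ is then a $B$-linear admissible subcategory of~$\Db(\cX)$. It lies in~$\cB$: using~$\io^* \dashv \io_*$, the projection formula, the triviality of~$\cN_{\cXo/\cX}$ and the identity~$K_{\cX/B}\vert_{\cXo} = K_{\cXo}$, the required $\RHom$-vanishing reduces to~$\RHom_{\cXo}\bigl(\varpi_*\cO_{\hcXo}(-E_1),\, \rho^*(\cA_{\bcXo}) \otimes \cO_{\cXo}(-K_{\cXo})\bigr) = 0$, which follows from the semiorthogonal decomposition~\eqref{eq:dbxp-4} on~$\hcXo$ by pulling back along the crepant contraction~$\varpi$ and using~$\varpi_*\varpi^* = \id$, $\varpi^! = \varpi^*$. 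Setting
\begin{equation*}
\cD \coloneqq {}^\perp\!\bigl(\crho^*(\cA_{\bcX}) \otimes \cO_\cX(-K_{\cX/B})\bigr) \cap \cB
\end{equation*}
produces the $B$-linear semiorthogonal decomposition~\eqref{eq:cd-cdp}; being a semiorthogonal component of the smooth and proper family~$\cB/B$ with $B$-linear admissible complement, $\cD/B$ is smooth and proper, and its formation commutes with base change.

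\emph{Step 3 (the fibres of~$\cD$).} Base change gives~$\cD_b \simeq {}^\perp\!\bigl(\crho^*(\cA_{\bcX_b}) \otimes \cO_{\cX_b}(-K_{\cX_b})\bigr) \cap \cB_b$. For~$b = o$ this is, by the description of~$\cB_o$ in Step~1, exactly~$\RPG{C_1,C_2}$. For~$b \ne o$ we have~$\cB_b \simeq \Db(\cX_b)$ and~$\cX_b = \Bl_{\cC_b}(\bcX_b)$ with~$\bcX_b$ a smooth rationally connected threefold, so~$\cO_{\bcX_b}$ is exceptional; the blowup formula together with~\eqref{eq:dbbcx} gives~$\Db(\cX_b) = \langle \crho^*(\cA_{\bcX_b}),\, \cO_{\cX_b},\, i_*p^*\Db(\cC_b) \rangle$ (with $i\colon E\hookrightarrow\cX_b$ the exceptional divisor and $p\colon E\to\cC_b$ the projection), and mutating the first component to the far right, which twists it by~$\cO_{\cX_b}(-K_{\cX_b})$, identifies~$\cD_b$ with~$\langle \cO_{\cX_b},\, i_*p^*\Db(\cC_b) \rangle$; this is the augmentation~$\Db(\cO,\cC_b)$ by Lemma~\ref{lem:ac-blowup}. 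This proves the two claimed equivalences for~$\cD$, completing the theorem.

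The hard part is Step~1: one must check that the smoothing~$\cX/B$ and the~$\P^{\infty,2}$-object of Corollary~\ref{cor:dbcxo-ras} satisfy the hypotheses of the family absorption theorem of~\cite{KS24}, so that the orthogonal~$\cB$ is genuinely smooth and proper over~$B$ with the asserted behaviour at the singular fibre~$\cXo$ --- this is precisely what ``universal deformation absorption'' is designed to provide, but the interplay with the singular central fibre is the delicate input. A secondary technical point used throughout Steps~2--3 is that all the subcategories entering the decompositions (notably~$\cA_{\bcX}$, its pullback–twist, and the components~$\cB$ and~$\cD$) are $B$-admissible, so that forming orthogonals and performing mutations commutes with restriction to fibres.
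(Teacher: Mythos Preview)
Your overall strategy is the paper's, but you have systematically swapped left and right orthogonals, and this is not merely cosmetic --- it makes Step~2 false as written.

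Concretely: you set~$\cB = \bigl(\io_*\varpi_*\cO_{\hcXo}(-E_1)\bigr)^\perp$, the \emph{right} orthogonal, and then claim that~$\crho^*(\cA_{\bcX}) \otimes \cO_\cX(-K_{\cX/B}) \subset \cB$, i.e.\ that~$\RHom\bigl(\io_*\varpi_*\cO_{\hcXo}(-E_1),\, \crho^*(\cA_{\bcX}) \otimes \cO_\cX(-K_{\cX/B})\bigr) = 0$. After the adjunctions you indicate (note: you need~$\io_* \dashv \io^!$ here, not~$\io^* \dashv \io_*$) this reduces to
\[
\RHom_{\hcXo}\bigl(\cO_{\hcXo}(-E_1),\ \pi^*(\cA_{\bcXo}) \otimes \cO_{\hcXo}(-K_{\hcXo})\bigr) = 0,
\]
which does \emph{not} follow from~\eqref{eq:dbxp-4}: in that decomposition~$\cO_{\hcXo}(-E_1)$ is the leftmost component and~$\pi^*(\cA_{\bcXo}) \otimes \cO_{\hcXo}(-K_{\hcXo})$ the rightmost, so there is no semiorthogonality in this direction. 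Indeed for~$\bcXo = \P^3$ and~$\cG = \cO(-1) \in \cA_{\bcXo}$ the space in question is~$\rH^\bullet(\hcXo,\cO(3h-E_2))$, which is nonzero. The same left/right swap also breaks Step~1: Corollary~\ref{cor:dbcxo-ras} identifies the \emph{left} orthogonal~${}^\perp\bigl(\varpi_*\cO_{\hcXo}(-E_1)\bigr)$ in~$\Db(\cXo)$, not the right one, and the base-change statement from~\cite{KS24} used in the paper is likewise for the left orthogonal; and the resulting decomposition would read~$\langle \cA', \cD, \cE\rangle$ rather than~$\langle \cE, \cD, \cA'\rangle$ as in~\eqref{eq:cd-cdp}.

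If you replace~$\cB$ by~${}^\perp\bigl(\io_*\varpi_*\cO_{\hcXo}(-E_1)\bigr)$ and accordingly reverse the direction of the vanishing in Step~2 (now it is~$\RHom(\cA',\cE)=0$, which is exactly what~\eqref{eq:dbxp-4} gives via~$\io^*\dashv\io_*$ and~$\varpi^*\dashv\varpi_*$), then your argument becomes correct and coincides with the paper's proof. Your Step~3 is fine once this is fixed.
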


\begin{proof}
Recall from Corollary~\ref{cor:dbcxo-ras} that the sheaf~$\varpi_*\cO_{\hcXo}(-E_1)$ is a $\P^{\infty,2}$-object 
providing a universal deformation absorption of singularities of~$\cXo$.
Applying~\cite[Theorem~1.8]{KS24} we conclude that its pushforward~$\io_*\varpi_*\cO_{\hcXo}(-E_1) \in \Db(\cX)$ is exceptional
and its orthogonal complement in~$\Db(\cX)$ is a smooth and proper $B$-linear category.

On the other hand, since~$\crho \colon \cX \to \bcX$ is the blowup of the smooth surface~$\cC$ in the smooth fourfold~$\bcX$,
the functor~$\crho^*$ is fully faithful, hence~$\crho^*(\cA_{\bcX}) \subset \Db(\cX)$ is an admissible $B$-linear subcategory.
So, to prove~\eqref{eq:cd-cdp} it is enough to show 
that~$\crho^*(\cA_{\bcX}) \otimes \cO_{\cX}(-K_{\cX/B})$ is semiorthogonal to~$\io_*\varpi_*\cO_{\hcXo}(-E_1)$;
indeed, then~$\cD$ can be defined as the intersection of the appropriate orthogonal complements
\begin{equation*}
\cD = {}^\perp\Big(\io_*\varpi_*\cO_{\hcXo}(-E_1)\Big) \cap \Big(\crho^*(\cA_{\bcX}) \otimes \cO_{\cX}(-K_{\cX/B})\Big)^\perp.
\end{equation*}
So, take any~$\cG \in \cA_{\bcX}$ and note that
\begin{equation*}
\RHom_{\Db(\cX)}(\crho^*(\cG) \otimes \cO_{\cX}(-K_{\cX/B}), \io_*\varpi_*\cO_{\hcXo}(-E_1)) \cong 
\RHom_{\Db(\hcXo)}(\varpi^*\io^*\crho^*(\cG) \otimes \cO_{\hcXo}(-K_{\hcXo}), \cO_{\hcXo}(-E_1)) 
\end{equation*}
where we used adjunction for~$(\varpi^*,\varpi_*)$ and~$(\io^*,\io_*)$ 
and an isomorphism~$\varpi^*\io^*\cO_{\cX}(-K_{\cX/B}) \cong \cO_{\hcXo}(-K_{\hcXo})$.
Furthermore, we have~$\varpi^*\io^*\crho^*(\cG) \cong \pi^*\bar\io^*(\cG)$ by commutativity of~\eqref{eq:cx-diagram}
and~$\bar\io^*(\cG) \in \cO_{\bcXo}^\perp = \cA_{\bcXo} \subset \Db(\bcXo)$ because~\eqref{eq:dbbcx} is $B$-linear.
Therefore, it follows from~\eqref{eq:dbxp-4} that the right side above vanishes,
hence the required semiorthogonality holds, 
and we obtain the semiorthogonal decomposition~\eqref{eq:cd-cdp}.

The category~$\cD$ defined by~\eqref{eq:cd-cdp} is $B$-linear (because the other components are),
and by~\cite[Theorem~1.5]{KS24} it is smooth and proper over~$B$, so it remains to identify its fibers~$\cD_o$ and~$\cD_b$.

To identify~$\cD_o$ we denote temporarily by~$\tcD$ the orthogonal complement to~$\io_*\varpi_*\cO_{\hcXo}(-E_1)$ in~\eqref{eq:cd-cdp},
so that~$\tcD = \langle \cD, \crho^*(\cA_{\bcX}) \otimes \cO_{\cX}(-K_{\cX/B}) \rangle$.
Then~\cite[Theorem~1.5]{KS24} shows that
\begin{equation*}
\tcD_o = {}^\perp\big(\varpi_*\cO_{\hcXo}(-E_1)\big) \subset \Db(\cXo).
\end{equation*}
Comparing this with Corollary~\ref{cor:dbcxo-ras}, we conclude that
\begin{equation*}
\tcD_o \simeq \langle \RPG{C_1,C_2}, \rho^*(\cA_{\bcXo}) \otimes \cO_{X}(-K_{X}) \rangle,
\end{equation*}
and it finally follows that~$\cD_o \simeq \RPG{C_1,C_2}$.

Similarly, after base change of~\eqref{eq:cd-cdp} along~$b \hookrightarrow B$, we obtain
\begin{equation*}
\Db(\cX_b) = \tcD_b = \langle \cD_b, \rho_{\cX_b}^*(\cA_{\bcX_b}) \otimes \cO_{\cX_b}(-K_{\cX_b}) \rangle,
\end{equation*}
where~$\cA_{\bcX_b} = \cO_{\bcX_b}^\perp \subset \Db(\bcX_b)$ and~$\rho_{\cX_b} \colon \cX_b \to \bcX_b$ is the restriction of~$\crho$.
Mutating the second component to the left, we obtain
\begin{equation*}
\Db(\cX_b) = \langle \rho_{\cX_b}^*(\cA_{\bcX_b}), \cD_b \rangle.
\end{equation*}
Comparing it with
\begin{equation*}
\Db(\cX_b) = 
\langle \rho_{\cX_b}^*(\Db(\bcX_b)), \Db(\cC_b) \rangle = 
\langle \rho_{\cX_b}^*(\cA_{\bcX_b}), \cO_{\cX_b}, \Db(\cC_b) \rangle 
\end{equation*}
we conclude that~$\cD_b = \langle \cO_{\cX_b}, \Db(\cC_b) \rangle$, 
and by Lemma~\ref{lem:ac-blowup} we have~$\cD_b \simeq \Db(\cO,\cC_b)$,
as required.
\end{proof}

To conclude this section we briefly discuss a relation
between the smooth and proper family of triangulated categories~$\cD$ constructed in Theorem~\ref{thm:main}
and the relative Jacobian for the family of curves~$\cC/B$.
Assume that~$\kk = \CC$ and (for simplicity) that the fibers of the smooth proper family of threefolds~$\bcX/B$
used in~\eqref{eq:def-cx} for the construction of~$\cX/B$ have trivial intermediate Jacobians
(e.g., $\bcX = \P^3 \times B$ as in Lemma~\ref{lem:cc-bcx}).
By~\cite[Conjecture~1.6]{KS25} under appropriate assumptions a smooth and proper over~$B$ family of categories~$\cD/B$ 
should give rise to an abelian scheme~$\cJ/B$ 
fiberwise isomorphic to the family of intermediate Jacobians of the fibers of~$\cD/B$, 
i.e., so that~$\cJ_b \cong \Jac(\cD_b)$ for all~$b \in B$.
In our case,
\begin{equation*}
\Jac(\cD_o) \cong \Jac(C_1) \times \Jac(C_2)
\qquad\text{and}\qquad
\Jac(\cD_b) \cong \Jac(\cC_b)
\qquad\text{for~$b \ne o$}
\end{equation*}
by Theorem~\ref{thm:main} and Propositions~\ref{prop:invariants-rpg}\ref{it:jac-rpg}, \ref{prop:invariants-ac}\ref{it:jac-ac},
and such an abelian scheme can be constructed ad hoc as~$\cJ \coloneqq \Pic^0(\cC/B)$.
Thus, the family of categories~$\cD/B$ can be considered as a categorification
of this family~$\cJ/B$ of principally polarized abelian varieties.

\appendix

\section{BN-modifications of genus~$4$ curves and $1$-nodal cubic threefolds}
\label{sec:cubic-3}

In this section we work over an algebraically closed field~$\kk$.
If~$Y \subset \P^4$ is a smooth cubic threefold, there is a semiorthogonal decomposition
\begin{equation}
\label{eq:dby}
\Db(Y) = \langle \cB_Y, \cO_Y(-1), \cO_Y \rangle.
\end{equation} 
The category~$\cB_Y$ defined by this decomposition has many interesting properties;
for instance, it is a fractional Calabi--Yau category with~$\bS_{\cB_Y}^3 \cong [5]$
(see~\cite[Corollary~4.4]{K04} or~\cite[Corollary~4.1]{K19}).
If~$Y$ is a $1$-nodal cubic threefold, the semiorthogonal decomposition~\eqref{eq:dby} is still defined.
The goal of this section is to relate the corresponding category~$\cB_Y$ to a BN-modification of a curve, 
see Definition~\ref{def:bn-modification}.

So, let~$Y$ be a $1$-nodal cubic threefold and let~$P \in Y$ be the node.
Consider the blowup
\begin{equation*}
X \coloneqq \Bl_P(Y) \xrightarrow{\ \pi\ } Y
\end{equation*}
Then~$X$ is a resolution of singularities of~$Y$ and its exceptional divisor is a smooth quadric surface~$Q \subset X$.
However, this resolution is not categorically minimal:
by~\cite[Theorem~4.4 and Proposition~4.7]{K08b}
the sheaves~$(\cO_Q,\cO_Q(1,0))$ form an exceptional pair in~$\Db(X)$
and their orthogonal complement provides a strongly crepant categorical resolution for~$Y$.
Combining this observation with~\eqref{eq:dby}, we obtain a semiorthogonal decomposition
\begin{equation}
\label{eq:sod-final}
\Db(X) = \langle \tcB_Y, \cO_X(-H), \cO_X, \cO_Q, \cO_Q(1,0) \rangle,
\end{equation} 
where~$H$ is the pullback of the hyperplane class of~$Y$ 
and the category~$\tcB_Y$ provides a strongly crepant categorical resolution for~$\cB_Y$.
Recall Definition~\ref{def:bn-modification} and Lemma~\ref{lem:ac-4}.

\begin{proposition}
\label{prop:tbcy}
Let~$Y$ be a $1$-nodal cubic threefold.
The category~$\tcB_Y$ defined by~\eqref{eq:sod-final}
is equivalent to a BN-modification of a curve~$C$ of genus~$4$ with respect to a trigonal line bundle.
\end{proposition}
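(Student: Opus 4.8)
The plan is to place the BN-modification of a genus $4$ curve inside $\Db(X)$ by exploiting the second blowdown $\sigma\colon X\to\P^3$, and then to match it with $\tcB_Y$ by mutating the semiorthogonal decomposition~\eqref{eq:sod-final}. The starting point is the classical birational geometry of a $1$-nodal cubic threefold (see~\cite{K08b}): linear projection from the node $P$ identifies $X=\Bl_P(Y)$ with the blowup $\Bl_C(\P^3)$ of $\P^3$ along a smooth curve $C$ of genus $4$ and degree $6$, the complete intersection of a smooth quadric $\bar Q$ and a cubic, so that $C\subset\bar Q\cong\P^1\times\P^1$ has bidegree $(3,3)$ and the $\pi$-exceptional quadric $Q\subset X$ is the strict transform of $\bar Q$, mapped isomorphically onto it. Writing $D$ for the exceptional divisor of $\sigma$ and $h=\sigma^*\cO_{\P^3}(1)$, one computes the relations $h=H-Q$, $K_X=-2H+Q$ and $D=2H-3Q$ in $\Pic(X)$. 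Finally $\cL\coloneqq\cO_Q(1,0)|_C$, the restriction to $C$ of the ruling class occurring in~\eqref{eq:sod-final}, is a trigonal line bundle on $C$ with $\omega_C\otimes\cL^{-1}\cong\cO_Q(0,1)|_C$; via the restriction isomorphisms $\rH^0(\bar Q,\cO(a,b))\cong\rH^0(C,\cO(a,b)|_C)$ for $a,b\le1$ its Petri map becomes the multiplication $\rH^0(\bar Q,\cO(1,0))\otimes\rH^0(\bar Q,\cO(0,1))\to\rH^0(\bar Q,\cO(1,1))$, which is an isomorphism, so $\cL$ is BNP extremal.

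Next I would compare two semiorthogonal decompositions of $\Db(X)$. Applying Lemma~\ref{lem:ac-blowup} to the pair $(\P^3,C)$ realizes the augmentation as $\Db(\cO,C)\simeq\langle\cO_X,j_*p^*\Db(C)\rangle\subset\Db(X)$, with $j\colon D\hookrightarrow X$ and $p\colon D\to C$; combining this with the blowup formula for $\sigma$ gives $\Db(X)=\langle\sigma^*\cO_{\P^3}(-3),\sigma^*\cO_{\P^3}(-2),\sigma^*\cO_{\P^3}(-1),\Db(\cO,C)\rangle$, and since $\cL$ is BNP extremal, $\cE_\cL=\fa(\cL)$ is exceptional by Proposition~\ref{prop:bnp-exceptional}, so
\begin{equation*}
\Db(X)=\langle\sigma^*\cO_{\P^3}(-3),\ \sigma^*\cO_{\P^3}(-2),\ \sigma^*\cO_{\P^3}(-1),\ \cE_\cL,\ {}^\perp\cE_\cL\rangle
\end{equation*}
with ${}^\perp\cE_\cL$ the BN-modification of $\Db(C)$ with respect to $\cL$. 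On the other hand, mutating the left-most component $\tcB_Y$ of~\eqref{eq:sod-final} all the way to the right produces $\Db(X)=\langle\cO_X(-H),\cO_X,\cO_Q,\cO_Q(1,0),\tcB_Y^*\rangle$ with $\tcB_Y^*\simeq\tcB_Y$ and the first four components literally unchanged. It therefore suffices to prove the equality of subcategories
\begin{equation*}
\langle\cO_X(-H),\cO_X,\cO_Q,\cO_Q(1,0)\rangle=\langle\sigma^*\cO_{\P^3}(-3),\sigma^*\cO_{\P^3}(-2),\sigma^*\cO_{\P^3}(-1),\cE_\cL\rangle\subset\Db(X),
\end{equation*}
since then the two right-orthogonal complements coincide, giving $\tcB_Y\simeq\tcB_Y^*={}^\perp\cE_\cL$, which is the assertion; the choice of ruling in~\eqref{eq:sod-final} picks out one of the two trigonal bundles, but by Lemma~\ref{lem:ac-4} both give equivalent categories.

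The hard part will be this last equality of exceptional blocks. I would prove it by transforming one exceptional collection into the other through a chain of mutations carried out inside the block, using: the line-bundle identities above to pass between $\cO_X(-H)$ and the $\sigma^*\cO_{\P^3}(-j)$ up to twists by $\cO_X(-Q)$; the structure sequence $0\to\cO_X(-Q)\to\cO_X\to\cO_Q\to0$, the isomorphism $\cO_Q(1,1)\cong\sigma^*\cO_{\P^3}(1)|_Q$, and the Euler-type resolution of $\cO_Q(1,0)$ on $Q\cong\P^1\times\P^1$; and, crucially, the incidence $Q\cap D\cong C$ together with $\cO_Q(1,0)|_{Q\cap D}\cong\cL$, which lets one rewrite $\cE_\cL=\Cone\big(\rH^0(C,\cL)\otimes\cO_X\to j_*p^*\cL\big)[-1]$ in terms of $\cO_Q(1,0)$ and of pullbacks from $\P^3$. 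A Grothendieck-group computation, compatible with Proposition~\ref{prop:invariants-mac} and with the invariants of $\tcB_Y$, shows that the numerical classes on the two sides span the same sublattice of $\rKn(X)$ and so pins down the shape of the required mutations. Once this identification of the two exceptional sub-blocks is in place, the remainder of the proof is formal.
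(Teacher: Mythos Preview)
Your overall strategy is sound, but the central claim fails: the equality
\[
\langle\cO_X(-H),\cO_X,\cO_Q,\cO_Q(1,0)\rangle
=\langle\sigma^*\cO_{\P^3}(-3),\sigma^*\cO_{\P^3}(-2),\sigma^*\cO_{\P^3}(-1),\cE_\cL\rangle
\]
of admissible subcategories of~$\Db(X)$ is \emph{false}, so no chain of mutations will transform one collection into the other. To see this, test against~$\cO_X$. In the splitting~$\rKn(X)=\sigma^*\rKn(\P^3)\oplus (j_*p^*)\rKn(C)$ coming from the blowup formula, the class~$[\cO_X]=\sigma^*[\cO_{\P^3}]$ lies in the first summand. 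On the right-hand side the three pullback classes lie in~$\sigma^*\rKn(\P^3)$, while~$[\cE_\cL]=2[\cO_X]-[j_*p^*\cL]$ is the only class with nonzero~$\rKn(C)$-component. Hence any element of the span with vanishing~$\rKn(C)$-part must have~$\cE_\cL$-coefficient~$0$, and then~$[\cO_X]$ would have to be an integer combination of~$[\cO(-1)],[\cO(-2)],[\cO(-3)]$ in~$\rKn(\P^3)$, which it is not. So~$\cO_X$ is in the left block but not in the right one; the Grothendieck-group check you allude to would in fact reveal this discrepancy rather than confirm the match. Put differently, inside~$\Db(\cO,C)=\langle\cO_X,j_*p^*\Db(C)\rangle$ the canonical exceptional~$\cE=\cO_X$ is not in~$\langle\cE_\cL\rangle$ (since~$\cL\not\cong\cO_C,\omega_C$), so~$\cO_X$ has nontrivial projection to~${}^\perp\cE_\cL$.

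The paper's proof avoids this obstruction by not attempting to match all four exceptionals at once. Instead it first mutates the~$\P^3$-side so that only a \emph{three}-term block matches: it rewrites~$\langle\cO_X(-H),\cO_X,\cO_Q\rangle$ as~$\langle\cO(E-3h),\cO(E-2h),\cO_X\rangle$ (note the extra twist by~$E$ compared to your~$\cO(-3h),\cO(-2h),\cO(-h)$), and identifies its left orthogonal with a \emph{different} embedded copy of~$\Db(\cO,C)$, namely~$\langle\bR_{\cO_X}(\cO(E-h)),\bR_{\cO_X}(\Phi(\Db(C)))\rangle$, not the one from Lemma~\ref{lem:ac-blowup}. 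Only then does it locate~$\cO_Q(1,0)$ inside this copy of the augmentation by computing~$\Phi^!(\bL_{\cO_X}(\cO_Q(1,0)))$ directly, and identifies it with~$\cE_{\cL_2}$. The moral is that the two natural copies of the augmented curve inside~$\Db(X)$ (yours and the one containing~$\cO_Q(1,0)$) are abstractly equivalent but not equal as subcategories, and the mutation bridging them is precisely what your argument is missing.
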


\begin{proof}
The linear projection~$\rho \colon \Bl_P(Y) \to \P^3$ out of~$P$ induces an isomorphism
\begin{equation*}
\Bl_P(Y) \cong \Bl_C(\P^3),
\end{equation*}
where~$C \subset \P^3$ is a smooth complete intersection of the smooth quadric~$\rho(Q)$ and a cubic in~$\P^3$,
i.e., a canonically embedded curve of genus~$\g(C) = 4$.
We consider the diagram of blowups
\begin{equation*}
\xymatrix@C=3em{
& 
Q \ar@{^{(}->}[r] \ar[dl] &
\Bl_P(Y) \ar[dl]^{\pi} &
X \ar@{=}[l]  &
\Bl_C(\P^3) \ar@{=}[l] \ar[dr]_{\rho} &
E \ar@{_{(}->}[l]_-i \ar[dr]^p
\\
P \ar@{^{(}->}[r] &
Y &&&& 
\P^3 &
C \ar@{_{(}->}[l]
}
\end{equation*}
where~$E$ is the exceptional divisor of~$\rho$.
We denote by~$h$ the pullback to~$X$ of the hyperplane class of~$\P^3$.
Then we have the following equalities in the Picard group~$\Pic(X)$:
\begin{equation}
\label{eq:HQ-hE}
H = 3h - E,
\qquad 
Q = 2h - E,
\end{equation}
and the canonical class formula for the blowups~$\pi$ and~$\rho$ gives the equalities
\begin{equation}
\label{eq:kx}
K_X = Q - 2H = E - 4h.
\end{equation}
Furthermore, the blowup formula for~$\rho \colon X \to \P^3$ gives the following semiorthogonal decomposition
\begin{equation}
\label{eq:sod-0}
\Db(X) = \langle \Phi(\Db(C)), \cO_X, \cO_X(h), \cO_X(2h), \cO_X(3h) \rangle,
\qquad\text{where}\quad
\Phi(\cF) \coloneqq i_*(p^*(\cF)) \otimes \cO_X(E).
\end{equation}
Now we modify~\eqref{eq:sod-0} by a couple of mutations.

First, we mutate~$\cO_X(h)$, $\cO_X(2h)$, and~$\cO_X(3h)$ to the far left.
By~\eqref{eq:kx} we obtain
\begin{equation*}
\Db(X) = \langle \cO_X(E-3h), \cO_X(E-2h), \cO_X(E-h), \Phi(\Db(C)), \cO_X \rangle,
\end{equation*}

Next, we mutate~$\cO_X(E - 2h)$, $\cO_X(E-h)$, $\Phi(\Db(C))$ to the right of~$\cO_X$.
Since~$2h - E = Q$ by~\eqref{eq:HQ-hE} and~$Q$ is the exceptional divisor of~$\pi$, we have~$\rH^\bullet(X,\cO_X(2h - E)) = \kk$,
hence the mutation of~$\cO_X(E - 2h)$ is realized by the exact sequence
\begin{equation*}
0 \to \cO_X(E - 2h) \to \cO_X \to \cO_Q \to 0.
\end{equation*}
Taking also into account the equality~$E - 3h = -H$ (see~\eqref{eq:HQ-hE}), we obtain
\begin{equation}
\label{eq:sod-p3-2}
\Db(X) = \langle \cO_X(-H), \cO_X, \cO_Q, \bR_{\cO_X}(\cO_X(E-h)), \bR_{\cO_X}(\Phi(\Db(C))) \rangle.
\end{equation}

On the other hand, if we mutate~$\tcB_Y$ in~\eqref{eq:sod-final} to the far right; by~\eqref{eq:kx} we obtain
\begin{equation}
\label{eq:sod-y-2}
\Db(X) = \langle \cO_X(-H), \cO_X, \cO_Q, \cO_Q(1,0), \tcB_Y(2H - Q) \rangle.
\end{equation} 
Now we observe that the first three components in decompositions~\eqref{eq:sod-p3-2} and~\eqref{eq:sod-y-2} agree.
Therefore, their orthogonal complements coincide, i.e., we have
\begin{equation}
\label{eq:ac-x}
\langle \bR_{\cO_X}(\cO_X(E-h)), \bR_{\cO_X}(\Phi(\Db(C))) \rangle =
\langle \cO_Q(1,0), \tcB_Y(2H - Q) \rangle 
\end{equation}
We will show that this category is equivalent to the augmentation~$\Db(\cO, C)$ in such a way
that the exceptional objects~$\bR_{\cO_X}(\cO_X(E-h))$ and~$\cO_Q(1,0)$ in the left and right sides
correspond to the canonical exceptional object~$\cE \in \Db(\cO,C)$
and a BN-exceptional object~$\cE_\cL \in \Db(\cO,C)$, respectively.

To start with, we compute~$\Ext^\bullet(\bR_{\cO_X}(\cO_X(E-h)), \bR_{\cO_X}(\Phi(\cF)))$.
Since the mutation functor~$\bR_{\cO_X}$ defines an equivalence~$\cO_X^\perp \to {}^\perp\cO_X$ we have an isomorphism
\begin{equation*}
\Ext^\bullet(\bR_{\cO_X}(\cO_X(E-h)), \bR_{\cO_X}(\Phi(\cF))) \cong \Ext^\bullet(\cO_X(E-h), \Phi(\cF)).
\end{equation*}
Now, recalling from~\eqref{eq:sod-0} the definition of~$\Phi$, we find
\begin{multline*}
\Ext^\bullet(\cO_X(E-h), \Phi(\cF)) =
\Ext^\bullet(\cO_X(E-h), i_*p^*(\cF) \otimes \cO_X(E)) \cong
\Ext^\bullet(\cO_X(-h), i_*p^*(\cF)) \\ \cong
\Ext^\bullet(i^*\cO_X(-h), p^*(\cF)) \cong
\Ext^\bullet(p^*\omega_C^{-1}, p^*(\cF)) \cong
\Ext^\bullet(\omega_C^{-1}, \cF). 
\end{multline*}
This proves that the category~\eqref{eq:ac-x} is equivalent to the gluing of~$\Db(\kk)$ 
and~$\Db(C)$ with the gluing object~$\omega_C^{-1}$, hence to the augmented curve~$C$, see Corollary~\ref{cor:gluing-auto}.
Moreover, under this equivalence the object~$\bR_{\cO_X}(\cO_X(E-h))$
corresponds to the canonical exceptional object~$\cE$ of the augmentation.

To identify the object~$\cO_Q(1,0)$, we decompose it with respect to the first semiorthogonal decomposition of~\eqref{eq:ac-x}.
Since~$\bR_{\cO_X}$ is an equivalence~$\cO_X^\perp \to {}^\perp\cO_X$ with inverse~$\bL_{\cO_X}$,
this is equivalent to decomposing
\begin{equation*}
\bL_{\cO_X}(\cO_Q(1,0)) \cong
\Cone \Big(\cO_X^{\oplus 2} \to \cO_Q(1,0) \Big).
\end{equation*}
with respect to the semiorthogonal decomposition~$\langle \cO_X(E-h), \Phi(\Db(C)) \rangle$.

First, we compute the projection of~$\bL_{\cO_X}(\cO_Q(1,0))$ to the component~$\Db(C)$; for this we apply the right adjoint~$\Phi^!$ of~$\Phi$.
It follows from~\eqref{eq:sod-0} that~$\Phi^!(\cF) \cong p_*(i^!(\cF \otimes \cO_X(-E))) \cong p_*i^*(\cF)[-1]$.
Therefore,
\begin{equation*}
\Phi^!(\cO_X) \cong p_*i^*\cO_X[-1] \cong p_*\cO_E[-1] \cong \cO_C[-1].
\end{equation*}
Similarly, since~$Q \cap E = C$ and the restrictions of~$\cO_Q(1,0)$ and~$\cO_Q(0,1)$ to~$C$ 
are the trigonal line bundles on~$C$, that we denote by~$\cL_1$ and~$\cL_2$, respectively,
we have
\begin{equation*}
\Phi^!(\cO_Q(1,0)) \cong p_*i^*\cO_Q(1,0)[-1] \cong p_*\cL_1[-1] \cong \cL_1[-1].
\end{equation*}
We conclude that
\begin{equation*}
\Phi^!(\bL_{\cO_X}(\cO_Q(1,0))) \cong
\Cone \Big(\Phi^!(\cO_X^{\oplus 2}) \to \Phi^!(\cO_Q(1,0)) \Big) \cong
\Cone \Big(\cO_C^{\oplus 2} \to \cL_1 \Big)[-1] \cong \cL_1^{-1}.
\end{equation*}
Taking into account that the equivalence of the category~\eqref{eq:ac-x} with~$\Db(\cO,C)$ includes a twist by~$\omega_C$,
we finally find that the component of~$\cO_Q(1,0)$ in~$\Db(C)$ is~$\cL_1^{-1} \otimes \omega_C \cong \cL_2$, 
the second trigonal bundle.

Next, we compute the projection of~$\bL_{\cO_X}(\cO_Q(1,0))$ to the first component of~$\langle \cO_X(E-h), \Phi(\Db(C)) \rangle$,
i.e., we compute the space~$\Ext^\bullet(\cO_X(E-h), \bL_{\cO_X}(\cO_Q(1,0)))$.
We have
\begin{equation*}
\Ext^\bullet(\cO_X(E-h), \cO_X) \cong
\rH^\bullet(X, \cO_X(h - E)) \cong
\Cone \Big(\rH^\bullet(X, \cO_X(h)) \to \rH^\bullet(C, \omega_C) \Big)[-1] \cong \kk[-2]
\end{equation*}
because~$C \subset \P^3$ is canonically embedded, 
hence the map~$\rH^\bullet(X, \cO_X(h)) = \rH^0(\P^3, \cO_{\P^3}(1)) \to \rH^0(C, \omega_C)$ is an isomorphism.
Similarly, since~$\cO_X(h)\vert_Q \cong \cO_Q(1,1)$ and~$\cO_X(E)\vert_Q \cong \cO_Q(3,3)$, we have
\begin{equation*}
\Ext^\bullet(\cO_X(E-h), \cO_Q(1,0)) \cong
\rH^\bullet(Q, \cO_Q(1,0) \otimes \cO_Q(-2,-2)) \cong 
\rH^\bullet(Q, \cO_Q(-1,-2)) = 0.
\end{equation*}
We conclude that
\begin{equation*}
\Ext^\bullet(\cO_X(E-h), \bL_{\cO_X}(\cO_Q(1,0))) \cong
\Cone \Big(\kk^{\oplus 2}[-2] \to 0 \Big) \cong \kk^{\oplus 2}[-1].
\end{equation*}
Altogether, this means that~$\cO_Q(1,0)$ corresponds to an object of the form~$(\kk^2, \cL_2, \phi)$ in~$\Db(\cO,C)$,
for some morphism~$\phi \colon \kk^2 \to \rH^0(C, \cL_2)$.
But since~$\cO_Q(1,0)$ is exceptional, the argument of Proposition~\ref{prop:bnp-exceptional}
shows that~$\phi$ is an isomorphism, hence~$(\kk^2, \cL_2, \phi)$ is the BN-exceptional object~$\cE_{\cL_2}$.

Finally, since the category~$\tcB_Y$ is equivalent to the orthogonal complement of~$\cO_Q(1,0)$ in~\eqref{eq:ac-x},
i.e., to the orthogonal complement of the BN-exceptional object~$\cE_{\cL_2}$ in~$\Db(\cO,C)$,
it is equivalent to the BN-modification of~$\Db(C)$.
\end{proof}

Proposition~\ref{prop:tbcy} gives a description for the categorical resolution~$\tcB_Y$ of~$\cB_Y$.
Now, we deduce a description for~$\cB_Y$.
Recall from Lemma~\ref{lem:ac-4} that the exceptional bundles~$\cE_{\cL_1}$, $\cE_{\cL_2}$ form a spherical pair.

\begin{corollary}
Let~$Y$ be a $1$-nodal cubic threefold.
The category~$\cB_Y$ defined by~\eqref{eq:dby}
is equivalent to the Verdier quotient of the BN-modification~${}^\perp\cE_{\cL_2} \subset \Db(\cO, C)$
by the $3$-spherical object~\mbox{$\Cone \big(\cE_{\cL_1}[-2] \to \cE_{\cL_2}\big)$}.
\end{corollary}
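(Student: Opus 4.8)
The plan is to combine the equivalence $\tcB_Y \simeq {}^\perp\cE_{\cL_2}$ of Proposition~\ref{prop:tbcy} with the fact that a variety with a single threefold ordinary double point is recovered from its strongly crepant categorical resolution by a Verdier quotient along a $3$-spherical object. Concretely, the blowup $X = \Bl_P(Y)$ is also the blowup $\Bl_\ell(\tilde Y)$ of either small resolution $\tilde Y \to Y$ along its exceptional curve $\ell \cong \P^1$, which has $\cN_{\ell/\tilde Y} \cong \cO_\ell(-1)^{\oplus 2}$; the blowup formula then identifies the component $\tcB_Y \subset \Db(X)$ of~\eqref{eq:sod-final} with the Kuznetsov component $\tilde\cB_Y := \langle \cO_{\tilde Y}(-1), \cO_{\tilde Y}\rangle^\perp \subset \Db(\tilde Y)$ (using $\sigma^*\cO_{\tilde Y} = \cO_X$ and $\sigma^*\cO_{\tilde Y}(-1) = \cO_X(-H)$ for $\sigma\colon X\to\tilde Y$). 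A short orthogonality computation shows $\cO_\ell(-1) \in \tilde\cB_Y$, and $K_{\tilde Y}\cdot\ell = 0$ together with $\cN_{\ell/\tilde Y} \cong \cO_\ell(-1)^{\oplus 2}$ makes $\cO_\ell(-1)$ a $3$-spherical object of $\tilde\cB_Y$. By the Bondal--Orlov description of a small resolution of threefold ODPs as a localization (see also~\cite{K08b} and the categorical resolution framework of~\cite{KS24}), the functor $\sigma_*$ induces an equivalence $\tilde\cB_Y/\langle\cO_\ell(-1)\rangle \xrightarrow{\ \sim\ } \cB_Y$. It then remains to compute the image of $\cO_\ell(-1)$ under the equivalence of Proposition~\ref{prop:tbcy}.

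For this I would first note that both ruling line bundles $\cO_Q(1,0)$ and $\cO_Q(0,1)$ on the exceptional quadric $Q \subset X$ lie in the admissible subcategory appearing in~\eqref{eq:ac-x}: this is a short orthogonality check against $\cO(-H)$, $\cO$, $\cO_Q$ using the K\"unneth formula on $Q \cong \P^1\times\P^1$ and $\cN_{Q/X} \cong \cO_Q(-1,-1)$. Running the computation in the proof of Proposition~\ref{prop:tbcy} for each of the two rulings identifies $\cO_Q(1,0)$ with the BN-exceptional object $\cE_{\cL_2}$ and, symmetrically, $\cO_Q(0,1)$ with $\cE_{\cL_1}$ (its component in $\Db(C)$ being $\cL_2^{-1}\otimes\omega_C \cong \cL_1$). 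A further K\"unneth computation gives $\RHom_X(\cO_Q(0,1), \cO_Q(1,0)) \cong \kk[-2]$, in agreement with $\Ext^\bullet(\cE_{\cL_1}, \cE_{\cL_2})$ from Lemma~\ref{lem:ac-4}. The key point is then that the spherical object $\cO_\ell(-1)$, transported to $\Db(X)$ via $\sigma^*$ and pushed through the mutations turning~\eqref{eq:sod-final} into~\eqref{eq:ac-x}, becomes the cone of the (unique up to scalar) morphism $\cO_Q(0,1)[-2] \to \cO_Q(1,0)$; hence it corresponds under Proposition~\ref{prop:tbcy} to $\Cone(\cE_{\cL_1}[-2] \to \cE_{\cL_2})$, which lies in ${}^\perp\cE_{\cL_2}$ and, by Lemma~\ref{lem:ac-4} and Serre duality in $\Db(\cO,C)$, is $3$-spherical. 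Assembling these identifications yields $\cB_Y \simeq \tcB_Y/\langle\rE\rangle \simeq {}^\perp\cE_{\cL_2}/\langle\Cone(\cE_{\cL_1}[-2]\to\cE_{\cL_2})\rangle$.

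The main obstacle is the bookkeeping in this last identification: one must trace $\sigma^*\cO_\ell(-1)$ --- which, because $\sigma^{-1}(\ell) = Q$ is a divisor, is a genuine two-term complex on $X$ produced by excess intersection --- through the three mutations of Proposition~\ref{prop:tbcy} and the twist by $\cO_X(2H - Q) = \cO_X(-K_X)$, and verify that the outcome is exactly $\Cone(\cO_Q(0,1)[-2]\to\cO_Q(1,0))$ with no stray shift or twist; the symmetry of the two rulings of $Q$ and the one-dimensionality of the relevant $\RHom$-spaces should pin this down. A secondary point to make rigorous is the Verdier-quotient reduction itself --- that the localization $\Db(\tilde Y) \to \Db(Y)$ restricts compatibly to the Kuznetsov components, so that $\cB_Y$ is genuinely $\tilde\cB_Y/\langle\cO_\ell(-1)\rangle$ and not merely related to it; here one uses that $\cO_\ell(-1)$ is simultaneously left- and right-orthogonal to $\langle\cO_{\tilde Y}(-1), \cO_{\tilde Y}\rangle$ (so it does not interfere with the semiorthogonal decomposition) together with the standard properties of the small resolution. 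If a black box asserting that any strongly crepant categorical resolution of a category with a single threefold ODP is a gluing in of a $3$-spherical object is available, one can bypass the first step entirely and reduce to the identification of Paragraph~2.
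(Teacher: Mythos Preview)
Your overall strategy is sound and you correctly identify the key objects: $\cO_Q(1,0)\leftrightarrow\cE_{\cL_2}$, $\cO_Q(0,1)\leftrightarrow\cE_{\cL_1}$, and the spherical object as the cone of the unique degree-$2$ extension between them. However, your route through a small resolution $\tilde Y$ hits a genuine obstacle: a $1$-nodal cubic threefold is \emph{factorial} (from $X=\Bl_C(\P^3)$ one computes $\operatorname{Pic}(X)=\ZZ H\oplus\ZZ Q$, hence $\operatorname{Cl}(Y)=\operatorname{Pic}(X\setminus Q)=\ZZ H$), so $\tilde Y$ does not exist as a projective variety, only as an algebraic space or Moishezon manifold. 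This can be worked around, but it adds a layer of foundations that is entirely avoidable.

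The paper takes exactly the shortcut you allude to in your final sentence. It invokes \cite[Theorem~5.8, Lemma~5.10, and~(5.13)]{KS24}, which already packages the Verdier-quotient description of~$\Db(Y)$ in terms of the \emph{big} resolution $X=\Bl_P(Y)$: one gets $\Db(Y)\simeq\langle\cO_Q,\cO_Q(1,0)\rangle^\perp/\langle\rK\rangle$ with the $3$-spherical object written directly as $\rK=\Cone(\cO_Q(-1,0)\to\cO_Q(0,-1)[2])\in\tcB_Y$. No small resolution, no excess-intersection computation for $\sigma^*\cO_\ell(-1)$, and no mutation-tracing are needed. Since $(2H-Q)\vert_Q\cong\cO_Q(1,1)$, the twist by $-K_X$ built into the equivalence of Proposition~\ref{prop:tbcy} carries $\rK$ to $\Cone(\cO_Q(0,1)\to\cO_Q(1,0)[2])$, and then the argument of that proposition (run symmetrically for both rulings, as you describe) identifies this with $\Cone(\cE_{\cL_1}[-2]\to\cE_{\cL_2})$ up to shift. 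So the whole proof is two lines once the black box is in hand.
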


\begin{proof}
Recall from~\cite[Theorem~5.8, Lemma~5.10, and~(5.13)]{KS24} that the category~$\Db(Y)$ 
is equivalent to the Verdier quotient of~$\langle \cO_Q, \cO_Q(1,0) \rangle^\perp \subset \Db(X)$ by the 3-spherical object
\begin{equation*}
\rK \coloneqq \Cone \Big(\cO_Q(-1,0) \to \cO_Q(0,-1)[2] \Big)
\end{equation*}
contained in~$\tcB_Y$.
Using~\eqref{eq:dby} and~\eqref{eq:sod-final}, we conclude that~$\cB_Y$ is the Verdier quotient of~$\tcB_Y$ by~$\rK$.

On the other hand, Proposition~\ref{prop:tbcy}
gives an equivalence~${}^\perp\big\langle \cO_X(-H), \cO_X, \cO_Q \big\rangle \simeq \Db(\cO, C)$
such that the sheaves~$\cO_Q(0,1)$ and~$\cO_Q(1,0)$ correspond to the BN-exceptional objects~$\cE_{\cL_1}$
and~$\cE_{\cL_2}$ in~$\Db(\cO,C)$
and the subcategory~$\tcB_Y(2H - Q)$ corresponds to the BN-modification~${}^\perp\cE_{\cL_2}$.
Composing it with the autoequivalence of~$\Db(X)$ given by the~$\cO_X(2H - Q)$-twist ,
we obtain an equivalence~\mbox{$\langle \cO_Q(0,-1), \tcB_Y \rangle \simeq \Db(\cO,C)$}
such that the sheaves~$\cO_Q(-1,0)$ and~$\cO_Q(0,-1)$
correspond to~$\cE_{\cL_1}$ and~$\cE_{\cL_2}$,
hence the object~$\rK$ corresponds to~$\Cone \big(\cE_{\cL_1}[-2] \to \cE_{\cL_2} \big)$.
\end{proof}

\section{The gluing functor for intersections of quadrics of general type}
\label{sec:quadrics}

In this section we work over a field~$\kk$ of characteristic not equal to~2.

Let~$W$ and~$V$ be vector spaces with~$\dim(W) = n$ and~$\dim(V) = 2n - 1$
and let~$\bq \colon W \hookrightarrow \Sym^2V^\vee$ be a linear embedding such that
\begin{equation*}
X \coloneqq \bigcap_{w \in W} Q_w \subset \P(V)
\end{equation*}
(where~$Q_w \subset \P(V)$ is the quadric with equation~$\bq(w)$)
is a smooth subvariety of dimension~$n - 2$.
Thus, $X$ is a smooth complete intersection of~$n$ quadrics in~$\P^{2n-2}$,
hence it is a variety of general type with~$\omega_X \cong \cO_{\P(V)}(1)\vert_X$.
Following~\cite[Section~5]{K08} (see also~\cite[(12)]{K08}), we consider
\begin{equation*}
\Cliff_0 = 
\cO_{\P(W)} \oplus 
\big(\wedge^2V \otimes \cO_{\P(W)}(-1)\big) \oplus 
\dots \oplus 
\big(\wedge^{2n-2}V \otimes \cO_{\P(W)}(1-n)\big),
\end{equation*}
the sheaf of even Clifford algebras on~$\P(W)$ corresponding to~$X$. 
Here is the main result of this section.

\begin{proposition}
\label{prop:gluing-quadrics}
For any~$n \ge 3$ the derived category~$\Db(\P(W), \Cliff_0)$ of sheaves of coherent~$\Cliff_0$-modules on~$\P(W)$
is equivalent to the gluing of~$\Db(X)$ and~$\Db(\kk)$
with the gluing object~$\urG \cong \cO_X$.

In particular, if~$n = 3$, so that~$X \subset \P^4$ is a smooth canonical curve of genus~$g = 5$,
the category~$\Db(\P^2, \Cliff_0)$ is equivalent to the augmentation~$\Db(\cO,X)$ of~$X$.
\end{proposition}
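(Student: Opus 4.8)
The plan is to deduce this from Kuznetsov's results on quadric fibrations and intersections of quadrics in~\cite{K08}. Consider the universal quadric
\begin{equation*}
\cQ \coloneqq \{(v,w) \in \P(V) \times \P(W) \mid v \in Q_w\},
\end{equation*}
a smooth divisor in~$\P(V) \times \P(W)$ of class~$\cO_{\P(V)}(2) \boxtimes \cO_{\P(W)}(1)$, together with its two projections~$q \colon \cQ \to \P(W)$ and~$p \colon \cQ \to \P(V)$. The projection~$q$ is a flat quadric fibration of relative dimension~$2n-3$ with fibre~$Q_w$, so by~\cite[Theorem~4.2]{K08} there is a~$\P(W)$-linear semiorthogonal decomposition of~$\Db(\cQ)$ whose leading component is~$\Db(\P(W),\Cliff_0)$ and whose remaining~$2n-3$ components are copies of~$q^{*}\Db(\P(W))$ twisted by powers of~$\cO_\cQ(1)\coloneqq p^{*}\cO_{\P(V)}(1)$. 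The projection~$p$ is a~$\P^{n-2}$-bundle over~$\P(V)\setminus X$ that acquires the full fibre~$\P(W)\cong\P^{n-1}$ over the points of~$X$; analysing it as in~\cite[\S5]{K08} produces a second semiorthogonal decomposition of~$\Db(\cQ)$ assembled from~$\Db(X)$ (appearing with a certain multiplicity) and the line bundles~$\cO_{\P(V)}(i)$.

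The main step is to match these two decompositions of~$\Db(\cQ)$ by a sequence of mutations; because~$\dim V = 2\dim W - 1$, the bookkeeping works out so that the orthogonal complement of the~$q^{*}\Db(\P(W))$-components, which is~$\Db(\P(W),\Cliff_0)$, is generated by a copy of~$\Db(X)$ together with one extra exceptional object~$\cE$, i.e.\ there is a semiorthogonal decomposition
\begin{equation*}
\Db(\P(W),\Cliff_0) = \langle \Db(X), \cE \rangle.
\end{equation*}
By Theorem~\ref{thm:gluing} this exhibits~$\Db(\P(W),\Cliff_0)$ as a gluing of~$\Db(X)$ and~$\Db(\kk)$, and it remains to identify the gluing bimodule, i.e.\ to compute~$\RHom(\cF,\cE)$ for~$\cF\in\Db(X)$: tracking~$\cE$ through the mutations represents it, up to mutation, by a line bundle on~$\cQ$, and the projection formula along~$p$ collapses the~$\RHom$ to~$\RHom_{\Db(X)}(\cF,\cO_X)\cong\rH^{\bullet}(X,\cF^{\vee})$, so the gluing object is~$\cO_X$ up to the twist and shift permitted by Corollary~\ref{cor:gluing-auto}. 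I expect the mutation bookkeeping, and the precise identification of~$\cE$ and of its pairing with~$\Db(X)$, to be the main obstacle; in substance this is~\cite[\S5]{K08}, recast in the language of gluings.

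Finally, for~$n = 3$ we have~$\dim V = 5$ and~$X \subset \P^{4}$ is a canonically embedded curve of genus~$5$; by Remark~\ref{rem:ac-lb} — which combines Remark~\ref{rem:gluing-dual} and Corollary~\ref{cor:gluing-auto} — the gluing of~$\Db(X)$ and~$\Db(\kk)$ with gluing object~$\cO_X$ is precisely the augmentation~$\Db(\cO,X)$, giving the last assertion, which is the input needed for Proposition~\ref{prop:ac-g5}.
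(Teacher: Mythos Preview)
Your high-level plan is right and matches the logic of~\cite[\S5]{K08}, but the proposal as written is not a proof: the entire content of the proposition is the identification of the gluing object, and that is precisely the step you leave as ``tracking~$\cE$ through the mutations'' and ``the projection formula along~$p$ collapses the~$\RHom$''. The semiorthogonal decomposition
\begin{equation*}
\Db(\P(W),\Cliff_0) = \langle \Phi(\Db(X)),\ \Cliff_0 \rangle
\end{equation*}
is already~\cite[Theorem~5.5]{K08} and can simply be cited; there is no need to rerun the mutation argument inside~$\Db(\cQ)$. The exceptional object is~$\Cliff_0$ itself, so what you must compute is~$\Phi^{!}(\Cliff_0)\in\Db(X)$, and your sketch gives no mechanism for doing so. Tracking mutations in~$\Db(\cQ)$ is possible in principle but is at least as involved as the direct route, and your claim that~$\cE$ is ``up to mutation a line bundle on~$\cQ$'' is not established.

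The paper instead uses the explicit description of~$\Phi^{!}$ from~\cite[\S4--5]{K08}: it is given by tensoring with the spinor sheaf~$\cS$ on~$\cQ$ and pushing forward, so~$\Phi^{!}(\Cliff_0)\cong q_{*}(\cS\vert_{\P(W)\times X})$, where~$q\colon \P(W)\times X\to X$ is the projection. The Clifford-multiplication resolution of~$i_{*}\cS$ by~$\Cliff_{-1}\boxtimes\cO(-1)\to\Cliff_{0}\boxtimes\cO$, restricted to~$\P(W)\times X$, carries a natural filtration whose associated graded maps are the Koszul differentials for the tautological inclusion~$\cO_X(-1)\hookrightarrow V\otimes\cO_X$. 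A rank count shows the resulting spectral sequence degenerates at~$E^{1}$, and the filtration on~$\cS\vert_{\P(W)\times X}$ has graded pieces~$\cO_{\P(W)}(-k)\boxtimes\wedge^{2k}(V\otimes\cO_X/\cO_X(-1))$ for~$0\le k\le n-1$. Since~$\rH^{\bullet}(\P(W),\cO(-k))=0$ for~$1\le k\le n-1$, only the~$k=0$ piece survives under~$q_{*}$, giving~$\Phi^{!}(\Cliff_0)\cong\cO_X$. This is the computation your proposal is missing.
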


\begin{proof}
By~\cite[Theorem~5.5]{K08} the object~$\Cliff_0 \in \Db(\P(W), \Cliff_0)$ is exceptional 
and there is a semiorthogonal decomposition
\begin{equation}
\label{eq:sod-pw-cl}
\Db(\P(W), \Cliff_0) = 
\langle \Phi(\Db(X)), \Cliff_0 \rangle,
\end{equation}
where~$\Phi \colon \Db(X) \hookrightarrow \Db(\P(W), \Cliff_0)$ is a fully faithful embedding.
To prove the first part of the proposition we need to compute the gluing object~$\Phi^!(\Cliff_0) \in \Db(X)$;
to achieve this we use a description of the functor~$\Phi^!$ from~\cite{K08}, which we recall below.

Let~$i \colon Q \hookrightarrow \P(W) \times \P(V)$ be the embedding of the divisor of bidegree~$(1,2)$ with the equation~$\bq$
(this is the universal quadric through~$X$)
and let~$f \colon Q \to \P(W)$ be the natural projection.
By~\cite[Section~4]{K08} there is a sheaf of~$f^*\Cliff_0$-modules~$\cS$ on~$Q$ 
which fits into an exact sequence of~$\Cliff_0 \boxtimes \cO_{\P(V)}$-modules
\begin{equation}
\label{eq:spinor}
0 \to \Cliff_{-1} \boxtimes \cO_{\P(V)}(-1) \xrightarrow{\ \updelta\ } \Cliff_0 \boxtimes \cO_{\P(V)} \to i_*\cS \to 0,
\end{equation}
where~$\Cliff_{-1} = 
\big(V \otimes \cO_{\P(W)}(-1)\big) \oplus 
\big(\wedge^3V \otimes \cO_{\P(W)}(-2)\big) \oplus 
\dots \oplus 
\big(\wedge^{2n-1}V \otimes \cO_{\P(W)}(-n)\big)$
is the sheaf of odd parts of the Clifford algebra (see~\cite[(14) and~(15))]{K08})
and~$\updelta$ is induced by Clifford multiplication, see~\cite[(23)]{K08}.
Finally, there is an embedding~$\P(W) \times X \subset Q$ and an isomorphism
\begin{equation*}
\Phi^!(\cF) \cong q_*(p^*\cF \otimes_{p^*\Cliff_0} \cS\vert_{\P(W) \times X}),
\end{equation*}
where~$p \colon \P(W) \times X \to \P(W)$ and~$q \colon \P(W) \times X \to X$ are the projections.
Therefore, 
\begin{equation}
\label{eq:alpha-shriek}
\Phi^!(\Cliff_0) \cong q_*(p^*\Cliff_0 \otimes_{p^*\Cliff_0} \cS\vert_{\P(W) \times X}) \cong q_*(\cS\vert_{\P(W) \times X}).
\end{equation}
So, we only need to check that~$q_*(\cS\vert_{\P(W) \times X}) \cong \cO_X$.

Restricting~\eqref{eq:spinor} to~$\P(W) \times X$, we obtain an exact sequence
\begin{equation}
\label{eq:spinor-restricted}
0 \to 
\cS\vert_{\P(W) \times X}(-1,-2) \to 
\Cliff_{-1} \boxtimes \cO_X(-1) \xrightarrow{\ \updelta\ } 
\Cliff_0 \boxtimes \cO_X \to 
\cS\vert_{\P(W) \times X} \to 
0.
\end{equation}
By definition of the Clifford multiplication the restriction~$\updelta_v$ of~$\updelta$ 
to~$\P(W) \times [v] \subset \P(W) \times X$ for each point~$[v] \in X \subset \P(V)$ is the sum of the map
\begin{equation*}
\wedge^{2k-1}V \otimes \cO_{\P(W)}(-k) \xrightarrow{\ \bq(v)\ } \wedge^{2k-2}V \otimes \cO_{\P(W)}(1-k)
\end{equation*}
given by~$\bq(v) \in W^\vee \otimes V^\vee$ and the wedge product map 
\begin{equation}
\label{eq:wedge}
\wedge^{2k-1}V \otimes \cO_{\P(W)}(-k) \xrightarrow{\ - \wedge v\ } \wedge^{2k}V \otimes \cO_{\P(W)}(-k).
\end{equation}
In particular, we see that~$\updelta$ is compatible with the filtrations defined by
\begin{align*}
\bF_k(\Cliff_{-1} \boxtimes \cO_X(-1)) &\coloneqq \bigoplus_{i \le k} \wedge^{2i-1}V \otimes \cO_{\P(W)}(-i) \boxtimes \cO_X(-1),
\\
\bF_k(\Cliff_0 \boxtimes \cO_X) &\coloneqq \bigoplus_{i \le k} \wedge^{2i}V \otimes \cO_{\P(W)}(-i) \boxtimes \cO_X
\end{align*}
(this compatibility also follows from the obvious equality~$\Hom(\cO_{\P(W)}(-i), \cO_{\P(W)}(-j)) = 0$ for~$i < j$).

Now we consider the spectral sequence of the filtered complex~$\Cliff_{-1} \boxtimes \cO_X(-1) \xrightarrow{\ \updelta\ } \Cliff_0 \boxtimes \cO_X$.
Its zeroth differential~$\bd_{k,-1}^0 \colon \bE_{k,-1}^0 \to \bE_{k,0}^0$ 
is a relative version of the map~\eqref{eq:wedge};
more precisely, it is the map 
\begin{multline}
\label{eq:wedge-rel}
\wedge^{2k-1}V \otimes \cO_{\P(W)}(-k) \boxtimes \cO_X(-1) = 
\gr_k^\bF(\Cliff_{-1} \boxtimes \cO_{X}(-1))
\\ \xrightarrow{\quad}
\gr_k^\bF(\Cliff_0 \boxtimes \cO_{X}) = 
\wedge^{2k}V \otimes \cO_{\P(W)}(-k) \boxtimes \cO_X
\end{multline}
induced by the tautological embedding~$\cO_X(-1) \hookrightarrow V \otimes \cO_X$ and wedge product in~$\wedge^\bullet V$.
In other words, the differentials~$\bd_{k,-1}^0$ are obtained from the maps in the Koszul complex of~$\P(V)$
\begin{equation*}
0 \to 
\cO_{\P(V)} \to 
V \otimes \cO_{\P(V)}(1) \to 
\dots \to 
\wedge^{2n-2}V \otimes \cO_{\P(V)}(2n-2) \to 
\wedge^{2n-1}V \otimes \cO_{\P(V)}(2n-1) \to 0
\end{equation*}
by a twist, restriction to~$X$, and box tensor product with~$\cO_{\P(W)}(-k)$.
It follows that the kernel and cokernel of~\eqref{eq:wedge-rel}
are isomorphic to
\begin{align*}
\bE_{k,-1}^1 &= 
\cO_{\P(W)}(-k) \boxtimes \wedge^{2k-2}(V \otimes \cO_X / \cO_X(-1)) \otimes \cO_X(-2)
\qquad\text{and}\\
\bE_{k,0}^1 &= 
\cO_{\P(W)}(-k) \boxtimes \wedge^{2k}(V \otimes \cO_X / \cO_X(-1)),
\end{align*}
respectively.
In particular, we see that the sheaves~$\bigoplus_k \bE_{k,-1}^1$ and~$\bigoplus_k \bE_{k,0}^1$ 
are locally free sheaves of the same rank~$2^{2n-3}$.
On the other hand, by~\eqref{eq:spinor-restricted} the spectral sequence 
converges to the associated graded sheaves~$\bigoplus_k \bE_{k,-1}^\infty$ and~$\bigoplus_k \bE_{k,0}^\infty$,
of appropriate filtrations on the sheaves~$\cS\vert_{\P(W) \times X}(-1,-2)$ and~$\cS\vert_{\P(W) \times X}$, respectively, 
which are also locally free of the same rank~$2^{2n-3}$ by~\cite[Lemma~4.7]{K08}.
Therefore, the spectral sequence degenerates at the first page 
and we conclude that~$\cS\vert_{\P(W) \times X}$ has a filtration with associated graded sheaves
\begin{equation*}
\gr_k^\bF(\cS\vert_{\P(W) \times X}) \cong
\bE_{k,0}^\infty \cong 
\bE_{k,0}^1 \cong 
\cO_{\P(W)}(-k) \boxtimes \wedge^{2k}(V \otimes \cO_X / \cO_X(-1))
\qquad\text{for~$0 \le k \le n - 1$}.
\end{equation*}
Since~$\rH^\bullet(\P(W), \cO_{\P(W)}(-k)) = 0 $ for~$1 \le k \le n-1$,
we have~$q_*(\gr_k^\bF(\cS\vert_{\P(W) \times X})) = 0$ and therefore
\begin{equation*}
q_*(\cS\vert_{\P(W) \times X})) \cong
q_*(\gr_0^\bF(\cS\vert_{\P(W) \times X})) \cong
q_*(\cO_{\P(W)} \boxtimes \cO_X) \cong 
\cO_X.
\end{equation*}
Combining this with~\eqref{eq:alpha-shriek} we conclude that 
the gluing object of~\eqref{eq:sod-pw-cl} is isomorphic to~$\cO_X$.

In the case where~$n = 3$ we proved that~$\Db(\P^2, \Cliff_0)$ is equivalent to the gluing of~$\Db(X)$ and~$\Db(\kk)$,
where~$X$ is a curve of genus~$5$ and the gluing object is isomorphic to the structure sheaf~$\cO_X$ of~$X$.
Applying Remark~\ref{rem:ac-lb}, we finally conclude that this category is equivalent to the augmentation of~$X$.
\end{proof}


\begin{thebibliography}{BGvBS14}

\bibitem[AK]{AK2}
Valery Alexeev and Alexander Kuznetsov.
\newblock Serre functors and dimensions of augmented curves.
\newblock {\em In preparation}.

\bibitem[BGvBS14]{BBS}
Christian B\"{o}hning, Hans-Christian Graf~von Bothmer, and Pawel Sosna.
\newblock On the {J}ordan--{H}\"{o}lder property for geometric derived
  categories.
\newblock {\em Adv. Math.}, 256:479--492, 2014.

\bibitem[BKM24]{BKM}
A.~Bayer, A.~Kuznetsov, and E.~Macr\`i.
\newblock Mukai bundles on {F}ano threefolds.
\newblock {\em Preprint {\tt arXiv:2402.07154}}, 2024.

\bibitem[CI12]{CI}
Daniel Chan and Colin Ingalls.
\newblock Conic bundles and {C}lifford algebras.
\newblock In {\em New trends in noncommutative algebra}, volume 562 of {\em
  Contemp. Math.}, pages 53--75. Amer. Math. Soc., Providence, RI, 2012.

\bibitem[FN25]{FN}
Soheyla Feyzbakhsh and Aliaksandra Novik.
\newblock Derived category of coherent systems on curves and stability
  conditions.
\newblock {\em Preprint {\tt arXiv:2511.01601}}, 2025.

\bibitem[IU15]{IU15}
Akira Ishii and Kazushi Ueda.
\newblock The special {M}c{K}ay correspondence and exceptional collections.
\newblock {\em Tohoku Math. J. (2)}, 67(4):585--609, 2015.

\bibitem[KL15]{KL15}
Alexander Kuznetsov and Valery~A. Lunts.
\newblock Categorical resolutions of irrational singularities.
\newblock {\em Int. Math. Res. Not. IMRN}, (13):4536--4625, 2015.

\bibitem[KP21]{KP21}
Alexander Kuznetsov and Alexander Perry.
\newblock Serre functors and dimensions of residual categories.
\newblock {\em Preprint {\tt arXiv:2109.02026}}, 2021.

\bibitem[Kra24]{Krah}
Johannes Krah.
\newblock A phantom on a rational surface.
\newblock {\em Invent. Math.}, 235(3):1009--1018, 2024.

\bibitem[KS24]{KS24}
Alexander Kuznetsov and Evgeny Shinder.
\newblock Categorical absorptions of singularities and degenerations.
\newblock {\em \'{E}pijournal G\'{e}om. Alg\'{e}brique}, (Special volume in
  honour of Claire Voisin):Art. 12, 42, 2024.

\bibitem[KS25a]{KS25}
Alexander Kuznetsov and Evgeny Shinder.
\newblock Derived categories of {F}ano threefolds and degenerations.
\newblock {\em Invent. Math.}, 239:377--430, 2025.

\bibitem[KS25b]{KS-hfd}
Alexander Kuznetsov and Evgeny Shinder.
\newblock Homologically finite-dimensional objects in triangulated categories.
\newblock {\em Selecta Math. (N.S.)}, 31(2):Paper No. 27, 45, 2025.

\bibitem[Kuz04]{K04}
Alexander Kuznetsov.
\newblock Derived category of a cubic threefold and the variety {$V_{14}$}.
\newblock {\em Tr. Mat. Inst. Steklova}, 246(Algebr. Geom. Metody, Svyazi i
  Prilozh.):183--207, 2004.

\bibitem[Kuz08a]{K08}
Alexander Kuznetsov.
\newblock Derived categories of quadric fibrations and intersections of
  quadrics.
\newblock {\em Adv. Math.}, 218(5):1340--1369, 2008.

\bibitem[Kuz08b]{K08b}
Alexander Kuznetsov.
\newblock Lefschetz decompositions and categorical resolutions of
  singularities.
\newblock {\em Selecta Math. (N.S.)}, 13(4):661--696, 2008.

\bibitem[Kuz09]{K09-HH}
Alexander Kuznetsov.
\newblock Hochschild homology and semiorthogonal decompositions.
\newblock {\em Preprint {\tt arXiv:0904.4330}}, 2009.

\bibitem[Kuz13]{K13}
Alexander Kuznetsov.
\newblock A simple counterexample to the {J}ordan--{H}\"older property for
  derived categories.
\newblock {\em Preprint {\tt arXiv:1304.0903}}, 2013.

\bibitem[Kuz15]{K15}
Alexander Kuznetsov.
\newblock Height of exceptional collections and {H}ochschild cohomology of
  quasiphantom categories.
\newblock {\em J. Reine Angew. Math.}, 708:213--243, 2015.

\bibitem[Kuz19]{K19}
Alexander Kuznetsov.
\newblock Calabi--{Y}au and fractional {C}alabi--{Y}au categories.
\newblock {\em J. Reine Angew. Math.}, 753:239--267, 2019.

\bibitem[Laz86]{lazarsfeld1986brill-noether}
Robert Lazarsfeld.
\newblock Brill--{N}oether--{P}etri without degenerations.
\newblock {\em J. Differential Geom.}, 23(3):299--307, 1986.

\bibitem[Orl16]{O16}
Dmitri Orlov.
\newblock Smooth and proper noncommutative schemes and gluing of {DG}
  categories.
\newblock {\em Adv. Math.}, 302:59--105, 2016.

\bibitem[Per22]{Per22}
Alexander Perry.
\newblock The integral {H}odge conjecture for two-dimensional {C}alabi--{Y}au
  categories.
\newblock {\em Compos. Math.}, 158(2):287--333, 2022.

\end{thebibliography}

\end{document}